\newcommand{\m}{\mathcal}
\renewcommand{\b}{\mathbb}
\apptocmd{\lim}{\limits}{}{}
\newcommand{\acl}{\textrm{acl}}
\newcommand{\tp}{\textrm{tp}}
\renewcommand{\o}{\overline}
\newcommand{\model}{\models}
\theoremstyle{plain}
\newtheorem{thm}{Theorem}
\newtheorem{theorem}[thm]{Theorem}
\newtheorem{lemma}[thm]{Lemma}
\newtheorem{cor}[thm]{Corollary}
\newtheorem{prop}[thm]{Proposition}
\newtheorem{fact}[thm]{Fact}
\numberwithin{thm}{section}
\numberwithin{subcase}{case}
\theoremstyle{definition}
\newtheorem{definition}[thm]{Definition}
\newtheorem{example}[thm]{Example}
\newtheorem{question}[thm]{Question}
\newtheorem{notation}[thm]{Notation}
\def\P{{\cal P}}
\def\Ind{\setbox0=\hbox{$x$}\kern\wd0\hbox to 0pt{\hss$\mid$\hss}
	\lower.9\ht0\hbox to 0pt{\hss$\smile$\hss}\kern\wd0}
\def\Notind{\setbox0=\hbox{$x$}\kern\wd0\hbox to 0pt{\mathchardef
		\nn=12854\hss$\nn$\kern1.4\wd0\hss}\hbox to 0pt{\hss$\mid$\hss}\lower.9\ht0
	\hbox to 0pt{\hss$\smile$\hss}\kern\wd0}
\newcommand{\Mod}{\textrm{Mod}}
\newcommand{\iso}{\cong}
\newcommand{\dom}{\textrm{dom}}
\newcommand{\hceq}{\sim_{_{\!\!\HC}}}
\newcommand{\hcleq}{\leq_{_{\!\HC}}}
\newcommand{\hclt}{<_{_{\!\HC}}}
\newcommand{\boreleq}{\sim_{_{\!\!B}}}
\newcommand{\borelleq}{\leq_{_{\!B}}}
\newcommand{\borellt}{<_{_{\!B}}}
\newcommand{\REF}{\textrm{REF}}
\newcommand{\K}{\textrm{K}}
\newcommand{\TK}{\textrm{TK}}
\newcommand{\ptl}{\textrm{ptl}}
\newcommand{\ext}{\ptl}
\newcommand{\sat}{\textrm{sat}}
\newcommand{\eni}{\textrm{eni}}
\newcommand{\ENIDOP}{\textrm{ENI-DOP}}
\newcommand{\ENINDOP}{\textrm{ENI-NDOP}}
\newcommand{\HC}{\textrm{HC}}
\newcommand{\CSS}{\textrm{CSS}}
\newcommand{\css}{\textrm{css}}
\newcommand{\forces}{\Vdash}
\newcommand{\Aut}{\textrm{Aut}}
\newcommand{\val}{\textrm{val}}
\newenvironment{claim}[1]{\smallskip\par\noindent\underline{Claim:}\space#1}{}
\newenvironment{claim1}[1]{\smallskip\par\noindent\underline{Claim 1:}\space#1}{}
\newenvironment{claim2}[1]{\smallskip\par\noindent\underline{Claim 2:}\space#1}{}
\newenvironment{claimproof}[1]{\par\noindent\underline{Proof:}\space#1}{\leavevmode\unskip\penalty9999 \hbox{}\nobreak\hfill\quad\hbox{$\square$}}
\def\F{{\cal F}}
\def\V{\b V}
\def\LL{\b L}
\def\N{{\cal W}}
\def\PP{\b P}
\def\QQ{\b Q}
\def\RR{\b R}
\def\phi{\varphi}
\def\abar{{\overline{a}}}
\def\bbar{{\overline{b}}}
\def\cbar{{\overline{c}}}
\def\alphabar{{\overline{\alpha}}}
\def\xbar{{\overline{x}}}
\def\tp{{\rm tp}}
\def\lg{{\rm lg}}
\def\<{\langle}
\def\>{\rangle}
\def\acl{\textrm{acl}}
\def\mult{\textrm{mult}}
\def\inf{\textrm{inf}}
\def\bin{\textrm{bin}}
\def\fin{\textrm{fin}}
\def\qf{\textrm{qf}}
\newcommand\myrestriction{\mathord\restriction}
\def\mr#1{\myrestriction_{#1}}
\begin{document}	

	\bibliographystyle{plain}
	
	\author{Douglas Ulrich, Richard Rast, and Michael C.\ Laskowski\thanks{All three authors were partially supported by NSF Research Grant DMS-1308546.}
	 \\ Department of Mathematics\\University of Maryland, College Park}
	\title{Borel Complexity and Potential Canonical Scott Sentences}
	\date{\today} 
	
	\maketitle

\begin{abstract}	
	We define and investigate $HC$-forcing invariant formulas of set theory, whose interpretations in the hereditarily countable sets are well behaved under forcing extensions. 
	 This leads naturally to a notion of cardinality $||\Phi||$ for sentences $\Phi$  of $L_{\omega_1,\omega}$, which counts the number of sentences of $L_{\infty,\omega}$ that, in some forcing extension, become a canonical Scott sentence of a model of $\Phi$.  
	 We show this cardinal bounds the complexity of $(\Mod(\Phi), \iso)$, the class of models of $\Phi$ with universe $\omega$, by proving that $(\Mod(\Phi),\iso)$ is not Borel reducible to $(\Mod(\Psi),\iso)$ whenever $||\Psi|| < ||\Phi||$. 
	Using these tools, 
	we analyze the complexity of the class of countable models of four complete, first-order theories $T$ for which  $(\Mod(T),\iso)$ is properly analytic,  yet admit very different behavior. We prove that both `Binary splitting, refining equivalence relations' and Koerwien's example \cite{KoerwienExample} of an eni-depth 2, $\omega$-stable theory 
have  $(\Mod(T),\iso)$ non-Borel, yet neither is  Borel complete.   We give a  slight modification of Koerwien's example that also is $\omega$-stable, eni-depth 2, but is Borel complete.
Additionally, we prove that $I_{\infty,\omega}(\Phi)<\beth_{\omega_1}$ whenever  $(\Mod(\Phi),\iso)$ is Borel.
\end{abstract}

\section{Introduction}	
In their seminal paper \cite{FriedmanStanleyBC}, Friedman and Stanley define and develop a notion of {\em Borel reducibility} among classes $C$ of structures with universe $\omega$ in a fixed, countable language $L$ that are Borel and invariant under permutations of $\omega$.  It is well known (see e.g., \cite{KechrisDST} or \cite{GaoIDST}) that such classes are of the form $\Mod(\Phi)$, the set of models of $\Phi$ whose universe is precisely $\omega$ for some sentence $\Phi\in L_{\omega_1,\omega}$.  A {\em Borel reduction} is a Borel function $f:\Mod(\Phi)\rightarrow \Mod(\Psi)$ that satisfies $M\iso N$ if and only if $f(M)\iso f(N)$.  One says that $\Mod(\Phi)$ is {\em Borel reducible} to $\Mod(\Psi)$, written $(\Mod(\Phi),\iso)\le_B (\Mod(\Psi),\iso)$ or more typically $\Phi \leq_B \Psi$,  if there is a Borel reduction $f:\Mod(\Phi)\rightarrow \Mod(\Psi)$; and the two classes are {\em Borel equivalent} if there are Borel reductions in both directions. 
As Borel reducibility is transitive, $\le_B$ induces a pre-order on $\{\Mod(\Phi):\Phi\in L_{\omega_1,\omega}\}$. In \cite{FriedmanStanleyBC}, Friedman and Stanley show that among Borel invariant classes, there is a maximal class with respect to $\le_B$.  We say $\Phi$ is {\em Borel complete} if it is in this maximal class.  Examples include the theories of graphs, linear orders, groups, and fields.

It is easily seen that for any $\Phi\in L_{\omega_1,\omega}$, the isomorphism relation on $\Mod(\Phi)$ is analytic, but in many cases it is actually Borel.
The isomorphism relation on any Borel complete class is properly analytic (i.e., not Borel) but prior to this paper there were few examples known
of classes $\Mod(\Phi)$ where the isomorphism relation is properly analytic, but not Borel complete.  Indeed, the authors are only aware of the example of abelian $p$-groups
that first appeared in \cite{FriedmanStanleyBC}.  The class of abelian $p$-groups is  expressible by a sentence of $L_{\omega_1,\omega}$, but no first-order example of this phenomenon was known.

To date, the study of classes $(\Mod(\Phi),\iso)$ is much more developed when the isomorphism relation is Borel than when it is not.
Here, however, we define the set of {\em short} sentences $\Phi$ (see Definition~\ref{shortdef}), 
that properly contain the set of sentences for which isomorphism is Borel but exclude the Borel complete sentences, and develop criteria for concluding
$\Mod(\Phi)\not\le_B \Mod(\Psi)$ among pairs $\Phi,\Psi$ of short sentences.  
Furthermore, Theorem~\ref{translate} asserts that there can never be a Borel reduction from a non-short $\Phi$ into a short $\Psi$.  
From this technology, we are able to discern more about $\Mod(\Phi)$ when $\iso$ is Borel.  For example, Corollary~\ref{Borelisshort} asserts that
\begin{quotation}
	If $\iso$ is Borel on $\Mod(\Phi)$, $I_{\infty,\omega}(\Phi)<\beth_{\omega_1}$.
\end{quotation}
That is, among models of $\Phi$ of any cardinality, there are fewer than $\beth_{\omega_1}$ pairwise
back-and-forth inequivalent models.



In this paper we work in ZFC, and so formalize all of model theory within ZFC. We do this in any reasonable way. For instance, countable languages $L$ are construed as being  elements of $\HC$ (the set of hereditarily countable sets) and the set of sentences of $L_{\omega_1, \omega}$ form a subclass  of $\HC$.  Moreover, basic operations, such as describing the set of 
subformulas of a given formula, are absolute class functions of $\HC$.  As well, if  $L \in \HC$ is a language, then for every $L$-structure $M$, the canonical Scott sentence $\css(M)$ is in particular a set; and if $M$ is countable then $\css(M) \in L_{\omega_1,\omega}$ is an element of $\HC$. Moreover the class function $\css$ is highly absolute.
 
One of our central ideas is the following. Given $\Phi \in L_{\omega_1,\omega}$, let $\CSS(\Phi) \subseteq \HC$ denote the set $\{\css(M): M\in \Mod(\Phi)\}$. Then any Borel map $f: \Mod(\Phi) \rightarrow \Mod(\Psi)$ induces an $\HC$-definable  function $f^*: \CSS(\Phi) \rightarrow \CSS(\Psi)$. This leads us to the investigation of definable subclasses of $\HC$ and definable maps between them. 
We begin by restricting our notion of classes to those definable by {\em $\HC$-forcing invariant} formulas (see Definition~\ref{HCdef}).
Three straightforward consequences of the Product Forcing Lemma 
allow us to prove that these classes are well-behaved.
It is noteworthy that we do not define $\HC$-forcing invariant formulas syntactically.  Whereas it is true that every $\Sigma_1$-formula is $\HC$-forcing invariant,
determining precisely which classes are $\HC$-forcing invariant depends on our choice of $\V$.  

The second ingredient of our development is that every set $A$ in $\V$ is {\em potentially in $\HC$}, i.e., there is a forcing extension $\V[G]$ of $\V$
(indeed a Levy collapse suffices) such that $A\in \HC^{\V[G]}$.  Given an $\HC$-forcing invariant $\phi(x)$, we define $\phi_\ext$ -- that is, the {\em potential solutions to $\phi$} --   to be those $A\in\V$ for which
$(\HC^{\V[G]},\in)\models\phi(A)$ whenever $A\in \HC^{\V[G]}$.  The definition of $\HC$-forcing invariance makes this notion well-defined.
Thus, given a sentence $\Phi$, one can define $\CSS(\Phi)_\ext$, which should be read as the class of `potential canonical Scott sentences' i.e., 
the class of all $\phi\in L_{\infty,\omega}$ such that in some forcing extension $\V[G]$, $\phi$ is the canonical Scott sentence of some countable model of $\Phi$.
We define $\Phi$ to be {\em short} if $\CSS(\Phi)_\ext$ is a {\bf set} as opposed to a proper class and define the \emph{potential cardinality} of $\Phi$, denoted $||\Phi||$, to
be the (usual) cardinality of $\CSS(\Phi)_\ext$ if $\Phi$ is short, or $\infty$ otherwise.    By tracing all of this through, with Theorem~\ref{translate}(2), we see that 
\begin{quotation}  If
$||\Psi||<||\Phi||$, then there cannot be a Borel reduction $f:\Mod(\Phi)\rightarrow \Mod(\Psi)$.  
\end{quotation}


Another issue that is raised by our investigation is a comparison of the class the potential canonical Scott sentences $\CSS(\Phi)_\ext$ with the class $\CSS(\Phi)_\sat$, consisting of
all sentences of $L_{\infty,\omega}$ that are canonical Scott sentences  of some model $M\models\Phi$  with $M\in\V$.
Clearly, the latter class is contained in the former, and we call $\Phi$ {\em grounded} (see Definition~\ref{grounded}) if equality holds.  We show that the incomplete theory $\REF$ of
refining equivalence relations is grounded.  By contrast, the theory $\TK$,  defined
in Section~\ref{OmegaStableSection}, is a  complete,
$\omega$-stable theory for which $|\CSS(\TK)_\sat|=\beth_2$, while $\CSS(\TK)_\ext$ is a proper class.  



Sections~\ref{CompactGroupSection}-\ref{OmegaStableSection} apply this technology.  Section~\ref{CompactGroupSection} discusses continuous actions by compact groups on Polish spaces.  In addition to being of interest in its own right, the results there are also used in Section~\ref{OmegaStableSection}.  Sections~\ref{REFsection} and \ref{OmegaStableSection} discuss four complete first-order theories that are not very complicated stability-theoretically, yet the isomorphism relation is
properly analytic in each case.  We summarize our findings by:

\medskip

{\bf $\REF({\bf inf})$} is the theory of `infinitely splitting, refining equivalence relations'.  Its language is $L=\{E_n:n\in\omega\}$. It asserts that each $E_n$ is an equivalence relation,  $E_0$ consists of a single class, each $E_{n+1}$ refines $E_n$, and each $E_n$-class is partitioned into infinitely many $E_{n+1}$-classes.  $\REF(\inf)$ is one of the standard examples of a stable, unsuperstable theory.  Then:

\begin{itemize}
\item  $\REF(\inf)$ is Borel complete, in fact, it is $\lambda$-Borel complete for all infinite $\lambda$  (see Definition~\ref{lambdaBC});
\item  Therefore, $\REF(\inf)$ is not short;
\item  Therefore, $\iso$ is not Borel.
\item  $\REF(\inf)$ is grounded, i.e., $\CSS(\REF(\inf))_\sat=\CSS(\REF(\inf))_\ext$.
\end{itemize}

%

\medskip

{\bf $\REF({\bf bin})$} is the theory of `binary splitting, refining equivalence relations'.  The language is also $L=\{E_n:n\in\omega\}$.  The axioms of $\REF(\bin)$ assert that each $E_n$ is an equivalence relation, $E_0$ is trivial, each $E_{n+1}$ refines $E_n$, and each $E_n$-class is partitioned into exactly two $E_{n+1}$-classes.  $\REF(\bin)$ is superstable (in fact, weakly minimal) but is not $\omega$-stable.  Then:

\begin{itemize}
\item  $T_2$ (i.e., `countable sets of reals') is Borel reducible into $\Mod(\REF(\bin))$, but

\item  $\REF(\bin)$ is short, with $||\REF(\bin)|| = \beth_2$.
\item  Therefore, $T_3$ (i.e., `countable sets of countable sets of reals') is not Borel reducible into $\Mod(\REF(\bin))$;
\item Therefore, $\REF(\bin)$ is not Borel complete.

\item  $\iso$ is not Borel.

\item  $\REF(\bin)$ is grounded.

\end{itemize}

\medskip

{\bf $\K$} is the Koerwien theory, originating in \cite{KoerwienExample} and defined in Section~\ref{OmegaStableSection}.  Koerwien proved that $\K$ is complete, $\omega$-stable,
eni-NDOP,  and of eni-depth 2.
Then:

\begin{itemize}
\item  $T_2$ (i.e., `countable sets of reals') is Borel reducible into $\Mod(\K)$, and
\item  $\K$ is short, with $||\K||=\beth_2$.
\item  Therefore, $T_3$ (i.e., `countable sets of countable sets of reals') is not Borel reducible into $\Mod(\K)$;
\item Therefore, $\Mod(\K)$ is not Borel complete;
\item  Nonetheless, $\iso$ is not Borel (this was proved by Koerwien in \cite{KoerwienExample}).

\item  Whether $\K$ is grounded or not remains open.
\end{itemize}
{\bf $\TK$} is a `tweaked version of $\K$' and is also defined in Section~\ref{OmegaStableSection}.  $\TK$ is also complete, $\omega$-stable, eni-NDOP, of $\eni$-depth 2,
and is very much like the theory $\K$, however the automorphism groups of models of $\TK$ induce a more complicated group of elementary permutations of
$\acl(\emptyset)$ than do the automorphism groups of models of $\K$.  Then:

\begin{itemize}
\item  $\TK$ is Borel complete, hence not short, hence $\cong$ is is not Borel.

\item  $\TK$ is not grounded; in fact $\CSS(\TK)_\ext$ is a proper class, while $|\CSS(\TK)_\sat|=\beth_2$.
\item Since $|\CSS(\TK)_{\sat}| = \beth_2$, $\TK$ is not $\lambda$-Borel complete for sufficiently large $\lambda$.
\end{itemize}

Many of the ideas for this arose from the first author, who read and abstracted ideas about absolutely ${\bf \Delta^1_2}$-formulas in \cite{FriedmanStanleyBC} and ideas that were discussed in
Chapter~9 of \cite{hjorthBook}.   Only recently, the authors became aware of work on `pinned equivalence relations' as surveyed in e.g., \cite{ZapletalForcingBorel}.
The whole of this paper was written independently of the development there.  
In terms of notation, $\HC$ always denotes $H(\aleph_1)$, the set of all sets whose transitive closure is countable.  For a forcing extension $\V[G]$ of $\V$,
$\HC^{\V[G]}$ denotes those sets that are hereditarily countable in $\V[G]$.  Throughout, we only consider set forcings,
so when we write `for $\V[G]$ a forcing extension of $\V\dots$' we are quantifying over all (set) forcing notions $\PP\in\V$ and all $\PP$-generic filters $G$.

Finally, we are grateful to the referee for their very careful reading and their finding simplifications to some of our arguments.

\section{A notion of cardinality for some classes of $\HC$}\label{formulaSection}

We develop a notion of cardinality on certain well-behaved subclasses  of $\HC$ in terms of the existence or non-existence of
certain injective maps.  
Behind the scenes, we rely heavily on the fact that all sets $A$ in $\b V$ are `potentially' elements of $\HC$, the set of hereditarily countable sets.
For example, if $\kappa$ is the cardinality of the transitive closure of $A$
and we take $\PP$ be the Levy collapsing poset $\textrm{Coll}(\kappa^+,\omega_1)$ that collapses $\kappa^+$ to $\omega_1$, then for any choice $G$ of a generic filter,
$A\in \HC^{\V[G]}$.

\subsection{$\HC$-forcing invariant formulas}

We begin with our principal definitions.

\begin{definition}  \label{HCdef}
Suppose $\phi(x)$ is any formula of set theory, possibly with a hidden parameter from $\HC$.  
\begin{itemize}
\item  $\phi(\HC)=\{a\in \HC:(\HC,\in)\models \phi(a)\}$.
\item  If $\V[G]$ is a forcing extension of $\b V$, then $\phi(\HC)^{\V[G]}=\{a\in \HC^{\V[G]}:\V[G]\models 
\hbox{`$a\in \phi(\HC)$'}\}$.
\item  $\phi(x)$ is
{\em $\HC$-forcing invariant}
if, for every twice-iterated forcing extension $\V[G][G']$,
\[\phi(\HC)^{\V[G][G']}\quad \cap\quad  \HC^{\V[G]}\quad = \quad \phi(\HC)^{\V[G]}\]
\end{itemize}
\end{definition}

The reader is cautioned that when computing $\phi(\HC)^{\V[G]}$, the quantifiers of $\phi$ range over $\HC^{\V[G]}$ as opposed to the whole of $\V[G]$.
Visibly, the class of $\HC$-forcing invariant formulas is closed under boolean combinations.
Note that by Shoenfield's Absoluteness Theorem, e.g., Theorem~25.20 of \cite{JechSetTheory}, any $\Sigma^1_2$ subset of $\RR$ is $\HC$-forcing invariant. There is also the closely related L\'evy Absoluteness Principle, which has various forms (e.g., Theorem~9.1 of \cite{barwise2} or Section~4 of \cite{LevyAbs}); we give a version more convenient to us.

\begin{lemma}  \label{sigma1} If $\V[G]$ is any forcing extension, and if $\phi(x)$ is a $\Sigma_1$ formula of set theory, then for every $a\in\HC$, $\HC \models \phi(a)$
 if and only if $\HC^{\V[G]} \models \phi(a)$. In particular,  $\Sigma_1$-formulas are $\HC$-forcing invariant.
 \end{lemma}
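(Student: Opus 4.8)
The plan is to prove both directions by reducing to the absoluteness of $\Sigma_1$-statements between $\HC$ and the full universe, and then invoking the fact that $\Sigma_1$-formulas are upward absolute and that forcing extensions are themselves models of $\ZFC$ containing $\V$'s ordinals. Write $\phi(x) \equiv \exists y\, \psi(x,y)$ with $\psi$ a $\Delta_0$-formula (absorbing bounded quantifiers is routine). The key observation is the standard one: for $a \in \HC$, $\HC \models \phi(a)$ if and only if $\V \models \phi(a)$. The forward implication here is immediate since $\HC \subseteq \V$ and $\Delta_0$-formulas are absolute between transitive classes; the reverse uses that if $\V \models \exists y\,\psi(a,y)$, then by the downward L\"owenheim--Skolem/reflection argument one may take a countable elementary submodel containing $a$ and a witness, transitively collapse it, and obtain a witness inside $\HC$ — here one uses that $a \in \HC$ is fixed by the collapse. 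The same equivalence holds verbatim inside $\V[G]$, relating $\HC^{\V[G]}$ and $\V[G]$.

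With that reduction in hand, the proof is: $\HC \models \phi(a)$ iff $\V \models \phi(a)$ iff $\V[G] \models \phi(a)$ iff $\HC^{\V[G]} \models \phi(a)$, where the middle equivalence is the upward absoluteness of $\Sigma_1$-formulas from $\V$ to its forcing extension $\V[G]$ (a forcing extension is a model of $\ZFC$ end-extending $\V$ with the same ordinals, so an existential witness in $\V$ remains one in $\V[G]$, and conversely a $\Sigma_1$-fact forced to hold, being true in $\V[G]$, reflects down — more carefully, $\V[G] \models \phi(a)$ with $a \in \V$ means some $p$ forces $\psi(\check a, \dot y)$ for some name $\dot y$; but we actually only need that $\Sigma_1$-truth is preserved \emph{and reflected}, which follows because $\HC$-absoluteness of $\Sigma_1$ statements gives us $\V \models \phi(a) \Leftrightarrow \HC \models \phi(a)$ on both sides and $\phi$ is $\Sigma_1$). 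The cleanest packaging: apply the $\HC$-versus-$\V$ equivalence in $\V$, then in $\V[G]$, and connect the two $\V$-level statements by noting $\Sigma_1$-formulas with parameters in $\V$ are absolute between $\V$ and $\V[G]$ — both directions, the nontrivial one being that truth in $\V[G]$ descends, which holds for $\Sigma_1$ because any witness can be captured in a sufficiently large $H(\theta)^{\V[G]}$, but more simply one observes both sides equal the truth value in $\HC^{\V[G]}$ computed two ways... I will instead argue: $\HC \models \phi(a) \Rightarrow \V \models \phi(a) \Rightarrow \V[G] \models \phi(a) \Rightarrow \HC^{\V[G]} \models \phi(a)$, and symmetrically $\HC^{\V[G]} \models \phi(a) \Rightarrow \V[G] \models \phi(a)$, and then to close the loop use that $\V[G] \models \phi(a)$ for $a \in \HC$ implies $\V \models \phi(a)$: this last step is where one needs that a witness $y \in \V[G]$ to $\psi(a,y)$ can be replaced by one in $\V$, which is \emph{false} in general for arbitrary $y$, but true at the level of $\Sigma_1$-statements over $H(\aleph_1)$ precisely because the statement $\exists y\,\psi(a,y)$ with $a$ hereditarily countable can be witnessed by a hereditarily countable $y$, and "being hereditarily countable" — hence the witness — persists from $\V[G]$ down to any inner model containing it only if the witness is already in $\V$; so the correct move is to witness in $\HC^{\V[G]}$ first.

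Accordingly the streamlined argument runs: given $a \in \HC$, suppose $\HC \models \phi(a)$; then $\HC^{\V[G]} \models \phi(a)$ because a $\Delta_0$-witness $y \in \HC$ remains in $\HC^{\V[G]}$ and $\psi$ is absolute — this gives one direction with no work. For the converse, suppose $\HC^{\V[G]} \models \phi(a)$, so there is $y \in \HC^{\V[G]}$ with $(\HC^{\V[G]}, \in) \models \psi(a,y)$; the point is that $\phi(a)$, being a $\Sigma_1$-statement true in $\V[G]$ about the hereditarily-countable parameter $a$, must already be true in $\V$, for which I would invoke the (by now standard) absoluteness of $\Sigma^1_2$ / $\Sigma_1^{\HC}$ statements between $\V$ and its forcing extensions — exactly the Shoenfield/L\'evy absoluteness already cited in the paragraph preceding the lemma — concluding $\V \models \phi(a)$, hence $\HC \models \phi(a)$ by the downward reflection-and-collapse argument above. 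The last clause, "$\Sigma_1$-formulas are $\HC$-forcing invariant," is then immediate from the definition: for a twice-iterated extension $\V[G][G']$, apply the just-proved equivalence with $\V[G]$ in place of $\V$ and $\V[G][G']$ in place of $\V[G]$ to get that membership in $\phi(\HC)$ for elements of $\HC^{\V[G]}$ is computed the same way in $\HC^{\V[G]}$ and in $\HC^{\V[G][G']}$, which is precisely the required identity $\phi(\HC)^{\V[G][G']} \cap \HC^{\V[G]} = \phi(\HC)^{\V[G]}$. The main obstacle — really the only subtle point — is justifying that a $\Sigma_1$-truth over $\HC^{\V[G]}$ reflects back to $\HC^{\V}$; I expect to handle this exactly as in Theorem~25.20 of \cite{JechSetTheory} (Shoenfield) together with the collapsing argument, since $\Sigma_1$ over $H(\aleph_1)$ corresponds to $\Sigma^1_2$ over the reals coding the relevant hereditarily countable sets.
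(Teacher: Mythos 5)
Your argument is correct, and it is essentially the route that the paper's proof sanctions in its opening sentence but does not write out: translate the $\Sigma_1$-over-$\HC$ statement into a $\Sigma^1_2$ statement about a real coding the hereditarily countable parameter $a$ (the existence of a well-founded, extensional $E\subseteq\omega\times\omega$ coding a transitive set that contains $a$ together with a witness for the $\Delta_0$ matrix), then invoke Shoenfield absoluteness between $\V$ and $\V[G]$. The paper's displayed proof instead unpacks that appeal: working in $\V$ it builds an $\omega$-tree $T$ of finite approximations (formula fragments paired with partial rank functions into $(\omega_1)^{\V[G]}$) whose ill-foundedness is equivalent to the existence of such an $E$, and then uses only that well-foundedness of a tree is $\Delta_1$, hence absolute, to pull the witness down into $\HC^{\V}$ --- essentially the proof of Shoenfield specialized to this situation, with the benefit of being self-contained and of isolating exactly which absoluteness is used. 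Your version is shorter at the cost of black-boxing the cited theorem. Two small points: the detour through ``$\V\models\phi(a)$'' followed by reflection-and-collapse is unnecessary, since the Shoenfield translation already lands you in $\HC^{\V}$ (the $\Sigma^1_2$ statement asserts the existence of a \emph{countable} well-founded code); and the several abandoned intermediate attempts (e.g.\ worrying about whether $\Sigma_1$ truth in $\V[G]$ descends to $\V$) should be cut --- the final packaged argument, with the easy direction by upward persistence of an $\HC$-witness and the hard direction by Shoenfield applied to a real coding $a$, is the sound one, and the derivation of $\HC$-forcing invariance by applying the equivalence over the ground model $\V[G]$ is exactly right.
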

 
 \begin{proof}
This can be proved using Shoenfield's Absoluteness Theorem, as in Exercise~25.4 of \cite{JechSetTheory} (which readily relativizes to allow a parameter from $\HC$). 
More directly, let  $\phi(x)$ be $\exists y \psi(y, x)$,  where $\psi(y, x)$ is $\Delta_0$.   Let $\V[G]$ be any forcing extension of $\V$ and assume $\HC^{\V[G]}\models\phi(a)$
for some $a\in \HC^{\V}$.

In $\V[G]$, there is a countable, transitive set $M$ containing $a$ and a witness $b$ to $\phi(a)$.  Choose a bijection $j:M\rightarrow\omega$ with $j(b)=1$ and $j(a)=0$.
The image of $\in$ restricted to  $M$ is a well-founded, extensional relation $E$ such that $(\omega,E)\models\psi(1,0)$.

Let $\kappa = (\omega_1)^{\V[G]}$.  Working in $\V$, there is  an $\omega$-tree $T$  whose ill-foundedness witnesses the existence of $E$ as above. Without going into all the details, $T$ consists of certain pairs ($\sigma(x_0, \ldots, x_{n-1}), r)$, where $\sigma(x_0, \ldots, x_{n-1})$ is a formula of set theory in the listed variables, $r: n \to \kappa$
is a function, and we arrange that when $\<(\sigma_n, r_n):n\in\omega\>$  is a branch through $T$, then  $p:= \bigcup_n \sigma_n$ is a complete type in the variables 
$(x_n:n\in\omega)$, such that the induced model $(\omega, E')$ is extensional, and $(\omega, E') \models \psi(1, 0)$, and $0$ codes $a$, and $r := \bigcup_n r_n: \omega \to \kappa$ is a rank function witnessing that $(\omega, E')$ is well-founded.

As $(\omega,E)$ is well-founded in $\V[G]$,  $T$ is ill-founded in $\V[G]$.  As well-foundedness is $\Delta_1$, this implies $T$ is ill-founded in $\V$ as well.
Thus, $\psi(x,a)$ has a witness in $\HC^\V$.
\end{proof}

For more complicated formulas, whether or not
$\phi(x)$ is $\HC$-forcing invariant or not may well depend on the choice of set-theoretic universe.  For example, consider the formula $\phi(x):= (x = \emptyset) \vee (\V\neq \LL)$. Then $\phi(\HC)$ is equal to $\{\emptyset\}$ if $\HC \subseteq \mathbb{L}$, and $\phi(\HC) = \HC$ otherwise. Because `$\HC\not\subseteq \mathbb{L}$' holding in $\V$
implies that it holds in any forcing extension $\V[H]$,  it follows that the formula  $\phi(x)$ is $\HC$-forcing invariant if and only if $\HC \not \subseteq \mathbb{L}$. 


Before continuing, we state three set-theoretic lemmas that form  the lynchpin of our development. Lemma~\ref{product1} is a simple consequence of our definitions.
Lemma~\ref{product2} is  well-known.   It is mentioned  in the proof of Theorem 9.4 in \cite{hjorthBook}; a full proof is given in the more recent \cite{kaplan}. Lemma~\ref{product3} is just a rephrasing of Lemma~\ref{product2}. The key tool for all of these lemmas is the Product Forcing Lemma, see e.g., Lemma~15.9 of \cite{JechSetTheory}, 
which states that given any $\PP_1\times\PP_2$-generic filter $G$,
if $G_\ell$ is the projection of $G$ onto $\PP_\ell$, then $G=G_1\times G_2$, each $G_\ell$ is $\PP_\ell$-generic, and
$\V[G]=\V[G_1][G_2]=\V[G_2][G_1]$ (i.e., $G_\ell$ meets every dense subset of $\PP_{\ell}$ in $\V[G_{3-\ell}]$).

\begin{lemma}  \label{product1}
 Suppose $\PP_1,\PP_2\in \b V$  are notions of forcing and $\phi(x)$ is $\HC$-forcing invariant (possibly with a hidden parameter from $\HC$).
If $A\in V$ and, for $\ell=1,2$, $H_\ell$ is $\PP_\ell$-generic and $V[H_\ell]\models A\in \HC$, then
$\V[H_1]\models A\in\phi(\HC)$ if and only if $\V[H_2]\models A\in \phi(\HC)$.  (The filters $H_1$ and $H_2$ are not assumed to be mutually generic.)
\end{lemma}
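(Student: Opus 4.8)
The plan is to argue by contradiction, the decisive move being \emph{not} to try to place $H_1$ and $H_2$ inside one common forcing extension (in general there is none, since they need not be mutually generic), but instead to replace $H_2$ by a fresh $\PP_2$-generic filter taken over $\V[H_1]$ which still records the relevant first-order fact about $A$. So suppose the conclusion fails; after possibly swapping the two indices we may assume $A\in\phi(\HC)^{\V[H_1]}$ while $A\notin\phi(\HC)^{\V[H_2]}$. Since $\V[H_2]\models A\in\HC$, unwinding Definition~\ref{HCdef} shows that $\V[H_2]$ satisfies the sentence ``$A\in\HC$ and $(\HC,\in)\models\neg\phi(A)$'' (with parameters $A$ and the hidden parameter of $\phi$). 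By the forcing theorem some $p\in H_2$ forces this sentence over $\PP_2$; equivalently $p\forces_{\PP_2}$ ``$A\in\HC$'' and $p\forces_{\PP_2}$ ``$A\notin\phi(\HC)$''.

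Next I would force with $\PP_2\restriction p$ over $\V[H_1]$ and pass to the $\PP_2$-generic filter $H_2'$ that it generates. A routine check about restricted posets shows that $H_2'$ is $\PP_2$-generic over $\V[H_1]$ and contains $p$; and, being generic over $\V[H_1]\supseteq\V$, it is in particular $\PP_2$-generic over $\V$. Now the Product Forcing Lemma applies to $H_1$ and $H_2'$: we get $\V[H_1][H_2']=\V[H_2'][H_1]$ with $H_1$ being $\PP_1$-generic over $\V[H_2']$, so this \emph{single} model is a twice-iterated forcing extension of $\V$ in two ways, as ``$\PP_1$ then $\PP_2$'' and as ``$\PP_2$ then $\PP_1$''. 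Since $p\in H_2'$, we have $A\in\HC^{\V[H_2']}$ and $A\notin\phi(\HC)^{\V[H_2']}$, whereas by hypothesis $A\in\HC^{\V[H_1]}$ and $A\in\phi(\HC)^{\V[H_1]}$.

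The contradiction now follows by invoking $\HC$-forcing invariance twice on this one model. Reading it as $\V[H_1][H_2']$ and using $A\in\HC^{\V[H_1]}$, the defining equation of Definition~\ref{HCdef} yields $A\in\phi(\HC)^{\V[H_1]}\iff A\in\phi(\HC)^{\V[H_1][H_2']}$, hence $A\in\phi(\HC)^{\V[H_1][H_2']}$. Reading the same model as $\V[H_2'][H_1]$ and using $A\in\HC^{\V[H_2']}$, the same equation yields $A\in\phi(\HC)^{\V[H_2']}\iff A\in\phi(\HC)^{\V[H_2'][H_1]}$; the right-hand side just held, so $A\in\phi(\HC)^{\V[H_2']}$, contradicting the previous paragraph. (No separate treatment of the other direction is needed, since we reduced to this case at the start.)

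The step I expect to require the most care is the substitution itself: one must make sure that forcing below $p$ really does produce a $\PP_2$-generic filter over $\V[H_1]$ that contains $p$, so that $\V[H_2']$ inherits \emph{both} ``$A\in\HC$'' and ``$A\notin\phi(\HC)$'' from $p$ even though $\V[H_2']\neq\V[H_2]$. Once that bookkeeping is settled, everything else is a direct application of the Product Forcing Lemma together with the \emph{definition} of $\HC$-forcing invariance --- no appeal to Shoenfield absoluteness or to L\'evy collapses is needed --- and a hidden parameter for $\phi$ causes no trouble, since it already lies in $\HC$ of every forcing extension.
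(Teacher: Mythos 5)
Your proof is correct and follows essentially the same route as the paper's: reduce to a mutually generic situation via the forcing theorem and the Product Forcing Lemma, then apply $\HC$-forcing invariance twice across the common two-step extension. The only (harmless) difference is that you keep $H_1$ and replace only $H_2$ by a fresh generic over $\V[H_1]$ below a condition $p\in H_2$, whereas the paper replaces both filters at once by the projections of a single $\PP_1\times\PP_2$-generic containing $(p_1,p_2)$.
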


\begin{proof} Assume this were not the case.  By symmetry, choose $p_1\in H_1$ such that $p_1\forces \check{A}\in\phi(\HC)$, and
choose $p_2\in H_2$ such that
$p_2\forces \check{A}\in \HC\wedge\check{A}\not\in\phi(\HC)$.  Let $G$ be a $\PP_1\times \PP_2$-generic filter with $(p_1,p_2)\in G$.  
Write $G=G_1\times G_2$, hence $\V[G]=\V[G_1][G_2]=\V[G_2][G_1]$.
As $p_1\in G_1$ and $p_2\in G_2$, we have $\V[G_1]\models  A\in\phi(\HC)$ and
$\V[G_2]\models  A \in \HC \land A\not\in\phi(\HC)$.  But applying the $\HC$-forcing invariance of $\phi$ twice, we get that $A \in \phi(\HC)^{\mathbb{V}[G_1]}$ iff $A \in \phi(\HC)^{\mathbb{V}[G_1][G_2]}$ iff $A \in \phi(\HC)^{\mathbb{V}[G_2]}$, a contradiction.
\end{proof}

\begin{lemma}  \label{product2}  Suppose $\PP_1$ and $\PP_2$ are both notions of forcing in $\b V$.
If $G$ is a $\PP_1\times\PP_2$-generic filter and $G=G_1\times G_2$, then
$\V=\V[G_1]\cap \V[G_2]$.
\end{lemma}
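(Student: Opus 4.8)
Proof proposal for Lemma~\ref{product2} ($\V = \V[G_1] \cap \V[G_2]$).

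The plan is to show the nontrivial inclusion $\V[G_1] \cap \V[G_2] \subseteq \V$; the reverse inclusion is immediate since $\V \subseteq \V[G_i]$ for each $i$. So suppose $a \in \V[G_1] \cap \V[G_2]$. Without loss of generality I may take $a$ to be a set of ordinals, since every set is coded by one in a way absolute between the relevant models (more precisely, it suffices to prove the statement for sets of ordinals, and then conclude for arbitrary $a$ by induction on rank, using that $\V$, $\V[G_1]$, $\V[G_2]$ all compute transitive closures and rank the same way). Fix a $\PP_1$-name $\sigma$ with $\sigma[G_1] = a$ and a $\PP_2$-name $\tau$ with $\tau[G_2] = a$.

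Now I would work inside $\V[G_2]$, where $a$ is available as an actual set. By the Product Forcing Lemma, $G_1$ is $\PP_1$-generic over $\V[G_2]$, and $\V[G] = \V[G_2][G_1]$. Since $a = \sigma[G_1] \in \V[G_2]$, there is a condition $p \in G_1$ which forces (over $\V[G_2]$, and hence we may arrange over $\V$ as well — see below) the name $\sigma$ to equal the check-name of a fixed element of $\V[G_2]$. The key step is then a genericity/homogeneity-free argument: I claim $a$ is determined in $\V$ by the set of conditions in $\PP_1$ that decide membership of each ordinal in $\sigma$. Concretely, for each ordinal $\xi$, whether $\xi \in a$ is decided by $G_1$ via $\sigma$; I want to argue this decision does not actually depend on $G_1$. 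The cleanest route: since $a \in \V[G_2]$ and $G_1$ is generic over $\V[G_2]$, the set $a$ lies in $\V[G_2]$ and is named by $\sigma \in \V$; consider the $\PP_1$-name $\rho = \{(\check\xi, p) : p \in \PP_1,\ p \Vdash_{\PP_1} \check\xi \in \sigma\}$ computed in $\V$. For any ordinal $\xi$, by genericity of $G_1$ over $\V$ some condition in $G_1$ decides ``$\check\xi \in \sigma$'', so $\rho[G_1] = a$ and moreover $\rho$ is in $\V$. But this alone does not put $a$ in $\V$, so the real content is to use the \emph{second} generic: since $G_1$ is generic over $\V[G_2]$ and $a\in\V[G_2]$, for each $\xi$ the statement ``$\xi\in a$'' is already decided in $\V[G_2]$, hence there is $p_\xi\in G_1$ with $p_\xi\Vdash_{\PP_1}\check\xi\in\sigma$ or $p_\xi\Vdash_{\PP_1}\check\xi\notin\sigma$ according to whether $\xi\in a$; and crucially, by mutual genericity (the Product Forcing Lemma again), $p_\xi$ can be taken in $\PP_1 \subseteq \V$ with the forcing relation $\Vdash_{\PP_1}$ computed in $\V$. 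Thus $\xi \in a \iff \exists p \in G_1\, (p \Vdash^{\V}_{\PP_1} \check\xi \in \sigma)$, and symmetrically $\xi\notin a \iff \exists p\in G_1\,(p\Vdash^\V_{\PP_1}\check\xi\notin\sigma)$. Combined with the symmetric analysis using $\tau$ and $G_2$, one gets: $\xi \in a$ iff it is \emph{not} the case that some $q \in G_2$ forces (via $\tau$, over $\V$) $\check\xi\notin\tau$ — and running the two descriptions against each other, the value of $a$ is pinned down by data in $\V$ alone. I would finish by the standard observation that if $a\in\V[G_1]\cap\V[G_2]$ then the name $\sigma$ can be replaced by one of the form ``antichain-indexed by decisions'', and the set $\{\xi : \text{the weakest such decision is ``in''}\}$ — which is the intersection over both factors — is in $\V$.

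The main obstacle is making the above genuinely work rather than hand-waving: the correct and standard argument is that because $G_1$ and $G_2$ are \emph{mutually} generic, $G_1$ is generic over $\V[G_2]$, so $a = \sigma[G_1] \in \V[G_2]$ forces a condition $p \in G_1$ with $p \Vdash_{\PP_1} \sigma = \check{a}$ \emph{over} $\V[G_2]$; but ``$\sigma = \check a$'' is not expressible over $\V$ since $a \notin \V$. One circumvents this by noting the set $D = \{ p \in \PP_1 : \exists b \in \V,\ p \Vdash_{\PP_1} \sigma = \check b\}$ is definable in $\V$, is met by $G_1$ (witnessed by $a \in \V[G_2]$ together with genericity of $G_1$ over $\V[G_2]$ — this is where mutual genericity is essential), and any $p \in D \cap G_1$ supplies $b \in \V$ with $b = \sigma[G_1] = a$, so $a \in \V$. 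Thus the real work is precisely the claim \emph{$G_1 \cap D \neq \emptyset$}, which follows because in $\V[G_2]$ the set $a$ exists, so the $\V[G_2]$-dense set $\{p : p \Vdash_{\PP_1}^{\V[G_2]} \sigma \text{ is decided}\}$ witnesses, via the Product Forcing Lemma, a condition in $G_1$ forcing $\sigma$ equal to a specific element — and that element, being $\sigma$ evaluated by a $\V$-generic-below-$p$ filter and lying in $\V[G_2]$, can be shown to be in $\V$ by one more application of the same principle with the roles of $\PP_1, \PP_2$ swapped. I expect this ``ping-pong'' between the two factors to be the delicate point, and I would present it carefully, citing the Product Forcing Lemma (Lemma~15.9 of \cite{JechSetTheory}) at each step where mutual genericity is invoked.
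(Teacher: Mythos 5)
First, a remark on context: the paper does not actually prove this lemma --- it simply cites Corollary~2.3 of the Kaplan reference --- so your attempt is being measured against the standard argument rather than against anything in the text. Your proposal assembles the right ingredients (reduction to sets of ordinals, names $\sigma$ and $\tau$ on both sides, mutual genericity via the Product Forcing Lemma), but it never closes. The decisive gap is the circularity in your final ``ping-pong.'' You define $D=\{p\in\PP_1:\exists b\in\V,\ p\forces_{\PP_1}\sigma=\check b\}$ and claim $G_1$ meets it, but your justification is: the condition extracted from genericity of $G_1$ over $\V[G_2]$ forces $\sigma=\check a$ with $a\in\V[G_2]$, and then $a\in\V$ follows ``by one more application of the same principle with the roles of $\PP_1,\PP_2$ swapped'' --- which is exactly the statement being proved. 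Nothing in the argument shows $D$ is (pre)dense below a condition of $G_1$; establishing that is essentially equivalent to the lemma. Likewise, the earlier claim that ``running the two descriptions against each other'' pins $a$ down in $\V$ is never substantiated: both descriptions you write down quantify over $G_1$ or over $G_2$, so neither defines $a$ over $\V$, and no mechanism for combining them is given. The remark about ``the weakest such decision'' is not a well-defined construction.

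The missing move is to work with a single condition in the \emph{product} poset rather than alternating between the two one-step extensions. Since $\sigma[G_1]=a=\tau[G_2]$, there is $(p_1,p_2)\in G$ forcing, over $\V$ in $\PP_1\times\PP_2$, that the evaluation of $\sigma$ by the first-coordinate generic equals the evaluation of $\tau$ by the second-coordinate generic. Set $b=\{\xi:p_1\forces_{\PP_1}\check\xi\in\sigma\}$, a set in $\V$. If $\xi\in b$ then $p_1\in G_1$ gives $\xi\in\sigma[G_1]=a$. Conversely, if $\xi\in a$ but $\xi\notin b$, choose $p_1'\le p_1$ with $p_1'\forces_{\PP_1}\check\xi\notin\sigma$, and (since $\xi\in\tau[G_2]$) choose $p_2'\in G_2$ with $p_2'\le p_2$ and $p_2'\forces_{\PP_2}\check\xi\in\tau$; then $(p_1',p_2')\le(p_1,p_2)$ forces $\check\xi$ out of the first evaluation and into the second, contradicting the choice of $(p_1,p_2)$. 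Hence $a=b\in\V$. This contradiction against a fixed product condition is the step your proposal lacks, and it is what replaces the circular appeal to the lemma itself.
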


\begin{proof}   This is  Corollary~2.3 of \cite{kaplan}.
\end{proof}

\begin{lemma}  \label{product3}
Let $\theta(x)$ be any formula of set theory, possibly with hidden parameters from $\b V$, and let $\V[G]$ be any forcing extension of $\b V$.
Suppose that there is some $b\in \V[G]$ such that for every forcing extension $\V[G][H]$ of $\V[G]$,
\[\V[G][H]\models \theta(b)\wedge \exists^{=1}x\theta(x)\]
Then $b\in \b V$.
\end{lemma}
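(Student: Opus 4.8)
The plan is to use Lemma~\ref{product2} together with a product-forcing argument, mirroring the proof of Lemma~\ref{product1}. Suppose $\V[G]$ is obtained by forcing with some $\PP \in \V$, and suppose $b \in \V[G]$ witnesses the hypothesis: in every further forcing extension $\V[G][H]$, the formula $\theta$ has $b$ as its unique witness. By Lemma~\ref{product2}, it suffices to exhibit a $\PP \times \PP$-generic filter $G^* = G_1 \times G_2$ such that $b \in \V[G_1] \cap \V[G_2]$; then $b \in \V$ as desired.

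First I would fix a $\PP$-name $\dot b$ with $\dot b[G] = b$ and a condition $p \in G$ forcing (over $\V$) that "for every forcing extension, $\theta(\dot b) \wedge \exists^{=1}x\,\theta(x)$ holds." Now let $G^* = G_1 \times G_2$ be $\PP \times \PP$-generic over $\V$ with $(p,p) \in G^*$. By the Product Forcing Lemma, $\V[G^*] = \V[G_1][G_2] = \V[G_2][G_1]$, each $G_i$ is $\PP$-generic over $\V$, and $\V[G^*]$ is a forcing extension of each $\V[G_i]$. Set $b_1 = \dot b[G_1] \in \V[G_1]$ and $b_2 = \dot b[G_2] \in \V[G_2]$. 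Since $p \in G_1$, working in $\V[G_1]$ we know $\theta(b_1) \wedge \exists^{=1}x\,\theta(x)$ holds in every forcing extension of $\V[G_1]$; in particular it holds in $\V[G^*]$, so $\V[G^*] \models \theta(b_1) \wedge \exists^{=1}x\,\theta(x)$. Symmetrically, since $p \in G_2$, we get $\V[G^*] \models \theta(b_2) \wedge \exists^{=1}x\,\theta(x)$. By the uniqueness clause applied in $\V[G^*]$, we conclude $b_1 = b_2$. Call this common value $b'$; then $b' \in \V[G_1] \cap \V[G_2] = \V$ by Lemma~\ref{product2}.

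It remains to identify $b'$ with $b$, i.e.\ to recover the original $b$ rather than merely producing \emph{some} element of $\V$ satisfying $\theta$. For this I would run the same argument with the product $\PP \times \PP$ but choosing the generic so that its first coordinate is the original $G$: more precisely, it is a standard fact that for any $\PP$-generic $G$ over $\V$ containing $p$, there is a $\PP$-generic $H$ over $\V[G]$ (equivalently, a suitable pair) so that $G \times H$ is $\PP \times \PP$-generic over $\V$. Applying the previous paragraph with $G_1 = G$, we get $b = \dot b[G] = b_1 = b_2 = b' \in \V$. Thus $b \in \V$.

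The main obstacle is the bookkeeping in the last step: ensuring that one can take a product-generic filter whose first projection is exactly the given $G$ (so that $\dot b[G_1]$ is literally $b$), which requires knowing that $\PP$ remains a forcing notion over $\V[G]$ in the appropriate sense and invoking mutual genericity of $G$ with a fresh generic over $\V[G]$. Once that is in hand, everything else is a direct application of the uniqueness hypothesis in the common extension $\V[G^*]$ together with Lemma~\ref{product2}; no absoluteness beyond the hypotheses themselves is needed, since the hypothesis already asserts $\theta$-truth is preserved into all further extensions.
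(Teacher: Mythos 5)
Your proof is correct and follows essentially the same route as the paper's: fix a name $\tau$ for $b$ and a condition $p\in G$ forcing the persistent uniqueness statement, take $H$ mutually generic over $\V[G]$ with $p\in H$ so that $G\times H$ is $\PP\times\PP$-generic, use uniqueness in $\V[G][H]$ to conclude $\val(\tau,G)=\val(\tau,H)$, and finish with Lemma~\ref{product2}. The only difference is expository: the paper takes the first coordinate to be $G$ from the outset, whereas you first run the argument for an arbitrary product-generic pair and then observe that the first coordinate can be chosen to be $G$ itself.
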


\begin{proof}  Fix $\theta(x), \V[G]$ and $b$ as above.  Let $\PP\in\b V$ be the forcing notion for which $G$ is $\PP$-generic. Let $\tau$ be a $\mathbb{P}$-name such that $b = \val(\tau, G)$.
Choose $p \in G$ such that $$p \forces \hbox{``$\mbox{for all forcing notions }\QQ, \forces_{\QQ} \theta(\check{\tau}) \land \exists^{=1} x \theta(x)$''}$$
Let $H$ be $\mathbb{P}$-generic over $\mathbb{V}[G]$ with $p \in H$. So $G \times H$ is $\mathbb{P} \times \mathbb{P}$-generic over $\mathbb{V}$. Let $i_1, i_2: \mathbb{P} \to \mathbb{P} \times \mathbb{P}$ be the canonical injections.
Then, since $(p, p) \in G \times H$, we have that 
$$\mathbb{V}[G][H] \models \theta(\val(i_1(\tau), G \times H)) \land \theta(\val(i_2(\tau)), G \times H) \land \exists^{=1} x \theta(x).$$ Hence $\mathbb{V}[G][H] \models \val(i_1(\tau), G \times H) = \val(i_2(\tau), G \times H)$ and so by Lemma~\ref{product2}, $\val(i_1(\tau), G \times H) \in \mathbb{V}$. But $\val(i_1(\tau), G \times H) = b$ so we are done.
\end{proof}


Lemma~\ref{product1} lends credence to the following definition.

\begin{definition}  Suppose that $\phi(x)$ is $\HC$-forcing invariant.  Then $\phi_\ext$ is the class of all sets $A$
such that $A\in \b V$ and, for some (equivalently, for every) forcing extension $\V[G]$ of $\b V$ with $A\in \HC^{\V[G]}$, we have $A \in \phi(\HC)^{\mathbb{V}[G]}$.
\end{definition}

As motivation for the notation used in the definition above, $\phi_\ext$ describes the class of all $A\in\V$ that are {\em potentially} in $\phi(\HC)$.
We are specifically interested in those $\HC$-forcing invariant $\phi$ for which $\phi_\ext$ is a {\bf set} as opposed to a proper class.  
%

\begin{definition}  An $\HC$-forcing invariant formula $\phi(x)$ is {\em short} if $\phi_\ext$ is a set.
\end{definition}

We begin with some easy observations.  As notation, if $C$ is a  subclass of $\b V$, then define $\P(C)$ to be all sets $A$ in $\b V$ such that every element of $A$ is in $C$.
(This definition is only novel when $C$ is a proper class.)
Similarly, $P_{\aleph_1}(C)$ is the class of all sets $A\in\P(C)$    that are countable (in $\b V$!).
Let $\delta(x)$ be the formula \[\delta(x):=\exists h[h:x\rightarrow \omega \ \hbox{is 1-1]}\]
Given a formula $\phi(x)$, let $\P(\phi)(y)$ denote the formula $\forall x(x\in y\rightarrow \phi(x))$
and let $\P_{\aleph_1}(\phi)$ denote $\P(\phi)(y)\wedge\delta(y)$.

\begin{lemma}  \label{easyex}
\begin{enumerate}
\item  The $\ext$-operator commutes with boolean combinations, i.e., if $\phi$ and $\psi$ are both $\HC$-forcing invariant, then $(\phi\wedge\psi)_\ext=\phi_\ext\cap\psi_\ext$
and $(\neg\phi)_\ext=\b V\setminus \phi_\ext$.
\item  $\delta(\HC) = \HC$. In particular $\delta(x)$ is $\HC$-forcing invariant and $\delta_{\ptl} = \mathbb{V}$.
\item  If $\phi$ is $\HC$-forcing invariant, then so are both $\P(\phi)$ and $\P_{\aleph_1}(\phi)$.  Moreover, $\mathcal{P}(\phi)(\HC) = \mathcal{P}_{\aleph_1}(\phi)(\HC)$ and
$\P(\phi_\ext)=(\P(\phi))_\ext=(\P_{\aleph_1}(\phi))_\ext$.
\item  Suppose $s:\omega\rightarrow \HC$ is any map such that for each $n$, $\phi(x,s(n))$ is $\HC$-forcing invariant.  (Recall that $\HC$-forcing invariant formulas are permitted to have a parameter from $\HC$.)  Then $\psi(x):=\exists n (n\in\omega\wedge\phi(x,s(n)))$ is $\HC$-forcing invariant and $\psi_\ext=\bigcup_{n\in\omega}\phi(x,s(n))_\ext$.
\end{enumerate}
\end{lemma}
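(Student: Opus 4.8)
\textbf{Proof plan for Lemma~\ref{easyex}.}

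The plan is to verify the four assertions in order, each reducing to a short application of the Product Forcing Lemma (for $\HC$-forcing invariance) together with Lemma~\ref{product1} (for the behavior of the $\ext$-operator). For part (1), invariance of $\phi \wedge \psi$ and $\neg \phi$ is immediate since the class of $\HC$-forcing invariant formulas is closed under boolean combinations (noted just after Definition~\ref{HCdef}). The identities $(\phi \wedge \psi)_\ext = \phi_\ext \cap \psi_\ext$ and $(\neg\phi)_\ext = \V \setminus \phi_\ext$ then follow by unwinding the definition of $\phi_\ext$: fix $A \in \V$, pass to any single forcing extension $\V[G]$ with $A \in \HC^{\V[G]}$ (a Levy collapse works), and observe that membership in $(\phi\wedge\psi)(\HC)^{\V[G]}$ is the conjunction of membership in $\phi(\HC)^{\V[G]}$ and $\psi(\HC)^{\V[G]}$, and similarly for negation; the ``some/every'' equivalence built into the definition of $\phi_\ext$ makes this independent of the choice of $\V[G]$.

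For part (2): in any model of ZFC every hereditarily countable set is countable, so $\delta(\HC) = \HC$ outright; this is absolute, hence $\delta$ is trivially $\HC$-forcing invariant, and since every $A \in \V$ lies in $\HC^{\V[G]}$ for a suitable collapse $\V[G]$ and there satisfies $\delta$, we get $\delta_\ptl = \V$. For part (3): to see $\P(\phi)$ is $\HC$-forcing invariant, note that for $A \in \HC^{\V[G]}$ the statement $\forall x(x \in A \to \phi(x))$ ranges over the (absolutely computed) finitely- or countably-many elements of $A$, each of which is again hereditarily countable, so $\HC$-forcing invariance of $\phi$ applied elementwise gives invariance of $\P(\phi)$; conjoining with $\delta$ (invariant by (2)) gives invariance of $\P_{\aleph_1}(\phi)$. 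The equality $\P(\phi)(\HC) = \P_{\aleph_1}(\phi)(\HC)$ is just the observation that every element of $\HC$ is countable. For the $\ext$-identity, take $A \in \V$; choosing a collapse that makes $A$ \emph{and all its elements} simultaneously hereditarily countable, one checks $A \in (\P(\phi))_\ext$ iff every element of $A$ satisfies $\phi$ in that extension iff every element of $A$ lies in $\phi_\ext$ (using Lemma~\ref{product1} to transfer each element's status between the possibly-different extensions that witness membership in $\phi_\ext$), i.e.\ iff $A \in \P(\phi_\ext)$; and since such an $A$ is countable in $\V$ it automatically satisfies $\delta$, so $(\P(\phi))_\ext = (\P_{\aleph_1}(\phi))_\ext$ as well.

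For part (4): $\psi(x) := \exists n(n \in \omega \wedge \phi(x, s(n)))$ has the single parameter $s \in \HC$ (as $s: \omega \to \HC$ with countable range, $s$ is itself hereditarily countable), and the quantifier $\exists n \in \omega$ is over an absolute set, so $\HC$-forcing invariance of each $\phi(\cdot, s(n))$ passes to $\psi$: in a twice-iterated extension $\V[G][G']$, an $a \in \HC^{\V[G]}$ satisfies $\psi$ iff it satisfies $\phi(\cdot, s(n))$ for some $n$, and by invariance of each $\phi(\cdot, s(n))$ this is equivalent to the corresponding statement in $\V[G]$. The $\ext$-identity $\psi_\ext = \bigcup_{n} \phi(x, s(n))_\ext$ then follows by the same Lemma~\ref{product1} argument: $A \in \psi_\ext$ iff in \emph{some} extension making $A$ hereditarily countable there is an $n$ with $A \in \phi(\HC, s(n))^{\V[G]}$, and for that fixed $n$ this is exactly $A \in \phi(x,s(n))_\ext$.

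I do not expect a genuine obstacle here; the content is entirely bookkeeping around the definitions. The one point that requires mild care is part (3): the two forcing extensions witnessing $A \in (\P(\phi))_\ext$ and witnessing $b \in \phi_\ext$ for an element $b \in A$ need not be the same, so one must invoke Lemma~\ref{product1} (which compares the status of a fixed set across two not-necessarily-mutually-generic extensions) rather than naively reading everything off a single extension. Beyond that, everything is a direct unwinding of Definition~\ref{HCdef} and the definition of $\phi_\ext$.
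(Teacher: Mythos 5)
Your proposal is correct and follows essentially the same route as the paper: parts (1), (2), and (4) are direct unwindings of the definitions, and for part (3) the key observation is that any extension making $A$ hereditarily countable automatically does the same for every element of $A$ (transitivity of $\HC^{\V[G]}$), so that the whole equivalence can be read off in a single extension, with Lemma~\ref{product1} supplying the ``some/every'' independence already built into the definition of $\phi_\ext$. The only cosmetic difference is that you invoke Lemma~\ref{product1} explicitly where the paper has absorbed it into the well-definedness of $\phi_\ext$.
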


\begin{proof}  The verification of (1) and (2) is immediate, simply by unpacking definitions.

(3)  That $\P(\phi)(y)$ and $\P_{\aleph_1}(\phi)(y)$ are $\HC$-forcing invariant is routine.  Since $\delta(\HC) = \HC$ we have $\mathcal{P}(\phi)(\HC) = \mathcal{P}_{\aleph_1}(\phi)(\HC)$ and so $(\P(\phi))_\ext=(\P_{\aleph_1}(\phi))_\ext$. 

The only other thing to check is that $\mathcal{P}(\phi_{\ptl}) = (\mathcal{P}(\phi))_{\ptl}$. Begin by choosing any $A\in\P(\phi)_\ext$.  We must show that every element $a\in A$ is in $\phi_\ext$.  
Fix any element $a\in A$.  
Choose any forcing extension $\V[G]$ of $\b V$ with $A\in \HC^{\V[G]}$.  Then
\[\HC^{\V[G]}\models\forall x (x\in A\rightarrow \phi(x))\]
Since $A\in \HC^{\V[G]}$, $a\in \HC^{\V[G]}$ as well.  Thus, $\HC^{\V[G]}\models\phi(a)$, so $a\in\phi_\ext$.

Conversely, suppose $A\in\P(\phi_\ext)$.  This means that $A\in\b V$ and every element of $A$ is in $\phi_\ext$.
Choose any forcing extension $\V[G]$ of $\b V$ such that $A\in \HC^{\V[G]}$.  As $\HC^{\V[G]}$ is transitive, every element $a\in A$ is also
an element of $\HC^{\V[G]}$.  Thus, $\HC^{\V[G]}\models\phi(a)$ for every $a\in A$.  That is, $\HC^{\V[G]}\models\forall x(x\in A\rightarrow \phi(x))$, so $A\in(\P(\phi))_\ext$.

(4)  Note that $s \in \HC$ so can be used as a hidden parameter for $\psi(x)$. Choose $A\in  \HC$.  It is immediate from the definition of $\psi$ that $\HC\models\psi(A)$ if and only if $\HC\models\phi(A,s_n)$ for some $n\in\omega$.
As this equivalence relativizes to any $A\in \HC^{\V[G]}$, both statements follow.
\end{proof}

\subsection{Strongly definable families}

In the previous subsection, we described a restricted vocabulary of formulas.  
Here, we discuss parameterized  families of classes of $\HC^{\V[G]}$ that are describable in this vocabulary.

\begin{definition}
	Let $\phi$ be $\HC$-forcing invariant. A family $X=(X^{\b V[G]}: \b V[G] \mbox{ a forcing extension of } \b V)$ is \emph{strongly definable via $\phi$} if $X^{\b V[G]}=\phi(\HC)^{\b V[G]}$ always.  We say the family $X$ is a \emph{strongly definable family}, or just \emph{strongly definable}, if it is strongly definable via some $\HC$-forcing invariant  formula $\phi$.
\end{definition}
	
	We say that two $\HC$-forcing invariant formulas $\phi$ and $\psi$ are \emph{persistently equivalent} if $\phi(\HC)^{\b V[G]}=\psi(\HC)^{\b V[G]}$ for every forcing extension $\b V[G]$.  Persistently equivalent formulas give rise to the same strongly definable family, and if $X$ is strongly definable via  both $\phi$ and $\psi$, then $\phi$ and $\psi$ are persistently equivalent.  
    
    We note that the strongly definable families can also be defined as the HC-forcing invariant formulas modulo persistent equivalence; using this one can verify that all the results of this paper are really theorems of ZFC. 
	
\begin{definition}
	If $X$ is strongly definable, define $X_\ext$ to be the class of all sets $A\in\V$ such that $A\in X^{\V[G]}$ for some
	forcing extension $\V[G]$ of $\V$.   We call $X$ {\em short} if $X_\ext$ is a set as opposed to a proper class.
\end{definition}

Note that if $X$ is strongly definable via $\phi$, then $X_\ptl = \phi_\ptl$ and $X$ is short if and only if $\phi$ is short.



We can define operations on the collection of strongly definable families.  Of particular interest is the countable power set.  That is, in the notation preceding Lemma~\ref{easyex}, given any $\HC$-forcing invariant $\phi(x)$, the two $\HC$-invariant formulas $\P(\phi)$ and $\P_{\aleph_1}(\phi)$ are persistently equal.  Thus, if $X$ is strongly definable via $\phi$, then $\P(\phi)$ and $\P_{\aleph_1}(\phi)$ give rise to the same family, which we denote by $\P_{\aleph_1}(X)$.

We begin by enumerating several examples and easy observations that help establish our notation.

\begin{example}  \label{cases}
\begin{enumerate}
\item  $\omega$ is strongly definable via the $\HC$-forcing invariant formula $\phi_\omega(x):=$``$x$ is a natural number.''  Here,  $\omega_\ext=\omega$.
In particular, $\omega$ is short.
\item  $\omega_1$ is strongly definable via the $\HC$-forcing invariant formula ``$x$ is an ordinal.''  Here, $(\omega_1)_\ext=ON$, the class of all ordinals. Thus, $\omega_1$ is not short.\footnote{The symbol $\omega_1$, depending on context, refers to either the least uncountable ordinal $(\omega_1)^{\mathbb{V}}$, or else the parametrized family $((\omega_1)^{\mathbb{V}[G]}: \mathbb{V}[G]$ a forcing extension of $\mathbb{V})$. It is in the latter sense that it makes sense to say that `$\omega_1$ is strongly definable'. This kind of ambiguity will not create problems, since in practice, given a subset $X \subseteq HC$, there is only one natural definition of $X$. Thus there is only one natural way of viewing $X$ as a family parametrized by forcing extensions. 
}


\item  The set of reals, $\RR=\P_{\aleph_1}(\omega)$, is strongly definable via $\phi_\RR(x):=\P(\phi_\omega)$.  By Lemma~\ref{easyex}(3)
$\RR_\ext=\P(\omega) = \RR$, hence $\RR$ is short.

\item  $\mathcal{P}_{\aleph_1}(\RR)$, the set of countable sets of reals, is strongly definable either by $ \P_{\aleph_1}(\phi_\RR)$  or by $\P(\phi_\RR)$.
Via either definition, by Lemma~\ref{easyex}(3), $(\mathcal{P}_{\aleph_1}(\RR))_\ext=\P(\P(\omega))$, hence is short.
\item  More generally, if $X$ is short, then it follows from Lemma~\ref{easyex}(3) that $\P_{\aleph_1}(X)_{\ptl} = \P(X_{\ptl})$ and so $\mathcal{P}_{\aleph_1}(X)$ is short.
\item  For any $\alpha<\omega_1$, let $\HC_\alpha$ denote the sets in $\HC$ whose rank  is less than $\alpha$. Then each $\HC_\alpha$ is strongly definable (since the formula ``rank$(x) < \alpha$" is $\HC$-forcing invariant). Also, each $(\HC_\alpha)_\ext = \mathbb{V}_\alpha$, so each $\HC_\alpha$ is short.

\end{enumerate}
\end{example}




\begin{notation} 
Suppose that $X_1,\dots,X_n$ are each strongly definable families. We say {\em $\psi(X_1,\dots,X_n)$ holds persistently} if, for every forcing extension $\V[G]$, we have
\[\V[G]\models  \psi({X_1}^{\V[G]},\dots,{X_n}^{\V[G]}).\]
\end{notation}

We list three examples of this usage in the definition below.

\begin{definition}  Suppose that $f$, $X$ and $Y$ are each strongly definable families.
\begin{itemize}
\item  The notation $f:X\rightarrow Y$ {\em persistently} means that  $f^{\V[G]}: X^{\V[G]} \to Y^{\V[G]}$ for all forcing extensions $\V[G]$ of $\b V$.
\item  The notation $f:X\rightarrow Y$ {\em is persistently injective} means that $f:X\rightarrow Y$ persistently and additionally, for all forcing extensions
$\V[G]$ of $\b V$,  $f^{\V[G]}:X^{\V[G]}\rightarrow Y^{\V[G]}$ is 1-1.  
\item  The notion $f:X\rightarrow Y$ {\em is persistently bijective} means $f^{-1}:X\to Y$ is strongly definable and both $f:X\rightarrow Y$ and $f^{-1}:Y\rightarrow X$ are persistently injective.
\end{itemize}
\end{definition}
The reader is cautioned that when $f:X\rightarrow Y$ persistently (or is persistently injective), the image of $f$ need not be strongly definable.  Indeed the ``image'' of a strongly definable function is not well-behaved in many respects, including the lack of a surjectivity statement in the following proposition.

\begin{prop}  \label{mapfunction}
Suppose that $f$, $X$, and $Y$ are each strongly definable.
\begin{enumerate}
\item  Suppose $f:X\rightarrow Y$ persistently.  Then $f_\ext:X_\ext\rightarrow Y_\ext$, i.e., $f_\ext$ is a class function with domain $X_\ext$
and image contained in $Y_\ext$.
\item  If $f:X\rightarrow Y$ is persistently injective, then $f_\ext:X_\ext\rightarrow Y_\ext$ is injective as well.
\item  If $f:X\rightarrow Y$ is persistently bijective, then $f_\ext:X_\ext\rightarrow Y_\ext$ is bijective.
\end{enumerate}
\end{prop}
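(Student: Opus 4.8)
\textbf{Proof plan for Proposition~\ref{mapfunction}.}

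The plan is to prove all three parts by passing to forcing extensions in which the relevant sets become hereditarily countable, and then exploiting the $\HC$-forcing invariance of the defining formulas together with Lemma~\ref{product1}. For part (1): let $\phi$, $\psi$, $\chi$ be $\HC$-forcing invariant formulas defining $f$, $X$, $Y$ respectively, where without loss of generality we may take $\phi$ to be a formula expressing ``$x$ is an ordered pair whose first coordinate satisfies $\psi$ and whose second satisfies $\chi$, and membership in the graph of the function.'' Given $A \in X_\ext$, pick a forcing extension $\V[G]$ with $A \in \HC^{\V[G]}$ and $A \in X^{\V[G]}$; since $f^{\V[G]}: X^{\V[G]} \to Y^{\V[G]}$, there is a (unique) $B = f^{\V[G]}(A)$ with $B \in Y^{\V[G]}$, and since $\HC^{\V[G]}$ is transitive and $A \in \HC^{\V[G]}$, we get $B \in \HC^{\V[G]}$ too (the value of a strongly definable function at a hereditarily countable point is again hereditarily countable, as it is an element of the transitive set coding the relevant computation — more carefully, $B$ is definable over $\HC^{\V[G]}$ from $A$, hence lies in $\HC^{\V[G]}$). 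Thus $B \in Y_\ext$. The one thing to check is that $B$ does not depend on the choice of $\V[G]$: if $\V[H_1]$, $\V[H_2]$ are two such extensions producing values $B_1$, $B_2$, apply Lemma~\ref{product1} to the $\HC$-forcing invariant formula ``$f(A) = x$'' (equivalently, to the formula expressing that the pair $(A,x)$ lies in the graph of $f$) — or more directly, force with $\PP_1 \times \PP_2$ and compare inside $\V[G_1][G_2]$, using invariance of $\chi$ and of the graph formula to conclude $B_1 = B_2$. Hence $f_\ext(A) := B$ is well-defined, giving a class function $X_\ext \to Y_\ext$.

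For part (2): suppose $A_1, A_2 \in X_\ext$ with $f_\ext(A_1) = f_\ext(A_2) =: B$. Choose a single forcing extension $\V[G]$ with $A_1, A_2, B \in \HC^{\V[G]}$ and $A_1, A_2 \in X^{\V[G]}$ (take the product of the collapses witnessing each, or one large Levy collapse). By part (1) and its well-definedness, $f^{\V[G]}(A_1) = B = f^{\V[G]}(A_2)$, and since $f^{\V[G]}: X^{\V[G]} \to Y^{\V[G]}$ is injective, $A_1 = A_2$. For part (3): persistent bijectivity gives that $f^{-1}$ is strongly definable and $f^{-1}: Y \to X$ is persistently injective, so parts (1) and (2) apply to $f^{-1}$, yielding $(f^{-1})_\ext : Y_\ext \to X_\ext$ injective. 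It remains only to observe that $(f^{-1})_\ext$ is a two-sided inverse of $f_\ext$: for $A \in X_\ext$, pick $\V[G]$ with $A, f_\ext(A) \in \HC^{\V[G]}$; then $f^{-1,\V[G]}(f^{\V[G]}(A)) = A$ in $\V[G]$, and both sides are computed by the $\ext$-operator from the same extension, so $(f^{-1})_\ext(f_\ext(A)) = A$. Symmetrically $f_\ext((f^{-1})_\ext(B)) = B$ for $B \in Y_\ext$. Hence $f_\ext$ is a bijection.

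The main obstacle, and the only genuinely delicate point, is the well-definedness argument in part (1): one must be sure that the value $f^{\V[G]}(A)$ is the same set across all forcing extensions $\V[G]$ in which $A$ becomes hereditarily countable, and that this set lies in $\V$ (not merely in some generic extension) so that $f_\ext$ really is a class function on $\V$. This is exactly the content of Lemma~\ref{product3} (or, at a lower level, of the product-forcing comparison underlying Lemma~\ref{product1}): the formula $\theta(x) := $ ``$x = f(A)$'' is, after fixing $A$ as a parameter, $\HC$-forcing invariant and persistently defines a unique element, so by Lemma~\ref{product3} applied in $\V[G]$ the witness $B$ already belongs to $\V$. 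I would present the argument so as to invoke Lemma~\ref{product3} directly for this step, which handles both ``independence of $\V[G]$'' and ``$B \in \V$'' in one stroke. Everything else is unwinding definitions and using transitivity of $\HC^{\V[G]}$.
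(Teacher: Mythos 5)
Your proposal is correct and follows essentially the same route as the paper's proof: the key step in part (1) is exactly the appeal to Lemma~\ref{product3} (applied to the formula ``$y = f(A)$'' with $A$ as parameter) to get that the value lies in $\V$ independently of the forcing extension, part (2) is the same single-extension injectivity argument, and part (3) applies (1) and (2) to $f^{-1}$. The only quibble is your parenthetical justification that $B \in \HC^{\V[G]}$ because it is ``definable over $\HC^{\V[G]}$'' --- the correct (and simpler) reason is that $B \in Y^{\V[G]} = \gamma(\HC)^{\V[G]} \subseteq \HC^{\V[G]}$ by the definition of persistence.
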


\begin{proof}  (1)  It is obvious that $f_\ext$ is a class of ordered pairs and is single-valued.  As well, by definition of $f_{\ext}$, if $(a,b)\in f_\ext$, then for any forcing extension $\V[G]$ of $\b V$ with
$(a,b)\in \HC^{\V[G]}$ we have $(a,b)\in f^{\V[G]}$.  Thus $\dom(f_\ext)\subseteq X_\ext$ and $\textrm{im}(f_\ext)\subseteq Y_\ext$.
To see that $\dom(f_\ext)$ is equal to $X_\ext$ requires Lemma~\ref{product3}.  Choose any $a\in X_\ext$ and look at the formula $\theta(a,y)^{\HC}$, where $\theta(x,y)$ defines $f$,
i.e., $f$ is strongly definable via $\theta$.
Let $\V[G]$ be any forcing extension of $\b V$ in which $a\in X^{\V[G]}$ (in particular $a \in \HC^{\mathbb{V}[G]}$).  Let $b:=f^{\V[G]}(a)$.
The definition of persistence tells us that the hypotheses of Lemma~\ref{product3} apply,
hence $b\in \b V$.  Thus, $(a,b)\in f_\ext$ and $a\in\dom(f_\ext)$.


(2)  Choose $a,b,c\in \b V$ such that $(a,c),(b,c)\in f_\ext$.  Choose a forcing extension $\V[G]$ of $\b V$ such that $a,b,c\in \HC^{\V[G]}$.
Thus, $(a,c),(b,c)\in f^{\V[G]}$.  As $f$ is persistently injective, it follows that $a=b$ holds in $\V[G]$ and hence in $\b V$.  So $f_\ext$ is injective.

(3)  From (2) we get that both $f_\ext$ and $(f^{-1})_\ext$ are  injective.  It follows that $f_\ext$ is bijective.
\end{proof}

We close this subsection with a characterization of surjectivity.  Its proof is simply an unpacking of the definitions.  However, 
$f:X\rightarrow Y$ being persistently surjective need not imply that the induced map $f_\ext:X_\ext\rightarrow Y_\ext$ is surjective.


\begin{lemma}  \label{surj}
Suppose that $f$, $X$, and $Y$ are each strongly definable via the $\HC$-forcing invariant formulas $\theta(x,y)$, $\phi(x)$, and $\gamma(y)$, respectively
and that $f:X\rightarrow Y$ persistently.  Then $f:X\rightarrow Y$ is persistently surjective if and only if the formula $\rho(y):=\exists x \theta(x,y)$ is
$\HC$-forcing invariant and persistently equivalent to $\gamma(y)$.
\end{lemma}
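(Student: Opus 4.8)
The plan is to prove Lemma~\ref{surj} by directly unwinding the definitions of persistent surjectivity and of being strongly definable, showing that the two sides of the biconditional are literally paraphrases of one another once translated through the family structure.

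First I would handle the easy direction. Suppose $f:X\to Y$ is persistently surjective. We already know $\rho(y):=\exists x\,\theta(x,y)$ is a formula of set theory (with the same hidden parameter as $\theta$), so I must check it is $\HC$-forcing invariant and persistently equivalent to $\gamma$. For any forcing extension $\V[G]$, persistence of $f$ gives $f^{\V[G]}:X^{\V[G]}\to Y^{\V[G]}$, and persistent surjectivity gives that every $b\in Y^{\V[G]}=\gamma(\HC)^{\V[G]}$ has some $a\in X^{\V[G]}\subseteq\HC^{\V[G]}$ with $(a,b)\in f^{\V[G]}$, i.e.\ $\HC^{\V[G]}\models\theta(a,b)$, i.e.\ $\HC^{\V[G]}\models\rho(b)$. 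Conversely, if $\HC^{\V[G]}\models\rho(b)$ for $b\in\HC^{\V[G]}$, pick a witness $a\in\HC^{\V[G]}$ with $\HC^{\V[G]}\models\theta(a,b)$; since $\theta$ strongly defines $f$ and $f^{\V[G]}$ is a partial function from $X^{\V[G]}$, having $(a,b)\in f^{\V[G]}$ forces $a\in X^{\V[G]}$ and $b\in Y^{\V[G]}=\gamma(\HC)^{\V[G]}$. (Here I am using that being strongly definable via $\theta$ means $f^{\V[G]}=\theta(\HC)^{\V[G]}$ always, and that $f^{\V[G]}$ being a function $X^{\V[G]}\to Y^{\V[G]}$ pins down the domain and codomain at each stage.) Hence $\rho(\HC)^{\V[G]}=\gamma(\HC)^{\V[G]}$ for every $\V[G]$, which simultaneously shows $\rho$ is persistently equivalent to $\gamma$ and — since $\gamma$ is $\HC$-forcing invariant — that $\rho$ is $\HC$-forcing invariant as well (persistent equivalence to an $\HC$-forcing invariant formula transfers invariance, because the defining equation $\phi(\HC)^{\V[G][G']}\cap\HC^{\V[G]}=\phi(\HC)^{\V[G]}$ only refers to the families, which coincide).

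For the converse direction, assume $\rho$ is $\HC$-forcing invariant and persistently equivalent to $\gamma$. Fix a forcing extension $\V[G]$ and $b\in Y^{\V[G]}=\gamma(\HC)^{\V[G]}$. By persistent equivalence $\HC^{\V[G]}\models\rho(b)$, so there is $a\in\HC^{\V[G]}$ with $\HC^{\V[G]}\models\theta(a,b)$, i.e.\ $(a,b)\in f^{\V[G]}$; as above this forces $a\in X^{\V[G]}$, and so $b$ is in the image of $f^{\V[G]}$. Since $b$ was arbitrary, $f^{\V[G]}:X^{\V[G]}\to Y^{\V[G]}$ is surjective, and since $\V[G]$ was arbitrary, $f:X\to Y$ is persistently surjective.

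I do not expect a serious obstacle here; the lemma really is bookkeeping. The one point that deserves care — and the place I would be most careful in the write-up — is the step that reads off $a\in X^{\V[G]}$ and $b\in Y^{\V[G]}$ from a single instance $\HC^{\V[G]}\models\theta(a,b)$. This uses that $f$ is \emph{strongly definable via} $\theta$ \emph{together with} the standing hypothesis $f:X\to Y$ persistently, i.e.\ that at every stage $\theta(\HC)^{\V[G]}$ is exactly the graph of a total function $X^{\V[G]}\to Y^{\V[G]}$; without the persistence hypothesis the implication could fail, and I would state this explicitly rather than let it pass silently.
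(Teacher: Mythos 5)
Your proof is correct and is exactly the ``unpacking of the definitions'' that the paper alludes to; the paper in fact omits the argument entirely, stating only that the proof is routine. Your care about where the standing hypothesis $f:X\rightarrow Y$ persistently is used (to read off $a\in X^{\V[G]}$ and $b\in Y^{\V[G]}$ from a single instance of $\theta$) is exactly the right point to flag, and nothing further is needed.
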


\subsection{Potential Cardinality}

%

\begin{definition}  \label{HCcard}
Suppose $X$ and $Y$ are strongly definable.  We say that {\em $X$ is $\HC$-reducible to $Y$}, written
$X\hcleq Y$, if there is a strongly definable $f$ such that $f:X\rightarrow Y$ is persistently injective.
As notation, we write $X\hclt Y$ if $X\hcleq Y$ but $Y\not\hcleq X$.  We also write $X\hceq Y$ if $X\hcleq Y$ and $Y\hcleq X$; this is apparently weaker than $X$ and $Y$ being in persistent bijection.
\end{definition}

The following notion will be very useful for our applications, as it can often be computed directly. With this and Proposition~\ref{shorty} we can prove otherwise difficult non-embeddability results for $\leq_{HC}$.


\begin{definition}
	Suppose $X$ is strongly definable.  The \emph{potential cardinality} of $X$, denoted $\|X\|$, refers to $|X_\ext|$ if $X$ is short, or $\infty$ otherwise.  By convention we say $\kappa<\infty$ for any cardinal $\kappa$.
\end{definition}


%
%

\begin{prop}  \label{shorty} Suppose $X$ and $Y$ are both strongly definable.
\begin{enumerate}
\item  If $Y$ is short and $X\leq_\HC Y$, then $X$ is short.
\item  If $X\hcleq Y$, then  $\|X\|\leq \|Y\|$.
\item  If $X$ is short, then $X\hclt \m P_{\aleph_1}(X)$.
\end{enumerate}
\end{prop}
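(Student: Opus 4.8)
\textbf{Proof plan for Proposition~\ref{shorty}.}

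For part (1): if $X \hcleq Y$ via a persistently injective strongly definable $f$, then by Proposition~\ref{mapfunction}(2) the induced map $f_\ext : X_\ext \to Y_\ext$ is injective. If $Y$ is short, $Y_\ext$ is a set, so its subclass $\im(f_\ext)$ is a set, and since $f_\ext$ is injective with domain $X_\ext$, the class $X_\ext$ injects into a set and is therefore itself a set (using Replacement applied to $f_\ext^{-1}$). Hence $X$ is short. Part (2) is then immediate: under the same hypotheses $f_\ext : X_\ext \to Y_\ext$ is an injection of classes; if $Y$ is short both sides are sets and $|X_\ext| \le |Y_\ext|$, i.e.\ $\|X\| \le \|Y\|$; if $Y$ is not short then $\|Y\| = \infty$ and the inequality holds by the convention $\kappa < \infty$.

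For part (3), assume $X$ is short. I first need $X \hcleq \P_{\aleph_1}(X)$: define $f$ persistently by $f^{\V[G]}(a) = \{a\}$ for $a \in X^{\V[G]}$. This is strongly definable (the graph ``$y = \{x\}$'' is $\Delta_0$, hence $\HC$-forcing invariant), it maps $X$ into $\P_{\aleph_1}(X)$ persistently since a singleton from $X^{\V[G]}$ is a countable subset of $X^{\V[G]}$, and it is persistently injective. So $X \hcleq \P_{\aleph_1}(X)$. It remains to show $\P_{\aleph_1}(X) \not\hcleq X$. Since $X$ is short, by Lemma~\ref{easyex}(5) (or Example~\ref{cases}(5)) $\P_{\aleph_1}(X)$ is short with $(\P_{\aleph_1}(X))_\ext = \P(X_\ext)$. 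If we had $\P_{\aleph_1}(X) \hcleq X$, then by part (2) $\|\P_{\aleph_1}(X)\| \le \|X\|$, i.e.\ $|\P(X_\ext)| \le |X_\ext|$, contradicting Cantor's theorem (noting $X_\ext$ is a genuine set once $X$ is short, so Cantor applies). Therefore $X \hclt \P_{\aleph_1}(X)$.

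The only genuine subtlety is bookkeeping about classes versus sets: in part (1) one must be careful that ``$X_\ext$ injects into a set'' really does yield that $X_\ext$ is a set, which is where Replacement (or Separation applied inside $Y_\ext$ together with the injectivity) is invoked; and in part (3) one must observe that Cantor's theorem is being applied to the honest set $X_\ext$, which is legitimate precisely because $X$ was assumed short. No other step presents a real obstacle — the maps involved are all given by $\Delta_0$ formulas, so their $\HC$-forcing invariance is automatic from Lemma~\ref{sigma1}, and persistence of injectivity is transparent.
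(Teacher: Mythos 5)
Your proof is correct and takes essentially the same route as the paper's: parts (1) and (2) both rest on Proposition~\ref{mapfunction}(2) producing an injective class function $f_\ext:X_\ext\to Y_\ext$, and part (3) uses the singleton map $x\mapsto\{x\}$ together with $(\P_{\aleph_1}(X))_\ext=\P(X_\ext)$ and Cantor's theorem, exactly as in the paper (the only cosmetic difference being that the paper derives (1) from (2) while you prove (1) directly via Replacement). One tiny citation slip: the identity $(\P_{\aleph_1}(X))_\ext=\P(X_\ext)$ is Lemma~\ref{easyex}(3) — that lemma has no part (5) — though your alternative reference to Example~\ref{cases}(5) is fine.
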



\begin{proof}  (1) follows immediately from (2).

(2)  Choose a strongly definable $f$ such that $f:X\rightarrow Y$ is persistently injective. Then by Proposition~\ref{mapfunction}(2), $f_\ext:X_\ext\rightarrow Y_\ext$ is an injective class function.
Thus, $|X_\ext|\le|Y_\ext|$.

(3)  Note that for any strongly definable $X$,  $X\hcleq \m P_{\aleph_1}(X)$  is witnessed by the strongly definable map $x\mapsto \{x\}$.
For the other direction, suppose by way of contradiction that $X$ is short, but $\P_{\aleph_1}(X)\hcleq X$.
Also,  $(\P_{\aleph_1}(X))_\ext=\P(X_\ext)$ by Lemma~\ref{easyex}(3).
Thus, by (2), we would have that $|\P(X_\ext)|\le |X_\ext|$, which contradicts Cantor's theorem since $X_\ext$ is a set.
\end{proof}

Using the fact that $(\HC_\beta)_\ext=\b V_\beta$, the following Corollary is immediate.

\begin{cor}  If $X$ is strongly definable and $\|X\|\leq\beth_\alpha$ for some $\alpha<\omega_1$, then
$\HC_{\omega+\alpha+1}\not\hcleq X$.
\end{cor}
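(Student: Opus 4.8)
The goal is to show that if $\|X\| \leq \beth_\alpha$ for some $\alpha < \omega_1$, then $\HC_{\omega+\alpha+1} \not\hcleq X$. The plan is to combine the two facts that are flagged immediately before the corollary: Proposition~\ref{shorty}(2), which says $\HC$-reducibility respects potential cardinality, and the identity $(\HC_\beta)_\ext = \b V_\beta$ from Example~\ref{cases}(6).

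\medskip

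First I would argue by contradiction: suppose $\HC_{\omega+\alpha+1}\hcleq X$. Setting $\beta = \omega+\alpha+1$, which is a countable ordinal since $\alpha < \omega_1$, we know $\HC_\beta$ is strongly definable and short by Example~\ref{cases}(6). By Proposition~\ref{shorty}(2) applied to $\HC_\beta \hcleq X$, we get $\|\HC_\beta\| \leq \|X\| \leq \beth_\alpha$. But $\|\HC_\beta\| = |(\HC_\beta)_\ext| = |\b V_\beta|$, and so it suffices to observe that $|\b V_{\omega+\alpha+1}| > \beth_\alpha$. Indeed, $|\b V_\omega| = \aleph_0 = \beth_0$, and by the standard recursion $|\b V_{\omega+\gamma}| = \beth_\gamma$ for all ordinals $\gamma$ (using $|\b V_{\delta+1}| = 2^{|\b V_\delta|}$ at successors and taking suprema at limits). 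Hence $|\b V_{\omega+\alpha+1}| = \beth_{\alpha+1} = 2^{\beth_\alpha} > \beth_\alpha$ by Cantor's theorem, giving the required contradiction.

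\medskip

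There is essentially no obstacle here beyond bookkeeping with the beth function; the only mild subtlety is confirming the indexing $|\b V_{\omega+\gamma}| = \beth_\gamma$, which is a routine transfinite induction, and noting that $\omega + \alpha + 1 < \omega_1$ so that $\HC_{\omega+\alpha+1}$ genuinely falls under Example~\ref{cases}(6). All the conceptual work — that strong definability of $\HC_\beta$ is preserved, that $\ext$ computes to $\b V_\beta$, and that $\hcleq$ is monotone in $\|\cdot\|$ — has already been done in the preceding results, so the corollary is indeed immediate modulo this cardinal arithmetic.
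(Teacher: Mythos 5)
Your proof is correct and is exactly the argument the paper intends: the corollary is stated as "immediate" from $(\HC_\beta)_\ext = \b V_\beta$ together with Proposition~\ref{shorty}(2), and your contradiction via $\|\HC_{\omega+\alpha+1}\| = |\b V_{\omega+\alpha+1}| = \beth_{\alpha+1} > \beth_\alpha$ is precisely that. The supporting cardinal arithmetic $|\b V_{\omega+\gamma}| = \beth_\gamma$ and the observation that $\omega+\alpha+1 < \omega_1$ are handled correctly.
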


\subsection{Quotients}

We begin with the obvious definition.  

\begin{definition}  A pair $(X,E)$ is a {\em strongly definable quotient} if both $X$ and $E$ are strongly definable
and persistently, $E$ is an equivalence relation on $X$.
\end{definition}

There is an immediate way to define a reduction of two quotients:

\begin{definition}\label{quotientReductionDef}
	Let $(X,E)$ and $(Y,F)$ be strongly definable quotients.  Say $(X,E)\hcleq (Y,F)$ if there is a strongly definable $f$ such that all of the following hold persistently:
	\begin{itemize}
		\item $f$ is a subclass of $X\times Y$.
		\item The $E$-saturation of $\dom(f)$ is $X$.  That is, for every $x\in X$, there is an $x'\in X$ and $y'\in Y$ where $xEx'$ holds and $(x',y')\in f$.
		\item $f$ induces a well-defined injection on equivalence classes.  That is, if $(x,y)$ and $(x',y')$ are in $f$, then $xEx'$ holds if and only if $yFy'$ does.
	\end{itemize}
	
	Define $(X,E)\hclt(Y,F)$ and $(X,E)\hceq (Y,F)$ in the natural way.
\end{definition}

We wish to define the potential cardinality $||(X, E)||$. It turns out that $|X_{\ptl}/E_{\ptl}|$ is too small, typically. For our purposes, we can restrict to more well-behaved quotients.


\begin{definition}  \label{rep}
A {\em representation} of a strongly definable quotient $(X,E)$ is  a  pair $f,Z$ of strongly definable families
such that 
$f:X\rightarrow Z$ is persistently surjective and persistently,
\[\forall a,b\in X [ E(a,b)\Leftrightarrow f(a)=f(b) ]\]
We say that $(X,E)$ is {\em representable} if it has a representation.
\end{definition}

In the case that $(X,E)$ is representable, the set of $E$-classes is strongly definable in a sense -- we equate it with the representation. For this reason we will also say $Z$ is a representation of $(X,E)$.  Note that if $f_1:X\rightarrow Z_1$ and $f_2:X\rightarrow Z_2$ are two representations of $(X,E)$, then there is a persistently bijective,
strongly definable  $h:Z_1\rightarrow Z_2$.  This observation implies the following definition is well-defined.


\begin{definition}  \label{quotdef}
If $(X,E)$ is a representable strongly definable quotient, then define $||(X,E)||=||Z||$ for some (equivalently, for all)
$Z$ such that there is a representation $f:(X,E)\to Z$.
\end{definition}

The following lemma can be proved by a routine composition of maps.

\begin{lemma}  \label{compo}  Suppose $(X,E)$ and $(Y,E')$ are strongly definable quotients, with representations $f:(X,E)\rightarrow Z$ and $g:(Y,E')\rightarrow Z'$.
Suppose $h$ is a witness to $(X,E)\hcleq (Y,E')$.  Then the induced function $h^*:Z\rightarrow Z'$ is  strongly definable, persistently injective, and  witnesses  $Z\hcleq Z'$.
\end{lemma}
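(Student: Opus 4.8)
The plan is to produce the map $h^*$ directly from the data $f$, $g$, and $h$, and then verify each of the three claimed properties persistently. Recall the setup: $f : X \to Z$ and $g : Y \to Z'$ are representations, so persistently $f$ is surjective with $E(a,b) \Leftrightarrow f(a)=f(b)$, and similarly for $g$ and $E'$; and $h \subseteq X \times Y$ is a witness to $(X,E) \hcleq (Y,E')$, so persistently the $E$-saturation of $\dom(h)$ is $X$ and $h$ induces a well-defined injection on classes. First I would define, in each forcing extension $\V[G]$ simultaneously (i.e.\ as a strongly definable family), the class
\[
h^* = \{ (z,z') \in Z \times Z' : \exists x \in X\, \exists y \in Y\, (f(x)=z \wedge g(y)=z' \wedge (x,y)\in h) \}.
\]
This is visibly given by a formula built from the defining formulas $\theta_f, \theta_g, \theta_h$ of $f, g, h$ together with the defining formulas of $X, Y, Z, Z'$, and all of these are $\HC$-forcing invariant; since the class of $\HC$-forcing invariant formulas is closed under the operations used (boolean combinations and bounded-type existential quantification over $\HC$, which is fine as these are just first-order combinations interpreted in $\HC^{\V[G]}$), $h^*$ is strongly definable.

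Next I would check that $h^*$ is (persistently) a well-defined function on all of $Z$. Work in an arbitrary $\V[G]$. Given $z \in Z$, by persistent surjectivity of $f$ pick $x$ with $f(x)=z$; by the saturation clause for $h$ pick $x'$ with $E(x,x')$ and $x' \in \dom(h)$, say $(x',y') \in h$; then $f(x')=f(x)=z$ since $E(x,x')$, and $g(y') \in Z'$, so $(z, g(y')) \in h^*$, giving totality. For single-valuedness, suppose $(z,z_1')$ and $(z,z_2') \in h^*$, witnessed by $(x_1,y_1),(x_2,y_2) \in h$ with $f(x_1)=f(x_2)=z$ and $g(y_i)=z_i'$. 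Then $E(x_1,x_2)$ (as $f(x_1)=f(x_2)$), so by the injection-on-classes property of $h$ we get $E'(y_1,y_2)$, hence $g(y_1)=g(y_2)$, i.e.\ $z_1'=z_2'$. So $h^* : Z \to Z'$ persistently. Injectivity runs symmetrically: if $h^*(z_1)=h^*(z_2)$, choose witnesses $(x_i,y_i)\in h$; then $g(y_1)=g(y_2)$ gives $E'(y_1,y_2)$, so the injection-on-classes property (in the $\Leftarrow$ direction) gives $E(x_1,x_2)$, hence $z_1=f(x_1)=f(x_2)=z_2$. Thus $h^* : Z \to Z'$ is persistently injective, and in particular it witnesses $Z \hcleq Z'$ as a strongly definable persistently injective map.

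I don't anticipate a serious obstacle here; the content is entirely a diagram chase, and the only point requiring a little care is the bookkeeping that the formula defining $h^*$ stays inside the class of $\HC$-forcing invariant formulas — but this is exactly the closure property already noted after Definition~\ref{HCdef} (closure under boolean combinations), together with the observation in Lemma~\ref{easyex}(4)-style reasoning that existential quantification inside $\HC$ preserves the relevant absoluteness, so that $h^*$ is genuinely strongly definable rather than merely definable in each model. The persistence of all three bullet-point properties of $h^*$ is immediate from the persistence of the corresponding properties of $f$, $g$, and $h$, since every verification above took place in an arbitrary fixed $\V[G]$ using only the hypotheses as they hold in that $\V[G]$. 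Hence the lemma follows by "a routine composition of maps," as claimed.
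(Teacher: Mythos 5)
Your construction of $h^*$ and the diagram chase for totality, single-valuedness, and injectivity are exactly the ``routine composition of maps'' that the paper alludes to (it gives no further detail), and those verifications are correct. The one step that does not stand as written is your justification of strong definability: the class of $\HC$-forcing invariant formulas is \emph{not} closed under unbounded existential quantification over $\HC$ --- a formula $\exists x\,\theta(x,z)$ can acquire new witnesses in a forcing extension even when $\theta$ is invariant --- which is precisely why the paper treats surjectivity as a separate hypothesis in Lemma~\ref{surj} and why Lemma~\ref{restriction} requires an argument rather than being immediate. (Lemma~\ref{easyex}(4), which you cite, only covers existentials over $\omega$, i.e.\ countable unions indexed in the ground model.) So the invariance of the graph formula for $h^*$ cannot be had ``for free'' from closure properties.

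The gap is repairable using ingredients already present in your own totality and single-valuedness arguments. Upward persistence of the defining formula is automatic. For downward persistence: suppose $z,z'\in\HC^{\V[G]}$ and some further extension $\V[G][G']$ contains witnesses $x,y$ with $f(x)=z$, $g(y)=z'$, $(x,y)\in h$. Since $Z$ is strongly definable, $z\in Z^{\V[G]}$; by the persistent surjectivity of $f$ and the $E$-saturation of $\dom(h)$, there are $x'',y''\in\HC^{\V[G]}$ with $f(x'')=z$ and $(x'',y'')\in h$. Working in $\V[G][G']$ one has $f(x)=f(x'')$, hence $E(x,x'')$, hence $E'(y,y'')$ by the injection-on-classes property of $h$, hence $g(y'')=g(y)=z'$; and this last equality concerns only elements of $\HC^{\V[G]}$, so it holds in $\V[G]$ as well. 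Thus the witnesses already exist downstairs, and the formula is $\HC$-forcing invariant. This is exactly the pattern of the proof of Lemma~\ref{restriction}; with that substitution in place of the claimed closure principle, your proof is complete and matches the intended argument.
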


We close this section with an observation about restrictions of representations.

\begin{lemma}  \label{restriction}  Suppose $f: (X,E)\rightarrow  Z$ is a representation and $Y \subseteq X$ is strongly definable and persistently  $E$-saturated.  Let 
$E'$ and $g=f\mr{Y}$ be the restrictions of $E$ and $f$, respectively, to $Y$.  Then the image $g(Y)$  is strongly definable, and so $g: (Y,E')\rightarrow g(Y)$ is a representation.
\end{lemma}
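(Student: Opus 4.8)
The plan is to verify that $g(Y)$ is strongly definable by exhibiting an explicit $\HC$-forcing invariant formula defining it, and then observe that the representation axioms for $g$ transfer immediately from those for $f$ once surjectivity onto $g(Y)$ is granted by construction. Concretely, suppose $f$, $X$, $E$, $Z$, $Y$ are strongly definable via $\HC$-forcing invariant formulas $\theta(x,z)$, $\phi(x)$, $\epsilon(x,x')$, $\gamma(z)$, $\psi(x)$ respectively, with $\psi$ persistently implying $\phi$ and $Y$ persistently $E$-saturated. Define
\[
\rho(z) \ :=\ \exists x\,\big(\psi(x)\wedge\theta(x,z)\big).
\]
I claim $g(Y)$ is strongly definable via $\rho$. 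First I would check $\rho$ is $\HC$-forcing invariant: it is a $\Sigma_1$-combination of $\HC$-forcing invariant formulas over the (absolute) quantifier $\exists x \in \HC$, and one shows directly that $\rho(\HC)^{\V[G][G']}\cap\HC^{\V[G]} = \rho(\HC)^{\V[G]}$ using the $\HC$-forcing invariance of $\psi$ and $\theta$ together with the fact that $\HC^{\V[G]}$ is transitive, so any witness $x$ to $\rho(z)$ for $z\in\HC^{\V[G]}$ in the larger model can be pulled back; this is the same style of argument as in Lemma~\ref{easyex}(4). Then for each forcing extension $\V[G]$, persistently $f^{\V[G]}$ is a total function $X^{\V[G]}\to Z^{\V[G]}$, so $\rho(\HC)^{\V[G]} = \{\, f^{\V[G]}(x) : x\in Y^{\V[G]}\,\} = g(Y)^{\V[G]}$, which is exactly what it means for $g(Y)$ to be the strongly definable family defined by $\rho$.

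Next I would confirm the three clauses of Definition~\ref{rep} for $g=f\mr{Y}$ with target $g(Y)$. Persistent surjectivity of $g:Y\to g(Y)$ is immediate, since we have just computed that persistently $g(Y)^{\V[G]}$ is precisely the pointwise image of $Y^{\V[G]}$ under $f^{\V[G]}$. The biconditional $\forall a,b\in Y\,[E'(a,b)\Leftrightarrow g(a)=g(b)]$ holds persistently because it is the restriction to $Y^{\V[G]}$ of the corresponding persistent statement for $f$ on $X^{\V[G]}$, and $E'$ is by definition the restriction of $E$ to $Y$. Finally one notes that $(Y,E')$ is indeed a strongly definable quotient: $Y$ and $E'$ are strongly definable, and persistently $E'$ is an equivalence relation on $Y$ since $Y$ is persistently $E$-saturated (in fact mere $Y\subseteq X$ suffices for $E'$ to be an equivalence relation on $Y$; saturation is what one typically wants for the quotient structure to be meaningful, and it is part of the hypothesis anyway).

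I expect the only genuinely non-routine point to be the verification that $\rho$ is $\HC$-forcing invariant, and even that is light: the subtlety is purely that one must check the witness $x$ can always be found inside the correct intermediate model, which follows from transitivity of $\HC^{\V[G]}$ (a witness lives in the transitive closure of $z$, which is countable and hence already in $\HC^{\V[G]}$ when $z$ is) together with the $\HC$-forcing invariance of $\psi$ and $\theta$ applied to that witness. Everything else is, as the paper says, a matter of restricting persistent statements about $f$ on $X$ to the strongly definable, persistently $E$-saturated subfamily $Y$. So the proof will consist of: (i) write down $\rho$; (ii) check $\HC$-forcing invariance of $\rho$; (iii) observe $\rho$ defines $g(Y)$ persistently; (iv) read off the representation axioms for $g:(Y,E')\to g(Y)$ from those for $f:(X,E)\to Z$.
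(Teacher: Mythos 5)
Your formula $\rho(z):=\exists x(\psi(x)\wedge\theta(x,z))$ is the same one the paper uses, but your justification for its $\HC$-forcing invariance — the one step you yourself flag as "the only genuinely non-routine point" — does not work, and it is in fact the entire content of the lemma. You argue that a witness $x$ to $\rho(z)$ found in a larger forcing extension "can be pulled back" because "a witness lives in the transitive closure of $z$." That is false: the witness is an element $x$ of $Y$ with $f(x)=z$, and there is no reason for $x$ to lie in the transitive closure of $z$ (think of $z$ as a Scott sentence and $x$ as a model realizing it — the model is not hereditarily contained in its Scott sentence, and genuinely new witnesses can appear after forcing). Indeed, the paper explicitly cautions, right after defining persistent injectivity, that the image of a strongly definable function need \emph{not} be strongly definable; an existential projection of $\HC$-forcing invariant formulas is not automatically $\HC$-forcing invariant, and Lemma~\ref{sigma1} only covers $\Sigma_1$ formulas with a $\Delta_0$ matrix, not $\exists x$ over arbitrary $\HC$-forcing invariant formulas.

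The correct argument, which is where every hypothesis of the lemma gets used, runs as follows. Suppose $z\in\HC^{\V}$ has a witness $y\in Y^{\V[G]}$ with $g(y)=z$. Since $f(y)=z$ and $f$ maps into $Z$ persistently, $z\in Z^{\V[G]}$, hence $z\in Z^{\V}$ because $Z$ is strongly definable. Because $f:(X,E)\to Z$ is a \emph{representation}, $f:X\to Z$ is persistently surjective, so there is $y^*\in X^{\V}$ with $f(y^*)=z$. In $\V[G]$ we then have $f(y)=f(y^*)$, so $E(y,y^*)$ holds by the representation property; since $Y$ is persistently $E$-saturated and $y\in Y^{\V[G]}$, we get $y^*\in Y^{\V[G]}$, and since $Y$ is strongly definable, $y^*\in Y^{\V}$. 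Thus $y^*$ is a witness in the ground model. Your proposal never invokes the surjectivity of $f$ onto $Z$, the equivalence $E(a,b)\Leftrightarrow f(a)=f(b)$, or the $E$-saturation of $Y$ in this verification, so as written it would "prove" that the image of any strongly definable map restricted to any strongly definable subclass is strongly definable, which is not true. The remaining steps of your outline (reading off the representation axioms for $g$ once $g(Y)$ is known to be strongly definable) are fine and match the paper.
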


\begin{proof} We show that $\phi(z):=\exists y (y\in Y\wedge g(y)=z)$ is $\HC$-forcing invariant.  Fix any $z\in \HC$ and let $\mathbb{V}[G]$ be a forcing extension of $\mathbb{V}$. We show that $z \in \phi(\HC)$ iff $z \in \phi(\HC)^{\mathbb{V}[G]}$. Left to right is clear. For right to left: choose a witness $y\in Y^{\V[G]}$ such that $g(y)=z$ in $\V[G]$.
As $f(y)=z$, we conclude $z\in Z^{\V[G]}$ and hence $z\in Z^{\V}$ as $Z$ is strongly definable.  So, choose $y^*\in X^{\V}$ with $f(y^*)=z$.
Thus, in $\V[G]$, $E(y,y^*)$ holds.  As $Y$ is persistently $E$-saturated, $y^*\in Y^{\V[G]}$.  Since $Y$ is strongly definable, we conclude $y^*\in Y^{\V}$,
so $y^*$ witnesses that $z\in (g(Y))^{\V}$.  

As this argument relativizes to any forcing extension, we conclude that $\phi(z)$ is $\HC$-forcing invariant.
\end{proof}

All of the examples we work with will be representable, where the representations are Scott sentences.  Therefore this simple definition of $\|(X,E)\|$ will suffice completely for our purposes.  In the absence of a representation, one can still define $\|(X,E)\|$ using the notion of \emph{pins}; see for instance \cite{ZapletalForcingBorel} for a thorough discussion.



\section{Connecting Potential Cardinality with Borel Reducibility}\label{cssSection}

The standard framework for Borel reducibility of invariant classes is the following.
Let $L$ be a countable langauge and let $X_L$ be the set of $L$-structures with universe $\omega$. Endow $X_L$ with the usual logic topology; then $X_L$ becomes a Polish space. Moreover, if $\Phi$ is a sentence of $L_{\omega_1 \omega}$ then $\Mod(\Phi)$ is a Borel subset of $X_L$; hence $\Mod(\Phi)$ is a standard Borel space. 
The relation $\iso_\Phi$ is the restriction of the isomorphism relation to $\Mod(\Phi)\times\Mod(\Phi)$.  When no ambiguity arises we omit the $\Phi$.
If $L'$ is another countable language and $\Phi'$ is a a sentence of $L'_{\omega_1 \omega}$, then a Borel reduction from $(\Mod(\Phi), \cong) \to (\Mod(\Phi'), \cong)$ is a Borel map $f: \Mod(\Phi) \to \Mod(\Phi')$ such that, for all $M, N \in \Mod(\Phi)$, $M \cong N$ if and only if $f(M) \cong f(N)$.

We want to apply the machinery of the previous section to this setup. First, recall that we are working entirely in $ZFC$; thus a language $L$ is just a set with an arity function, and an $L$-structure with universe $\omega$ is just a function $f: L \to \bigcup_n \mathcal{P}(\omega^n)$ respecting the arities. Since our languages are countable we can suppose that they are elements of $HC$. We will presently show that for any sentence $\Phi$ of $L_{\omega_1 \omega}$, $(\Mod(\Phi), \cong)$ is a strongly definable quotient. We will also show that $(\Mod(\Phi), \cong)$ is representable, and that Borel reductions are in particular $\HC$-reductions.

\subsection{Canonical Scott sentences}

For what follows,
we need the notion of a canonical Scott sentence of any infinite $L$-structure, regardless of cardinality.  The definition below is  in both Barwise~\cite{BarwiseScottSentences}
and Marker~\cite{MarkerMT}. 


\begin{definition} \label{cssDefinition} Suppose $L$ is countable and $M$ is any  infinite $L$-structure, say of power $\kappa$.
For each $\alpha<\kappa^+$, define an $L_{\kappa^+,\omega}$ formula $\phi_\alpha^\abar(\xbar)$ for each finite $\abar\in M^{<\omega}$ as follows:
\begin{itemize}
\item  $\phi_0^\abar(\xbar):=\bigwedge \{\theta(\xbar): \theta$ atomic or negated atomic and $M\models\theta(\abar)\}$;
\item  $\phi_{\alpha+1}^\abar(\xbar):=\phi_\alpha^\abar(\xbar)\ \wedge\  \bigwedge \left\{ \exists y\, \phi_\alpha^{\abar, b}(\xbar,y):b\in M\right\}\ \wedge\forall y\bigvee \left\{\phi_\alpha^{\abar, b}(\xbar,y):b\in M\right\}$;
\item  For $\alpha$ a non-zero limit, $\phi_\alpha^\abar(\xbar):=\bigwedge \left\{\phi_\beta^\abar(\xbar):\beta<\alpha\right\}$.
\end{itemize}
Next, let $\alpha^*(M)<\kappa^+$ be least  ordinal $\alpha$ such that 
for all finite $\abar$ from $M$, 
\[\forall \xbar [\phi_\alpha^\abar(\xbar)\rightarrow\phi_{\alpha+1}^{\abar}(\xbar)].\]

Finally, put 
$\css(M):=\phi^{\emptyset}_{\alpha^*(M)} \wedge \bigwedge \left\{ \forall\, \xbar [\phi^{\abar}_{\alpha^*(M)}(\xbar)\rightarrow \phi^{\abar}_{\alpha^*(M)+1}(\xbar)]:\abar\in M^{<\omega}\right\}$.
\end{definition}


For what follows, it is crucial that the choice of $\css(M)$ really is canonical.  In particular, in the infinitary clauses forming the definition of $\tp_{\alpha+1}^\abar(\xbar)$,  we consider the conjunctions and disjunctions be taken over {\em sets} of formulas, as opposed to sequences.  In particular, we ignore the multiplicity of a formula inside the set.
By our conventions about working wholly in ZFC, countable languages and sentences of $L_{\infty,\omega}$ are sets, and in particular
$\css(M)$ is a set. 

We summarize the well-known, classical facts about canonical Scott sentences with the following:

\begin{fact}  \label{summ}  Fix a countable language $L$.
\begin{enumerate}
\item  For every $L$-structure $M$, $M\models \css(M)$; and for all 
 $L$-structures $N$, $M\equiv_{\infty,\omega} N$ if and only if $\css(M)=\css(N)$ if and only if $N\models \css(M)$.
 \item  If $M$ is countable, then $\css(M)\in \HC$.
\item  $\css$ is absolute between transitive models of $ZFC^-$, where $ZFC^-$ is $ZFC$ but with the powerset axiom is deleted. (Recall that $HC \models ZFC^-$.)
\item If $M$ and $N$ are both countable, then $M\iso N$ if and only if $\css(M)=\css(N)$ if and only if $N\models \css(M)$.
\end{enumerate}
\end{fact}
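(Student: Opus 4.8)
\textbf{Proof proposal for Fact~\ref{summ}.}

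The plan is to recall the classical theory of Scott sentences and read off each clause from the construction in Definition~\ref{cssDefinition}, emphasizing only the absoluteness point, which is the one novelty relative to the textbook treatment. For (1), I would argue by induction on $\alpha$ that for any two $L$-structures $M,N$ and tuples $\abar\in M^{<\omega}$, $\bbar\in N^{<\omega}$ of the same length, $N\models\phi_\alpha^{\abar'}(\bbar)$ (where $\abar'$ matches $\abar$) encodes exactly that the Ehrenfeucht--Fra\"iss\'e game of length $\alpha$ between $(M,\abar)$ and $(N,\bbar)$ is a win for Duplicator at depth $\alpha$; the successor clause is precisely the ``$\forall$ element on one side, $\exists$ matching response on the other'' step, and the limit clause is the conjunction over smaller depths. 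Then $M\equiv_{\infty,\omega}N$ iff Duplicator wins all finite-length games with parameters, iff $N\models\phi_{\alpha^*(M)}^{\abar}$ for all $\abar$ together with the stabilization clauses, iff $N\models\css(M)$; and by the standard argument that ordinal-indexed EF-equivalence stabilizes, $\css(M)=\css(N)$ exactly when these hold. Finally $M\models\css(M)$ is immediate since $M$ trivially wins against itself and $\alpha^*(M)$ is chosen so that $\phi_{\alpha^*(M)}^{\abar}\to\phi_{\alpha^*(M)+1}^{\abar}$ holds in $M$.

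For (2), when $M$ is countable, $\kappa=\aleph_0$, so $\alpha^*(M)<\omega_1$, and each $\phi_\alpha^{\abar}(\xbar)$ is built from countably many subformulas via countable conjunctions and disjunctions and finitely many quantifiers; hence every $\phi_\alpha^{\abar}$ lies in $\HC$, being a set of hereditarily countable rank, and $\css(M)$ is a countable conjunction of such, so $\css(M)\in L_{\omega_1,\omega}\subseteq\HC$. For (4), combine (1) and (2): if $M,N$ are countable then $\css(M)\in\HC$, $N\models\css(M)$ iff $\css(M)=\css(N)$ by (1), and $\css(M)=\css(N)$ iff $M\iso N$ by the classical fact that $L_{\infty,\omega}$-equivalence coincides with isomorphism for countable structures (Scott's theorem)---which itself falls out of the back-and-forth analysis in (1), since a Duplicator strategy in all finite games between countable structures can be amalgamated into an isomorphism.

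The one genuinely substantive clause is (3), absoluteness between transitive models of $ZFC^-$. Here I would note that the entire construction in Definition~\ref{cssDefinition} is given by a transfinite recursion whose every step---forming the atomic diagram of a tuple, taking set-sized conjunctions/disjunctions indexed by elements of $M$, forming $\exists y$ and $\forall y$ formulas, and taking unions at limits---is $\Delta_1$ (indeed absolute) over any transitive model of $ZFC^-$ containing $M$ and the relevant ordinals, since none of these operations needs the powerset axiom: the recursion is along a definable well-order and each stage produces a set whose existence is guaranteed by Replacement and Union alone. The only point requiring a word is that the least ordinal $\alpha^*(M)$ with the stabilization property is computed the same way in both models: the property ``for all finite $\abar$, $\forall\xbar(\phi_\alpha^{\abar}\to\phi_{\alpha+1}^{\abar})$ holds in $M$'' is absolute once the $\phi_\alpha^{\abar}$ are, because it is a statement quantifying over the (absolutely computed) set of finite tuples and checking a satisfaction relation in $M$, and $M\models\chi$ for $\chi\in L_{\infty,\omega}\cap M$ is absolute between transitive models containing $M$ and $\chi$. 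Since $\HC\models ZFC^-$, this in particular gives that $\css$ as computed in $\HC$ agrees with $\css$ as computed in $\V$, which is exactly what the later sections need. I expect clause (3) to be the main obstacle only in the bookkeeping sense---there is no real difficulty, but one must be careful to phrase the recursion so that it manifestly avoids powerset and to invoke the standard absoluteness of the satisfaction relation for set-sized infinitary formulas.
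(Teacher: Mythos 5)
The paper offers no proof of Fact~\ref{summ}: it is stated as a well-known classical result, with the construction of $\css$ taken from Barwise and Marker, so your sketch is precisely the standard argument the paper is implicitly deferring to, and it is correct in substance, including the key point that clause (3) holds because the recursion uses only Replacement and Union and the satisfaction relation for set-sized infinitary formulas is absolute. (One wording slip worth fixing: $M\equiv_{\infty,\omega}N$ is characterized by Duplicator winning the Ehrenfeucht--Fra\"iss\'e games of all \emph{ordinal} lengths, equivalently by the existence of a back-and-forth system of partial isomorphisms, not by winning all finite-length games --- compare $(\mathbb{Z},<)$ with two copies of $\mathbb{Z}$ --- but the remainder of your paragraph, which argues via $\phi^{\abar}_{\alpha^*(M)}$ and the stabilization clauses, already uses the correct characterization.)
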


Our primary interest in canonical Scott sentences is that they give rise to representations of classes of $L$-structures.  The key to the representability is Karp's Completeness Theorem for sentences of $L_{\omega_1,\omega}$, see e.g., Theorem~3 of Keisler~\cite{Keisler}, which says that if a sentence $\sigma$ of $L_{\omega_1 \omega}$ is consistent, then it has a countable model. It quickly follows that if $\sigma$ is a sentence of $L_{\omega_1 \omega}$, and $\sigma$ has a model in a forcing extension, then $\sigma$ already has a countable model in $\mathbb{V}$.  

We begin by considering $\CSS(L)$, the set of all canonical Scott sentences of  structures in $X_L$, the set of $L$-structures with universe $\omega$.

\begin{lemma} \label{Karp0}
Fix a countable language $L$.
Then:
\begin{enumerate}
\item  $\CSS(L)$ is strongly definable via the formula $\phi(y):=\exists M(M\in X_L\wedge \css(M)=y)$;
\item  The strongly definable function $\css:X_L\rightarrow \CSS(L)$ is persistently surjective;
\item  $\css:X_L\rightarrow \CSS(L)$ is a representation of the
strongly definable quotient $(X_L,\iso)$.
 \end{enumerate}
 \end{lemma}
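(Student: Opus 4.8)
The three clauses are all of the form ``an $\HC$-forcing-invariant sentence is behaving well under forcing,'' so the plan is to verify $\HC$-forcing invariance of the defining formula $\phi(y)$ and then read off (2) and (3) from the definitions and from Karp's theorem. First I would check (1): write $\phi(y) := \exists M\,(M \in X_L \wedge \css(M) = y)$. The point is that $X_L$ is strongly definable (being a $\Delta_0$-in-$L$ subset of $\HC$, and $L \in \HC$ is a fixed parameter), and $\css$ is absolute between transitive models of $\ZFC^-$ by Fact~\ref{summ}(3), so $\phi$ is (equivalent to) a $\Sigma_1$ formula with a parameter from $\HC$; hence by Lemma~\ref{sigma1} it is $\HC$-forcing invariant. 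To be careful about what $\phi(\HC)^{\V[G]}$ is: if $y \in \HC^{\V[G]}$ and $\V[G] \models \phi(\HC)(y)$, then there is a countable (in $\V[G]$) $L$-structure $M$ on universe $\omega$ with $\css(M) = y$, so $y$ is genuinely a canonical Scott sentence of something in $X_L^{\V[G]}$; conversely every $\css(M)$ for $M \in X_L^{\V[G]}$ lands in $\HC^{\V[G]}$ by Fact~\ref{summ}(2). So $\CSS(L)$ is strongly definable via $\phi$.

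For (2), persistent surjectivity of $\css : X_L \to \CSS(L)$ is immediate from the definition of $\phi$: in any forcing extension $\V[G]$, the set $\CSS(L)^{\V[G]} = \phi(\HC)^{\V[G]}$ is by definition exactly $\{\css(M) : M \in X_L^{\V[G]}\}$, which is the image of $\css^{\V[G]}$. One should also note in passing that $\css : X_L \to \CSS(L)$ is itself strongly definable as a family of functions — its graph is $\{(M,y) : M \in X_L \wedge \css(M) = y\}$, again $\Sigma_1$ (indeed $\Delta_1$, since $\css$ is a total absolute function) with parameter $L$, hence $\HC$-forcing invariant — which is what licenses talking about $\css$ as a strongly definable map at all.

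For (3), I need to check the two conditions in Definition~\ref{rep}: that $\css : X_L \to \CSS(L)$ is persistently surjective (done in (2)) and that persistently, for all $M, N \in X_L$, $M \iso N \iff \css(M) = \css(N)$. The latter is exactly Fact~\ref{summ}(4) applied inside $\V[G]$: both $M$ and $N$ are countable $L$-structures in $\V[G]$ (they have universe $\omega$), so $M \iso N$ iff $\css(M) = \css(N)$; and this statement, being about countable structures and the absolute function $\css$, holds in every forcing extension. This also re-confirms that $(X_L, \iso)$ is a strongly definable quotient: $X_L$ is strongly definable, $\iso$ restricted to $X_L \times X_L$ is strongly definable (it is $\Sigma_1$: ``there is a bijection $\omega \to \omega$ respecting the structure'', parameter-free once $L$ is fixed), and it is persistently an equivalence relation.

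\textbf{Main obstacle.} None of the steps is deep; the only thing that needs genuine care — and the place I expect the real content to sit — is pinning down that the \emph{internal} computation $\phi(\HC)^{\V[G]}$ (quantifiers ranging over $\HC^{\V[G]}$, not all of $\V[G]$) agrees with the ``external'' intended meaning of $\CSS(L)$, i.e.\ that no canonical Scott sentence of a universe-$\omega$ structure is lost by restricting the witness $M$ to live in $\HC^{\V[G]}$ and that $\css$ computed inside $\HC^{\V[G]}$ equals $\css$ computed in $\V[G]$. Both follow from the absoluteness of $\css$ (Fact~\ref{summ}(3)) together with $\HC \models \ZFC^-$ and the fact that $X_L^{\V[G]} \subseteq \HC^{\V[G]}$ since $L \in \HC$ is countable and each structure on $\omega$ is hereditarily countable. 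Once that bookkeeping is done, clauses (1)–(3) drop out; Karp's Completeness Theorem is not even strictly needed here (it becomes relevant for identifying $\CSS(\Phi)_{\ptl}$ in the sequel), though it is the reason the strongly definable quotient $(X_L, \iso)$ has the ``expected'' potential cardinality.
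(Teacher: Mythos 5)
Your proposal is correct, and clauses (2) and (3) are handled essentially as in the paper: (2) is immediate from (1) (the paper cites Lemma~\ref{surj}, which amounts to your observation that $\phi$ is its own ``image formula''), and (3) combines (2) with Fact~\ref{summ}(4). The genuine difference is in the crux of (1), the downward direction of $\HC$-forcing invariance: given $\sigma\in\HC^{\V[G]}$ which equals $\css(M)$ for some $M\in X_L$ in a further extension, one must produce a witness already in $\V[G]$. You do this by noting that $\phi(y)$ is (provably over $\ZFC^-$, hence persistently over $\HC$) equivalent to a $\Sigma_1$ formula --- using that the graph of $\css$ is $\Sigma_1$-expressible, which is the content of Fact~\ref{summ}(3) --- and then quoting Lemma~\ref{sigma1} wholesale. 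The paper instead invokes Karp's Completeness Theorem (via the discussion preceding the lemma) to pull back a countable model $N\in X_L$ of the \emph{sentence} $\sigma$ itself to the ground model, and then uses Fact~\ref{summ}(1),(4): since $N\models\sigma=\css(M)$ and $N$ is countable, $\css(N)=\sigma$, and by absoluteness of $\css$ this persists. Both arguments are valid, and you are right that Karp's theorem is not strictly needed here, since the witness in $X_L$ (together with its Scott analysis) is automatically hereditarily countable, so L\'evy absoluteness applies directly. What your route costs is that it rests on the standard but slightly fiddly fact that the total $\Sigma_1$-recursive function $\css$ has a uniformly $\Sigma_1$ graph over transitive models of $\ZFC^-$; what the paper's route buys is a softer ``pull back a model'' template that it reuses repeatedly later (e.g., in the groundedness arguments and in Lemma~\ref{subset}).
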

 
 \begin{proof} Note that $L_{\omega_1 \omega}$ is strongly definable, clearly.
 
 (1) We need to verify that $\phi(y)$ is $\HC$-forcing invariant.  Suppose $\sigma\in \HC$ and $\V[G]$ is a forcing extension of $\b V$. If $\sigma\in \phi(\HC)^{\V[G]}$, then there is some $M\in X_L^{\V[G]}$ such that $\css(M)=\sigma$. Hence by the preceding discussion, $\sigma$ has a countable model $N \in (X_L)^{\b V}$.  In $\V[G]$, $N\models\sigma$; but $\sigma = \css(M)$.  So $\css(N)=\sigma$.
 As this argument readily relativizes to any forcing extension, 
 $\phi$ is $\HC$-forcing invariant.
 
(2) follows  from (1) and Lemma~\ref{surj}.
 
 (3): $(X_L, \iso)$ is a strongly definable quotient since $\css$ is strongly definable. Thus we conclude by (2) and Fact~\ref{summ}(4).
  \end{proof}

In most of our applications, we are interested in strongly definable subclasses of $X_L$ that are closed under isomorphism.  
For any sentence $\Phi$ of $L_{\omega_1,\omega}$,  because $\Mod(\Phi)$ is a Borel subset of $X_L$,  it follows from Shoenfield's Absoluteness Theorem that both $\Mod(\Phi)$  
and the restriction of $\css$   to $\Mod(\Phi)$ (also denoted by $\css$) $\css:\Mod(\Phi)\rightarrow \HC$ are strongly definable.

\begin{definition}  For $\Phi$ any sentence of $L_{\omega_1,\omega}$, $\CSS(\Phi)=\{\css(M):M\in\Mod(\Phi)\}\subseteq HC$.
\end{definition}

\begin{prop} \label{Karp1}   Fix any sentence $\Phi\in L_{\omega_1,\omega}$ in a countable vocabulary.
Then $\css:\Mod(\Phi)\rightarrow\CSS(\Phi)$ is a representation of the quotient $(\Mod(\Phi),\iso)$. In particular the latter is strongly definable.
\end{prop}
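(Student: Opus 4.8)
The plan is to reduce Proposition~\ref{Karp1} to Lemma~\ref{Karp0} via Lemma~\ref{restriction}. The point is that $\Mod(\Phi)$ is a strongly definable subclass of $X_L$ that is persistently closed under isomorphism, so that the general machinery for restricting representations applies directly.

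First I would note that $\Mod(\Phi)$ is strongly definable and persistently $\iso$-saturated. Strong definability was already observed just before the statement: since $\Mod(\Phi)$ is Borel in $X_L$, membership in $\Mod(\Phi)$ is (boldface) $\mathbf{\Delta}^1_1$, hence $\Sigma^1_2$, hence $\HC$-forcing invariant by Shoenfield absoluteness (Lemma~\ref{sigma1} and the remarks following Definition~\ref{HCdef}). Persistent $\iso$-saturation is immediate: in any forcing extension $\V[G]$, if $M\in\Mod(\Phi)^{\V[G]}$ and $N\in X_L^{\V[G]}$ with $N\iso M$, then $N\models\Phi$ as well, so $N\in\Mod(\Phi)^{\V[G]}$.

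Next I would invoke Lemma~\ref{Karp0}(3), which gives that $\css:X_L\to\CSS(L)$ is a representation of the strongly definable quotient $(X_L,\iso)$. Applying Lemma~\ref{restriction} with $X=X_L$, $E={\iso}$, $Z=\CSS(L)$, and $Y=\Mod(\Phi)$ (which we just checked is strongly definable and persistently $\iso$-saturated), we conclude that the image $\css(\Mod(\Phi))$ is strongly definable and that the restriction $\css:(\Mod(\Phi),\iso)\to\css(\Mod(\Phi))$ is a representation. It remains only to identify $\css(\Mod(\Phi))$ with $\CSS(\Phi)$, but this is exactly the definition of $\CSS(\Phi)$ given immediately above the proposition, namely $\CSS(\Phi)=\{\css(M):M\in\Mod(\Phi)\}$. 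Finally, the assertion that $(\Mod(\Phi),\iso)$ is strongly definable is part of what it means for it to admit a representation (and in any case follows from the strong definability of $\Mod(\Phi)$ and of $\css$, as a representable quotient is in particular a strongly definable quotient).

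I do not expect a serious obstacle here: the real content has been front-loaded into Lemma~\ref{Karp0} (which uses Karp completeness to handle absoluteness of the set of Scott sentences) and Lemma~\ref{restriction} (which handles restriction of representations to saturated subclasses). The only mild subtlety worth spelling out is the verification that $\Mod(\Phi)$ is $\HC$-forcing invariant as a family — one should be slightly careful that this is the boldface statement, using that $\Phi\in\HC$ serves as the hidden parameter and that the Borel code for $\Mod(\Phi)$ is absolute — but this is routine given Shoenfield's theorem and was already asserted in the text preceding the statement.
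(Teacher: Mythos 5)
Your proposal is correct and follows exactly the paper's argument: the paper's proof is the one-line observation that the result follows from Lemma~\ref{Karp0} and Lemma~\ref{restriction} since $\Mod(\Phi)$ is strongly definable, and you have simply spelled out the same reduction (strong definability via Shoenfield, persistent $\iso$-saturation, then restriction of the representation $\css:X_L\to\CSS(L)$). No gaps.
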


\begin{proof}  As $\Mod(\Phi)$ is strongly definable, this follows immediately from Lemmas~\ref{Karp0} and \ref{restriction}.
\end{proof}

 \begin{definition}  \label{shortdef}
 Let $\Phi$ be any sentence of $L_{\omega_1,\omega}$ in a countable vocabulary.  We say that $\Phi$ is {\em short} if
 $\CSS(\Phi)_\ext$ is a set (as opposed to a proper class).
If $\Phi$ is short, let the \emph{potential cardinality} of $\Phi$, denoted $||\Phi||$, be the (usual) cardinality of $\CSS(\Phi)_\ext$; otherwise let it be $\infty$.  
\end{definition}

It follows from Proposition~\ref{Karp1} and Definitions~\ref{HCcard} and \ref{quotdef} that
\[||\Phi||=||(\Mod(\Phi),\iso)||=||\CSS(\Phi)||=|\CSS(\Phi)_\ext|.\]

\medskip

In order to understand the class $\CSS(\Phi)_\ext$, note that if $\phi\in \CSS(\Phi)_\ext$, then $\phi\in\V$ and  is  a sentence of $L_{\infty,\omega}$.
If we choose any forcing extension $\V[G]$ of $\b V$ for which $\phi\in \HC^{\V[G]}$, then $\V[G]\models \, \hbox{`$\phi\in L_{\omega_1,\omega}$'}$
and there is some $M\in \HC^{\V[G]}$ such that $V[G]\models \hbox{`$M\in \Mod(\Phi)\ \hbox{and}\ \css(M)=\phi$'}$.  Thus, we  refer to elements of $\CSS(\Phi)_\ext$
as being {\em potential canonical Scott sentences} of a model of $\Phi$.  In particular, every element of $\CSS(\Phi)_\ext$ is {\em potentially satisfiable} in the sense that it
is satisfiable in some forcing extension $\V[G]$ of $\V$.  
There is a proof system
for sentences of $L_{\infty,\omega}$ for which a sentence is consistent if and only if it is potentially satisfiable as defined above.\footnote{In Chapter 4 of \cite{Keisler}, Keisler gives a proof system for $L_{\omega_1 \omega}$, and shows in Theorem 3 that it is complete, i.e. if $\phi$ is unprovable then $\lnot \phi$ has a model. The natural generalization of this proof system to $L_{\infty \omega}$ works: the proof of Theorem 3 shows that whenever $\phi$ is unprovable, then $\lnot \phi$ lies in a consistency property. Forcing on the consistency property gives a model of $\lnot \phi$.} When we say `$\phi$ implies $\psi$', we mean with respect to this proof system; equivalently, in any forcing extension $\V[G]$, every model of $\phi$ is a model of $\psi$. 


%
%

We can also ask what is the image of the class function $\css$ when restricted to the class of models of $\Phi$.  As notation, let $\CSS(\Phi)_\sat$
denote the class $\{\css(M):M\in\V \textrm{ and } M\models\Phi\}$.  This choice of notation is clarified by the following easy lemma.

\begin{lemma} \label{subset} $\CSS(\Phi)_\sat\subseteq \CSS(\Phi)_\ext$.
\end{lemma}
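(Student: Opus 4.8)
The plan is to show that any element of $\CSS(\Phi)_\sat$ is automatically an element of $\CSS(\Phi)_\ext$, essentially by observing that canonical Scott sentences are absolute enough to survive the passage to a forcing extension that makes a given model hereditarily countable. First I would unpack the definition: suppose $\sigma \in \CSS(\Phi)_\sat$, so there is a model $M \in \V$ with $M \models \Phi$ and $\css(M) = \sigma$. To conclude $\sigma \in \CSS(\Phi)_\ext$, I must show $\sigma \in \V$ (immediate, since $\sigma$ is a set computed from $M$) and that whenever $\V[G]$ is a forcing extension of $\V$ with $\sigma \in \HC^{\V[G]}$, we have $\sigma \in \CSS(\Phi)(\HC)^{\V[G]}$, i.e.\ there is some $N \in \HC^{\V[G]}$ with $N \in \Mod(\Phi)$ and $\css(N) = \sigma$ as computed in $\V[G]$.

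The key step is the following. Fix such a $\V[G]$. Pass, if necessary, to a further forcing extension $\V[G][H]$ (e.g.\ a Levy collapse of $|\mathrm{trcl}(M)|^+$ to $\omega_1$) in which $M$ itself becomes hereditarily countable; note that this does not disturb $\sigma \in \HC$, since $\HC$ only grows. In $\V[G][H]$, $M$ is a countable structure with $M \models \Phi$ (first-order, or more generally $L_{\omega_1,\omega}$, satisfaction for a fixed sentence is absolute downward and the relevant clauses are absolute by Shoenfield / the strong definability of $\Mod(\Phi)$ noted after Fact~\ref{summ}), and $\css(M) = \sigma$ still holds there because $\css$ is absolute between transitive models of $ZFC^-$ by Fact~\ref{summ}(3) and both $\V$ and $\HC^{\V[G][H]}$ are such models containing $M$. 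Hence $\sigma = \css(M) \in \CSS(\Phi)(\HC)^{\V[G][H]}$. Now apply $\HC$-forcing invariance of the formula $\phi(y) := \exists M (M \in X_L \wedge M \models \Phi \wedge \css(M) = y)$ defining $\CSS(\Phi)$ (strong definability from Lemma~\ref{Karp0}/Proposition~\ref{Karp1}): since $\sigma \in \HC^{\V[G]}$ and $\sigma \in \CSS(\Phi)(\HC)^{\V[G][H]}$, forcing invariance gives $\sigma \in \CSS(\Phi)(\HC)^{\V[G]}$. As $\V[G]$ was an arbitrary extension with $\sigma \in \HC^{\V[G]}$, this is exactly $\sigma \in \CSS(\Phi)_\ext$.

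I do not expect a serious obstacle here — the lemma is "easy" as the paper advertises — but the one point requiring care is making sure the absoluteness invocations are applied to the right transitive models: $\css(M)$ must be computed identically in $\V$ and in $\HC^{\V[G][H]}$, which is legitimate precisely because $M$ is hereditarily countable in the latter and $\css$ is $ZFC^-$-absolute, and because $\HC$ itself satisfies $ZFC^-$. An alternative, slightly slicker route avoids the detour through $\V[G][H]$: directly collapse inside $\V$ (or inside $\V[G]$) to witness $\sigma \in \CSS(\Phi)_\ext$ using the "for some (equivalently, every)" clause in the definition of $\cdot_\ext$, which only requires exhibiting one good extension. Either way the heart of the matter is the absoluteness of $\css$ plus the strong definability of $\CSS(\Phi)$.
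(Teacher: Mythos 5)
Your proof is correct and uses essentially the same idea as the paper's: collapse $M$ to be hereditarily countable, use the absoluteness of $\css$ (Fact~\ref{summ}(3)) to see that the countable copy still has Scott sentence $\sigma$, and invoke the strong definability of $\CSS(\Phi)$. The only cosmetic difference is that you verify the ``for every extension'' clause directly via forcing invariance over $\V[G]\subseteq\V[G][H]$, whereas the paper exhibits a single good extension and appeals to the ``for some (equivalently, for every)'' equivalence built into the definition of $\phi_\ext$ --- the alternative you yourself note at the end.
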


\begin{proof}  Choose any $\phi \in\CSS(\Phi)_\sat$ and choose any $M\in\V$ such that $M\models\Phi$ and $\css(M)=\phi$.  Then clearly, $\phi\in\V$.
As well, choose a forcing extension $\V[G]$ in which $M$ is countable.  Then, in $\V[G]$, there is some $M'\in\Mod(\Phi)$ (i.e., where the universe of $M'$ is $\omega$)
such that $M'\iso M$.  Then $(\css(M'))^{\V[G]}=\phi$ and so $\phi\in\CSS(\Phi)_\ext$.
\end{proof}

To summarize, elements of $\CSS(\Phi)_\ext$ are called {\em potential} canonical Scott sentences, whereas elements of $\CSS(\Phi)_\sat$ are {\em satisfiable}.
This suggests a property of the sentence $\Phi$.

\begin{definition} \label{grounded}
 A sentence $\Phi\in L_{\omega_1,\omega}$ (or a complete first-order theory $T$) is {\em grounded} if $\CSS(\Phi)_\sat=\CSS(\Phi)_\ext$, i.e., if every potential canonical Scott sentence
is satisfiable.
\end{definition}

As a trivial example, if $T$ is $\aleph_0$-categorical, then as all models of $T$ are back-and-forth equivalent, $\CSS(T)_\ext$ is a singleton, hence $T$ is grounded.
In Section~\ref{REFsection} we show that both of the theories $\REF(\bin)$ and $\REF(\inf)$ are grounded, but  in Section~\ref{OmegaStableSection} we prove that the theory $\TK$ is not grounded.

Next, we show that a  Borel reduction between invariant classes yields a strongly definable map between the associated canonical Scott sentences.

\begin{fact}  \label{corresp}
Suppose $\Phi$ and $\Phi'$ are sentences of $L_{\omega_1,\omega}$ and $L'_{\omega_1,\omega}$ respectively.
If there is a Borel reduction $f:(\Mod(\Phi),\iso)\rightarrow (\Mod(\Phi'),\iso)$ then there is a strongly definable  $f^*:\CSS(\Phi)\rightarrow \CSS(\Phi')$
between canonical Scott sentences that is persistently injective. Hence $\CSS(\Phi) \leq_{HC} \CSS(\Phi')$.
\end{fact}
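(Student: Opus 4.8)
The plan is to lift the Borel reduction $f: \Mod(\Phi) \to \Mod(\Phi')$ to a map on canonical Scott sentences by exploiting the absoluteness of $\css$ together with the $\Sigma^1_2$-absoluteness of Borel maps. First I would observe that $f$, being a Borel function, is coded by a real, hence lies in $\HC$, and its graph is a $\Sigma^1_2$ (indeed $\bo{\Delta^1_1}$ relative to that code) relation; by Shoenfield/Lemma~\ref{sigma1}, the statements ``$f$ is a function'', ``$f$ is a reduction'' (i.e. $M \iso N \iff f(M) \iso f(N)$), and ``$f(M) = N$'' are all $\HC$-forcing invariant. So $f$ itself is a strongly definable family $f: \Mod(\Phi) \to \Mod(\Phi')$ persistently, and persistently it remains a Borel reduction (a single real continues to code a Borel reduction in every forcing extension, since being a reduction is absolute).

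Next I would define $f^*$ on canonical Scott sentences. The natural definition is
\[
f^*(\sigma) = \tau \iff \exists M\, \big( M \in \Mod(\Phi) \wedge \css(M) = \sigma \wedge \css(f(M)) = \tau \big),
\]
a formula interpretable in $\HC$ with the hidden parameter $f \in \HC$. I would check this is $\HC$-forcing invariant: this is essentially the same argument as in Lemma~\ref{Karp0}(1) combined with Karp's completeness theorem. If in $\V[G]$ some $M \in \Mod(\Phi)^{\V[G]}$ has $\css(M) = \sigma$ and $\css(f(M)) = \tau$ with $\sigma, \tau \in \HC^{\V}$, then $\sigma$ already has a countable model $N \in \Mod(\Phi)^{\V}$ (since $\sigma$ is a consistent $L_{\omega_1,\omega}$-sentence with a model in a forcing extension); in $\V$, apply $f$ to get $f(N) \in \Mod(\Phi')^{\V}$, and by absoluteness of $\css$ and of ``$f$ is a reduction'' one gets $\css(f(N)) = \tau$ — because $N \equiv_{\infty,\omega} M$ forces $f(N) \iso f(M)$ in any common extension, hence $\css(f(N)) = \css(f(M)) = \tau$. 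That gives right-to-left $\HC$-forcing invariance; left-to-right is immediate by upward absoluteness.

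Then I would verify that $f^*$ is a well-defined, persistently injective, strongly definable function $\CSS(\Phi) \to \CSS(\Phi')$. Well-definedness and single-valuedness follow because if $M, M' \in \Mod(\Phi)$ have $\css(M) = \css(M') = \sigma$ then $M \iso M'$ (Fact~\ref{summ}(4)), so $f(M) \iso f(M')$, so $\css(f(M)) = \css(f(M'))$; totality follows from persistent surjectivity of $\css: \Mod(\Phi) \to \CSS(\Phi)$ (Lemma~\ref{Karp0}(2), Proposition~\ref{Karp1}). Injectivity: if $f^*(\sigma) = f^*(\sigma')$, pick witnesses $M, M'$; then $\css(f(M)) = \css(f(M'))$ gives $f(M) \iso f(M')$, and since $f$ is a reduction, $M \iso M'$, hence $\sigma = \css(M) = \css(M') = \sigma'$. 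All of this is phrased via $\HC$-forcing invariant formulas with parameter $f$, and persists to every forcing extension by the same arguments, so $f^*$ is strongly definable and persistently injective. The final clause $\CSS(\Phi) \leq_{HC} \CSS(\Phi')$ is then immediate from Definition~\ref{HCcard}.

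The main obstacle I expect is the bookkeeping in the $\HC$-forcing invariance of the graph of $f^*$: one must be careful that ``$M \in \Mod(\Phi)$'' and ``$\css(f(M)) = \tau$'' are being evaluated with quantifiers ranging over $\HC^{\V[G]}$ rather than $\V[G]$, and that passing from the witness $M$ in $\V[G]$ down to a witness $N$ in $\V$ genuinely preserves the value $\tau$ — this is where one leans on the fact that $f$ as a reduction is $\infty,\omega$-invariant (equivalent models go to isomorphic images), which itself needs the absoluteness of the Borel code for $f$. Once that is in hand, everything else is the routine diagram-chase indicated above.
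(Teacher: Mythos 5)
Your proposal is correct and follows essentially the same route as the paper: establish that the Borel graph of $f$ makes $f$ strongly definable and that ``$f$ is a reduction'' is a $\Pi^1_2$ statement preserved by Shoenfield absoluteness, then push $f$ through the $\css$-representations to get $f^*$. The only difference is that you explicitly write out and verify the induced map on Scott sentences (including the Karp-completeness argument for pulling witnesses down to $\V$), whereas the paper packages exactly that step as Lemma~\ref{compo} and cites it as a routine composition of representations.
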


\begin{proof}  It is a standard theorem, see e.g., \cite{KechrisDST} Proposition 12.4, that the graph of $f$ is Borel.  So $f$ is naturally a strongly definable family of Borel sets. 
By Lemma~\ref{compo}, it suffices to show that $f: (\Mod(\Phi),\iso) \leq_{\HC} (\Mod(\Phi'),\iso)$, which amounts to showing that $f$ remains well-defined and
injective on isomorphism classes in every forcing extension.  But this is a ${\bf \Pi^1_2}$ statement in codes for $f,\Phi,\Phi'$, and thus is absolute to forcing extensions by
Shoenfield's Absoluteness Theorem.  
\end{proof}


The following Theorem is simply a distillation of our previous results.
 

\begin{theorem}  \label{translate}
Let $\Phi$ and $\Psi$ be sentences of $L_{\omega_1,\omega}$, possibly in different countable vocabularies.
\begin{enumerate}
\item  If $\Psi$ is short, while $\Phi$ is not short, then $(\Mod(\Phi),\iso)$ is not Borel reducible to  $(\Mod(\Psi),\iso)$.
\item If $||\Psi||<||\Phi||$, then 
$(\Mod(\Phi),\iso)$ is not Borel reducible to $(\Mod(\Psi),\iso)$.
\end{enumerate}
\end{theorem}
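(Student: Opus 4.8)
The statement is an immediate consequence of the machinery already assembled, so the plan is simply to chain the relevant facts together. I would first handle (1): suppose toward a contradiction that $f : (\Mod(\Phi),\iso) \to (\Mod(\Psi),\iso)$ is a Borel reduction. By Fact~\ref{corresp}, $f$ induces a strongly definable, persistently injective map $f^* : \CSS(\Phi) \to \CSS(\Psi)$, so $\CSS(\Phi) \hcleq \CSS(\Psi)$. By Proposition~\ref{Karp1}, $\css : \Mod(\Psi) \to \CSS(\Psi)$ is a representation, so $\CSS(\Psi)$ is a representation of $(\Mod(\Psi),\iso)$; since $\Psi$ is short, $\CSS(\Psi)_\ext$ is a set, i.e.\ $\CSS(\Psi)$ is short. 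Now Proposition~\ref{shorty}(1) applied to $\CSS(\Phi) \hcleq \CSS(\Psi)$ gives that $\CSS(\Phi)$ is short, i.e.\ $\CSS(\Phi)_\ext$ is a set, which says precisely that $\Phi$ is short (Definition~\ref{shortdef}) — contradiction.

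For (2), the argument is parallel but uses the quantitative version. Again suppose $f : (\Mod(\Phi),\iso) \to (\Mod(\Psi),\iso)$ is a Borel reduction. By Fact~\ref{corresp}, $\CSS(\Phi) \hcleq \CSS(\Psi)$. If $\Psi$ is not short then $||\Psi|| = \infty \geq ||\Phi||$, contradicting $||\Psi|| < ||\Phi||$; so we may assume $\Psi$ is short, and then by part (1) (or directly by Proposition~\ref{shorty}(1)) $\Phi$ is short as well, so both potential cardinalities are genuine cardinals. Proposition~\ref{shorty}(2) applied to $\CSS(\Phi) \hcleq \CSS(\Psi)$ yields $||\CSS(\Phi)|| \leq ||\CSS(\Psi)||$, and by the displayed identity following Definition~\ref{shortdef}, $||\Phi|| = ||\CSS(\Phi)||$ and $||\Psi|| = ||\CSS(\Psi)||$, so $||\Phi|| \leq ||\Psi||$, contradicting the hypothesis $||\Psi|| < ||\Phi||$.

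There is no real obstacle here — every ingredient (the absoluteness argument turning a Borel reduction into a persistently injective strongly definable map, the fact that $\css$ gives a representation, and the monotonicity of $||\cdot||$ under $\hcleq$) has already been established. The only thing to be careful about is bookkeeping: making sure that the notion of ``short'' for the sentence $\Phi$ (Definition~\ref{shortdef}, about $\CSS(\Phi)_\ext$) is identified with ``short'' for the strongly definable family $\CSS(\Phi)$, and that the potential cardinality $||\Phi||$ is identified with $|\CSS(\Phi)_\ext| = ||\CSS(\Phi)||$; both identifications are exactly the content of the remarks immediately after Definition~\ref{shortdef}, so the theorem follows formally.
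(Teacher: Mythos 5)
Your proposal is correct and follows essentially the same route as the paper: both arguments pass through Fact~\ref{corresp} to get $\CSS(\Phi)\hcleq\CSS(\Psi)$ and then apply Proposition~\ref{shorty} together with the identification $||\Phi||=||\CSS(\Phi)||$. The only cosmetic difference is that the paper deduces (1) from (2) via the convention $\kappa<\infty$, whereas you prove (1) directly; the content is identical.
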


\begin{proof}
(1) follows immediately from (2).

(2) Suppose $f:(\Mod(\Phi),\iso)\rightarrow (\Mod(\Psi),\iso)$ were a Borel reduction.  Then by Fact~\ref{corresp} we would obtain
a strongly definable $f^*:\CSS(\Phi)\rightarrow \CSS(\Psi)$ that is persistently injective, meaning
$||\CSS(\Phi)||\le||\CSS(\Psi)||$.  This contradicts Proposition~\ref{shorty}(2).
\end{proof}

\subsection{Consequences of $\iso_\Phi$ being Borel}

Although our primary interest is classes $\Mod(\Phi)$ where $\iso_\Phi$ is not Borel, in this brief subsection we see the consequences of $\iso_\Phi$ being Borel.

\begin{theorem} \label{charBorel} The following are equivalent for a sentence $\Phi\in L_{\omega_1,\omega}$ in countable vocabulary.
\begin{enumerate}
\item  The relation of $\iso$ on $\Mod(\Phi)$ is a Borel subset of $\Mod(\Phi)\times \Mod(\Phi)$;
\item For some $\alpha < \omega_1$, $\CSS(\Phi) \subseteq \HC_\alpha$;
\item  For some $\alpha<\omega_1$, $\CSS(\Phi)$ is persistently contained in $\HC_\alpha$;
\item  $\CSS(\Phi)_\ext$ is contained in $\b V_\alpha$ for some $\alpha<\omega_1$.
\end{enumerate}
\end{theorem}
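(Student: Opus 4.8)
The plan is to prove the equivalences in the cycle $(1)\Rightarrow(2)\Rightarrow(3)\Rightarrow(4)\Rightarrow(1)$, with the main work being the two ``endpoints'' $(1)\Rightarrow(2)$ and $(4)\Rightarrow(1)$; the middle implications are nearly bookkeeping. For $(3)\Rightarrow(4)$: if $\CSS(\Phi)$ is persistently contained in $\HC_\alpha$, then in particular for any forcing extension $\V[G]$ and any $a\in\CSS(\Phi)^{\V[G]}$ we have $a\in(\HC_\alpha)^{\V[G]}=\HC^{\V[G]}\cap(\V_\alpha)^{\V[G]}$; by Example~\ref{cases}(6), $(\HC_\alpha)_\ext=\V_\alpha$ (computed in the ground model), so $\CSS(\Phi)_\ext\subseteq\V_\alpha$. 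For $(2)\Rightarrow(3)$: this is a Shoenfield/$\Sigma_1$-absoluteness argument. The statement ``$\CSS(\Phi)\subseteq\HC_\alpha$'' says every $M\in\Mod(\Phi)$ has $\mathrm{rank}(\css(M))<\alpha$; since $\css$ is absolute and rank is absolute, and ``$\mathrm{rank}(\css(M))<\alpha$ for all $M\in\Mod(\Phi)$'' is (for fixed countable $\alpha$ coded as a real) a $\Pi^1_1$-type statement about a real coding $\Phi$ and a real coding a well-order of type $\alpha$, Shoenfield gives persistence. Alternatively, one observes directly: if in some $\V[G]$ there were $M\in\Mod(\Phi)^{\V[G]}$ with $\mathrm{rank}(\css(M))\ge\alpha$, then by Karp completeness (as invoked before Lemma~\ref{Karp0}) $\css(M)$ would be potentially satisfiable, hence by $(2)\Rightarrow(4)$-reasoning it would already be realized by a countable model in $\V$, contradicting $(2)$.

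For $(1)\Rightarrow(2)$: assume $\iso_\Phi$ is Borel. The key classical input is the boundedness/Nadel theorem for Scott ranks: the Scott rank of a countable structure $M$ is bounded by $\omega_1^{\mathrm{CK}}(M)$-type ordinals, but more to the point, for a Borel isomorphism relation the Scott ranks of models of $\Phi$ are bounded below $\omega_1$. Concretely, one can argue: if the Scott ranks were unbounded below $\omega_1$, one could (by a standard Lopez-Escobar / boundedness argument, e.g.\ via the fact that $\{(M,N)\in\Mod(\Phi)^2 : M\equiv_\alpha N\}$ is Borel uniformly in $\alpha$ and these stratify $\iso$) write $\iso_\Phi$ as a strictly increasing $\omega_1$-union of Borel sets, forcing $\iso_\Phi$ to be $\Sigma^1_1$-complete among such, hence non-Borel — contradicting $(1)$. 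So there is $\beta<\omega_1$ bounding all Scott ranks, and then $\alpha^*(M)\le\beta$ for all $M\in\Mod(\Phi)$, so $\css(M)$ is built using only the formulas $\phi^{\abar}_\gamma$ for $\gamma\le\beta+1$; since each such formula has rank bounded by an ordinal depending only on $\beta$ (and the countable language), there is a single $\alpha<\omega_1$ with $\css(M)\in\HC_\alpha$ for all $M\in\Mod(\Phi)$.

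For $(4)\Rightarrow(1)$: assume $\CSS(\Phi)_\ext\subseteq\V_\alpha$, $\alpha<\omega_1$. Then for every $M\in\Mod(\Phi)$ (these live in $\V$, with universe $\omega$, hence are hereditarily countable), $\css(M)\in\CSS(\Phi)_\sat\subseteq\CSS(\Phi)_\ext\subseteq\V_\alpha$ by Lemma~\ref{subset}, and $\css(M)\in\HC$, so $\css(M)\in\HC_\alpha$. Thus $\CSS(\Phi)\subseteq\HC_\alpha$, a \emph{set}. Now $M\iso N$ iff $\css(M)=\css(N)$ iff $N\models\css(M)$ (Fact~\ref{summ}(4)); and the relation $\{(M,\sigma)\in\Mod(\Phi)\times\HC_\alpha : M\models\sigma\}$ is Borel (satisfaction for a fixed sentence of $L_{\omega_1,\omega}$ of bounded rank is Borel, uniformly), as is the map $M\mapsto\css(M)$ restricted to $\Mod(\Phi)$ (again because only formulas $\phi^{\abar}_\gamma$ with $\gamma\le\alpha^*(M)<\alpha$ are involved, and $\alpha^*(M)$ is computed by a Borel procedure once ranks are bounded). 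Composing, $\iso_\Phi=\{(M,N):N\models\css(M)\}$ is Borel.

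\textbf{Main obstacle.} The delicate step is $(1)\Rightarrow(2)$ — turning ``$\iso_\Phi$ Borel'' into a genuine \emph{ordinal} bound on Scott ranks (equivalently on $\alpha^*(M)$, equivalently on $\mathrm{rank}(\css(M))$) below $\omega_1$. This is the classical fact that Borel equivalence relations arising as isomorphism on an $L_{\omega_1,\omega}$-class have bounded Scott height; the cleanest route is the boundedness theorem for $\Pi^1_1$ (or $\Sigma^1_1$) sets applied to the $\Pi^1_1$ set of codes of countable models of $\Phi$ of large Scott rank, using that if Scott ranks were unbounded the iso relation would be $\Sigma^1_1$-hard. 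One must be careful to state this correctly (the bound is uniform over all of $\Mod(\Phi)$, not model-by-model) and to check that the rank of $\css(M)$ as a set in $\HC$ is controlled by $\alpha^*(M)$ plus a fixed countable offset coming from the language — a routine but necessary verification.
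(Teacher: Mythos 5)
Your proposal is correct and follows essentially the same route as the paper: the equivalence of (1) with a countable bound on the Scott sentences is exactly the classical result the paper cites (Gao, Theorem 12.2.4), persistence of that bound is an absoluteness argument (the paper phrases it as $\Sigma_1$-absoluteness over $\HC$ via Lemma~\ref{sigma1}, you phrase it as Shoenfield applied to a $\Pi^1_1$ statement --- these are the same tool), and the passage between $\HC_\alpha$ and $\V_\alpha$ is Example~\ref{cases}(6), just as in the paper; your only structural deviation is closing the cycle with a direct $(4)\Rightarrow(1)$, which simply re-proves the easy direction of the cited classical fact. One small caveat: your ``alternative'' argument for $(2)\Rightarrow(3)$ via Karp completeness does not work as stated, because a countable model of $\Phi$ appearing only in $\V[G]$ may have a canonical Scott sentence that is not an element of $\V$ at all, so there is no sentence in $\V$ to which the pull-back applies --- but since your primary absoluteness argument for that step is sound, nothing is lost.
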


\begin{proof}   
To see the equivalence of (1) and (2), first note that in both conditions we are only considering models of $\Phi$ with universe $\omega$
and the canonical Scott sentence of such objects.  In particular, neither condition involves passing to a forcing extension.
However, it is a classical result (see for instance \cite{GaoIDST}, Theorem 12.2.4) that $\iso$ is Borel if and only if the Scott ranks of countable models are bounded below $\omega_1$,
which is equivalent to stating that there is a bound on the canonical Scott sentences in the $\HC_\alpha$ hierarchy.

For (2) implies (3),   note that the formula $\exists M: M \models \Phi \land \css(M) \not \in \HC_\alpha$ is a $\Sigma_1$ formula in the parameters $\Phi, \alpha\in HC$ and so is absolute to forcing extensions by Lemma~\ref{sigma1}.

That (3) implies (4) and (4) implies (2)  follow directly from Example~\ref{cases}(6).
\end{proof}


We obtain an immediate corollary to this.  
Let  $I_{\infty,\omega}(\Phi)$ denote the cardinality of a maximal set of pairwise $\equiv_{\infty,\omega}$-inequivalent models 
$M\in\V$ (of any cardinality) with $M\models \Phi$.
If no maximal set exists, we write $I_{\infty,\omega}(\Phi)=\infty$.  By Fact~\ref{summ} and  Lemma~\ref{subset}, $I_{\infty,\omega}(\Phi)=|\CSS(\Phi)_\sat|\le ||\Phi||$.


\begin{cor}  \label{Borelisshort}
Let $\Phi$ be any sentence in $L_{\omega_1,\omega}$ in a countable vocabulary such that $\iso$ is a Borel subset of $\Mod(\Phi)\times \Mod(\Phi)$.
Then
\begin{enumerate}
\item   $\Phi$ is short; and 
\item $I_{\infty,\omega}(\Phi)<\beth_{\omega_1}$. (In fact $||\Phi|| < \beth_{\omega_1}$.)
\end{enumerate}
\end{cor}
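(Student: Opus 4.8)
\textbf{Proof plan for Corollary~\ref{Borelisshort}.}

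The plan is to derive everything from the equivalence established in Theorem~\ref{charBorel} together with the easy facts collected in Example~\ref{cases}(6) and the remark immediately preceding this corollary. Since $\iso$ is Borel on $\Mod(\Phi)\times\Mod(\Phi)$, condition (1) of Theorem~\ref{charBorel} holds, hence so does condition (4): there is some $\alpha<\omega_1$ with $\CSS(\Phi)_\ext\subseteq \b V_\alpha$. This already gives part (1): $\b V_\alpha$ is a set, so its subclass $\CSS(\Phi)_\ext$ is a set, which is precisely the statement that $\Phi$ is short (Definition~\ref{shortdef}).

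For part (2), I would first bound $||\Phi||$. By definition $||\Phi|| = |\CSS(\Phi)_\ext|$, and we have just seen $\CSS(\Phi)_\ext \subseteq \b V_\alpha$ for some $\alpha<\omega_1$; so it suffices to observe that $|\b V_\alpha| < \beth_{\omega_1}$ for every $\alpha<\omega_1$. Indeed, a routine transfinite induction shows $|\b V_\alpha| \le \beth_\alpha$ for limit $\alpha$ (and $|\b V_{\alpha}| = \beth_\alpha$ or smaller in general), and since $\alpha<\omega_1$ we get $\beth_\alpha<\beth_{\omega_1}$ by the monotonicity and continuity of the $\beth$ function. Hence $||\Phi|| \le |\b V_\alpha| \le \beth_\alpha < \beth_{\omega_1}$, proving the parenthetical claim. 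Finally, $I_{\infty,\omega}(\Phi) < \beth_{\omega_1}$ follows because, as noted just before the corollary (using Fact~\ref{summ} and Lemma~\ref{subset}), $I_{\infty,\omega}(\Phi) = |\CSS(\Phi)_\sat| \le ||\Phi||$.

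There is no real obstacle here — the corollary is, as the text says, an immediate consequence of Theorem~\ref{charBorel}. The only point requiring a moment's care is the arithmetic estimate $|\b V_\alpha|\le\beth_\alpha$ for $\alpha<\omega_1$; one should be slightly careful about the successor steps (where $|\b V_{\alpha+1}| = 2^{|\b V_\alpha|}$) versus limit steps, but in all cases the bound stays below $\beth_{\omega_1}$ since $\omega_1$ is a limit ordinal and $\beth$ is continuous there. Everything else is a direct citation of the prior results in this section.
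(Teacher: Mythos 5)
Your proposal is correct and follows essentially the same route as the paper: invoke Theorem~\ref{charBorel} to get $\CSS(\Phi)_\ext\subseteq \b V_\alpha$ for some $\alpha<\omega_1$, conclude shortness, and then bound $I_{\infty,\omega}(\Phi)=|\CSS(\Phi)_\sat|\le\|\Phi\|\le|\b V_\alpha|<\beth_{\omega_1}$. The only difference is that you spell out the cardinal arithmetic $|\b V_\alpha|<\beth_{\omega_1}$, which the paper leaves implicit.
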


\begin{proof}
Assume that $\iso$ is a Borel subset of $\Mod(\Phi)\times \Mod(\Phi)$.  By Theorem~\ref{charBorel}(3), $\CSS(\Phi)_\ext\subseteq \b V_{\alpha}$  for some
$\alpha<\omega_1$ and hence is a set.
Thus, $\Phi$ is short, and $I_{\infty,\omega}(\Phi)=|\CSS(\Phi)_\sat|\le |\CSS(\Phi)_\ext|\le|\V_\alpha|<\beth_{\omega_1}$.
\end{proof}

We remark that the implication in Corollary~\ref{Borelisshort} does not reverse.  In Sections~\ref{REFsection} and \ref{OmegaStableSection} we show that both of the complete theories $\REF(\bin)$ and $\K$ are short,  but on countable models of either theory, $\iso$ is not Borel. 



\subsection{Maximal Complexity}
In this subsection, we recall two definitions of maximality.  The first, Borel completeness, is from Friedman-Stanley \cite{FriedmanStanleyBC}.  

\begin{definition} \label{BC} Let $\Phi\in L_{\omega_1,\omega}$.  
	The quotient $(\Mod(\Phi),\iso)$ is {\em Borel complete} if every $(\Mod(\Psi),\iso)$ is Borel reducible to  $(\Mod(\Phi),\iso)$.
\end{definition}

\begin{cor}  \label{BCnotshort}  If $(\Mod(\Phi),\iso)$ is Borel complete, then $\Phi$ is not short.
\end{cor}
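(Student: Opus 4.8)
The plan is to derive this as an immediate consequence of Theorem~\ref{translate}(1), once we produce a single non-short sentence that is Borel reducible to every Borel-complete class. Concretely, suppose toward a contradiction that $(\Mod(\Phi),\iso)$ is Borel complete but $\Phi$ is short. By Borel completeness, every $(\Mod(\Psi),\iso)$ — in particular one for a \emph{non-short} $\Psi$ — is Borel reducible to $(\Mod(\Phi),\iso)$. But Theorem~\ref{translate}(1) says there can be no Borel reduction from a non-short sentence into a short one, giving the contradiction. So the only thing that needs to be checked is that a non-short sentence exists at all.

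For that, the natural candidate is $\REF(\inf)$, which the introduction records as $\lambda$-Borel complete for all infinite $\lambda$, hence not short; alternatively one can invoke the theory of graphs or linear orders, which are classically Borel complete and, being Borel complete themselves, cannot be short by the very argument we are running — except that would be circular, so I would point instead to a sentence whose non-shortness is established independently. The cleanest self-contained route available from the material in the excerpt is to observe that $\omega_1$, viewed as a strongly definable family, has $(\omega_1)_\ext = \ON$ and so is not short (Example~\ref{cases}(2)); then $\P_{\aleph_1}$-iterations of it, or more to the point the class $\CSS(\Psi)$ for $\Psi$ the Scott sentence analysis of linear orders, yields a sentence $\Psi$ with $\CSS(\Psi)_\ext$ a proper class. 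Since the paper later proves $\REF(\inf)$ is non-short, it is legitimate to cite that; I would phrase the corollary's proof as: \emph{fix any non-short $\Psi$ (such a $\Psi$ exists — e.g.\ $\REF(\inf)$, or the theory of linear orders, shown non-short in Section~\ref{REFsection}); by Borel completeness $(\Mod(\Psi),\iso) \le_B (\Mod(\Phi),\iso)$; by Theorem~\ref{translate}(1) this forces $\Phi$ to be non-short.}

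The main obstacle — really the only subtlety — is avoiding circularity in asserting that \emph{some} non-short sentence exists, since the most familiar Borel-complete examples are exactly the ones whose non-shortness we might be trying to conclude. This is resolved by noting that non-shortness of a specific example like $\REF(\inf)$ (or graphs, or linear orders) can be, and in this paper is, established directly via $\lambda$-Borel completeness for unbounded $\lambda$, which produces $\beth_{\omega_1}$-many — indeed a proper class of — potential canonical Scott sentences without any appeal to the present corollary. Everything else is a one-line invocation of Theorem~\ref{translate}(1).

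\begin{proof}
Suppose for contradiction that $(\Mod(\Phi),\iso)$ is Borel complete but $\Phi$ is short.  Fix any sentence $\Psi$ that is not short; such $\Psi$ exist (for instance $\REF(\inf)$, shown to be $\lambda$-Borel complete for all infinite $\lambda$, hence not short, in Section~\ref{REFsection}).  Since $(\Mod(\Phi),\iso)$ is Borel complete, $(\Mod(\Psi),\iso)$ is Borel reducible to $(\Mod(\Phi),\iso)$.  But $\Psi$ is not short and $\Phi$ is short, so Theorem~\ref{translate}(1) asserts that $(\Mod(\Psi),\iso)$ is \emph{not} Borel reducible to $(\Mod(\Phi),\iso)$, a contradiction.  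Hence $\Phi$ is not short.
\end{proof}
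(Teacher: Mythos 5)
Your overall skeleton is exactly the paper's: assume $\Phi$ is Borel complete, exhibit a non-short $\Psi$, use Borel completeness to reduce $\Psi$ to $\Phi$, and invoke Theorem~\ref{translate}(1). You also correctly identified that the only real content is producing a non-short sentence without circularity. The problem is that your chosen witness does not escape the circularity you flagged. In the paper, the statement ``$\REF(\inf)$ is not short'' (the corollary at the end of Section~\ref{REFsection}) is itself \emph{derived from} Corollary~\ref{BCnotshort} together with Theorem~\ref{REIisBCTheorem}; it is not established independently. So citing ``$\REF(\inf)$ is shown non-short in Section~\ref{REFsection}'' inside the proof of Corollary~\ref{BCnotshort} is circular as the paper is actually organized. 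Your fallback claim --- that $\lambda$-Borel completeness for all infinite $\lambda$ \emph{directly} yields non-shortness --- is true but is nowhere proved in the paper as a standalone implication, and unpacking it requires exactly the ingredient you would need anyway: a concrete proper class of pairwise $\equiv_{\infty,\omega}$-inequivalent structures whose Scott sentences land in $\CSS(\Psi)_\sat$, hence in $\CSS(\Psi)_\ext$ by Lemma~\ref{subset}.

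The paper supplies that ingredient cleanly and self-containedly: take $\Psi$ to be the theory of linear orders; distinct ordinals $(\alpha,\le)$ and $(\beta,\le)$ are $\equiv_{\infty,\omega}$-inequivalent, so they have distinct canonical Scott sentences, so $\CSS(\Psi)_\sat$ is a proper class, and by Lemma~\ref{subset} so is $\CSS(\Psi)_\ext$; hence $\Psi$ is not short, and Theorem~\ref{translate}(1) finishes. You gesture at this in your discussion (``the theory of linear orders \ldots yields a sentence $\Psi$ with $\CSS(\Psi)_\ext$ a proper class'') but then the proof block you actually commit to uses the $\REF(\inf)$ route. Replace the parenthetical justification in your proof with the ordinals-in-linear-orders argument and the proof is complete and matches the paper's.
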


\begin{proof}  Let $L=\{\le\}$ and let $\Psi$ assert that $\le$ is a linear ordering.  As $(\Mod(\Phi),\iso)$ is Borel complete,
there is a Borel reduction $f:(\Mod(\Psi),\iso)\rightarrow (\Mod(\Phi),\iso)$.  However, it is easily proved that for distinct ordinals  $\alpha\neq\beta$,
the $L$-structures $(\alpha,\le)$ and $(\beta,\le)$ are $\equiv_{\infty\omega}$-inequivalent models of $\Psi$, hence have distinct canonical Scott sentences.
Thus, $\CSS(\Psi)_\sat$ is a proper class, and hence so is $\CSS(\Psi)_\ext$ by Lemma~\ref{subset}.  So $\Phi$ cannot be short by Theorem~\ref{translate}(1).
\end{proof}


If one is only interested in classes of countable models, then the Borel complete classes are clearly maximal with respect to Borel reducibility.  As any invariant class 
of countable structures has a natural extension to a class of uncountable structures,  one can ask for more.
The following definitions from \cite{LaskowskiShelahAleph0Stable} generalize Borel completeness to larger cardinals $\lambda$.  To see that it is a generalization, recall that among countable structures, isomorphism is equivalent to back-and-forth equivalence, and that for structures of size $\lambda$, $\equiv_{\lambda^+,\omega}$-equivalence is also equivalent to back-and-forth equivalence.  Consequently, `Borel complete' in the sense of Definition~\ref{BC} is equivalent to `$\aleph_0$-Borel complete'  in Definition~\ref{lambdaBC}.
So, `$\Phi$ is $\lambda$-Borel complete for all infinite $\lambda$' implies $\Phi$ Borel complete.  However, in Section~\ref{OmegaStableSection} 
we will see that the theory $\TK$ is Borel complete,
but is not $\lambda$-Borel complete for large $\lambda$.

\begin{definition}  \label{lambdaBC} Let $\Phi$ be a sentence of $L_{\omega_1, \omega}$.

	\begin{itemize}
		\item  For $\lambda\ge\aleph_0$, let $\Mod_\lambda(\Phi)$ denote the class of models of $\Phi$ with universe $\lambda$.  
		\item Toplogize $\Mod_\lambda(\Phi)$ by declaring that 
		${{\cal B}}:=\{U_{\theta(\alphabar)}:\theta(\xbar) \ \hbox{is quantifier free and $\alphabar\in\lambda^{<\omega}$}\}$
		is a sub-basis, 
		where $U_{\theta(\alphabar)}=\{M\in \Mod_\lambda(\Phi):M\models\theta(\alphabar)\}$.
		\item  A set is  {\em $\lambda$-Borel}  if it is in the $\lambda^+$-algebra generated by the sub-basis ${\cal B}$.
		\item  A function $f:\Mod_\lambda(\Phi)\rightarrow \Mod_\lambda(\Psi)$ is a
		{\em $\lambda$-Borel embedding} if
		\begin{itemize} 
			\item  the inverse image of every (sub)-basic open set is $\lambda$-Borel; and
			\item  For $M,N\in \Mod_\lambda(\Phi)$, $M\equiv_{\infty,\omega} N$ if and only if  $f(M)\equiv_{\infty,\omega} f(N)$.
		\end{itemize}
		\item $(\Mod_\lambda(\Phi),\equiv_{\infty,\omega})$ is {\em $\lambda$-Borel reducible} to $(\Mod_\lambda(\Psi),\equiv_{\infty,\omega})$ if there exists a $\lambda$-Borel embedding
		$f:\Mod_\lambda(\Phi)\rightarrow \Mod_\lambda(\Psi)$.
		\item  $\Phi$ is {\em $\lambda$-Borel complete }  if every $(\Mod_\lambda(\Psi),\equiv_{\infty,\omega})$ is $\lambda$-Borel reducible to $(\Mod_\lambda(\Phi),\equiv_{\infty,\omega})$.
	\end{itemize}
\end{definition}
For example, the class of graphs (directed or undirected) is $\lambda$-Borel complete for all infinite $\lambda$. This is a standard coding argument.  
Although we are not aware of any direct reference,  Theorem 5.5.1 of \cite{HodgesModelTheory} states  that graphs can interpret any theory. 
It is easily checked that the  map constructed  in the proof of Theorem~5.5.1 is in fact a 
$\lambda$-Borel reduction for every $\lambda$.  

Also, in \cite{LaskowskiShelahAleph0Stable} it is proved that the class of subtrees of $\lambda^{<\omega}$ is $\lambda$-Borel complete, and more recently the second author has proved that the class of linear orders is $\lambda$-Borel complete for all $\lambda$.

\subsection{Jumps and products}


In this subsection we recall two procedures -- the \emph{jump} and the \emph{product} -- and use them to define a sequence $\<T_\alpha:\alpha\in\omega_1\>$ of complete, first order theories
for which the potential cardinality is strictly increasing.

\begin{definition}\label{jumpDefinition}
	Suppose $L$ is a countable relational language and $\Phi\in L_{\omega_1,\omega}$.  The {\em jump of $\Phi$}, written $J(\Phi)$, is a sentence of $L'_{\omega_1 \omega}$ defined as follows, where $L' =L\cup\{E\}$ is obtained by adding a new binary relation symbol $E$ to $L$. Namely $J(\Phi)$ states that $E$ is an equivalence relation with infinitely many classes, each of which is a model of $\Phi$.  If $R\in L$ and $\o x$ is a tuple not all from the same $E$-class, then $R(\o x)$ is defined to be false, so that the models are independent.
\end{definition}

There is a corresponding notion of jump that can be defined directly on equivalence relations: Given an equivalence relation $E$ on $X$, its jump is the equivalence relation $J(E)$ on $X^\omega$, defined by setting $(x_n: n \in \omega) J(E) (y_n: n  \in \omega)$ if there is some $\sigma \in S_\infty$ with $x_{\sigma(n)} E y_n$ for all $n \in \omega$. Then the previous definition of the jump can be viewed as the special case where $(X, E)$  is $(\Mod(\Phi), \cong)$.

The notion of a jump was investigated in \cite{FriedmanStanleyBC}, where it was shown that if $E$ is a Borel equivalence relation on a Polish space $X$ with more than one class, then $E\borellt J(E)$. We give a partial generalization of this in Proposition~\ref{BasicJumpProposition}(3) -- if $\Phi\in L_{\omega_1,\omega}$ is short, then $\|\Phi\|<\|J(\Phi)\|$, so $\Phi\borellt J(\Phi)$.  Using the theory of pins \cite{ZapletalForcingBorel}, one can use essentially the same proof to give a true generalization: if $(X,E)$ is strongly definable, short, and has more than one $E$-class, then $\|(X,E)\|<\|(X^\omega,J(E))\|$, so in particular $E\borellt J(E)$.

The following Proposition lists the basic properties of the jump operation.

\begin{prop}\label{BasicJumpProposition}
	Let $\Phi$ and $\Psi$ be $L_{\omega_1,\omega}$-sentences in countable relational languages.
	
	\begin{enumerate}
		\item If $\Phi$ is a complete first order theory, so is $J(\Phi)$.
		\item If $\Phi$ is grounded, so is $J(\Phi)$.
		\item If $\Phi$ is short, then $J(\Phi)$ is also short. More precisely, if $\|\Phi\|$ is infinite, $\|J(\Phi)\|=2^{\|\Phi\|}$.  If $2\leq \|\Phi\|<\aleph_0$, $\|J(\Phi)\|=\aleph_0$.  If $\|\Phi\|=1$, then $\|J(\Phi)\|=1$.
		\item The jump is monotone: if $\Phi\borelleq \Psi$, then $J(\Phi)\borelleq J(\Psi)$.
		\item It is always true that $\Phi\borelleq J(\Phi)$; if $\Phi$ is short and not $\aleph_0$-categorical, then $\Phi\borellt J(\Phi)$.
	\end{enumerate}
	
	Note that here and throughout, we use $\Phi\borelleq \Psi$ as a shorthand for $(\Mod(\Phi),\iso)\borelleq (\Mod(\Psi),\iso)$, and similarly with $\borellt$ and $\boreleq$.
\end{prop}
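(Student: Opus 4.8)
The plan is to verify each of the five items in turn, using the machinery of strongly definable quotients and potential cardinality developed earlier, together with the corresponding Borel-reducibility facts from Friedman--Stanley.

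\emph{Item (1).} For completeness of $J(\Phi)$, recall that a sentence in a countable language is (the Scott sentence of a model of) a complete first-order theory iff it is $\aleph_0$-categorical up to elementary equivalence, i.e.\ iff all countable models are elementarily equivalent. Since an $E$-equivalence-class structure in a model of $J(\Phi)$ is just a model of $\Phi$, and the remaining structure (an equivalence relation with infinitely many classes, with $R$ trivialized across classes) is itself $\aleph_0$-categorical, a standard back-and-forth / Feferman--Vaught style argument shows that the first-order theory of a countable model of $J(\Phi)$ is determined by the common first-order theory of its classes. Hence if $\Phi$ is a complete first-order theory, so is $J(\Phi)$.

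\emph{Item (2).} For groundedness, I would argue directly at the level of canonical Scott sentences. Given $\psi \in \CSS(J(\Phi))_\ext$, pick a forcing extension $\V[G]$ in which $\psi \in \HC$ and $\psi = \css(N)$ for some countable $N \models J(\Phi)$. Each $E$-class of $N$ is a countable model of $\Phi$, so its canonical Scott sentence lies in $\CSS(\Phi)_\ext$; moreover $\css(N)$ is built uniformly (and absolutely) from the multiset of Scott sentences of the classes, which is really just a \emph{set} $S \subseteq \CSS(\Phi)_\ext$ of size at most $\aleph_0$ together with multiplicity data in $\{1,2,\dots,\aleph_0\}$. Since $\Phi$ is grounded, each element of $S$ is realized by an actual model in $\V$; assembling one class of the appropriate isomorphism type for each element of $S$ with the correct multiplicity (and infinitely many classes total, padding with repeats if necessary) produces a model $N' \in \V$ of $J(\Phi)$ with $\css(N') = \psi$. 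Hence $\psi \in \CSS(J(\Phi))_\sat$.

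\emph{Item (3).} This is the main computational heart, and I expect it to be the main obstacle, because it requires pinning down exactly what $\CSS(J(\Phi))_\ext$ looks like. The key claim is that $\CSS(J(\Phi))_\ext$ is in a persistent bijection with the set of pairs $(S,\mu)$ where $S \subseteq \CSS(\Phi)_\ext$ is nonempty, $\mu\colon S \to \{1,2,\dots\}\cup\{\aleph_0\}$ assigns multiplicities, and at most countably many classes are used unless one allows $\aleph_0$ as a total — i.e., roughly, countable sub-multisets of $\CSS(\Phi)_\ext$ with infinitely many total entries. Write $\kappa = \|\Phi\| = |\CSS(\Phi)_\ext|$. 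If $\kappa \geq \aleph_0$, the number of such countable multisets is $\kappa^{\aleph_0} = 2^\kappa$ (using $\kappa \geq \aleph_0$ and that countable sequences from a set of size $\kappa$ number $\kappa^{\aleph_0}$, which equals $2^\kappa$ since $2^\kappa \le \kappa^{\aleph_0} \le (2^\kappa)^{\aleph_0} = 2^\kappa$), so $\|J(\Phi)\| = 2^{\|\Phi\|}$; in particular $J(\Phi)$ is short. If $2 \le \kappa < \aleph_0$, there are countably infinitely many such multisets (finitely many ``shapes'' for each finite cardinality of support, infinitely many total-count choices), giving $\|J(\Phi)\| = \aleph_0$. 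If $\kappa = 1$, every model of $J(\Phi)$ has all classes isomorphic to the unique type, so there is exactly one Scott sentence and $\|J(\Phi)\|=1$. The delicate points here are (a) justifying the ``Scott sentence of $N$ $\leftrightarrow$ multiset of Scott sentences of classes'' correspondence persistently and absolutely, which follows from the canonicity of $\css$ and the fact that an automorphism of $N$ must permute $E$-classes; and (b) checking the cardinal arithmetic in the finite and $\kappa=1$ edge cases.

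\emph{Items (4) and (5).} Monotonicity of $J$ under $\borelleq$ is essentially the argument of Friedman--Stanley: given a Borel reduction $f\colon \Mod(\Phi)\to\Mod(\Psi)$, applying $f$ classwise (and sending the $E$-structure to itself) yields a Borel reduction $J(\Phi)\to J(\Psi)$; absoluteness of the relevant $\mathbf{\Pi}^1_2$ statements handles well-definedness. That $\Phi \borelleq J(\Phi)$ always holds is immediate: send a model $M$ of $\Phi$ to a model of $J(\Phi)$ whose classes are all isomorphic to $M$ — this is Borel and preserves and reflects isomorphism. Finally, if $\Phi$ is short and not $\aleph_0$-categorical, then $\|\Phi\| \ge 2$, so by (3) $\|J(\Phi)\|$ is either $\aleph_0$ (when $\|\Phi\|$ is finite) or $2^{\|\Phi\|} > \|\Phi\|$; in either case $\|\Phi\| < \|J(\Phi)\|$, so Theorem~\ref{translate}(2) gives $J(\Phi)\not\borelleq\Phi$, whence $\Phi\borellt J(\Phi)$.
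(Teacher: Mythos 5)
Your items (1), (4), and (5) match the paper's own (terse) arguments, and your item (2) is the paper's argument in outline. The one thing you elide in (2) is why the multiset $S$ of Scott sentences of the $E$-classes, computed in $\V[G]$, is actually an element of $\V$: each \emph{element} of $S$ lies in $\CSS(\Phi)_\ext\subseteq\V$, but a set of ground-model objects assembled in $\V[G]$ need not itself belong to $\V$ (a generic real is a set of natural numbers). The paper gets $S\in\V$ from the fact that $S$ is definable from $\Psi$ alone, via Lemma~\ref{product3}; without that, the "assembly in $\V$" step has nothing to assemble from.

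The genuine gap is in item (3), which you rightly flag as the heart. You identify $\CSS(J(\Phi))_\ext$ with the \emph{countable} sub-multisets of $\CSS(\Phi)_\ext$. That is the wrong class: a potential Scott sentence of $J(\Phi)$ comes from a model that is countable in \emph{some forcing extension}, and in that extension an arbitrary ground-model subset $X\subseteq\CSS(\Phi)_\ext$ --- of any cardinality in $\V$ --- may have been collapsed to be countable. The correct parametrization is therefore by arbitrary multiplicity functions $\mu:\CSS(\Phi)_\ext\to\omega+1$, of which there are $|(\omega+1)^{\kappa}|=2^{\kappa}$; this is exactly how the paper obtains both bounds (the lower bound by collapsing an arbitrary $X$ and using Lemma~\ref{product3} to see the resulting Scott sentence lies in $\V$). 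Your count of countable multisets gives $\kappa^{\aleph_0}$, and the repair you propose, $2^\kappa\le\kappa^{\aleph_0}$, is false: for $\kappa=\beth_1$ one has $\kappa^{\aleph_0}=(2^{\aleph_0})^{\aleph_0}=\beth_1$ while $2^{\kappa}=\beth_2$. This is not a harmless slip --- it is precisely the case $\|T_2\|=\|J(T_1)\|$, where your count would yield $\beth_1$ instead of $\beth_2$, destroying the strictness of the chain $T_\alpha\borellt T_{\alpha+1}$ and with it applications such as $T_3\not\borelleq\REF(\bin)$. Once the parametrization is corrected, your finite and $\kappa=1$ cases and your deduction of (5) from (3) via Theorem~\ref{translate}(2) go through as written.
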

\begin{proof}
	(1) That the jump is first-order is clear.  Completeness follows from a standard Ehrenfeucht-Fra\"isse argument.
	
	(2) Let $\Psi\in \CSS(J(\Phi))_\ptl$, and let $\b V[G]$ be some forcing extension in which $\Psi$ is hereditarily countable.  Let $M\models\Psi$ be the unique countable model of $\Psi$ in $\b V[G]$.  Let $X$ be the set of $E$-classes in $M$, and for each $x\in X$, let $\Psi_x$ be the canonical Scott sentence of $x$, viewed as an $L$-structure.  Let $m_x$ be the number of $E$-classes of $M$ which are isomorphic to $x$ as $L$-structures, if this number is finite; if infinite, let $m_x=\omega$.
	
	The set of pairs $S=\{(\Psi_x,m_x):x\in X\}$ depends only on the isomorphism type of $M$, so is uniquely definable from $\Psi$.  By Lemma~\ref{product3}, $S\in\b V$, although it may no longer be countable.  Since $\Phi$ is grounded, each $\Psi_x$ has a model in $\b V$, so let $N$ be the model coded from $S$ -- for each pair $(\Psi',m)$ in $S$, give $m$ distinct $E$-classes, each of which is a model of $\Psi'$.  Then $M\iso N$ in any sufficiently large forcing extension of $\b V[G]$, so $N\models\Psi$, as desired.
	
	(3) We assume $\|\Phi\|$ is infinite; the finite cases are similar and trivial, respectively.  First, let $X\subset \CSS(\Phi)_\ptl$ be arbitrary.  Let $\b V[G]$ be a forcing extension in which $X$ is hereditarily countable, and let $M_X$ be a countable model of $J(\Phi)$ such that each $E$-class of $M_X$ is a model of some $\Psi\in X$, and each $\Psi\in X$ is represented infinitely often.  Since each $\Psi$ is $\aleph_0$-categorical, $M_X$ is determined up to isomorphism by these constraints.  Therefore, $\Psi_X=\css(M_X)$ is determined entirely by $X$, so $\Psi_X\in \b V$ by Lemma~\ref{product3}.  Furthermore, if $X\not=Y$, then $M_X\not\iso M_Y$, so $\Psi_X\not=\Psi_Y$, so $\|J(\Phi)\|\geq 2^{\|\Phi\|}$.
	
	The other direction is similar to part (2). Let $\Psi\in \CSS(J(\Phi))_\ext$, let $\b V[G]$ be a forcing extension in which $\Psi$ is hereditarily countable, and let $M_\Psi\models \Psi$ be the unique countable model.  Let $X$ be the set of $E$-classes of $M_\Psi$, and for each $x \in X$, let $\Psi_x$ be $\css(x)$, where we consider the $E$-class $x$ as an $L$-structure and a model of $\Psi$.  For each $x$, let $m(x)$ be the number of equivalence classes of $M$ which are isomorphic to $x$ as $\mathcal{L}$-structures, or $\omega$ if there are infinitely many.  Since $M_\Psi$ is determined up to isomorphism by $\Psi$, the set $S_\Psi=\{(\Psi_x,m(x)): x \in X\}$ is determined entirely by $\Psi$.  Therefore, $S_\Psi\in \b V$ by Lemma~\ref{product3}.  If $\Psi\not=\Psi'$, $M_\Psi\not\iso M_{\Psi'}$, so $S_{\Psi}\not=S_{\Psi'}$.  Thus we see that $\|J(\Phi)\| \leq |(\omega+1)^{\|\Phi\|}|$.  Since $\|\Phi\|$ is infinite, this last is equal to $2^{\|\Phi\|}$, completing the proof.
	
	(4) follows from the fact that each equivalence class is in canonical bijection with $\omega$, allowing us to  apply $f$ to each class and reindex. (5) follows from (3) and the fact that if $\|\Phi\|>\|\Psi\|$ then $\Phi\not\borelleq \Psi$.
\end{proof}

Another important operation is the \emph{product}:

\begin{definition}
	Suppose $I$ is a countable set and for each $i$, $\Phi_i$ is a sentence of $L_{\omega_1,\omega}$ in the countable relational language $L_i$.  The \emph{product of the $\Phi_i$}, denoted $\prod_i \Phi_i$, is a sentence of $L_{\omega_1 \omega}$, where $L=\{U_i:i\in I\}\cup \bigcup_i L_i$ is the disjoint union of the $L_i$'s together with new unary predicates $\{U_i: i \in I\}$. 
    
    Namely $\prod_i \Phi_i$ states that the $U_i$ are disjoint, that the elements of $U_i$ form a model of $\Phi_i$ when viewed as an $L_i$-structure, and that if $R\in L_i$ and $\o x$ is not all from $U_i$, then $R(\o x)$ is false, so that the models are independent.  If $I$ is finite, we also require that each element is in some $U_i$.
    This becomes more convenient 
\end{definition}

The proofs of the corresponding facts for products are quite similar to those for the jump, so we omit them:

\begin{prop}\label{BasicProductProposition}
	Let $\{\Phi_i:i\in I\}$ and $\{\Psi_j:j\in J\}$ be countable sets of $L_{\omega_1,\omega}$-sentences in countable relational languages.
	
	\begin{enumerate}
		\item If each $\Phi_i$ is complete and first order, so is $\prod_i \Phi_i$.
		\item If each $\Phi_i$ is grounded, so is $\prod_i \Phi_i$.
		\item If each $\Phi_i$ is short then $\prod_i \Phi_i$ is short.  More precisely, $\|\prod_i \Phi_i\|= \kappa \cdot \prod_i \|\Phi_i\|$, where $\kappa$ is $1$ if $I$ is finite or $\aleph_0$ if $I$ is infinite.
		\item The product is monotone: if $f:I\to J$ is an injection and $\Phi_i\borelleq \Psi_{f(i)}$ for all $i\in I$, then $\prod_i \Phi_i\borelleq \prod_j \Psi_j$.
		\item It is always true that for all $i\in I$, $\Phi_i\borelleq \prod_i \Phi_i$.  If additionally, for all $i\in I$, there is an $i'\in I$ where $\Phi_i\borellt \Phi_{i'}$, then for all $i\in I$, $\Phi_i\borellt \prod_i \Phi_i$.
	\end{enumerate}
\end{prop}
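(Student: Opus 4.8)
The plan is to transcribe, \emph{mutatis mutandis}, the proofs of Proposition~\ref{BasicJumpProposition}, replacing ``$E$-class'' everywhere by ``the $U_i$-part'' and letting the fixed countable index set $I$ play the role that $\omega$ plays for the jump. Since the $U_i$'s pin down which part is which, there is no $S_\infty$-reindexing to worry about, which streamlines things; the one genuinely new ingredient is the set $L^M := M\setminus\bigcup_i U_i^M$ of elements lying in no $U_i$, which is nonempty only when $I$ is infinite and which is responsible for the factor $\kappa$. For Part (1): $\Mod(\prod_i\Phi_i)$ is axiomatized by an explicit countable first-order theory --- the sentences saying the $U_i$ are pairwise disjoint, the relativizations to $U_i$ of an axiomatization of each $\Phi_i$, the independence axioms, and (if $I$ is finite) the covering axiom $\forall x\bigvee_i U_i(x)$ --- so $\prod_i\Phi_i$ is first order. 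Completeness is an Ehrenfeucht--Fra\"iss\'e argument carried out inside an arbitrary finite sublanguage $L'$: given $M,N\models\prod_i\Phi_i$, a Spoiler move inside $U_i^M$ is answered inside $U_i^N$ using a winning Duplicator strategy witnessing $U_i^M\equiv U_i^N$ (available as $\Phi_i$ is complete), while a move in $L^M$ is answered by any fresh element lying outside every $U_i$ occurring in $L'$ --- such elements always exist in abundance, since all but finitely many components are disjoint from $L'$ and nonempty. As ``$x$ lies in no $U_i$'' is not first-order expressible when $I$ is infinite, this suffices, even though models with different $|L^M|$ occur.

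For Parts (2) and (3): given $\Psi\in\CSS(\prod_i\Phi_i)_\ext$, pass to a forcing extension $\V[G]$ in which $\Psi$ is hereditarily countable and let $M_\Psi$ be its unique countable model. Put $\Psi_i:=\css(U_i^{M_\Psi})$ (computed in $L_i$) and let $\lambda\in\{0,1,\dots,\aleph_0\}$ be $|L^{M_\Psi}|$; then $S_\Psi:=(\langle\Psi_i:i\in I\rangle,\lambda)$ is uniquely definable from $\Psi$, hence lies in $\V$ by Lemma~\ref{product3}. A componentwise back-and-forth argument (matching the ``leftover'' pure sets, which have equal cardinality) shows two models of $\prod_i\Phi_i$ with the same associated data are $\equiv_{\infty,\omega}$, so $\Psi\mapsto S_\Psi$ is injective; conversely every pair $(\langle\Psi_i\rangle,\lambda)$ with each $\Psi_i\in\CSS(\Phi_i)_\ext$ arises as some $S_\Psi$ --- force everything hereditarily countable, assemble the corresponding countable model, take its canonical Scott sentence, and apply Lemma~\ref{product3} again. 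This identifies $\CSS(\prod_i\Phi_i)_\ext$ with $\prod_i\CSS(\Phi_i)_\ext$ when $I$ is finite and with $(\omega+1)\times\prod_i\CSS(\Phi_i)_\ext$ when $I$ is infinite, giving $\|\prod_i\Phi_i\|=\kappa\cdot\prod_i\|\Phi_i\|$ and in particular the shortness of $\prod_i\Phi_i$ when each $\Phi_i$ is short. For (2), groundedness of each $\Phi_i$ lets us realize each $\Psi_i$ by a genuine model $N_i\in\V$ of $\Phi_i$; the disjoint assembly of the $N_i$ together with a pure set of size $\lambda$ is a model $N\in\V$ of $\prod_i\Phi_i$ with $S_N=S_\Psi$, so $\css(N)=\Psi$ and $\Psi\in\CSS(\prod_i\Phi_i)_\sat$.

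For Parts (4) and (5): given an injection $f:I\to J$ and Borel reductions $g_i:\Mod(\Phi_i)\to\Mod(\Psi_{f(i)})$, define a map on $\Mod(\prod_i\Phi_i)$ by partitioning $\omega$ into infinitely many infinite pieces, placing $g_i$ applied to the $U_i$-part into the $U_{f(i)}$-slot, filling each $U_j$ with $j\notin f(I)$ by a fixed countable model of $\Psi_j$ (such a model exists by Karp's theorem whenever $\prod_j\Psi_j$ is consistent, which we may assume), and copying $L^M$ across; this is Borel and respects isomorphism, so $\prod_i\Phi_i\borelleq\prod_j\Psi_j$. The first assertion of (5) is the case $I=\{\ast\}$ of (4). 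For strictness, if $\Phi_i\borellt\Phi_{i'}$ with $i'\in I$, then $\Phi_{i'}\borelleq\prod_i\Phi_i$ by the first assertion, so $\prod_i\Phi_i\borelleq\Phi_i$ would give $\Phi_{i'}\borelleq\Phi_i$, contradicting $\Phi_i\borellt\Phi_{i'}$; when everything is short one may instead invoke Part (3) together with ``$\|X\|>\|Y\|\Rightarrow X\not\borelleq Y$'' (Theorem~\ref{translate}(2)).

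The only place I expect real friction is the bookkeeping around $L^M$ when $I$ is infinite: one must check simultaneously that it is invisible to first-order logic --- so that models with differing leftover cardinalities stay elementarily equivalent, which is exactly what keeps (1) true --- yet visible as a cardinality to $L_{\infty,\omega}$, so that it contributes precisely the factor $\kappa=|\omega+1|=\aleph_0$ in (3), and that the induced maps on canonical Scott sentences are well-defined with the stated images. Everything else is a faithful copy of the jump arguments.
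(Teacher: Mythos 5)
The paper omits the proof of this proposition entirely, stating only that the arguments are ``quite similar to those for the jump'' (Proposition~\ref{BasicJumpProposition}); your proposal is exactly that adaptation, carried out correctly, including the one genuinely new point --- the unsorted elements $L^M$ and the resulting factor $\kappa$ --- which the paper itself flags in the remark immediately following the proposition. Your direct transitivity argument for strictness in part (5) is also the right choice, since unlike the jump case the product version of (5) carries no shortness hypothesis, so the potential-cardinality route you mention only works as a secondary argument in the short case.
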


Note that if we use $L_{\omega_1,\omega}$ to add to our definition of product that the $U_i$ are exhaustive, the product of first-order theories may not be first order, but the statement of (3) improves and we can lose reference to $\kappa$.  This can also be achieved by working in a multisorted first-order logic, which preserves (1) and fixes (3).  This is the approach we prefer, but for the time being we remain flexible.

We close this section by defining a set of concrete benchmarks: Note that these are essentially the same as the $\m I_\alpha$ in \cite{FriedmanStanleyBC}, the $\iso_\alpha$ in \cite{HjorthKechrisLouveau}, the $=^\alpha$ in \cite{GaoIDST}, and the $T_\alpha$ in \cite{KoerwienBRDepth}.

\begin{definition}\label{spectrum}
	$T_0$ is the theory of $(\b Z,S)$, where $S$ is the graph of the successor function.
	
	For each countable $\alpha$, $T_{\alpha+1}=J(T_\alpha)$. If $\alpha$ is a limit ordinal, $T_\alpha=\prod_{\beta<\alpha} T_\beta$.
\end{definition}

We quickly summarize the properties which are relevant to us. First, they all have Borel isomorphism relation; that is, $\iso_{T_\alpha}$ is Borel for every $\alpha$. (This is because having Borel isomorphism relation is preserved under jumps and products, as can be easily checked.)  Second, they are cofinal among such theories: if $\iso_\Phi$ is Borel, then $\Phi\borelleq T_\alpha$ for some $\alpha$, see \cite{GaoIDST} Corollary 12.2.8.  Therefore, characterizing the relationship between some $\Phi$ and the $T_\alpha$ is a reasonable way to gauge the complexity of $(\mbox{Mod}(\Phi), \iso)$. Finally:

\begin{cor}  \label{countT}
	Each $T_\alpha$ is a complete first-order theory which is short and grounded. For finite $\alpha$, $\|T_\alpha\|=\beth_\alpha$; for infinite $\alpha$, $\|T_\alpha\|=\beth_{\alpha+1}$.
	In particular, $T_\alpha\borellt T_\beta$ for all $\alpha<\beta<\omega_1$.
\end{cor}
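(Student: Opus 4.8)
The plan is to verify the claimed properties of the $T_\alpha$ by a straightforward transfinite induction on $\alpha < \omega_1$, using Propositions~\ref{BasicJumpProposition} and \ref{BasicProductProposition} as the engine. First I would handle the base case: $T_0 = \Th(\b Z, S)$ is a complete first-order theory, and it is $\aleph_0$-categorical? No---it has continuum many countable models (disjoint unions of $\b Z$-chains, indexed by their number of components in $\{1,2,\dots,\aleph_0\}$)---so I must instead directly compute $\|T_0\|$. Every countable model is a disjoint union of copies of $(\b Z,S)$, and two such are $\equiv_{\infty,\omega}$ (equivalently back-and-forth) equivalent iff they have the same number of components; the potential canonical Scott sentences thus correspond to elements of $\omega \cup \{\infty\}$ together possibly with larger ``cardinalities of the component set'' that can appear in forcing extensions---but since in any forcing extension a countable model still has at most $\aleph_0$ components, $\CSS(T_0)_\ext$ is in bijection with $\omega+1$, so $\|T_0\| = \aleph_0 = \beth_0$. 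Also $T_0$ is grounded since every such Scott sentence is realized by an actual model in $\b V$, and short since $\beth_0 < \infty$.

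Next I would run the induction. For the successor step $T_{\alpha+1} = J(T_\alpha)$: by Proposition~\ref{BasicJumpProposition}(1) and (2), if $T_\alpha$ is complete first-order and grounded, so is $J(T_\alpha)$; by part (3), if $T_\alpha$ is short with $\|T_\alpha\|$ infinite then $\|J(T_\alpha)\| = 2^{\|T_\alpha\|}$. So if $\|T_\alpha\| = \beth_\alpha$ (for finite $\alpha$, where $\beth_\alpha$ is infinite since $\beth_0 = \aleph_0$) we get $\|T_{\alpha+1}\| = 2^{\beth_\alpha} = \beth_{\alpha+1}$, matching the finite-$\alpha$ clause; and if $\|T_\alpha\| = \beth_{\alpha+1}$ (infinite $\alpha$) we get $\|T_{\alpha+1}\| = 2^{\beth_{\alpha+1}} = \beth_{\alpha+2} = \beth_{(\alpha+1)+1}$, matching the infinite-$\alpha$ clause. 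For the limit step $T_\lambda = \prod_{\beta<\lambda} T_\beta$: by Proposition~\ref{BasicProductProposition}(1),(2) completeness/first-orderness and groundedness are preserved, and by part (3), $\|T_\lambda\| = \aleph_0 \cdot \prod_{\beta<\lambda} \|T_\beta\|$ (the index set $\lambda$ being infinite). Since the factors $\|T_\beta\|$ are cofinally of the form $\beth_{\beta}$ or $\beth_{\beta+1}$ with $\beta$ ranging cofinally in $\lambda$, the product equals $\sup_{\beta<\lambda}\beth_{\beta'} = \beth_\lambda$ where the sup is over the relevant indices; since $\lambda$ is a limit this supremum is $\beth_\lambda$, and then $\aleph_0 \cdot \beth_\lambda = \beth_\lambda = \beth_{\lambda+1}$? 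Here I must be careful: for a limit ordinal $\lambda$ the claimed value is $\beth_{\lambda+1}$, not $\beth_\lambda$. The point is that $\prod_{\beta<\lambda}\beth_{\beta}$ is strictly larger than $\sup_{\beta<\lambda}\beth_\beta = \beth_\lambda$: it equals $\beth_\lambda^{\aleph_0} \geq 2^{\beth_\lambda} = \beth_{\lambda+1}$, and conversely the product of $\lambda$-many factors each $\leq \beth_\lambda$ is $\leq \beth_\lambda^{|\lambda|} = \beth_\lambda^{\aleph_0} = \beth_{\lambda+1}$. So $\|T_\lambda\| = \beth_{\lambda+1}$, as claimed. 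Shortness is preserved at each stage by the same propositions.

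Finally, the concluding assertion $T_\alpha \borellt T_\beta$ for $\alpha<\beta<\omega_1$ follows from the strict monotonicity of the potential cardinalities: tracing through the values, $\|T_\alpha\|$ is strictly increasing in $\alpha$ (each successor step squares an infinite cardinal, each limit step jumps past the supremum), so $\|T_\alpha\| < \|T_\beta\|$, and then Theorem~\ref{translate}(2) gives $T_\beta \not\borelleq T_\alpha$; combined with $T_\alpha \borelleq T_\beta$ (from Propositions~\ref{BasicJumpProposition}(5) and \ref{BasicProductProposition}(5), noting the hypotheses there are met since the cardinalities strictly increase, so no $T_\gamma$ is $\borelleq$-maximal below $T_\beta$) we get $T_\alpha \borellt T_\beta$. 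The only real subtlety---the one place I expect to have to think carefully rather than just cite---is the limit-stage cardinal arithmetic, namely pinning down that $\prod_{\beta<\lambda}\|T_\beta\| = \beth_{\lambda+1}$ rather than $\beth_\lambda$; this rests on the identity $\beth_\lambda^{\aleph_0} = \beth_{\lambda+1}$ (valid because $\lambda$ is a countable limit ordinal, so $\cf(\beth_\lambda) = \cf(\lambda) = \omega$ forces $\beth_\lambda^{\aleph_0} > \beth_\lambda$, while the Hausdorff/Bukovský–Hechler computations bound it above by $\beth_{\lambda+1}$) together with the bookkeeping that the factors $\|T_\beta\|$ are each at most $\beth_\lambda$ and reach values cofinal in $\beth_\lambda$.
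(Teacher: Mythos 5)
Your proof is correct and follows essentially the same route as the paper: a direct computation for $T_0$ (countable models classified by their number of $\mathbb{Z}$-components, which also gives groundedness), then induction via parts (1)--(3) and (5) of Propositions~\ref{BasicJumpProposition} and \ref{BasicProductProposition}, with the only real work at limit stages. Your limit-stage arithmetic via $\prod_{\beta<\lambda}\beth_\beta=\beth_\lambda^{\aleph_0}=\beth_{\lambda+1}$ is a correct (if slightly more roundabout) variant of the paper's computation $\prod_\beta 2^{\beth_\beta}=2^{\sum_\beta\beth_\beta}=2^{\beth_\lambda}$.
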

\begin{proof}
	We first check $\alpha=0$.  Clearly $T_0$ has exactly $\aleph_0$ models up to back-and-forth equivalence.  If $M\models T_0$ in some $\b V[G]$, then either $M$ has finite dimension $n$, so is isomorphic to a model in $\b V$, or $M$ has infinite dimension, so is back-and-forth equivalent to the countable model $(\omega\times\b Z,S)$ in $\b V$.  Either way, this simultaneously shows that $T_0$ is grounded and $||T_0||=\aleph_0$, completing the proof.
	
	The rest of the proof goes immediately by induction, using Propositions~\ref{BasicJumpProposition} and \ref{BasicProductProposition}.  Groundedness follows from  part (2). Size counting (and therefore shortness) follows from part (3), which is immediate at successor stages but the limits require an argument. So let $\alpha$ be a limit ordinal.  Then $\|T_\alpha\|=\prod_\beta \|T_\beta\|$ (regardless of whether there are unsorted elements).  By the inductive hypothesis and the rules of cardinal multiplication, this is equal to $\prod_\beta \beth_\beta=\prod_{\beta} 2^{\beth_\beta}$, which in turn is equal to $2^{(\sum_{\beta}\beth_\beta)}=2^{\beth_\alpha}=\beth_{\alpha+1}$, completing the proof.
	
	The strictness of the ascending chain follows from induction and part (5).

\end{proof}

\section{Compact group actions}\label{CompactGroupSection}

In this brief section we use the technology of canonical Scott sentences and representability to analyze the effect of a continuous action of a compact group on a Polish space $X$.  In particular, we show that the quotient of $\P_{\aleph_1}(X)$ by the diagonal action of $G$ is representable.
We also show that if the group is abelian, we can bound the potential cardinality of the representation.
 In Section~\ref{OmegaStableSection} we use these results to analyze the models of the theory $\K$ and to contrast $\K$ with $\TK$.


Suppose we have a Polish group $G$ acting on a Polish space $X$. To apply our machinery to this situation we need to say what it means for the objects involved to be strongly definable:

\begin{definition}\label{sdPolishSpace}
\begin{itemize} \item A strongly definable Polish space is a sequence $(X,d, D, i)$ of strongly definable families, where persistently: $d$ is a complete metric on $X$, $D \subset X$ is dense and $i: \omega \to D$ is a bijection.
\item A strongly definable Polish group is a sequence $(G, d', D', i', \times)$ where $(G, d', D', i')$ is a strongly definable Polish space and persistently, $\times$ is a compatable group operation on $G$. 

\item Suppose $(G, d', D', i',  \times)$ is a strongly definable Polish group, $(X,d, D, i)$ is a strongly definable Polish space. Then a strongly definable continuous action of $G$ on $X$ is a strongly definable family $\cdot$ such that persistently, $\cdot \subset G \times X \times X$ is a continuous action of $G$ on $X$.
\end{itemize} 
\end{definition}

Throughout this subsection, we fix a strongly definable Polish space $(X, d, D, i)$, a strongly definable, persistently compact Polish group $(G, d', D',i', \times)$, and a strongly definable continuous action $\cdot$ of $G$ on $X$.

We also fix strongly definable families
$$\m B_n=\{U_i^n:i\in\omega\}$$
such that persistently, each $\m B_n$ is a basis for the topology on $X^n$. (For instance, take $\m B_1$ to be the balls with rational radius and center in $D$, using the enumeration of $D$ given by $i$.)


The action of $G$ on $X$ naturally gives diagonal actions on both $X^n$ and $\P(X)$ defined by $g\cdot\abar=\<g\cdot a:a\in\abar\>$ and $g\cdot A=\{g\cdot a:a\in A\}$,
respectively. Clearly, the diagonal action of $G$ takes countable subsets of $X$ to countable subsets.  For all of these spaces, let $\sim^G$ be the equivalence relation induced by $G$.

In order to understand the quotient $(\P_{\aleph_1}(X),\sim^G)$, we begin with one easy lemma that uses the fact that $G$ is compact.  This lemma is the motivation for the language we define below. 


\begin{lemma}  \label{ctblecompact}  If $A,B\in \P_{\aleph_1}(X)$, then $A\sim^G B$ if and only if there is a bijection $\sigma:A\rightarrow B$ satisfying 
$\abar\sim^G h(\abar)$ for all $\abar\in A^{<\omega}$.
\end{lemma}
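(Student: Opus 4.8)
The plan is to prove the two directions separately, with compactness of $G$ entering only in the harder one. For the left-to-right implication I would argue directly, with no use of compactness: given $g\in G$ with $g\cdot A=B$, set $\sigma:A\to B$ to be the restriction of the bijection $a\mapsto g\cdot a$; then for every $\abar=(a_0,\dots,a_{n-1})\in A^{<\omega}$ one has $g\cdot\abar=\sigma(\abar)$ (applying $\sigma$ coordinatewise), so $\abar\sim^G\sigma(\abar)$ is witnessed by the same $g$.

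For the converse, suppose $\sigma:A\to B$ is a bijection with $\abar\sim^G\sigma(\abar)$ for all tuples $\abar\in A^{<\omega}$, where $\sigma$ acts coordinatewise on tuples. First I would fix an enumeration $A=\{a_i:i<\omega\}$ (the case of finite $A$ being only easier). For each $n$, applying the hypothesis to the tuple $\abar_n:=(a_0,\dots,a_{n-1})$ yields some $g_n\in G$ with $g_n\cdot a_i=\sigma(a_i)$ for all $i<n$. This produces a sequence $(g_n)_{n<\omega}$ in $G$; since a compact metric space is sequentially compact, I would pass to a convergent subsequence $g_{n_k}\to g$ with $g\in G$.

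It then remains to check that this single $g$ carries $A$ onto $B$. Fixing $i$, for all sufficiently large $k$ we have $n_k>i$ and hence $g_{n_k}\cdot a_i=\sigma(a_i)$, so the sequence $(g_{n_k}\cdot a_i)_k$ is eventually constant equal to $\sigma(a_i)$; on the other hand continuity of the action gives $g_{n_k}\cdot a_i\to g\cdot a_i$, and uniqueness of limits in a metric space forces $g\cdot a_i=\sigma(a_i)$. As this holds for every $i$ and $\sigma$ is a bijection onto $B$, we conclude $g\cdot A=\{\sigma(a_i):i<\omega\}=B$, i.e.\ $A\sim^G B$.

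The hard part — really the only nontrivial point — is the passage from the sequence of ``approximate'' witnesses $g_n$, each handling only an initial segment of $A$, to a single group element $g$ handling all of $A$ at once; this is precisely where compactness of $G$ is indispensable, and indeed the statement fails for noncompact $G$. Beyond that, the only thing requiring care is that the continuity-plus-limit-uniqueness step must be carried out for each coordinate $a_i$ individually before assembling the conclusion $g\cdot A=B$.
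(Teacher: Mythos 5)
Your proof is correct and follows essentially the same route as the paper: the easy direction by restricting $g$, and the converse by extracting a single $g$ from the approximate witnesses $g_n$ for initial segments of an enumeration of $A$ via compactness of $G$. The only cosmetic difference is that the paper invokes compactness through the finite intersection property of the nested nonempty closed sets $C_n=\{g\in G: g\cdot \abar_n=\sigma(\abar_n)\}$, whereas you use sequential compactness and a convergent subsequence; for a compact metric space these are interchangeable.
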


	\begin{proof}
		If $g\cdot A=B$, then $\sigma:=g\mr{A}$ is as desired. For the converse, fix such a $\sigma$; we will show there is $g\in G$ inducing $\sigma$.  Let $\{a_n:n\in\omega\}$ be an enumeration of $A$, and for each $n$, let $\overline{a}_n$ be the tuple $a_0 \ldots a_{n-1}$ and let $C_n \subseteq G$ be the set of all $g \in G$ with $g \cdot \overline{a}_n = \sigma(\overline{a}_n)$. $C_n$ is closed since the action is continuous and $C_n$ is nonempty by hypothesis. Since $G$ is compact, $C = \bigcap_n C_n$ is nonempty, and clearly any $g \in C$ has $g \cdot A = B$.
	\end{proof}

We define a language $L$ and a class of $L$-structures that encode this information.
Put  $L:=\{R^n_i:i\in\omega,n\ge 1\}$, where each $R^n_i$ is an $n$-ary relation.  
Let $M_X$ be the $L$-structure with universe all of $X$, with each $R^n_i$ interpreted by
$$M_X\models R^n_i(\abar)\quad \hbox{if and only if}\quad G\cdot \abar\cap U^n_i=\emptyset$$
As notation, let $\qf_n(\abar)$ denote the quantifier-free type of $\abar\in X^n$. 
It is easily seen that to specify $\qf_n(\abar)$ it is enough to specify the set of $i \in \omega$ such that $M_X \models R^n_i(\overline{a})$. Also, $$\qf_n(\abar)=\qf_n(\bbar)\quad\hbox{if and only if} \quad G\cdot \abar=G\cdot \bbar$$
As well, note that every $g\in G$ induces an $L$-automorphism of $M_X$ given by $a\mapsto g\cdot a$.
These two observations imply that $M_X$ has a certain homogeneity -- For $\abar,\bbar\in X^n$,
$\qf_n(\abar)=\qf_n(\bbar)$ if and only if there is an automorphism of $M_X$ taking $\abar$ to $\bbar$.

For $\abar,\bbar\in X^n$ the relation $\abar\sim^G\bbar$ is absolute between $\V$ and any forcing extension $\V[H]$. To see this, note that it suffices to check that $\qf_n$ is absolute; and in turn it suffices to check that each $R^n_i$ is absolute. But $\overline{a} \in R^n_i$ iff for some or any sequence $(\overline{d}_m: m \in \omega)$ from $D^n$ converging to $\overline{a}$, we have that for large enough $m$, $D'\cdot  \overline{d}_m \cap U^n_i = \emptyset$.

It is not hard to check that the range of $\qf_n$ is analytic ($\Gamma(\overline{x})$ is in the range of $\qf_n$ iff there is a convergent sequence $(\overline{d}_m: m < \omega)$ from $D^n$ satisfying various Borel properties). Hence by Shoenfield Absoluteness, the range of $\qf_n$ is absolute.
%
%

As notation, call an $L$-structure $N\in HC$ {\em nice} if it is  isomorphic to a substructure of $M_X$.  Let $\N$ consist of all nice $L$-structures.

\begin{lemma} \label{nice}  An $L$-structure $N\in HC$ is nice if and only if for every $n\ge 1$, every quantifier-free $n$-type realized in $N$ is realized in $M_X$.
\end{lemma}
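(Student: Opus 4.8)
The forward direction is immediate: if $N$ is nice, then $N$ embeds into $M_X$, and any quantifier-free type realized in a substructure of $M_X$ is in particular realized in $M_X$. So the content is in the converse. The plan is to fix $N \in HC$ with the property that every quantifier-free $n$-type realized in $N$ is realized in $M_X$, and build an embedding $N \hookrightarrow M_X$ by a back-and-forth / transfinite construction. Since $N$ is countable, fix an enumeration $(b_k : k \in \omega)$ of its universe. I would construct elements $(a_k : k \in \omega)$ in $X$ one at a time so that, at each stage $n$, the map $b_k \mapsto a_k$ for $k < n$ is a partial isomorphism, i.e. $\qf_n(a_0, \ldots, a_{n-1})$ (computed in $M_X$) equals the quantifier-free type of $(b_0, \ldots, b_{n-1})$ in $N$.

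The base case uses the hypothesis directly: the quantifier-free $1$-type of $b_0$ in $N$ is realized in $M_X$, so pick $a_0$ realizing it. For the inductive step, suppose $(a_0, \ldots, a_{n-1})$ have been chosen with the right quantifier-free type. The quantifier-free $(n{+}1)$-type $\Gamma(x_0, \ldots, x_n)$ of $(b_0, \ldots, b_n)$ in $N$ is, by hypothesis, realized in $M_X$ by some tuple $(c_0, \ldots, c_n)$. Now $(c_0, \ldots, c_{n-1})$ and $(a_0, \ldots, a_{n-1})$ realize the same quantifier-free $n$-type in $M_X$ (namely the quantifier-free type of $(b_0, \ldots, b_{n-1})$ in $N$), so $\qf_n(\bar c) = \qf_n(\bar a)$, hence $G \cdot \bar c = G \cdot \bar a$; pick $g \in G$ with $g \cdot \bar c = \bar a$. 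Then set $a_n := g \cdot c_n$. Since $g$ induces an $L$-automorphism of $M_X$ (as noted in the text, $a \mapsto g\cdot a$ is an automorphism), the tuple $(a_0, \ldots, a_n) = g \cdot (c_0, \ldots, c_n)$ realizes the same quantifier-free $(n{+}1)$-type in $M_X$ as $(c_0, \ldots, c_n)$, namely $\Gamma$. This completes the induction. The resulting map $b_k \mapsto a_k$ is then an $L$-embedding of $N$ into $M_X$: it is injective because distinct $b_k$'s have a quantifier-free $2$-type expressing $x_0 \neq x_1$ (here I should note that equality is quantifier-free, so inequality is recorded in $\qf_2$), and it preserves and reflects each $R^n_i$ since it preserves quantifier-free types of all finite tuples. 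Hence $N$ is isomorphic to a substructure of $M_X$, i.e. nice.

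The only mild subtlety — and the step I would be most careful about — is the homogeneity move in the inductive step: extracting $g \in G$ from the equality $G \cdot \bar c = G \cdot \bar a$ and then transporting the new point $c_n$ by that $g$. This is exactly the homogeneity property of $M_X$ recorded just before the lemma ($\qf_n(\bar a) = \qf_n(\bar b)$ iff some automorphism takes $\bar a$ to $\bar b$), so it is available for free; one just has to apply it to the length-$n$ prefixes and then push forward the whole length-$(n{+}1)$ tuple. I do not anticipate any difficulty with absoluteness or definability here, since the lemma is a statement about a fixed $N \in HC$ and $M_X$ in a fixed universe; and the back-and-forth is really only a ``forth'' since we are building an embedding, not an isomorphism, so no genuine back-and-forth bookkeeping is needed.
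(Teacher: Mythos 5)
Your proof is correct and follows essentially the same route as the paper: a ``forth'' construction that realizes the quantifier-free $(n{+}1)$-type of the next initial segment of $N$ somewhere in $M_X$ and then uses the homogeneity of $M_X$ (an automorphism, induced by a group element, matching the length-$n$ prefixes) to transport the new point onto the partial embedding already built. The paper phrases the homogeneity step via an automorphism $\sigma$ of $M_X$ rather than explicitly extracting $g\in G$ from $G\cdot\bar c = G\cdot\bar a$, but these are the same move.
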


\begin{proof}  Left to right is obvious.  For the converse, choose any $N\in HC$ for which every quantifier-free $n$ type realized in $N$ is realized in $M_X$.
We construct an $L$-embedding of $N$ into $M_X$ via a ``forth'' construction using the homogeneity of $M_X$.
Enumerate the universe of $N=\{a_n:n\in\omega\}$ and let $\abar_n$ denote $\<a_i:i<n\>$.  Assuming $f_n:\abar_n\rightarrow M_X$ has been defined,
choose any $\bbar\in X^{n+1}$ such that $\qf_{n+1}(\abar_{n+1})=\qf_{n+1}(\bbar)$.  Write $\bbar$ as $\bbar_nb^*$.  As $\qf_n(\bbar_n)=\qf_n(f_n(\abar_n))$,
there is an automorphism $\sigma$ of $M_X$ with $\sigma(\bbar_n)=f_n(\abar_n)$.  Then define $f_{n+1}$ to extend $f_n$ and satisfy $f_{n+1}(a_n)=\sigma(b^*)$.
\end{proof}

Define a map $f:\P_{\aleph_1}(X)\rightarrow \N$
by $A\mapsto M_A$, the substructure of $M_X$ with universe $A$.

Our first goal is the following Theorem.

\begin{thm} \label{compact} Suppose $(X, d, D, i)$ is a strongly definable Polish space, $(G, d', D', i', \times)$ is a strongly definable, persistently compact Polish group, and $\cdot$ is a strongly definable continuous action of $G$ on $X$. Then:
\begin{enumerate}
\item  Both $\N$ and $f$ are strongly definable;
\item  Persistently, for all $A,B\in \P_{\aleph_1}(X)$, $A\sim^G B$ if and only if $f(A)\iso f(B)$;
\item  The canonical Scott sentence map $\css:(\N,\iso)\rightarrow CSS(\N)$ is a representation, where $\CSS(\N) = \{\css(N): N \in \N\}$;
\item  The quotient $(\P_{\aleph_1}(X),\sim^G)$ is representable via the composition map $\css\circ f$ that takes $A\mapsto \css(M_A)$.
\end{enumerate}
\end{thm}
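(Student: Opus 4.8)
The plan is to verify the four clauses essentially in order, as each builds on the previous. For (1), I would first show that $\N$ is strongly definable: by Lemma~\ref{nice}, $N \in \N$ iff every quantifier-free $n$-type realized in $N$ lies in the range of $\qf_n$, and we have already observed (using Shoenfield Absoluteness applied to the analytic description of $\mathrm{range}(\qf_n)$) that this range is persistently the same set; so ``$N$ is nice'' is an $\HC$-forcing invariant condition. For $f$, note that $f(A) = M_A$ is the unique $L$-structure on universe $A$ whose atomic diagram is read off from the predicates $R^n_i$, and membership of a tuple in $R^n_i$ was checked above to be absolute (via convergent sequences from $D^n$ and the Borel condition $D' \cdot \overline{d}_m \cap U^n_i = \emptyset$); hence the graph of $f$ is cut out by an $\HC$-forcing invariant formula.

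For (2), I would combine Lemma~\ref{ctblecompact} with the homogeneity of $M_X$. Given $A, B \in \P_{\aleph_1}(X)$: if $A \sim^G B$ via $g$, then $g\restriction A$ is an isomorphism $M_A \to M_B$ since $g$ induces an $L$-automorphism of $M_X$. Conversely, if $\tau: M_A \to M_B$ is an $L$-isomorphism, then for every finite $\abar \in A^{<\omega}$ we have $\qf_n(\abar) = \qf_n(\tau(\abar))$, hence $G \cdot \abar = G \cdot \tau(\abar)$, i.e. $\abar \sim^G \tau(\abar)$; now Lemma~\ref{ctblecompact} (with $\sigma = \tau$) produces $g \in G$ with $g \cdot A = B$. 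Since all the relevant notions ($\sim^G$ on tuples, the interpretation of $M_X$, compactness of $G$) are persistent, this argument relativizes to every forcing extension, giving the ``persistently'' clause.

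For (3), I would apply Lemma~\ref{Karp0} and Lemma~\ref{restriction}: $\css: X_L \to \CSS(L)$ is a representation of $(X_L, \iso)$ by Lemma~\ref{Karp0}(3), and $\N \subseteq X_L$ — strictly speaking, $\N$ as defined lives inside $HC$ rather than having universe exactly $\omega$, so I would either reindex nice structures onto $\omega$ or work with the evident strongly definable copy of $\N$ inside $X_L$ — is strongly definable by (1) and persistently $\iso$-invariant (an $L$-structure isomorphic to a nice one is nice). Lemma~\ref{restriction} then yields that $\css$ restricted to $\N$ has strongly definable image $\CSS(\N)$ and is a representation of $(\N, \iso)$. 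Finally (4) is a formal composition: by (2), $f: (\P_{\aleph_1}(X), \sim^G) \to (\N, \iso)$ is a persistent reduction that is persistently injective on classes and persistently surjective (every nice $N$ is $f(A)$ for $A$ its universe viewed inside $X$), and by (3), $\css: (\N,\iso) \to \CSS(\N)$ is a representation; composing, $\css \circ f: A \mapsto \css(M_A)$ is persistently surjective onto $\CSS(\N)$ with $A \sim^G B \iff \css(M_A) = \css(M_B)$ persistently, which is exactly the definition of a representation of $(\P_{\aleph_1}(X), \sim^G)$.

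\textbf{Main obstacle.} I expect the technical heart to be the absoluteness bookkeeping in (1) and (2): confirming that the interpretation of $M_X$ (the relations $R^n_i$, equivalently $\qf_n$), the range of $\qf_n$, and the relation $\sim^G$ on finite tuples are all genuinely $\HC$-forcing invariant, so that ``niceness'' and ``$f(A) = M_A$'' are strongly definable. Once those absoluteness facts are in hand — and the excerpt has already sketched them via convergent sequences from the dense set $D$ together with Shoenfield Absoluteness for the analytic conditions — the rest is a routine assembly of Lemmas~\ref{ctblecompact}, \ref{nice}, \ref{Karp0}, and \ref{restriction}. The use of compactness of $G$ enters only through Lemma~\ref{ctblecompact}, and it is essential there: without it the intersection $\bigcap_n C_n$ could be empty even though each $C_n \neq \emptyset$.
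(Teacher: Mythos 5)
Your proposal is correct and follows essentially the same route as the paper: (1) via Lemma~\ref{nice} together with the absoluteness of $\qf_n$ and its range, (2) via the homogeneity of $M_X$ in one direction and Lemma~\ref{ctblecompact} in the other, (3) via Lemmas~\ref{Karp0} and \ref{restriction}, and (4) by composition. Your extra remark in (3) about $\N$ living in $\HC$ rather than literally inside $X_L$ is a point the paper silently elides, and your fix (reindexing onto $\omega$, noting niceness is isomorphism-invariant so the subclass is persistently saturated) is the right one.
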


\begin{proof}  (1) It is obvious that $f$ is strongly definable.

That $\N$ is strongly definable follows from Lemma~\ref{nice} and the absoluteness results mentioned above.  In particular, for any $L$-structure $N\in\HC$
that is not nice, there is some $n$ and $\abar\in N^n$ such that $\Gamma:=\qf_n(\abar)$ is not realized in $M_X$.  But then, in any forcing extension $\V[H]$, $(M_X)^{\V[H]}$
does not realize $\Gamma$, so $N$ is not nice in $\V[H]$.  As this argument relativizes to any forcing extension, $\N$ is strongly definable. 
 
For (2), if $A\sim^G B$, then  any $g\in G$ that  satisfies $g\cdot A=B$ induces a bijection between $A$ and $B$ such that $\abar\sim^G g\cdot \abar$ for all $\abar\in A^{<\omega}$.
As this implies $G \cdot \abar=G \cdot (g\cdot\abar)$, $\qf_n(\abar)$ in $M_A$ is equal to $\qf_n(g\cdot \abar)$ in $M_B$.   Thus, the action by $g$ induces an isomorphism of the $L$-structures $M_A$ and $M_B$.  Conversely, suppose $\sigma:M_A\rightarrow  M_B$
is an $L$-isomorphism.  Then  $\sigma(\abar)\sim^G\abar$ for every $\abar\in A^{<\omega}$, so $A\sim^G B$ by Lemma~\ref{ctblecompact}.

(3)  As $\N$ is strongly definable, this follows immediately from Lemmas~\ref{Karp0} and \ref{restriction}.

(4) follows immediately from (1), (2), and (3).
\end{proof}

\bigskip

As a consequence of Theorem~\ref{compact}, $||(\P_{\aleph_1}(X),\sim^G)||$ is defined.  For an arbitrary compact group action, this quotient need not be short.
Indeed, Theorem~\ref{graphs}
 gives an example where it is not.  However, if we additionally assume that $G$ is abelian, then we will see below that $||(\P_{\aleph_1}(X),\sim^G)||\le\beth_2$.
The reason for this stark discrepancy is due to the comparative simplicity of abelian group actions.  In particular, if an abelian group $G$ acts transitively on a set $S$,
then $S$ is essentially an affine copy of $G/\textrm{Stab}(a)$, where $\textrm{Stab}(a)$ is the subgroup of $G$ stabilizing any particular $a\in S$.  The following Lemma is really a restatement
of this observation.  

	\begin{lemma}\label{AbelianEqualityLemma}
	
	Suppose that an abelian group $G$ acts on a set $X$.  Then for every $n\ge 1$, if three $n$-tuples $\abar,\bbar,\cbar\in X^n$ satisfy $\abar\sim^G\bbar\sim^G\cbar$
	and
	$\overline{ab}\sim^G \overline{ac}$, then $\bbar=\cbar$.
			\end{lemma}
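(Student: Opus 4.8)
The plan is to unpack the hypotheses using the definition of the diagonal action and exploit commutativity of $G$ to merge the group elements witnessing the various equivalences.

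First I would extract group elements from each hypothesis: pick $g,h\in G$ with $g\cdot\abar=\bbar$ and $h\cdot\abar=\cbar$ (possible since $\abar\sim^G\bbar$ and $\abar\sim^G\cbar$), and pick $k\in G$ with $k\cdot\overline{ab}=\overline{ac}$ (possible since $\overline{ab}\sim^G\overline{ac}$). Unpacking the diagonal action on the $2n$-tuple $\overline{ab}$, the single element $k$ must simultaneously satisfy $k\cdot\abar=\abar$ and $k\cdot\bbar=\cbar$. So $k$ fixes $\abar$ pointwise (equivalently, each coordinate of $\abar$ lies in $\mathrm{Stab}$ of $k$, or rather $k\in\bigcap_i\mathrm{Stab}(a_i)$) and carries $\bbar$ to $\cbar$.

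Now the key step: I want to conclude $\bbar=\cbar$, i.e.\ that $k$ already acts trivially on $\bbar$. Write $\bbar=g\cdot\abar$. Then $k\cdot\bbar=k\cdot(g\cdot\abar)=(kg)\cdot\abar=(gk)\cdot\abar=g\cdot(k\cdot\abar)=g\cdot\abar=\bbar$, where the middle equality $kg=gk$ is exactly where abelianness of $G$ is used, and the last step uses $k\cdot\abar=\abar$. Hence $\cbar=k\cdot\bbar=\bbar$, as desired. (The hypothesis $\bbar\sim^G\cbar$ turns out to be redundant given the others, but it does no harm.)

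I do not expect a serious obstacle here — the only subtlety is being careful that the element $k$ witnessing $\overline{ab}\sim^G\overline{ac}$ is a \emph{single} group element acting diagonally, so that the same $k$ fixes $\abar$ and moves $\bbar$ to $\cbar$; once that is observed, the computation above closes immediately. If one wanted a slicker phrasing: the stabilizer of $\abar$ in $G$ is a subgroup $H$, and $\bbar=g\cdot\abar$, $\cbar=k\cdot\bbar$ with $k\in H$; since $G$ is abelian every subgroup is normal and in fact central elements commute past $g$, giving $k\cdot\bbar=\bbar$. I would present the direct computation rather than invoking normality, since it is shorter and self-contained.
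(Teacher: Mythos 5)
Your proof is correct and is essentially identical to the paper's: both extract $g$ with $g\cdot\abar=\bbar$ and a single element ($k$ in your notation, $h$ in the paper's) carrying $\overline{ab}$ to $\overline{ac}$, observe that this element fixes $\abar$ and sends $\bbar$ to $\cbar$, and then use $gk=kg$ to conclude $k\cdot\bbar=\bbar=\cbar$. Your side remark that the hypothesis $\bbar\sim^G\cbar$ is redundant is also accurate.
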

	\begin{proof}
		Let $g\in G$ be such that $g\o a=\o b$.  Choose $h\in G$ such that $h(\overline{ab})=\overline{ac}$.  Then in particular, $h\o a=\o a$ and $h\o b=\o c$.  From this, $gh\o a=\o b$ and $hg\o a=\o c$.  But $gh=hg$, so $\o b=\o c$, as desired.
	\end{proof}

We will show that $\|(\m W,\iso)\|\leq\beth_2$ by showing that each Scott sentence in the representation is from $L_{\beth_1^+\omega}$, and then using the fact that there are at most $\beth_2$ such sentences. In the case that the Scott sentence is satisfiable, this means the model has size at most $\beth_1$, so this is perhaps unsurprising.  We will accomplish this complexity bound by a type-counting argument; here is the notion of type we will use.

If $\phi$ is a canonical Scott sentence -- that is, $\phi\in \CSS(L)_\ptl$ -- then let $S^\infty_n(\phi)$ be the set of all canonical Scott sentences in the language $L'=L\cup \{c_0,\ldots,c_{n-1}\}$ which imply $\phi$.  We will refer to elements of $S^\infty_n(\phi)$ as types -- infinitary formulas with free variables $x_0,\ldots,x_{n-1}$, resulting from replacing each $c_i$ with a new variable $x_i$ not otherwise appearing in the formula.  It is equivalent to define $S^\infty_n(\phi)$ by forcing -- if $\b V[H]$ makes $\phi$ hereditarily countable and $M\in \b V[H]$ is the unique countable model of $\phi$, then $S^\infty_n(\phi)$ is the set $\{\css(M,\o a):\o a\in M^n\}$.  Evidently this set depends only on the isomorphism class of $M$, so by the usual argument with Lemma~\ref{product3}, this set is in $\b V$.

\begin{prop}\label{SmallBranchingSmallFormulaTheorem}
	Suppose $\phi$ is a canonical Scott sentence in a language of size at most $\kappa$, and for all $n$, $|S^\infty_n(\phi)|\leq \kappa$, where $\kappa$ is an infinite cardinal.  Then $\phi$ is a sentence of $L_{\kappa^+\omega}$.
\end{prop}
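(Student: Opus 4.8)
The plan is to build, working in $\b V$, an explicit sentence of $L_{\kappa^+\omega}$ and then verify that it is literally equal (as a set) to $\phi$. Fix a forcing extension $\b V[H]$ in which $\phi$ becomes hereditarily countable, and let $M$ be the unique countable model of $\phi$ in $\b V[H]$ (Karp), so that $\phi=\css(M)$ and the formulas $\phi_\alpha^{\bar a}(\bar x)$ of Definition~\ref{cssDefinition} are computed relative to $M$. All the data actually used will live in $\b V$: the sets $S^\infty_n(\phi)$ (already shown to be in $\b V$), together with the ``end-extension'' relations $E_n\subseteq S^\infty_n(\phi)\times S^\infty_{n+1}(\phi)$, where $E_n(s,s')$ holds iff $s'\vdash s$. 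This is absolute, and in $\b V[H]$ it says that some (any) $\bar a$ realizing $s$ has an extension $\bar a c$ realizing $s'$; equivalently $\{s':E_n(s,s')\}=\{\css(M,\bar a c):c\in M\}$ for $\bar a$ realizing $s$.

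First I would define, by recursion on $\alpha$ in $\b V$, equivalence relations $\sim_\alpha$ on $\bigsqcup_n S^\infty_n(\phi)$ relating only same-arity types: $s\sim_0 t$ iff $s$ and $t$ contain the same atomic and negated atomic formulas; $s\sim_{\alpha+1}t$ iff $s\sim_0 t$ and for every $s'$ with $E_n(s,s')$ there is $t'$ with $E_n(t,t')$ and $s'\sim_\alpha t'$, and symmetrically; and $\sim_\delta=\bigcap_{\alpha<\delta}\sim_\alpha$ at limits. This is a decreasing sequence of equivalence relations on a set of size $\le\kappa$, so a standard argument --- each strict refinement permanently separates a new pair, and there are at most $\kappa$ pairs --- shows it stabilizes at some $\alpha^*<\kappa^+$, where one uses $\cf(\kappa^+)>\kappa$. (This is exactly the step that must be carried out in $\b V$ rather than $\b V[H]$, since the collapse making $\phi$ hereditarily countable may destroy the regularity of $\kappa^+$.) Via the standard correspondence between the back-and-forth relations of $M$ and the canonical formulas $\phi_\alpha^{\bar a}$, pushing forward along $\bar a\mapsto\css(M,\bar a)$ sends the $\alpha$-th back-and-forth relation of $M$ to $\sim_\alpha$; hence $\alpha^*$ is the Scott rank $\alpha^*(M)$ and $\sim_{\alpha^*}$ is equality on each $S^\infty_n(\phi)$.

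Next, still in $\b V$, I would define formulas $\psi^s_\alpha(x_0,\dots,x_{n-1})$ for $s\in S^\infty_n(\phi)$, mimicking Definition~\ref{cssDefinition}: $\psi^s_0=\bigwedge\{\theta:\theta$ atomic or negated atomic, $\theta\in s\}$; $\psi^s_{\alpha+1}=\psi^s_\alpha\wedge\bigwedge\{\exists x_n\,\psi^{s'}_\alpha:E_n(s,s')\}\wedge\forall x_n\bigvee\{\psi^{s'}_\alpha:E_n(s,s')\}$; and $\psi^s_\delta=\bigwedge_{\alpha<\delta}\psi^s_\alpha$ at limits. Every conjunction or disjunction appearing here is indexed by a set of size $\le\kappa$: the atomic index sets have size $\le|L|\cdot\aleph_0\le\kappa$; the index set $\{s':E_n(s,s')\}$ is a subset of $S^\infty_{n+1}(\phi)$, of size $\le\kappa$; and for $\alpha\le\alpha^*<\kappa^+$ we have $|\alpha|\le\kappa$. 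Hence each $\psi^s_\alpha$ with $\alpha\le\alpha^*+1$ lies in $L_{\kappa^+\omega}$, and so does
\[\psi:=\psi^{\phi}_{\alpha^*}\wedge\bigwedge\Bigl\{\forall\bar x\,[\psi^s_{\alpha^*}(\bar x)\to\psi^s_{\alpha^*+1}(\bar x)]:s\in\textstyle\bigsqcup_n S^\infty_n(\phi)\Bigr\},\]
the last conjunction being over a set of size $\le\aleph_0\cdot\kappa=\kappa$.

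Finally I would show $\psi=\phi$ as sets. Since $\{s':E_n(s,s')\}=\{\css(M,\bar a c):c\in M\}$ for $\bar a$ realizing $s$, an easy induction on $\alpha$ --- using that the infinitary connectives are taken over \emph{sets}, so the change of index is harmless --- gives $\psi^{\css(M,\bar a)}_\alpha=\phi_\alpha^{\bar a}$ for all $\bar a\in M^{<\omega}$ and all $\alpha$ as genuine set-equalities; the recursion defining $\psi^s_\alpha$ is absolute between $\b V$ and $\b V[H]$. Taking $\alpha=\alpha^*=\alpha^*(M)$ and comparing with the definition of $\css(M)$ yields $\psi=\css(M)=\phi$. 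The one genuinely delicate point is the bound $\alpha^*(M)<\kappa^+$: it must be extracted from the hypothesis $|S^\infty_n(\phi)|\le\kappa$ by the pairs-counting stabilization argument performed in $\b V$, precisely because the forcing that witnesses $\phi\in\HC$ can collapse $\kappa^+$. Everything else is bookkeeping with the Scott construction and absoluteness.
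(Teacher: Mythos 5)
Your proof is correct and follows essentially the same route as the paper's: both arguments bound the Scott rank below $\kappa^+$ by counting pairs of $\infty$-types (your stabilization of the decreasing relations $\sim_\alpha$ is exactly the paper's surjection $d:\bigcup_n(S^\infty_n(\phi))^2\to\alpha^*+1$), bound the number of level-$\alpha$ Scott formulas by $|S^\infty_n(\phi)|\le\kappa$ via the natural surjection, and then induct on $\alpha\le\alpha^*+1$ to place everything in $L_{\kappa^+\omega}$. The only difference is packaging: the paper uses Lemma~\ref{product3} to certify that the actual sets $S^\alpha_n(\phi)=\{\phi^{\o a}_\alpha:\o a\in M^n\}$, the rank $\alpha^*$, and the distance function already lie in $\b V$ and counts them directly, whereas you rebuild the formulas $\psi^s_\alpha$ in $\b V$ from the type data and the entailment relation and then verify they coincide set-theoretically with the formulas of Definition~\ref{cssDefinition}; both executions are sound.
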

\begin{proof}
	We use the precise syntactic definition of Scott formulas from Definition~\ref{cssDefinition}.  For a moment, pass to a forcing extension $\b V[G]$ in which $\phi$ is hereditarily countable, and let $M$ be its unique countable model.  For each ordinal $\alpha$, let $S^\alpha_n(\phi)$ be the set $\{\phi^{\o a}_\alpha(\o x):\o a\in M^n\}$.  Since $M$ is (persistently above $\b V[G]$) unique up to isomorphism, and since this set is unchanged by passing to an isomorphic image of $M$, $S^\alpha_n(\phi)$ is in $\b V$ and depends only on $\phi$.  Moreover, there is a natural surjection $\pi^\alpha_n:S^\infty_n(\phi)\to S^\alpha_n(\phi)$ taking $\css(M,\o a)$ to $\phi^{\o a}_\alpha(\o x)$; each $\pi^\alpha_n$ is in $\b V$.
	
	Let $\alpha^*$ be the Scott rank of $M$.  Again, this is invariant under isomorphism, so depends only on $\phi$.  For any two sentences $\psi,\tau\in S^\infty_n(\phi)$, let $d(\psi,\tau)$ be the least $\alpha<\alpha^*$ where $\pi^{\alpha+1}_n(\psi)\not=\pi^{\alpha+1}_n(\tau)$; if there is no such $\alpha$, then $\psi=\tau$, so say $d(\psi,\tau)=\alpha^*$.  It is immediate from the construction of Scott rank that if $\alpha\le \alpha^*$, there are Scott sentences $\psi$ and $\tau$ of some arity where $d(\psi,\tau)=\alpha$; hence $d:\bigcup_n (S^\infty_n(\phi))^2\to \alpha^*+1$ is surjective.  Further, $d$ depends only on $\phi$, so by Lemma~\ref{product3}, $d\in\b V$.
	
	The rest of the proof takes place in $\b V$.  Because $\pi^\alpha_n$ is surjective and $|S^\infty_n(\phi)|\leq \kappa$, $|S^\alpha_n(\phi)|\leq\kappa$ for all $\alpha$.  Similarly, $|\bigcup_n (S_n^\infty(\phi))^2|\leq\kappa$ and $d$ is surjective, so $|\alpha^*+1|\leq\kappa$.  By induction we show that for all $\alpha\leq \alpha^*+1$, $S^\alpha_n(\phi)\subseteq L_{\kappa^+\omega}$.  The base case is trivial, since there are only $\kappa$ atomic formulas.  
	The step follows from the fact that $|S^\alpha_n(\phi)|\leq\kappa$, and the limit follows from the fact that $\alpha^*<\kappa^+$, so in both cases we need only take conjunctions and disjunctions of $\kappa$ formulas at a time.
	
	Observe that $\phi$ is precisely the following: \[ \pi_0^{\alpha^*}(\phi) \land  \bigwedge\left\{\forall \o x \left( \pi^{\alpha^*}_n(\phi^*)(\o x)\to \pi^{\alpha^*+1}_n(\phi^*)(\o x) \right)
	: n\in\omega,\phi^*\in S^\infty_n(\phi)\right\} \]
	
	Since $S^\alpha_n(\phi)\subseteq L_{\kappa^+\omega}$ for all $\alpha$ and $n$, and since they all have size at most $\kappa$, $\phi$ is in $L_{\kappa^+\omega}$, as desired.
\end{proof}

The following holds by a straightforward induction on the complexity of formulas:

\begin{lemma}\label{FewFormulasKappaPlusLemma}
	For all infinite cardinals $\kappa$ and languages $L$ of size at most $\kappa$, there are exactly $2^\kappa$ different $L_{\kappa^+\omega}$ formulas.
\end{lemma}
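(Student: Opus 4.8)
The plan is to bound the number $N$ of $L_{\kappa^+\omega}$-formulas both above and below by $2^\kappa$. For the upper bound I would stratify formulas by their \emph{construction rank}: set $\mathrm{rk}(\phi)=0$ for $\phi$ atomic, $\mathrm{rk}(\neg\phi)=\mathrm{rk}(\exists x\,\phi)=\mathrm{rk}(\phi)+1$, and $\mathrm{rk}(\bigwedge X)=\mathrm{rk}(\bigvee X)=\sup\{\mathrm{rk}(\psi)+1:\psi\in X\}$. The first observation is that every $L_{\kappa^+\omega}$-formula has rank $<\kappa^+$: by induction on $\phi$, noting that for an infinitary connective the index set $X$ has size at most $\kappa$, so the displayed supremum is a supremum of at most $\kappa$ ordinals below $\kappa^+$, hence stays below $\kappa^+$ by regularity of $\kappa^+$. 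Next, writing $F_\alpha$ for the set of formulas of rank $<\alpha$, I would show $|F_\alpha|\le 2^\kappa$ by induction on $\alpha<\kappa^+$: there are at most $\kappa$ atomic formulas (from $|L|\le\kappa$ symbols, $\omega$ variables, hence $\le\kappa$ terms), and every formula of rank exactly $\alpha$ is obtained from $F_\alpha$ by a negation, one quantifier, or a conjunction/disjunction over a subset of $F_\alpha$ of size $\le\kappa$; the number of such subsets is at most $(2^\kappa)^\kappa=2^{\kappa\cdot\kappa}=2^\kappa$, and limits are handled by unions. Since every formula lies in some $F_\alpha$ with $\alpha<\kappa^+$, we get $N\le\kappa^+\cdot 2^\kappa=2^\kappa$.

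For the lower bound I would first exhibit $\kappa$ pairwise distinct $L_{\kappa^+\omega}$-formulas $\langle\tau_\alpha:\alpha<\kappa\rangle$, built by transfinite recursion so that the construction rank strictly increases: e.g.\ $\tau_0$ atomic, $\tau_{\alpha+1}=\bigvee\{\tau_\alpha,\neg\tau_\alpha\}$, and $\tau_\lambda=\bigvee\{\tau_\alpha:\alpha<\lambda\}$ at limits $\lambda\le\kappa$. Each such disjunction has at most $\kappa$ disjuncts, so these really are $L_{\kappa^+\omega}$-formulas, and strictly increasing rank makes them pairwise distinct. Then for each $S\subseteq\kappa$ put $\psi_S:=\bigwedge\{\tau_\alpha:\alpha\in S\}$; since conjunctions are taken over \emph{sets} of formulas (as emphasized in the discussion of canonical Scott sentences) and the $\tau_\alpha$ are distinct, $\psi_S=\psi_{S'}$ implies $S=S'$, and each $\psi_S$ conjoins at most $\kappa$ formulas so lies in $L_{\kappa^+\omega}$. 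This gives $2^\kappa$ distinct formulas, so $N\ge 2^\kappa$, and combining the two bounds yields $N=2^\kappa$.

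I do not expect a genuine obstacle. The only step requiring care is the claim that the construction rank of every $L_{\kappa^+\omega}$-formula is below $\kappa^+$ — equivalently, that the recursion generating these formulas closes off by stage $\kappa^+$ — which is precisely where the regularity of $\kappa^+$ and the arity bound $\kappa$ on infinitary connectives enter; everything else is routine cardinal arithmetic ($\kappa\cdot\kappa=\kappa$, $(2^\kappa)^\kappa=2^\kappa$, $\kappa^+\le 2^\kappa$).
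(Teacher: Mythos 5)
Your proof is correct and is exactly the ``straightforward induction on the complexity of formulas'' that the paper invokes without writing out: the rank stratification gives the upper bound $\kappa^+\cdot 2^\kappa=2^\kappa$, and the conjunctions $\bigwedge\{\tau_\alpha:\alpha\in S\}$ over sets (which the paper's conventions do treat as sets, so distinctness follows) give the matching lower bound. No gaps.
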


Now we can prove our theorem. Recall that $\sim_G$ is the diagonal equivalence relation on $\mathcal{P}_{\aleph_1}(X)$, induced by the diagonal action of $G$.
\begin{theorem}\label{compact2}
 Let $X$ be a strongly definable Polish space, let $G$ be a strongly definable, persistently compact abelian group, and suppose $\cdot$ is a strongly definable continuous action of $G$ on $X$.  Suppose all this holds persistently.
 Then $\|(\m P_{\aleph_1}(X),\sim_G)\|\leq \beth_2$.
\end{theorem}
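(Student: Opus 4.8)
The plan is to combine the representation provided by Theorem~\ref{compact}(4) with the syntactic bound of Proposition~\ref{SmallBranchingSmallFormulaTheorem}. By Theorem~\ref{compact}(4) the quotient $(\m P_{\aleph_1}(X),\sim_G)$ is representable, via $A\mapsto\css(M_A)$, by the strongly definable family $Z=\CSS(\m W)$, so $\|(\m P_{\aleph_1}(X),\sim_G)\|=\|Z\|=|Z_\ext|$. Since the language $L=\{R^n_i\}$ is countable, Proposition~\ref{SmallBranchingSmallFormulaTheorem} (with $\kappa=\beth_1$) together with Lemma~\ref{FewFormulasKappaPlusLemma} reduces everything to the single claim: \emph{for every $\phi\in Z_\ext$ and every $n$, $|S^\infty_n(\phi)|\le\beth_1$}. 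Indeed, each such $\phi$ would then lie in $L_{\beth_1^+,\omega}$, and there are only $2^{\beth_1}=\beth_2$ formulas of $L_{\beth_1^+,\omega}$, giving $|Z_\ext|\le\beth_2$.

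To prove the claim I would fix $\phi=\css(M_A)\in Z_\ext$ and a forcing extension $\V[H]$ in which $\phi$ is hereditarily countable; then the unique countable model of $\phi$ is $M_A$ for some countable $A\subseteq X^{\V[H]}$, and $S^\infty_n(\phi)=\{\css(M_A,\bar a):\bar a\in A^n\}$, which lies in $\V$ by the usual argument with Lemma~\ref{product3}. Arguing as in Theorem~\ref{compact}(2), two pointed structures $(M_A,\bar a)$ and $(M_A,\bar a')$ are isomorphic precisely when $\bar a,\bar a'$ lie in a common orbit of the setwise stabilizer $G_A=\{g\in G:g\cdot A=A\}$, so counting $S^\infty_n(\phi)$ amounts to counting $A^n/G_A$. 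I would stratify $A^n$ by the coarser relation $\sim^G$: the stratum of $\bar a$ is labelled by $\qf_n(\bar a)\in\mathrm{range}(\qf_n)$, and since that range is analytic, hence absolute by Shoenfield, one has $|\mathrm{range}(\qf_n)^{\V}|\le\beth_1$; thus there are at most $\beth_1$ strata, and every stratum label already lies in $\V$.

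The abelian hypothesis enters in bounding one stratum. Fix a $G$-orbit $O\subseteq X^n$ meeting $A^n$, with label $\rho\in\V$. Because $\rho\in\mathrm{range}(\qf_n)$ and that set is absolute, there is $\bar c\in(X^{\V})^n$ with $\qf_n(\bar c)=\rho$, i.e.\ $\bar c\in O$. Since $G$ is abelian, Lemma~\ref{AbelianEqualityLemma} applies with this $\bar c$: as every $\bar a\in O$ satisfies $\bar c\sim^G\bar a$, the map $\bar a\mapsto\qf_{2n}(\bar c,\bar a)$ is injective on $O$, and its values lie in $\mathrm{range}(\qf_{2n})$, again absolute and of size $\le\beth_1$ in $\V$. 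Together with the fixed internal quantifier-free structure of $M_A$, this datum $\qf_{2n}(\bar c,\bar a)$ controls the isomorphism type of $(M_A,\bar a)$, so the stratum contributes at most $\beth_1$; summing over the $\le\beth_1$ strata yields $|S^\infty_n(\phi)|\le\beth_1\cdot\beth_1=\beth_1$, as required.

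The step I expect to be the main obstacle is exactly this transfer: the estimate ``at most $\beth_1$ per stratum'' is visibly true inside $\V[H]$, but $\V[H]$ may collapse cardinals, so one must extract a bound genuinely computed in $\V$. The way through is to anchor everything to a $\V$-parameter $\bar c$ (available only because $\mathrm{range}(\qf_n)$ is absolute), so that the injectivity given by Lemma~\ref{AbelianEqualityLemma} lands in the honestly $\beth_1$-sized absolute set $\mathrm{range}(\qf_{2n})$, and then to verify carefully that $\qf_{2n}(\bar c,\bar a)$ plus the fixed internal structure of $M_A$ really does determine $\css(M_A,\bar a)$ — this last point is where the affine character of abelian orbits, and the commutativity used in Lemma~\ref{AbelianEqualityLemma}, is indispensable, and it is precisely what fails for non-abelian compact $G$, in accordance with Theorem~\ref{graphs}.
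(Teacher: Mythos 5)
Your overall strategy matches the paper's: reduce via Proposition~\ref{SmallBranchingSmallFormulaTheorem} and Lemma~\ref{FewFormulasKappaPlusLemma} to showing $|S^\infty_n(\phi)|\le\beth_1$, stratify by quantifier-free type, and use Lemma~\ref{AbelianEqualityLemma} to inject each stratum into a set of quantifier-free $2n$-types. But the step that bounds a single stratum has a genuine gap. You anchor to a tuple $\overline{c}\in(X^{\V})^n$ lying in the orbit $O$ but not necessarily in $A^n$, and then assert that the values $\qf_{2n}(\overline{c},\overline{a})$ for $\overline{a}\in A^n\cap O$ land in an ``honestly $\beth_1$-sized absolute set.'' Absoluteness does not give this: Shoenfield absoluteness of the analytic set $\mathrm{range}(\qf_{2n})$ says membership is absolute for reals of $\V$, but the set acquires new elements in $\V[H]$, and your injection lands among them. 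Concretely, take $G=X=2^\omega$ with $G$ acting on itself by $\oplus$ (a strongly definable, compact, abelian action), let $A=\{g\}$ for $g$ a Cohen real over $\V$, and take $\overline{c}=0$: then $\css(M_A)=\css(M_{\{0\}})\in\V$ since the action is transitive, yet $\qf_2(0,g)$ codes the coset datum $0\oplus g=g\notin\V$. So the values $\qf_{2n}(\overline{c},\overline{a})$ need not lie in $\V$, and no cardinality bound computed in $\V$ applies to them; countability of $A$ in $\V[H]$ is no help, since $\V[H]$ may collapse cardinals. Note the asymmetry with your stratum labels $\qf_n(\overline{a})$, which \emph{are} in $\V$ --- but only because they are recoverable from $\css(M_A,\overline{a})\in S^\infty_n(\phi)\subseteq\V$; no such argument is available for the mixed tuple $(\overline{c},\overline{a})$ when $\overline{c}$ is not from $A$.

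The repair is to choose the anchor from $A$ itself, which is what the paper does. Fix $\psi$ in the stratum and a tuple $\overline{a}_0\in A^n$ with $\css(M_A,\overline{a}_0)=\psi$; for any $\tau$ in the same stratum pick $\overline{b}\in A^n$ realizing $\tau$ and record $p^*=\qf_{2n}(\overline{a}_0,\overline{b})$. Now $(\overline{a}_0,\overline{b})$ is a tuple from $A$, so $p^*$ is the quantifier-free part of $\css(M_A,\overline{a}_0,\overline{b})\in S^\infty_{2n}(\phi)$, a set lying in $\V$ by the usual argument with Lemma~\ref{product3}; hence $p^*\in\V$ and, being a set of quantifier-free formulas over a countable language, it ranges over at most $\beth_1$ values as computed in $\V$. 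Your intended application of Lemma~\ref{AbelianEqualityLemma} (after moving the first block of $\overline{b}$'s witness onto $\overline{a}_0$ by an automorphism) shows that the pair $(\psi,p^*)$ determines $\tau$, giving the bound $\beth_1$ per stratum and hence $|S^\infty_n(\phi)|\le\beth_1$. So the combinatorial use of commutativity is right; what is missing is the correct mechanism for pulling the count back into $\V$, which must pass through $S^\infty_{2n}(\phi)$ rather than through absoluteness of $\mathrm{range}(\qf_{2n})$.
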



%
\begin{proof}
	We use Proposition~\ref{SmallBranchingSmallFormulaTheorem} to show that $\CSS(\m W)_\ptl\subseteq L_{\beth_1^+\omega}$; then by Lemma~\ref{FewFormulasKappaPlusLemma}, we have that $|\CSS(\m W)_\ptl|\leq\beth_2$, as desired.  So let $\phi\in \CSS(\m W)_\ptl$ be arbitrary; it is enough to show that $|S^\infty_n(\phi)|\leq\beth_1$.
	
	For each $n$, let $\qf_n(\phi)$ be the set of quantifier-free $n$-types which are consistent with $\phi$.  We have a surjective map $\pi_n:S^\infty_n(\phi)\to \qf_n(\phi)$ sending $\psi(\o x)$ to the set of quantifier-free formulas in $\o x$ which it implies.  For any $p\in \qf_n(\phi)$, let $S^\infty_n(\phi,p)$ be $\pi^{-1}(p)$, the set of $\psi\in S^\infty_n(\phi)$ where $\pi_n(\psi)=p$.  (All of these definitions have taken place in $\mathbb{V}$.) Since the language is countable, $|\qf_n(\phi)|\leq\beth_1$.  Thus it is sufficient to show that for all $p$, $|S^\infty_n(\phi,p)|\leq \beth_1$. 
	
	Now we take advantage of the fact that $G$ is abelian:
	
	\begin{claim}
		Suppose $p^*\in \qf_{2n}(\phi)$ is such that $p^*\mr{[0,n)}=p^*\mr{[n,2n)}=p$.  Suppose that $\psi^*,\tau^*\in S^\infty_{2n}(\phi)$ both complete $p^*$. Further, suppose $\psi^*\mr{[0,n)}=\tau^*\mr{[0,n)}$.  Then $\psi^*=\tau^*$.
	\end{claim}
	\begin{claimproof}
		Pass to a forcing extension $\b V[H]$ in which $\phi$ is hereditarily countable, and let $M$ be its unique countable model.  By Theorem~\ref{compact}, we may assume $M=M_A$ for some $A\in \m P_{\aleph_1}(X)^{\b V[H]}$.  Choose some tuples $(\overline{a_0 a_1})$ and $(\overline{b_0 b_1})$ from $M^{2n}$ where $\css(M,\overline{a_0 a_1})=\psi^*$ and $\css(M,\overline{b_0 b_1})=\tau^*$.  By assumption $\css(M,\o a_0)=\css(M,\o b_0)$, so we may assume $\o a_0=\o b_0$.  Since all of the tuples $\o b_0$, $\o a_1$, and $\o b_1$ have the same quantifier-free type, they are in the same $G$-orbit, and similarly with $\o b_0\o a_1$ and $\o b_0\o b_1$.  Thus Lemma~\ref{AbelianEqualityLemma} applies directly to the triple $(\o b_0,\o b_1,\o a_1)$, so in particular $\o b_1=\o a_1$.  Thus $\psi^*=\css(M,\overline{b_0 a_1})=\css(M,\overline{b_0 b_1})=\tau^*$, as desired.
	\end{claimproof}
	
	\medskip
	
	Fix some $\psi\in S^\infty_n(\phi,p)$, and define $\Gamma(\psi)$ to be the set of all $p^*\in \qf_{2n}(\phi)$ such that $p^*\mr{[0,n)}=p^*\mr{[n,2n)}=p$ and such that for some $\psi^*\in S^\infty_{2n}(\phi,p^*)$, $\psi^*\mr{[0,n)}=\psi$.
	
	By the Claim, if $p^*\in \Gamma(\psi)$, there is a unique $\psi^*\in S^\infty_{2n}(\phi)$ where $\pi_{2n}(\psi^*)=p^*$ and $\psi^*\mr{[0,n)}=\psi$.  So define $F(p^*)$ to be $\psi^*\mr{[n,2n)}$.  Evidently $|\Gamma(\psi)|\leq\beth_1$, so it is enough to show that $F:\Gamma(\psi)\to S^\infty_n(\phi, p)$ is surjective.
	
	But this is almost immediate.  Fix any $\tau\in S^\infty_n(\phi, p)$ and let $\b V[H]$ be a forcing extension in which $\phi$ is hereditarily countable, and let $M$ be its unique countable model;  as before, we may assume $M=M_A$ for some $A\in \m P_{\aleph_1}(X)^{\b V[H]}$.  Choose any $\o a\in A^n$ where $\css(M,\o a)=\psi$ and any $\o b\in A^n$ where $\css(M,\o a)=\tau$.  Finally, let $p^*$ be the quantifier-free type of $\overline{ab}$ in $M$.  Clearly $p^*\in \Gamma(\psi)$ and $F(p^*)=\tau$.
\end{proof}	

This theorem will be crucial in Section~\ref{OmegaStableSection}.

\section{Refining Equivalence Relations}\label{REFsection}

We begin by defining an incomplete first-order theory $\REF$.  Its language is $L=\{E_n:n\in\omega\}$ and its axioms posit:
\begin{itemize}
\item  Each $E_n$ is an equivalence relation;
\item  $E_0$ has a single equivalence class; that is, we consider $xE_0y$ to be universally true;
\item  For all $n$, $E_{n+1}$ refines $E_n$; that is, every $E_n$-class is a union of $E_{n+1}$-classes.
\end{itemize}
The theory $\REF$ is very weak, which makes the generality of the following proposition surprising.

\begin{prop}  \label{gr} $\REF$ is grounded.
\end{prop}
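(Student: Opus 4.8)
The plan is to show that every potential canonical Scott sentence of a model of $\REF$ is already realized by a model living in $\V$. So suppose $\phi \in \CSS(\REF)_\ext$, and fix a forcing extension $\V[G]$ in which $\phi \in \HC^{\V[G]}$ and $\phi = \css(M)$ for some countable $M \models \REF$ in $\V[G]$. I want to extract from $M$ a combinatorial invariant that (a) determines the $\equiv_{\infty,\omega}$-class of $M$, and (b) lives in $\V$ by Lemma~\ref{product3}. The natural invariant is a labeled tree: since each $E_{n+1}$ refines $E_n$ and $E_0$ is trivial, the $E_n$-classes of $M$ form a tree $T_M$ of height $\omega$ (the root being the unique $E_0$-class, and the children of an $E_n$-class $c$ being the $E_{n+1}$-classes contained in $c$), where each node at level $n$ is additionally tagged by the cardinality (a natural number or $\aleph_0$) of the set of elements of $M$ in that $E_n$-class that are not in any... actually more simply, tag each node by whether the corresponding class is a single $E_{n+1}$-class going down or splits, and tag the leaves/branches by how many elements sit at each level. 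Let me phrase it cleanly: the invariant is the tree $T_M$ of all $E_n$-classes under reverse inclusion, where each node $c$ at level $n$ carries the label $\mu(c) \in \{1,2,\dots,\aleph_0\}$ equal to the number of elements $a \in M$ with $a/E_n = c$ but ... — the cleanest choice is to attach to each node the number of points that "die" at that node, but since $\REF$ has no such structure, I'll instead just record, for each $E_n$-class, the number of elements it contains that are realized, i.e. record the tree together with, at each leaf-branch, the count of elements. Concretely: define $T_M$ to be the set of pairs $(n, c)$ where $c$ is an $E_n$-class, ordered by extension, with each node labeled by the isomorphism-invariant datum "number of elements $a$ of $M$ all of whose $E_k$-classes for $k \le n$ are determined" — but I am overcomplicating; for $\REF$ every element lies in exactly one $E_n$-class for each $n$, so the data is: the tree of classes, plus at each node the cardinality of the set of elements whose $E_n$-class is that node and whose $E_{n+1}$-class is not recorded...

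Let me just commit to: the invariant $\mathrm{inv}(M)$ is the tree $T_M$ of $E_n$-classes under reverse containment, where each node $c$ at level $n$ is labeled by $|c| \in \omega \cup \{\aleph_0\}$, the number of elements of $M$ in the class $c$. First I would check that $\mathrm{inv}(M)$ is determined by (indeed computable from) $\css(M)$, hence by $\phi$ alone: two models of $\REF$ with the same labeled tree are back-and-forth equivalent (a standard back-and-forth, matching $E_n$-classes of the same label and recursing), and conversely back-and-forth equivalent models have isomorphic labeled trees, so the map $M \mapsto \mathrm{inv}(M)$ factors through $\css$. Then $\mathrm{inv}(M)$ is definable from $\phi$ in the parameter $\phi$, uniformly across forcing extensions: applying Lemma~\ref{product3} (with $\theta(x)$ the formula "$x$ is the labeled tree coded by the unique countable model of $\phi$"), we conclude $\mathrm{inv}(M) \in \V$. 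Note this labeled tree may be uncountable in $\V$ (it has countably many levels but possibly uncountably many nodes and infinite labels), which is fine.

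Next I would build, working in $\V$, a model $N \models \REF$ from the labeled tree $t := \mathrm{inv}(M) \in \V$: take the underlying set of $N$ to consist of, for each leaf-branch of $t$ — more precisely for each node of $t$ the prescribed number of points, glued up the tree — formally, $N$ is the set of pairs $(b, j)$ where $b$ is a branch of $t$ and $j$ ranges over... the cleanest construction: for each node $c$ of $t$ at level $n$ with label $\mu(c)$, we want $\mu(c)$ many elements in class $c$; but these must be consistently distributed down the tree. Since labels are monotone non-increasing along branches only if ... they need not be; in $\REF$ there is no monotonicity constraint, but there IS the constraint that the number of elements in $c$ equals the sum over children of the numbers in those children — wait, no, that's automatic only if every element's $E_{n+1}$-class exists, which it does. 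So actually the label of a node equals the sum of labels of its children, so I should only record labels at the "bottom", but there is no bottom. The correct statement: the data is the tree plus, for each branch, the eventual number of elements that follow that branch forever, plus finitely-supported corrections — i.e. an element of $M$ determines a branch $(c_n)_n$ of $T_M$ and $|c_n|$ is eventually constant? No — $|c_n|$ is the number of elements in the $n$-th class along the branch, which is non-increasing in $n$ (more elements are split off), so it stabilizes to some value $\ge 0$; elements following a branch forever correspond to the eventual value, elements leaving at level $n$ correspond to drops. So the clean invariant: the tree $T_M$ of $E_n$-classes, where each node $c$ at level $n$ is labeled by $d(c) := |c| - \sum\{|c'| : c' \text{ a child of } c\} \in \omega \cup \{\aleph_0\}$, the number of elements "stopping" at $c$ (plus the eventual counts along infinite branches). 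I would then reconstruct $N$ in $\V$ directly from this labeled tree: for each node $c$ put $d(c)$ new points into all classes $c' \supseteq c$ up the tree, and for each infinite branch with eventual count $e$ put $e$ points following that branch. This $N$ is a genuine $L$-structure in $\V$, it satisfies $\REF$, and $\mathrm{inv}(N) = t = \mathrm{inv}(M)$ by construction, hence $N \equiv_{\infty,\omega} M$ (in $\V[G]$, where both are compared — note $N$ need not be countable, so "back-and-forth equivalent" here means $\equiv_{\infty,\omega}$, which is exactly what $\css$ detects by Fact~\ref{summ}(1)). Therefore $\css(N) = \css(M) = \phi$, witnessing $\phi \in \CSS(\REF)_\sat$, which is what we wanted.

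The main obstacle — and the step deserving genuine care — is getting the combinatorial invariant exactly right so that it is simultaneously a complete $\equiv_{\infty,\omega}$-invariant for models of $\REF$ of arbitrary size and is faithfully reconstructible into a model in $\V$. In particular I need: (i) the back-and-forth system matching two $\REF$-models with the same labeled tree genuinely works at all ranks (including keeping track of the "stopping counts" $d(c)$ and the eventual branch counts so the two sides never run out of partners), and (ii) the reconstruction $N$ from $t$ lands in $\V$ and really has $\mathrm{inv}(N) = t$; the subtlety is that $t$ lives in $\V$ but may code an uncountable model, and one must be sure the definability hypothesis of Lemma~\ref{product3} is met — i.e. that "the labeled tree of the unique countable model of $\phi$" is an $\HC^{\V[G][H]}$-absolute, provably-unique description, which it is because Karp completeness makes $\phi$ have a unique countable model up to isomorphism in every forcing extension and the labeled tree is isomorphism-invariant and absolutely computable from that model. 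A secondary point to be careful about is the arithmetic of the labels (the identity $|c| = d(c) + \sum_{c' \text{ child}} |c'|$ with cardinal, possibly infinite, summands), but this is routine once the bookkeeping is set up.
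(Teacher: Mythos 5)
Your high-level strategy (extract a tree-like invariant from $M$, pull it into $\V$ via Lemma~\ref{product3}, rebuild a model from it) is the same as the paper's, but there are two genuine gaps in the execution. First, your invariant is not complete. The tree of $E_n$-classes labeled by class cardinalities does not determine the $\equiv_{\infty,\omega}$-type: take $A$ with universe the eventually-zero elements of $2^\omega$ (each point once) and $B$ the same but with each point doubled, both with $E_n$ = agreement on the first $n$ coordinates. Both have the complete binary tree with every label $\aleph_0$, yet $\forall x\forall y\,(\bigwedge_n E_n(x,y)\rightarrow x=y)$ holds in $A$ and fails in $B$. The missing data is exactly what the paper calls the \emph{color} of an element, $|[a]_\infty|$, attached to realized branches. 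Your attempted repair does not supply it: since the $E_{n+1}$-classes inside an $E_n$-class partition it, no element ever ``stops'' at a node (so $d(c)=0$ whenever the subtraction is meaningful, and is undefined otherwise), and the stabilized value of $|c_n|$ along a branch is \emph{not} the number of elements realizing that branch when the classes are infinite (in $A$ it is $\aleph_0$ on every branch, while the realization count is $1$ or $0$).

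Second, and more fundamentally, Lemma~\ref{product3} does not apply to your invariant as invoked. The lemma requires a formula $\theta$ with a \emph{unique} solution in every further extension, but $\phi$ determines its countable model only up to isomorphism; distinct isomorphic models of $\phi$ have distinct (merely isomorphic) labeled trees, so ``the labeled tree of the unique countable model of $\phi$'' is not a well-defined set and $\exists^{=1}x\,\theta(x)$ fails. Passing to ``the isomorphism type of the labeled tree'' just re-raises the problem of canonically coding a structure up to isomorphism. The paper's proof is built precisely to dodge this: the nodes of its tree $I(M)$ are the canonical Scott sentences $\css(M_n(a))$ of expansions of $M$, so $I(M)$, the multiplicity function, $Seq(M)$ and the color spectra $Sp_M$ are literally the same sets for any two isomorphic models, and Lemma~\ref{product3} legitimately applies. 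This canonicalization is also what forces the remaining structure of the argument that your sketch omits entirely: since distinct classes can have the same Scott sentence, one must record multiplicities; branches of the type-tree correspond to whole families of branches of the class-tree, so one can only record the \emph{set} of colors realized along each branch-type; and the reconstruction then needs the isolated-type/perfect-type dichotomy and a partition into dense subsets to place the colors consistently. None of this is routine bookkeeping that can be deferred --- it is the content of the proof.
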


\begin{proof}  
We begin with an analysis of an arbitrary model $M$ of $\REF$.  
As notation, for any $a\in M$ and $n\in\omega$, let $[a]_n$ denote
the equivalence class of $a$, i.e., $\{b\in M:M\models E_n(a,b)\}$.  
As the equivalence relations refine each other, the classes $T(M)=\{[a]_n:a\in M, n\in\omega\}$ form an $\omega$-tree, ordered by $[a]_n\le [b]_m$
if and only if $n\le m$ and $[b]_m\subseteq [a_n]$.
Next,
let $E_\infty$ be the equivalence relation given by $E_\infty(a,b)$ if and only if $E_n(a,b)$ for every $n\in\omega$.  Let $[a]_\infty$ be the $E_\infty$-class of $a$.
Then $M/E_\infty$ can be construed as a subset of  the branches $[T(M)]$ of $T(M)$.  
As we are interested in determining models up to back-and-forth equivalence (as opposed to isomorphism), the following definition is natural.

For each $a\in M$, let the {\em color of $a$}, $c(a)\in(\omega+1)\setminus\{0\}$ be given by
$$c(a)=
 \left\{\begin{array}{ll}
|[a]_\infty| &\mbox{if $[a]_\infty$ is finite}\\
\omega & \mbox{if $[a]_\infty$ is infinite}
\end{array}
\right. $$

Next, we  describe some expansions of $M$ to larger languages.
For each $n\in\omega$, let $L_n=L\cup\{U_i:i\le n\}$, where the $U_i$'s are distinct unary predicates.  Given any $M\models \REF$, $n\in\omega$, and $a\in M$,
let $M_n(a)$ denote the $L_n$-structure $(M,[a]_0,\dots,[a]_{n})$, i.e., where each predicate $U_i$ is interpreted as $[a]_i$.

We now exhibit some invariants, which we term the {\em data of $M$}, written $D(M)$ which we will see only depend on the $\equiv_{\infty,\omega}$-equivalence class of $M$.

For each $n\in\omega$, let
$$I_n(M)=\{\css(M_n(a)):a\in M\}.$$
We combine the sets $I_n(M)$ into a tree $(I(M),\le)$ where
$I(M)=\bigcup_{n\in\omega} I_n(M)$ and, for $\sigma_n\in I_n(M)$ and $\psi_m\in I_m(M)$, we say $\sigma_n\le \psi_m$ if and only if
$n\le m$ and $\psi_m\vdash \sigma_n$.  That is, if in any forcing extension the reduct of any model of $\psi_m$ to $L_n$ is a model of $\sigma_n$.  Then clearly $(I(M),\le)$ is an $\omega$-tree.

Continuing, for each $n>0$ and $\sigma_n\in I_n(M)$, let  the multiplicity of $\sigma_n$, $\mult_M(\sigma_n)\in (\omega+1)\setminus\{0\}$, be given by:
$\mult_M(\sigma_n)=k<\omega$ if $k$ is maximal such that there are elements $\{b_i:i<k\}\subseteq M$ such that
$$ \bigwedge_{i<j<k} \bigg[ E_{n-1}(b_i,b_j)\wedge \neg E_n(b_i,b_j)\bigg]\wedge\bigwedge_{i<k} \css(M_n(b_i))=\sigma_n$$
and let $\mult_M(\sigma_n)=\omega$ if there is an infinite family $\{b_i:i<\omega\}$ as above.

Now, each $a\in M$ induces a canonical sequence  $Seq_M(a):=\<\css(M_n(a)):n\in\omega\>$, which is clearly a branch through the tree $I(M)$, and depends only on $\css(M, a)$.
  Let $Seq(M)=\{Seq_M(a):a\in M\}$. So $Seq(M)\subseteq [I(M)]$, the set of branches of $I(M)$.  Finally, for any $s\in Seq(M)$, we define the color spectrum of $s$ as $Sp_M(s):=\{c(a):Seq_M(a)=s\}$.  Thus, each $Sp_M(s)$ is a non-empty subset of $(\omega+1)\setminus \{0\}$.
  
  \medskip
  
  Define the {\em data of $M$}, $D(M):=\<(I(M),\le), \mult_M,Seq(M),Sp_M\>$.

  \medskip

  \begin{claim1}  For any $M,N\models \REF$, $M\equiv_{\infty,\omega} N$ if and only if $D(M)=D(N)$.
  \end{claim1}
  
  \begin{claimproof}  First, note that if $D(M)=D(N)$, then as the trees $(I(M),\le)$ and $(I(N),\le)$ are equal, they have the same root, so $\css(M_0(a))=\css(N_0(b))$
  for some/every $a\in M, b\in N$.  So $M\equiv_{\infty,\omega} N$.
  
  For the forward direction, it is easy to check that $D(M)$ only depends on the isomorphism type of $M$, and also that $D$ is absolute to forcing extensions. Hence if $M \equiv_{\infty \omega} N$, then pass to a forcing extension $\mathbb{V}[G]$ in which $M \cong N$; then we get $(D(M))^{\mathbb{V}} = (D(M))^{\mathbb{V}[G]} = (D(N))^{\mathbb{V}[G]} = (D(N))^{\mathbb{V}}$.
  \end{claimproof}
  
  \bigskip
 
 To begin the proof of groundedness,  choose any $\sigma\in \CSS(\REF)_\ext$.  Choose any forcing extension $\V[G]$ of $\V$ in which $\sigma\in HC^{\V[G]}$ and hence
 $\sigma\in \CSS(\REF)^{\V[G]}$.  Choose any model $M\in \V[G]$ with $\css(M)=\sigma$.  Working in $\V[G]$, compute $D(M)$, the data of $M$.  
 However, in light of Claim~1, $D(M)$ only depends on $\sigma$, and so by Lemma~\ref{product3} $D(M) \in \mathbb{V}$.  As $\sigma$ is fixed, for the remainder of the argument we write
 $$D=\<(I,\le),\mult,Seq,Sp\>.$$

\medskip

To complete the proof of the Proposition, we work in $\V$ and  `unpack' the data $D$ to construct an $L$-structure $N\in\V$ such that in $\V[G]$, $M\equiv_{\infty,\omega} N$.
 Once we have this,  as $\sigma=\css(M)$, it follows that $N\models \sigma$ and so $N$ witnesses that $\sigma\in \CSS(\REF)_\sat$. 
 That is, the proof of groundedness will be finished once we establish the following Claim.
 
 \begin{claim2}  There is an $L$-structure $N \in \mathbb{V}$ such that $\V[G]\models \hbox{`$N\equiv_{\infty,\omega} M$'}$.
 \end{claim2}
 
 \begin{claimproof} Before beginning the `unpacking' of $D$, we note some connections between $M$ and $D$ that are not part of the data.
 First, there is a surjective tree homomorphism $h:T(M)\cup M/E_{\infty} \rightarrow I\cup Seq$ given by $h([a]_n)=\css(M_n(a))$ for $n\in\omega$ and
 $h([a]_\infty)=\<\css(M_n(a)):n\in\omega\>$. Note that for each $s \in Seq$ and each $k \in Sp(s)$, $\{[a]_{\infty}: h(a) = s \mbox{ and } c(a) = k\}$ is dense in $h^{-1}(s)$. The following relationship between $M$ and $h$ follows quickly:
 
 \begin{quotation}  $(\star)_{M,h}$:  For every $n\ge 1$, $s\in Seq$, $k\in Sp(s)$, and $a\in M$ such that $h([a]_{n-1})=s(n-1)$,
 there are pairwise $E_n$-inequivalent $\{d_i:i<\mult(s(n))\}\subseteq M$ such that
 $$\bigwedge_{i<\mult(s(n))} \bigg[ E_{n-1}(d_i, a) \wedge h([d_i]_\infty)=s\wedge c(d_i)=k\bigg]$$
 \end{quotation}
 
 We also identify two species of elements of $Seq$.  Call $s\in Seq$ of {\em isolated type} if there is $n\in\omega$ such that $\mult(s(m))=1$ for every $m\ge n$
 and of {\em perfect type} otherwise.  The latter name is apt, as $h^{-1}(s)$ is perfect (has no isolated points) whenever $s$ is not of isolated type.
 We argue that if $s\in Seq$ is of isolated type, then $Sp(s)$ is a singleton.  Indeed, choose $n$ such that $\mult(s(m))=1$ for every $m\ge n$ and choose $a,b\in M$
 such that $h([a]_\infty)=h([b]_\infty)=s$.  We will show that $c(a)=c(b)$.  To see this, by applying $(\star)_{M,h}$ at level $n+1$ with $k=c(b)$, get $d\in M$ such that
 $E_n(a,d)$, $h([d]_\infty)=s$, and $c(d)=c(b)$.  But now, as $h([a]_\infty)=h([d]_\infty)=s$, the choice of $n$ implies that $E_\infty(a,d)$.  Thus,
 $c(a)=c(d)=c(b)$ as required.


  We begin `unpacking' $D$ by inductively constructing an $\omega$-tree $(J,\le)$ and a surjective tree homomorphism $h':(J,\le)\rightarrow (I,\le)$.  
 Begin the construction of $J=\bigcup_{n\in\omega} J_n$ by taking  $J_0=\{\rho_0\}$ to be a singleton  and defining $h'(\rho_0)=\sigma$.
 Suppose the $n$th level $J_n$ has been defined, together with $h':\bigcup_{j\le n} J_j\rightarrow \bigcup_{j\le n} I_j$.   For each $\rho_n\in J_n$,
 we define its immediate successors 
 $Succ_J(\rho_n)$ as follows.  Look at $Succ_I(h'(\rho_n))\subseteq I_{n+1}$.  For each $\sigma_{n+1}\in Succ_I(h'(\rho_n))$,
 choose a set $A_{n+1}(\sigma_{n+1})$ of  cardinality $\mult(\sigma_{n+1})\in(\omega+1)\setminus\{0\}$ such that the sets $A_{n+1}(\sigma_{n+1})$  are pairwise disjoint.
 Let $$Succ_J(\rho_n):=\bigcup \{A_{n+1}(\sigma_{n+1}):\sigma_{n+1}\in Succ_I(h(\rho_n))\}$$
 and put $J_{n+1}:=\bigcup\{Succ_J(\rho_n):\rho_n\in J_n\}$.  
We extend $h'$ by $h'(\rho)=\sigma_{n+1}$ for every $\rho\in A_{n+1}(\sigma_{n+1})$.


Now, having completed the construction of $(J,\le)$ and the tree homomorphism $h':(J,\le)\rightarrow (I,\le)$, there is a unique extension  (which we also call $h'$)
$h':[J]\rightarrow[I]$ from the branches of $J$ to the branches of $I$ such that $h'(\eta)=s$ if and only if $h'(\eta\mr{n})=s(n-1)$ for every $n\in\omega$.

The universe of the $L$-structure $N$ we are building will be a subset of $(h')^{-1}(Seq)\times(\omega+1)$ and for $(\eta,i),(\nu,j)\in N$,
we will interpret $E_n$ by
$$E_n((\eta,i),(\nu,j))\quad\hbox{if and only if} \quad \eta\mr{n}=\nu\mr{n}.$$
In particular, we will have $[(\eta,i)]_\infty=\{(\eta,j):(\eta,j)\in N\}$.  To finish our description of $N$, we must assign a `color' to each element of $(h')^{-1}(Seq)$.
Fix $s$; we assign colors to $(h')^{-1}(s)$.  First, if $s$ is of isolated type, then from above, we know that $Sp(s)=\{k\}$ for a single
color
$k\le\omega$.  Accordingly, put elements $\{(\eta,i):i<k\}$ into the universe of $N$ for every $\eta$ satisfying $h'(\eta)=s$.
For each $s\in Seq$ that is not of isolated type, note that $(h')^{-1}(s)$ has no isolated points.  Thus, we can choose a partition $(h')^{-1}(s)=\bigcup D_k(s)$
into disjoint dense subsets indexed by colors $k\in Sp(s)$.  Then, for each $\eta\in D_k(s)$ put elements $\{(\eta,i):i<k\}$ into the universe of $N$.
This completes our construction of the $L$-structure $N\in\V$, and 
it is easily verified that this construction entails $(\star)_{N,h'}$.

We now work in $\V[G]$ and demonstrate that $M\equiv_{\infty,\omega} N$.  Indeed, all that we need for this is that in $\V[G]$, both $(\star)_{M,h}$ and $(\star)_{N,h'}$ hold.
Let $\F$ consist of all $(\abar,\bbar)$ such that $\lg(\abar)=\lg(\bbar)$, $\abar$ from $M$, and $\bbar$ from $N$ that satisfy for each $i<\lg(\abar)$,
$c(a_i)=c(b_i)$ and $h([a_i]_\infty)=h'([b_i]_\infty)$; and for each $n\in \omega$, $i<j<\lg(\abar)$, $M\models E_n(a_i,a_j)$ if and only if $N\models E_n(b_i,b_j)$ and $a_i = a_j$ if and only if $b_i = b_j$.

To see that $\F$ is a back-and-forth system, choose any $(\abar,\bbar)\in\F$ and choose any $a^*\in M$.  We will find $b^*\in N$ such that $(\abar a^*,\bbar b^*)\in\F$,
and the argument in the other direction is symmetric.  If $\lg(\overline{a}) = 0$, or if $a^*\in\abar$, it is obvious what to do, so assume this is not the case.  If $E_{\infty}(a^*,a_i)$ for some $i$,
then as $c(a_i)=c(b_i)$, we can find $b^*\not\in\bbar$ such that $E_{\infty}(b^*,b_i)$ which suffices.

Now assume that $\neg E_{\infty}(a^*,a_i)$ holds for each $i$.
Let $k=c(a^*)$ and $s=h([a^*]_\infty)$. Let $n>0$ be least such that $\neg E_n(a^*,a_i)$ for all $i$.   Let $A_1=\{a_i: E_{n-1}(a^*,a_i)\}$ and let $B_1$ be the associated subset of $\bbar$.  By the axioms of $\REF$
it suffices to find $b^*\in N$ such that $c(b^*)=k$, $h'([b^*]_\infty)=s$, $E_{n-1}(b^*,b)$ for some/every $b\in B_1$, but $\neg E_n(b^*,b)$ for every $b\in B_1$.
To find such an element, let 
$$A_2=\{a\in A_1:\ \hbox{there is some $a'\in[a]_n$ such that $c(a')=k$ and $h([a']_\infty)=s$}\}$$
Let $A_3\subseteq A_2$ be any maximal, pairwise $E_n$-inequivalent subset of $A_2$ and let $\ell=|A_3|$.
The set $\{a^*\}\cup A_3$ witnesses that $\mult(s(n))>\ell$. [More precisely, for each $a\in A_3$, choose $a'\in[a]_n$ with $c(a')=k$ and
$h([a']_\infty)=s$.  Then $\{a^*\}\cup\{a':a\in A_3\}$ witnesses  $\mult(s(n))>\ell$.]
Let $B_3$ be the associated subset of $\bbar$; so $|B_s| = \ell$.

Choose $a_i \in A_1$. Then by $(\star)_{N,h'}$, applied at $b_i$ (noting that $[b_i]_{n-1} = s(n-1)$), choose a family $\{d_i:i<\mult(s(n))\}$ as there.  By pigeon-hole choose an $i^*<\mult(s(n))$ such that $\neg E_n(d_{i^*},b)$ holds for all $b\in B_3$.
It is easily checked that $d_{i^*}$ is a possible choice for $b^*$.   As noted above, this completes the proof of the Claim.
  \end{claimproof}
  
  In particular, $N\models\sigma$, establishing groundedness.
  \end{proof}
  

%

	We now turn our attention to two classical complete  theories extending $\REF$.  These are often given as first examples in stability theory.
	 We  denote them by $\REF(\inf)$ and $\REF(\bin)$, respectively.
	$\REF(\bin)$ is the extension of $\REF$ asserting that for every $n$, $E_{n+1}$ partitions each $E_n$-class into two $E_{n+1}$-classes, while $\REF(\inf)$ asserts
	that for all $n$, $E_{n+1}$ partitions each $E_n$-class into infinitely many $E_{n+1}$-classes.
	
The following facts are well known.

\begin{fact}  Both $\REF(\bin)$ and $\REF(\inf)$ are complete theories that admit quantifier elimination.
\begin{itemize}
\item  $\REF(\bin)$ is superstable but not $\omega$-stable; and
\item  $\REF(\inf)$ is stable, but not superstable.
\end{itemize}
\end{fact}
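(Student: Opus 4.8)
The plan is: (1) prove quantifier elimination for both theories by a back-and-forth argument, (2) deduce completeness for free, and (3) read off the stability-theoretic classification by counting types.

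For (1), I would first record two structural facts valid in every model $M$ of $\REF(\bin)$ (resp.\ $\REF(\inf)$). Writing $[a]_n$ for the $E_n$-class of $a$ and $E_\infty=\bigcap_m E_m$: (i) every $E_n$-class is infinite, since it contains at least $2^j$ (resp.\ infinitely many) pairwise $E_{n+j}$-inequivalent elements for each $j$; and (ii) the set of formulas $\{E_m(x,y):m\in\omega\}\cup\{x\neq y\}$ is finitely satisfiable modulo the theory, so in an $\aleph_0$-saturated model every $E_\infty$-class is infinite. Then I would verify the standard criterion for QE in its one-variable form: given models $M,N$ with $N$ being $\aleph_0$-saturated, a finite $A\subseteq M$, a quantifier-free embedding $f\colon A\to N$, and any $a\in M$, one must realise over $f(A)$ in $N$ the quantifier-free type of $a$ over $A$. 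Since the language is relational, $\qf(a/A)$ is determined by the equalities among $\{a\}\cup A$ together with, for each $a_i\in A$, the value $\sup\{n:M\models E_n(a,a_i)\}\in\omega+1$; moreover $f$ preserves each $E_n$, hence $E_\infty$. If $a\in A$ there is nothing to do; otherwise one descends the tree of $E_n$-classes of $N$ above $f(A)$, at stage $n$ selecting the subclass prescribed by $\qf(a/A)$ — which exists because $\qf(a/A)$ is realised in $M$ and the number of subclasses (two, resp.\ infinitely many) is the same in $N$ — using (i) to keep the eventual point off the finitely many elements of $f(A)$, and, in case some $a_i$ has $\sup\{n:E_n(a,a_i)\}=\omega$ with $a\neq a_i$, invoking (ii) and $\aleph_0$-saturation to place the point inside $[f(a_i)]_\infty$ but distinct from $f(A)$. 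This yields QE for $\REF(\bin)$ and for $\REF(\inf)$.

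Completeness is then automatic: in a relational language with no constant symbols the only quantifier-free sentences are equivalent to $\top$ or to $\bot$, so after QE every sentence is decided, and since the theory is consistent it is complete. For (3), QE lets me identify a complete $1$-type over a set $A$ with a quantifier-free $1$-type, i.e.\ (for $x\notin A$) with the subtree $\mathcal{T}_A$ of $E_n$-classes meeting $A$ together with the exit datum: either the branch of $\mathcal{T}_A$ that $x$ follows forever (ending in a fresh $E_\infty$-class), or the finite level at which $x$ leaves $\mathcal{T}_A$ for a class disjoint from $A$; there are $\le|A|$ types of the latter shape, so everything is governed by the number of branches of $\mathcal{T}_A$ together with the $|A|$ realised types. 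For $\REF(\bin)$ the tree of $E_n$-classes of any model is exactly $2^{<\omega}$, so over a model $M$ one gets $|S_1(M)|=|M|+2^{\aleph_0}$; hence $\REF(\bin)$ is $\lambda$-stable for every $\lambda\ge 2^{\aleph_0}$ — superstable — while over a countable model it already has $2^{\aleph_0}$ types, so it is not $\omega$-stable. For $\REF(\inf)$ the subtree $\mathcal{T}_A$ may have up to $|A|^{\aleph_0}$ branches, giving $|S_1(A)|\le |A|^{\aleph_0}$; thus the theory is $\lambda$-stable whenever $\lambda^{\aleph_0}=\lambda$ (e.g.\ $\lambda=2^{\aleph_0}$), so it is stable; and it is not superstable because $E_0\supseteq E_1\supseteq\cdots$ is a strictly descending chain of $\emptyset$-definable equivalence relations of infinite index, which produces an infinite dividing chain (equivalently, choosing a model $M$ of singular cardinality $\lambda$ of cofinality $\omega$ whose class tree is wide enough forces $|S_1(M)|=\lambda^{\aleph_0}>\lambda$).

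I expect the main obstacle to be the bookkeeping in the back-and-forth step — simultaneously matching the values of all the $E_n$ against the finitely many parameters, keeping the chosen point distinct from all of them, and, when required, pushing it into a genuinely new $E_\infty$-class — and, on the ``not superstable'' side, the care needed to convert the descending chain of equivalence relations into a bona fide infinite dividing chain (or to carry out the cardinal computation $\lambda^{\aleph_0}>\lambda$ with an appropriately chosen model).
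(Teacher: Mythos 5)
The paper offers no proof here at all: the statement is labelled a Fact and introduced with ``The following facts are well known,'' so there is nothing to compare your argument against step by step. Judged on its own terms, your proposal is correct and is the standard argument. The back-and-forth step works as you describe: if $a$ separates from every element of $A$ at some finite level $n$, then the elements of $f(A)$ lying in the relevant $E_{n-1}$-class of $N$ occupy only one of its two $E_n$-subclasses (for $\REF(\bin)$, since $f$ preserves $E_n$) or only finitely many of its infinitely many subclasses (for $\REF(\inf)$), so a subclass disjoint from $f(A)$ exists and no saturation is needed; saturation enters exactly in the $E_\infty$ case, which you isolate correctly via the finite satisfiability of $\{E_m(x,b):m\in\omega\}\cup\{x\neq b\}$; and the only other conceivable case --- following the tree of $A$-classes forever without being $E_\infty$-equivalent to any $a_i$ --- cannot occur for finite $A$ by pigeonhole, so your case split is exhaustive. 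The type counts are also right: $|S_1(M)|=|M|+2^{\aleph_0}$ for $\REF(\bin)$ (every $E_n$-class is represented in every model, so unrealized types correspond exactly to branches of $2^{<\omega}$), giving superstability but not $\omega$-stability, and $|S_1(M)|\le|M|^{\aleph_0}$ for $\REF(\inf)$, giving stability; for non-superstability either of your two witnesses works, e.g.\ the model of eventually-zero sequences in $\lambda^{\omega}$ with $\cf(\lambda)=\omega$ has $\lambda^{\aleph_0}>\lambda$ types over it. Two cosmetic slips, neither damaging: the number of ``exit at a finite level'' types over $A$ is bounded by the number of nodes of $\mathcal{T}_A$, i.e.\ $|A|\cdot\aleph_0$ rather than $|A|$; and completeness is more cleanly phrased as ``QE plus any two one-point substructures are isomorphic'' than via quantifier-free sentences, of which there are none in this language.
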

	
	We will see below that these theories have extremely different countable model theory.  Both are similar in that the isomorphism relation $\iso$ is not Borel.
	However, 
	it turns out that $\REF(\inf)$ is Borel complete, and indeed, is $\lambda$-Borel complete for every $\lambda$.  That is, $\REF(\inf)$ the class of models is maximally complicated, both at the countable level as well as  at every uncountable level.  
	On the other hand, $\REF(\bin)$ is far from being Borel complete.  In fact, its class of countable models embeds  $T_2$ but not $T_3$.

	\subsection{Finite Branching}
	In this subsection we show that $T_2 \leq_B \REF(\bin)$ but $T_3 \not \leq_B \REF(\bin)$, and that the isomorphism relation of $\REF(\bin)$ is not Borel. 
    
    For the following, it would be inconvenient to work with $T_2$ directly. Instead, let $F_2$ be the equivalence relation on $(2^\omega)^\omega$ defined by: $(x_n: n \in \omega) F_2 (y_n: n \in \omega)$ iff $\{x_n: n \in \omega\} = \{y_n: n \in \omega\}$. Then the quotient $(2^\omega)^\omega/F_2$
     is in natural bijection with $\mathcal{P}_{\aleph_1}(2^\omega) \backslash \{\emptyset\}$, so we think of $F_2$ as representing countable sets of reals. It is not hard to check that $(\mbox{Mod}(T_2), \cong)$ is Borel bireducible with $((2^\omega)^\omega, F_2)$. So for $T$ a theory, showing that $T_2 \leq_B T$ is the same thing as showing $F_2 \leq_B T$.

	\begin{thm}\label{REFCodesIso2Theorem}
		$T_2\borelleq \REF(\bin)$.
	\end{thm}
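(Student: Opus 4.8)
The plan is to construct an explicit Borel reduction $f : (2^\omega)^\omega \to \Mod(\REF(\bin))$ so that $(x_n : n\in\omega) \mathrel{F_2} (y_n : n\in\omega)$ iff $f((x_n)) \cong f((y_n))$. The idea is that a model of $\REF(\bin)$ is, up to back-and-forth equivalence, essentially a subtree of $2^{<\omega}$ (the tree $T(M)$ of $E_n$-classes, where at each node exactly two successors are possible) together with colors on its branches recording the size of each $E_\infty$-class. So, given a countable set $S = \{x_n : n\in\omega\} \subseteq 2^\omega$, I would build a model $M_S$ whose tree of classes is all of $2^{<\omega}$, and whose branch colors encode membership in $S$: concretely, declare that the $E_\infty$-class sitting above a branch $\eta \in 2^\omega$ is infinite if $\eta \in S$ and a singleton if $\eta \notin S$ (some fixed nonempty finite choice would also work, as long as the two colors are distinguishable and the recipe is invariant). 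Because every $E_n$-class must split into exactly two at level $n+1$ and every branch must carry at least one element, I also need to guarantee each branch is inhabited — so really the color is ``$1$'' off $S$ and ``$\omega$'' on $S$.

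\textbf{Implementation.} Formally: fix a recursive bijection between $2^{<\omega}$ and a subset of $\omega$ used to lay out the universe. Given $\bar x = (x_n)\in (2^\omega)^\omega$, let $S = \{x_n:n\in\omega\}$. The universe of $M_{\bar x}$ is $\{(\eta, 0) : \eta\in 2^\omega\} \cup \{(\eta,k) : \eta\in S,\ k\in\omega\}$, re-indexed bijectively onto $\omega$ via a Borel choice of enumeration driven by $\bar x$ (this is where the Borel-ness has to be checked: the set of $(\eta,k)$ we include depends in a Borel way on $\bar x$, and we can enumerate it Borel-measurably). Interpret $E_n$ by $E_n((\eta,i),(\nu,j))$ iff $\eta\restriction n = \nu\restriction n$. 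Then each $E_n$ is an equivalence relation, $E_0$ is trivial, $E_{n+1}$ refines $E_n$, and each $E_n$-class splits into exactly two $E_{n+1}$-classes (both inhabited, since every branch carries the $(\eta,0)$ element). So $M_{\bar x}\models\REF(\bin)$, and the map $\bar x\mapsto M_{\bar x}$ is Borel.

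\textbf{Correctness of the reduction.} It remains to check $\bar x \mathrel{F_2} \bar y \iff M_{\bar x}\cong M_{\bar y}$. For the forward direction, if $\{x_n\}=\{y_n\}=S$ then the two constructions produce isomorphic $L$-structures (the only data used is $S$), modulo the bookkeeping of the chosen enumerations — since both have universe $\omega$ and the same isomorphism type, they are isomorphic; alternatively one builds the isomorphism levelwise. For the reverse direction, an isomorphism $M_{\bar x}\cong M_{\bar y}$ induces a tree isomorphism $T(M_{\bar x})\cong T(M_{\bar y})$ — i.e.\ a map $2^{<\omega}\to 2^{<\omega}$ — extending to a homeomorphism $\pi$ of $2^\omega$; and it must send each $E_\infty$-class to an $E_\infty$-class of the same size, hence $\eta\in S_{\bar x}$ iff $\pi(\eta)\in S_{\bar y}$. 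But the only branches with an infinite $E_\infty$-class are those in $S$, so $\pi(S_{\bar x}) = S_{\bar y}$; in particular $S_{\bar x}$ and $S_{\bar y}$ are homeomorphic as subspaces of $2^\omega$. This is not quite enough — I need $S_{\bar x}=S_{\bar y}$ literally — so in fact I should arrange the coding more rigidly: encode $\eta\in S$ not by ``infinite class'' alone but so that the whole structure pins $S$ down. The cleanest fix is to replace $2^\omega$ by a rigid copy: given $x\in 2^\omega$ thought of as a real, attach to the branch $\eta$ a gadget recording $\eta$ itself in an absolute way. But actually the standard trick is simpler: since $F_2$ only asks for equality of the \emph{sets} $\{x_n\}$, and $\Mod(\REF(\bin))$ is classified (up to iso) by exactly such combinatorial tree-with-colors data that is visibly a complete invariant, the map $\bar x\mapsto M_{\bar x}$ followed by $\css$ lands injectively on $F_2$-classes — this is the content I would verify carefully.

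\textbf{Main obstacle.} The genuine difficulty is the reverse implication: showing that $M_{\bar x}\cong M_{\bar y}$ forces $\{x_n\}=\{y_n\}$ rather than merely ``$\{x_n\}$ and $\{y_n\}$ are abstractly equivalent in some weaker sense.'' Any tree-automorphism of $2^{<\omega}$ permutes branches, so a naive coding only recovers $S$ up to the action of $\Aut(2^{<\omega})$, which is far too coarse. The resolution is to rigidify: instead of using the full binary tree, use for the $n$-th coordinate branch a \emph{canonically distinguished} path — e.g.\ reserve the tree so that the ``spine'' $0^{<\omega}$ is marked (by being the unique branch whose every node has its sibling's subtree nonisomorphic to its own), hang an isomorphic copy of the full coding tree for $x_n$ off the $n$-th node of the spine, and encode $x_n$ inside that subtree by the colors in a way that a branch is in $S$ iff it threads a specific pattern. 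Then an isomorphism must fix the spine pointwise, hence match the $n$-th coordinate of one model with some coordinate of the other, recovering the sets. Getting this rigidification to still obey ``each $E_n$-class splits into exactly two $E_{n+1}$-classes'' — the defining constraint of $\REF(\bin)$, which forbids marking a node by its branching degree — is the delicate combinatorial point, and is presumably what the authors' proof spends its effort on; the Borel measurability of the resulting map is then routine.
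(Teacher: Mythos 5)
There is a genuine gap. You correctly identify the central obstacle --- an isomorphism of your $M_{\bar x}$ and $M_{\bar y}$ only recovers $S$ up to the action of $\Aut(2^{<\omega})$ on branches, which is far too coarse --- but you do not resolve it. Your proposed rigidifications are either explicitly forbidden (marking a spine by branching degree is impossible since $\REF(\bin)$ forces every class to split into exactly two) or are left as a sketch (``the unique branch whose every node has its sibling's subtree nonisomorphic to its own'') that you yourself defer as ``presumably what the authors' proof spends its effort on.'' Since the whole content of the theorem lives in that rigidification, the proposal does not constitute a proof. There is also a secondary error: your universe $\{(\eta,0):\eta\in 2^\omega\}\cup\{(\eta,k):\eta\in S,\ k\in\omega\}$ is uncountable, so it cannot be re-indexed onto $\omega$; to inhabit every $E_n$-class you only need a countable \emph{dense} set of branches, not all of them.

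The paper's resolution is worth internalizing because it is simpler than what you sketch and uses exactly the invariant you already introduced (the ``color'' of an element, i.e.\ the size of its $E_\infty$-class). Let $S_0$ be the countable dense set of eventually-zero branches and fix a bijection $c:S_0\to\mathbb{N}$; the backbone model $M$ places, above each $\eta\in S_0$, an $E_\infty$-class of size exactly $c(\eta)$, so that every finite color occurs at exactly one branch and no element has color $\aleph_0$. Any isomorphism must send the unique class of size $c(\nu)$ to the unique class of size $c(\nu)$, hence fixes every branch in $S_0$, and by density fixes every node of $2^{<\omega}$. The coded set $I\subseteq 2^\omega\setminus S_0$ is then attached as infinite classes above branches outside $S_0$, and the argument that $g(\eta,n)=(\eta,n')$ proceeds level by level: approximate $\eta$ by $\nu\in S_0$ agreeing on the first $m$ coordinates, use that $g$ fixes the class above $\nu$, and conclude the images agree up to level $m$. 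This is precisely the ``pin down a dense set of branches individually'' device your sketch is groping for, achieved with colors rather than with tree combinatorics.
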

	\begin{proof}
		Begin by building a special countable model $M$ of $\REF(\bin)$.  Let $S$ be the set of sequences from $2^\omega$ which are eventually zero, and fix a bijection $c:S\to \b N$.  Let $M$ be the set of all $(\eta,n)$ where $\eta\in S$ and $n < c(\eta)$.  As usual, say $(\eta_1,n_1)E_m(\eta_2,n_2)$ holds if and only if $\eta_1$ and $\eta_2$ agree on the first $m$ places.  Clearly $M$ is a model of $\REF(\bin)$, and the color of $(\eta,n)$ is exactly $c(\eta)$; observe that no element has color $\aleph_0$. (Recall that the color of $a$ is the cardinality of $[a]_{\infty}$.) We will construct our models as superstructures of $M$, whose new elements all have color $\aleph_0$ and are not $E_\infty$-equivalent to any element of $M$.
		
        Let $X \subseteq (2^\omega)^\omega$ be the set of all $(x_n: n \in \omega)$ such that each $x_n \not \in S$. Then $(X, F_2 \restriction_X) \cong_B ((2^\omega)^\omega, F_2)$, via any Borel bijection between $2^\omega$ and $2^\omega \backslash S$.  (By Corollary 13.4 and Theorem 4.6 of \cite{KechrisDST}, any two uncountable Borel sets are in Borel bijection.) So it suffices to show that 
        $(X, F_2 \restriction_X) \leq_B \REF(\bin)$. 
		Given $I \subseteq 2^\omega \backslash S$ countable, let $M_I$ be the $L$-structure extending $M$ with universe $M\cup(I\times\omega)$, where again,   $(\eta_1,n_1)E_m(\eta_2,n_2)$ holds if and only if $\eta_1\mr{m}=\eta_2\mr{m}$.
        
        It is not hard to check that one can define a Borel map $f: X \to \Mod(\REF(\bin))$, such that for all $\overline{x} = (x_n: n \in \omega) \in X$, $f(\overline{x}) \cong M_{\{x_n: n \in \omega\}}$. Given that, it suffices to show that for all distinct $I, J \subseteq 2^\omega \backslash S$ countable, $M_I \not \cong M_J$.
        
		So suppose $M_I \cong M_J$, say via $g: M_I \to M_J$. I aim to show that for all $(\eta, n) \in M_I$,  $g(\eta, n) = (\eta, n')$ for some $n' < \omega$. This suffices to show $I = J$, since then $I = \{\eta: (\eta, n) \in M_I \mbox{ for all } n\} = \{\eta: (\eta, n) \in M_J \mbox{ for all } n\} = J$.
        So let $(\eta, n) \in M_I$; write $g(\eta, n) = (\tau, n')$. I show for each $m < \omega$ that $\eta \restriction_m =  \tau \restriction_m$. Indeed, pick $\nu \in S$ such that $\nu \restriction_m = \eta \restriction_m$. Then $g(\nu, 0) = (\nu, k)$ for some $k < c(\tau)$, since $g([(\nu, 0)]_{\infty})$ is the unique $E_{\infty}$-class of $M_J$ of size $c(\nu)$. Then since $((\eta, n) E_m (\tau, 0))^{M_I}$, we have $((\tau, n') E_m (\nu, k))^{M_J}$. Hence $\tau \restriction_m = \nu \restriction_m = \eta \restriction_m$.
	\end{proof}
	
	To show that $T_3$ does not embed into $\REF(\bin)$, we clarify  $\equiv_{\infty\omega}$-equivalence on a slightly wider class of $L$-structures.
	Let $\REF(\fin)$ denote the sentence of $L_{\omega_1,\omega}$ extending $\REF$ asserting that every $E_{n+1}$-class partitions every
	$E_n$-class into finitely many $E_{n+1}$-classes.  
	
	\begin{lemma} \label{subm} Every model $M$ of $\REF(\fin)$ has an $\equiv_{\infty\omega}$-equivalent  submodel $N\subseteq M$ 
		 of size at most $\beth_1$.
	\end{lemma}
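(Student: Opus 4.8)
The plan is to discard the information that $\equiv_{\infty,\omega}$ cannot detect, which for models of $\REF(\fin)$ is precisely the exact (infinite) cardinalities of the $E_\infty$-classes.

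First I would record the structural feature that makes $\REF(\fin)$ special. Since $E_0$ has a single class and each $E_n$-class is partitioned into only finitely many $E_{n+1}$-classes, an immediate induction shows there are only finitely many $E_n$-classes for every $n$. Hence the tree $T(M)=\{[a]_n : a\in M,\ n\in\omega\}$ (ordered as in the proof of Proposition~\ref{gr}) is a finitely branching $\omega$-tree, so it is countable and has at most $2^{\aleph_0}=\beth_1$ branches. The map $[a]_\infty\mapsto (n\mapsto [a]_n)$ embeds $M/E_\infty$ into the set of branches of $T(M)$, so $M$ has at most $\beth_1$ many $E_\infty$-classes. Now let $N$ be the substructure of $M$ whose universe contains, from each $E_\infty$-class $C$ of $M$, exactly $\min(|C|,\aleph_0)$ of its elements (all of $C$ if $C$ is finite, an arbitrary countably infinite subset otherwise). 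Then $|N|\le \beth_1\cdot\aleph_0=\beth_1$. Two features of $N$ will be used: $N$ meets every $E_\infty$-class of $M$, and for $a,b\in N$ and every $n$, $N\models E_n(a,b)$ iff $M\models E_n(a,b)$ since $E_n$ is atomic; in particular each $E_\infty$-class of $N$ has the form $C\cap N$.

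It then remains to show $N\equiv_{\infty,\omega}M$, which I would do via an explicit back-and-forth system. Take
\[\mathcal F=\{(\abar,\bbar): \abar\in N^{<\omega},\ \bbar\in M^{<\omega},\ \lg(\abar)=\lg(\bbar),\ a_i\,E_\infty^M\,b_i\ \forall i,\ a_i=a_j\Leftrightarrow b_i=b_j\ \forall i,j\}.\]
Since $E_\infty^M$ refines every $E_n$, membership in $\mathcal F$ forces $E_n(a_i,a_j)\Leftrightarrow E_n(b_i,b_j)$, so each element of $\mathcal F$ is a partial isomorphism, and $(\emptyset,\emptyset)\in\mathcal F$. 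For the forth step, given $(\abar,\bbar)\in\mathcal F$ and $a^*\in N$: if $a^*=a_j$, put $b^*=b_j$; otherwise choose $b^*\in [a^*]_\infty^M$ distinct from all $b_j$. This is possible because $b_j\in[a^*]_\infty^M$ exactly when $a_j\in[a^*]_\infty^M$ (transitivity of $E_\infty$ together with $a_jE_\infty^Mb_j$), so the number of distinct $b_j$ in $[a^*]_\infty^M$ equals the number of distinct $a_j$ there, which is strictly smaller than $|[a^*]_\infty^M|$ when that class is finite (as $a^*$ is not among them) and trivially leaves room when it is infinite. The back step is symmetric and is the only place the construction of $N$ is used: given $b^*\in M$, the class $[b^*]_\infty^M$ meets $N$, so one picks $a^*\in [b^*]_\infty^M\cap N$ distinct from the relevant $a_j$'s, using $|[b^*]_\infty^M\cap N|=\min(|[b^*]_\infty^M|,\aleph_0)$ and the same counting. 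Thus $\mathcal F$ is a back-and-forth system, so $N\equiv_{\infty,\omega}M$, and in particular $N\models\REF(\fin)$.

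The only delicate point is the counting in the back-and-forth step — guaranteeing that $N$ retains enough elements of each class — but this is exactly why we keep a full countable subset of each infinite class and all of each finite class, and the bookkeeping reduces to the observation that the $a_j$'s lying in a given $E_\infty$-class are matched bijectively, via $\mathcal F$, with the $b_j$'s lying in that class.
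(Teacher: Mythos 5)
Your proof is correct and follows essentially the same route as the paper's: you build $N$ by retaining all of each finite $E_\infty$-class and a countably infinite subset of each infinite one, bound $|N|$ by $\beth_1$ using the finite splitting (at most continuum many $E_\infty$-classes), and verify $N\equiv_{\infty,\omega}M$. The only difference is that you spell out the back-and-forth system and the counting that the paper dismisses as ``easily seen,'' and your bookkeeping there is accurate.
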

	\begin{proof}
		For each $E_{\infty}$-class $[a]_\infty\subseteq M$, let 
		
		$$B([a]_\infty)=
 \left\{\begin{array}{ll}
[a]_\infty &\mbox{if $[a]_\infty$ is countable}\\
\hbox{any countably infinite subset of $[a]_\infty$}& \mbox{if $[a]_\infty$ is uncountable}
\end{array}
\right. $$ 
and let $N$ be the substructure of $M$ with universe $\bigcup\{B([a]_\infty):a\in M\}$.  It is easily seen that $N\equiv_{\infty,\omega} M$.  That $N$ has size at most continuum
follows from the finite splitting at each level.
	\end{proof}
	
	Combined with groundedness, this gives us the nonembedding result we wanted:
	
	\begin{thm}\label{REFcountingTheorem}
		$||\REF(\bin)||=||\REF(\fin)||=I_{\infty,\omega}(\REF(\bin))=I_{\infty,\omega}(\REF(\fin))=\beth_2$.  
		In particular,  both $\REF(\bin)$ and $\REF(\fin)$ are short and $T_3\not\borelleq \REF(\bin),\REF(\fin)$.
	\end{thm}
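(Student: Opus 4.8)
The plan is to assemble Theorem~\ref{REFcountingTheorem} from three facts already in hand: the groundedness of $\REF$ (Proposition~\ref{gr}), the size bound for models of $\REF(\fin)$ (Lemma~\ref{subm}), and the embedding $T_2\borelleq\REF(\bin)$ (Theorem~\ref{REFCodesIso2Theorem}). Throughout I use that $\REF(\bin)\vdash\REF(\fin)\vdash\REF$, so persistently $\Mod(\REF(\bin))\subseteq\Mod(\REF(\fin))\subseteq\Mod(\REF)$; hence $\CSS(\REF(\bin))_\ext\subseteq\CSS(\REF(\fin))_\ext\subseteq\CSS(\REF)_\ext$, and likewise $I_{\infty,\omega}(\REF(\bin))\leq I_{\infty,\omega}(\REF(\fin))$.

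The first step is to observe that $\REF(\fin)$ and $\REF(\bin)$ are grounded. Fix $\sigma\in\CSS(\REF(\fin))_\ext$. Since $\REF(\fin)\vdash\REF$ we have $\sigma\in\CSS(\REF)_\ext$, which equals $\CSS(\REF)_\sat$ by Proposition~\ref{gr}; so there is $N\in\V$ with $N\models\REF$ and $\css(N)=\sigma$. On the other hand, in a forcing extension making $\sigma$ hereditarily countable, $\sigma=\css(M)$ for some countable $M\models\REF(\fin)$, and every model of $\css(M)$ is $\equiv_{\infty,\omega}$-equivalent to $M$, hence (since $\REF(\fin)$ is an $L_{\omega_1,\omega}$-sentence, and $L_{\omega_1,\omega}$-satisfaction is preserved under $\equiv_{\infty,\omega}$) satisfies $\REF(\fin)$; thus $\sigma$ persistently implies $\REF(\fin)$, and by the absoluteness of implication for $L_{\infty,\omega}$ discussed after Definition~\ref{shortdef} this holds in $\V$ as well. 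Since $N\models\sigma$ in $\V$, we get $N\models\REF(\fin)$, so $\sigma\in\CSS(\REF(\fin))_\sat$. The same argument with $\REF(\bin)$ in place of $\REF(\fin)$ shows $\REF(\bin)$ is grounded. Hence $\CSS(\REF(\fin))_\ext=\CSS(\REF(\fin))_\sat$ and $\CSS(\REF(\bin))_\ext=\CSS(\REF(\bin))_\sat$, and recall $|\CSS(\Phi)_\sat|=I_{\infty,\omega}(\Phi)$ for any $\Phi$.

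It remains to pin down the common value. For the upper bound, Lemma~\ref{subm} says every model of $\REF(\fin)$ is $\equiv_{\infty,\omega}$-equivalent to one of size at most $\beth_1$; since a countable relational language has at most $2^{\beth_1}=\beth_2$ isomorphism types of structures over a domain of size $\leq\beth_1$, we get $I_{\infty,\omega}(\REF(\fin))\leq\beth_2$, hence $|\CSS(\REF(\fin))_\ext|=|\CSS(\REF(\fin))_\sat|=I_{\infty,\omega}(\REF(\fin))\leq\beth_2$. In particular $\CSS(\REF(\fin))_\ext$, and a fortiori $\CSS(\REF(\bin))_\ext$, are sets, so both theories are short, with $\|\REF(\bin)\|\leq\|\REF(\fin)\|\leq\beth_2$. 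For the lower bound, Theorem~\ref{REFCodesIso2Theorem} gives $T_2\borelleq\REF(\bin)$, so by Fact~\ref{corresp} and Proposition~\ref{shorty}(2) we have $\|T_2\|\leq\|\REF(\bin)\|$, while $\|T_2\|=\beth_2$ by Corollary~\ref{countT}. Combining,
\[\beth_2=\|T_2\|\leq\|\REF(\bin)\|\leq\|\REF(\fin)\|\leq\beth_2,\]
so $\|\REF(\bin)\|=\|\REF(\fin)\|=\beth_2$, and by groundedness $I_{\infty,\omega}(\REF(\bin))=I_{\infty,\omega}(\REF(\fin))=\beth_2$ as well. Finally, $\|T_3\|=\beth_3>\beth_2$ by Corollary~\ref{countT}, so Theorem~\ref{translate}(2) yields $T_3\not\borelleq\REF(\bin)$ and $T_3\not\borelleq\REF(\fin)$.

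Each step is short given the cited results, and the only place that wants genuine care is the transfer of groundedness from $\REF$ to $\REF(\fin)$ and $\REF(\bin)$ — that is, the claim that $\css(M)$ persistently implies $\REF(\fin)$ whenever $M\models\REF(\fin)$, which combines the preservation of $L_{\omega_1,\omega}$-satisfaction under $\equiv_{\infty,\omega}$ with the absoluteness of provability in the $L_{\infty,\omega}$ proof system. Everything else is bookkeeping: the cardinal arithmetic $2^{\beth_1}=\beth_2$ and $\beth_3>\beth_2$, the structure count over a domain of size $\leq\beth_1$, and the monotonicity of $\|\cdot\|$ along implications and Borel reductions.
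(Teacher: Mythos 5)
Your proof is correct and follows essentially the same route as the paper's: lower bound from $T_2\borelleq\REF(\bin)$ and $\|T_2\|=\beth_2$, upper bound from groundedness of $\REF$ together with Lemma~\ref{subm}, and the $T_3$ non-reducibility from Theorem~\ref{translate}(2). The only difference is that you spell out the transfer of groundedness from $\REF$ to $\REF(\fin)$ and $\REF(\bin)$ (via the fact that $\css(M)$ persistently implies any $L_{\omega_1,\omega}$-sentence $M$ satisfies), a step the paper's proof uses implicitly; making it explicit is a correct and welcome clarification, not a departure.
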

	\begin{proof}
    	Recall by Corollary~\ref{countT} that $||T_2|| = \beth_2$ and $||T_3||=\beth_3$.
		Since $T_2\borelleq\REF(\bin)$, $\beth_2=||T_2||\le||\REF(\bin)||$.  On the other hand, since $\REF$ is grounded,  $||\REF(\fin)||=I_{\infty,\omega}(\REF(\fin))$ but the latter cardinal is bounded above by $\beth_2$ by Lemma~\ref{subm}.  Thus, all four ca\textit{}rdinals are equal to $\beth_2$.  So, by definition, both $\REF(\bin)$ 
		and $\REF(\fin)$ are short.
		As $||T_3||=\beth_3$, the nonembeddability of $T_3$ into either class follows from Theorem~\ref{translate}(2).
		\end{proof}
			
	Finally, we show that isomorphism for $\REF(\bin)$ is not Borel.
	
	\begin{thm}\label{REFnotBorelTheorem}
		Isomorphism on $\REF(\bin)$ is not Borel.
	\end{thm}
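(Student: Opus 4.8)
The strategy is to show that the canonical Scott sentences of countable models of $\REF(\bin)$ have unbounded rank: by Theorem~\ref{charBorel} (the equivalence of (1) and (2), which encodes the classical fact that $\iso$ is Borel exactly when the Scott ranks of the countable models are bounded below $\omega_1$), this already forces $\iso$ not to be Borel. Note that shortness is useless here: $||\REF(\bin)|| = \beth_2 < \beth_{\omega_1}$, and Corollary~\ref{Borelisshort} runs only in the other direction. So it suffices to produce, for every $\alpha < \omega_1$, a countable model $M_\alpha \models \REF(\bin)$ whose Scott rank exceeds $\alpha$.

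As in the proof of Proposition~\ref{gr}, a countable model of $\REF(\bin)$ is conveniently coded by a countable dense set $B \subseteq 2^\omega$ together with a colouring $c : B \to (\omega+1)\setminus\{0\}$: the universe is $\{(b,i) : b \in B,\ i < c(b)\}$ with $(b,i)\,E_n\,(b',i')$ iff $b\restriction n = b'\restriction n$, and density of $B$ is precisely what makes every $E_n$-class split into two $E_{n+1}$-classes. Writing $\equiv^\gamma$ for the $\gamma$-th level of the infinitary back-and-forth relation, I build by recursion on $\gamma < \omega_1$ a pair of coded models $A_\gamma, A'_\gamma$ with $A_\gamma \equiv^\gamma A'_\gamma$ but $A_\gamma \not\equiv_{\infty,\omega} A'_\gamma$. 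For $\gamma = 0$, let $A_0$ (over some fixed countable dense $B$) colour everything $1$ and $A'_0$ colour everything $2$; all singletons, and (using density) all tuples with a fixed pattern of common prefixes, realise the same atomic type, so $A_0 \equiv^0 A'_0$, while the $L_{\omega_1,\omega}$-formula $\exists y\,(y \neq x \wedge \bigwedge_n E_n(x,y))$ separates them. At a successor, let $A_{\gamma+1}$ be the coded model whose two $E_1$-classes are cones isomorphic to $A_\gamma$ and $A'_\gamma$, and let $A'_{\gamma+1}$ be the one whose two $E_1$-classes are both copies of $A_\gamma$; a single descent move through the root, followed by the inductive $A_\gamma \equiv^\gamma A'_\gamma$ strategy on one side and the identity on the other, gives $A_{\gamma+1} \equiv^{\gamma+1} A'_{\gamma+1}$, whereas any isomorphism $A_{\gamma+1} \to A'_{\gamma+1}$ is level-preserving and so would map $\{A_\gamma, A'_\gamma\}$ onto $\{A_\gamma, A_\gamma\}$, impossible since $A_\gamma \not\equiv_{\infty,\omega} A'_\gamma$. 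At a limit $\gamma$, fix $\beta_n \uparrow \gamma$ and take $A_\gamma$ (resp.\ $A'_\gamma$) to be the juxtaposition carrying a copy of $A_{\beta_n}$ (resp.\ $A'_{\beta_n}$) on the cone above $0^n1$; since the cones are relation-independent, a responder who always matches a cone of $A_\gamma$ to a sufficiently deep cone of $A'_\gamma$ wins with any budget $\beta < \gamma$, so $A_\gamma \equiv^\gamma A'_\gamma$, while a relativised Scott sentence of a cone witnesses $A_\gamma \not\equiv_{\infty,\omega} A'_\gamma$. Finally, let $M_\alpha$ be the coded model whose two $E_1$-classes are $A_\alpha$ and $A'_\alpha$: an element of the first class and an element of the second are $\equiv^\alpha$-equivalent but not $\equiv_{\infty,\omega}$-equivalent, so the Scott rank of $M_\alpha$ exceeds $\alpha$. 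As $\alpha$ ranges over $\omega_1$ this shows $\CSS(\REF(\bin))$ is not contained in any $\HC_\beta$, and we conclude by Theorem~\ref{charBorel}.

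I expect the main obstacle to be making the back-and-forth bookkeeping of the recursion precise, particularly at limit stages: one must verify that in the juxtaposition at a limit the responder can always route the spoiler's moves into cones that are ``deep enough'' relative to the ordinal budget remaining, so that the inductively supplied strategies on those cones suffice — and similarly, at successors, that a single descent move really does cost only the expected one level. This is the standard but slightly delicate part, entirely analogous to the computation of the Scott ranks of the benchmark theories $T_\alpha$ of Definition~\ref{spectrum}; everything else (that the coded objects are genuinely models of $\REF(\bin)$, that the separating infinitary formulas have the claimed complexity, and the passage from unbounded Scott rank to non-Borelness) is routine given the machinery already in place.
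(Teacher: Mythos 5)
Your global strategy --- producing, for each $\alpha<\omega_1$, countable models of $\REF(\bin)$ that are $\equiv^\alpha$-equivalent but not isomorphic, and invoking Theorem~\ref{charBorel} --- is exactly the paper's, but the transfinite recursion you propose breaks at both the successor and the limit steps, and the breakdown is not mere bookkeeping. Consider the successor step in the ordinal-clock form of the back-and-forth game: Player I opens the $(\gamma+1)$-game on $(A_{\gamma+1},A'_{\gamma+1})$ by playing a point $a_0$ of the $E_1$-class of $A_{\gamma+1}$ carrying $A'_\gamma$, with clock value $\gamma$. Since the two classes are relation-independent and the other class can be handled by an isomorphism, Player II must produce $b_0$ in a class of $A'_{\gamma+1}$ (necessarily a copy of $A_\gamma$) with $(A'_\gamma,a_0)\equiv^{\gamma}(A_\gamma,b_0)$; the existence of such a $b_0$ for every $a_0$ is precisely one direction of $A'_\gamma\equiv^{\gamma+1}A_\gamma$ --- one level more than your induction supplies. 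Burying the pair $(A_\gamma,A'_\gamma)$ one level down in the tree does not buy a level of equivalence, because Player I's opening move carries the full clock value $\gamma$ into the cone. The limit step fails for the same reason, and there the claimed equivalence is actually false for your models (unless $A_{\beta_n}\equiv^{\gamma'}A'_{\beta_n}$ already held for all $\gamma'<\gamma$, in which case the stage is vacuous): the cone above $0^n1$ carries $A_{\beta_n}$ in $A_\gamma$ and $A'_{\beta_n}$ in $A'_\gamma$, these are distinguishable at some level $\delta_n$ with $\beta_n<\delta_n<\gamma$, and Player I attacks that cone with clock value $\delta_n$ plus finitely many localizing moves, still below $\gamma$. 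Your proposed repair --- routing Player I's moves into ``sufficiently deep'' cones --- is not available: the relations $E_k$ force any partial isomorphism, once a few moves have pinned down relative positions, to respect the depth at which each cone sits, so the cone above $0^n1$ must be matched to the cone above $0^n1$.

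The paper's construction is designed precisely to evade this obstruction. Its two models $M_X$ and $M_Y$ contain \emph{identical} copies of all the hard pairs $(A_n)^s$, $(B_n)^t$, placed on an antichain at unboundedly increasing depths; the distinguishing invariant is not the placement of these bubbles but a countable dense set $X$ of color-$1$ branches. Player II, \emph{after} seeing Player I's opening clock value $\beta_0$, chooses $n$ with $\alpha_n\ge\beta_0$ and a tree isomorphism that is the identity up to level $2n$: bubbles at depth at most $n$ are then matched to themselves, and only bubbles at depth $m>n$ are ever mismatched ($A_m$ against $B_m$), where the remaining clock is below $\beta_0\le\alpha_n\le\alpha_m$ and the inductive strategies apply. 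This ``choose where to mismatch only after seeing the budget'' device --- which requires the distinguishing data to be spread to infinite depth rather than localized in a fixed cone --- is the idea your recursion is missing, and without it the lower bound on Scott ranks does not go through.
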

	
	
	\begin{proof}

				It is commonly known -- see for example  Theorem 12.2.4  of \cite{GaoIDST} -- that isomorphism is Borel if and only if, for some $\alpha$, $\equiv_\alpha$ is sufficient to decide isomorphism. Since $\equiv_0$ is implied by $\equiv$ and $\REF(\bin)$ is a complete theory with more than one model, $\equiv_0$ does not decide isomorphism.  We proceed by induction with a combined step and limit induction step.  So suppose $\alpha_0\leq \alpha_1\leq\cdots$ are such that each $\equiv_{\alpha_n}$ does not decide isomorphism.  That is, for each $n$, there is a pair $A_n$, $B_n$ of  countable models of $\REF(\bin)$ which are nonisomorphic but where $A_n\equiv_{\alpha_n}B_n$.  Let $\alpha=\sup\{\alpha_n+1:n\in\omega\}$.  We will construct a pair  (indeed, a large family) of  countable models of $\REF(\bin)$ that are pairwise $\equiv_\alpha$-equivalent but not isomorphic.  This is sufficient.

        Recall that among countable models $M$ of \REF(\bin), the {\em color} of an element $a\in M$ is the size of its $E_\infty$-class
        $[a]_\infty$.   By adding an element to each finite $E_\infty$ class occurring in $A_n$, $B_n$, respectively, 
        we can suppose the color ``$1$'' does not occur in any of the $A_n$'s, $B_n$'s.
        
        
        Let $\mathfrak{C} \model \REF(\bin)$ be the model with universe $2^\omega \times \omega$, where as usual $(\eta, n) E_k (\tau, m)$ iff $\eta \restriction_k = \tau \restriction_k$. $\mathfrak{C}$ will serve as a `monster model' of sorts; in particular we can suppose each $A_n, B_n$ are (elementary) substructures of $\mathfrak{C}$.
        
        We begin by forming a single countable model $M\preceq\mathfrak{C}$ that encodes all of the complexity of the models $A_n,B_n$.  For $s\in 2^{<\omega}$, let
        $(A_n)^s$ be a `shift' of $A_n$ by $s$.  Formally, $(A_n)^s=\{(s\frown\eta,j):(\eta,k)\in A_n\}$ and we define $(B_n)^s$ analogously.   Whereas the substructures $A_n$ and
        $(A_n)^s$  of $\mathfrak{C}$ are certainly not elementarily equivalent, the relationships between $A_n$ and $B_n$ are maintained.  That is, if  $\lg(s)=\lg(t)$, then for any $n$,
        $(A_n)^s \equiv_{\alpha_n} (B_n)^t$, but because $A_n\not\cong B_n$, there is no elementary bijection $f:(A_n)^s\rightarrow (B_n)^t$.
        
        As notation, for $i\in\{0,1\}$ and $n\in\omega$, let $S^i_n$ be the subset of $2^{2n+2}$ satisfying 
        \begin{itemize}
        \item  $s(j)=0$ for every odd $j<2n$;
        \item  $s(2n)=i$; and
        \item  $s(2n+1)=1$.
        \end{itemize}
        Note that  not only are the sets $S^i_n$ disjoint, but in fact, $S^*:=\bigcup\{S^i_n:i\in\{0,1\},n\in\omega\}$ is an antichain on $2^{<\omega}$.
        Let $$M := \bigcup_{n < \omega, s \in S^0_n} (A_n)^s \cup \bigcup_{n < \omega, s \in S^1_n} (B_n)^s$$ 
        
        It is readily checked that $M\preceq\mathfrak{C}$.  
      Because every element of every $A_n$, $B_n$ has color distinct from 1, no element of $M$ has color 1 either.
       As notation, we refer to the subsets $(A_n)^s$ and $(B_n)^s$ as the {\em $s$-bubbles} of $M$.  Obviously, for a specific choice of $s$, $M$ contains only one of $(A_n)^s$
       or $(B_n)^s$.  We write $M(s)$ for this $s$-bubble.
       
       For each $x\in 2^\omega$, let $x^*\in 2^\omega$ be defined by $x^*(j)=0$ if $j$ is odd and $x^*(j)=x(j/2)$ if $j$ is even.
       For each countable, dense subset $X\subseteq 2^\omega$, let
       $$M_X=M\cup \{(x^*,0):x\in X\}\quad\hbox{and let}\quad S^*_X=S^*\cup\{x^*:x\in X\}$$
       Clearly, $M\preceq M_X\preceq \mathfrak{C}$ and an element $c\in M_X$ has color 1 if and only if $c\not\in M$. Write $M_X(s) = M(s)$ for $s \in S^i_n$.
       
       \begin{claim1}
			Let $X,Y\subset 2^\omega$ be countable and dense.  Then $M_X\iso M_Y$ if and only if $X=Y$.
		\end{claim1}
		\begin{claimproof}
			If $X=Y$ then $M_X=M_Y$.  On the other hand, suppose $X$ and $Y$ are dense and $f: M_X \cong M_Y$. We claim that for all $\eta \in X$, $f(\eta^*, 0) = (\eta^*, 0)$. This suffices, since then $X = \{\eta: (\eta^*, n) \in M_X \mbox{ iff } n = 0\} \subseteq \{\eta: (\eta^*, n) \in M_Y \mbox{ iff } n = 0\} = Y$ and by symmetry $Y \subseteq X$.
            
            So fix $\eta \in X$ and write $f(\eta^*, 0) = (\tau^*, 0)$ where $\tau \in Y$ ($f(\eta^*, 0)$ must be of this form since it is of color $1$ in $M_Y$). Suppose towards a contradiction that $\eta \not= \tau$; let $n$ be least such that $\eta(n) \not= \tau(n)$. Let $s = \eta^* \restriction_{2n+1}\,^\frown(1)$ and let $t = \tau^* \restriction_{2n+1}\,^\frown (1)$.  
            Then our purported isomorphism $f$ would induce an elementary permutation  between  $(A_n)^s$  and $(B_n)^t$ (or between $(A_n)^t$ and $(B_n)^s$), which is impossible
            since $A_n\not\cong B_n$.
            
		\end{claimproof}

 By contrast, we have:

		\begin{claim2}
			Let $X,Y\subset 2^\omega$ be countable and dense.  Then $M_X\equiv_\alpha M_Y$.
		\end{claim2}
		\begin{claimproof}
        We recall that $M_X \equiv_\alpha M_Y$ iff Player II has a winning strategy in the following game $\mathcal{G}(M_X, M_Y, \alpha)$:
        
        Players I and II alternate moves. On Player I's $n$-th turn, he either plays a pair $(a_n, \beta_n)$ where $a_n \in M_X$ and $\beta_n$ is an ordinal with $\alpha > \beta_0 > \ldots > \beta_n$, or else he plays a pair $(b_n, \beta_n)$, where $b_n \in M_Y$ and $\beta_n$ is an ordinal with $\alpha > \beta_0 > \ldots > \beta_n$. (Really Player I should also specify which of $M_X$, $M_Y$ he is playing in, but no harm is caused by suppressing this). On Player II's $n$-th turn, she plays either $b_n \in M_Y$ or $a_n \in M_X$, depending on Player I's move; she is required to make sure that $(a_0, \ldots, a_n) \mapsto (b_0, \ldots, b_n)$ is partial elementary from $M_X$ to $M_Y$. This specifies the game, since Player I cannot survive indefinitely.
        
        Now for each $n < \omega$, we are assuming that $A_n \equiv_{\alpha_n} B_n$, where $(\alpha_n: n < \omega)$ is increasing (possibly not strictly), such that $\alpha = \sup \{\alpha_n+1: n < \omega\}$. Fix a winning strategy $\Gamma_n$ for Player II in the game $\mathcal{G}(A_n, B_n, \alpha_n)$.
        Given $s \in S^0_n, t \in S^1_n$, let $\Gamma_{s, t} = \Gamma_{t, s}$ be the corresponding strategy for the game $\mathcal{G}((A_n)^s, (B_n)^t, \alpha_n)$. For $s, t \in S^0_n$, $(A_n)^s \cong (A_n)^t$; use this to get $\Gamma_{s, t}$, a winning strategy for Player II in the game $\mathcal{G}((A_n)^s, (A_n)^t, \infty)$. Similarly define $\Gamma_{s, t}$ for $s, t \in S^1_n$.
        
        We now describe a winning strategy $\Gamma$ for Player II in the game $\mathcal{G}(M_X, M_Y, \alpha)$.
        
        \vspace{1 mm}
        
        Case 1: suppose Player I plays $(a_0, \beta_0)$ where $a_0 = (\eta^*, 0)$ for some $\eta \in X$. Choose $n$ large enough so that $\alpha_n \geq \beta_0$. Using the density of $Y$, choose $\tau \in Y$ such that $\tau \restriction_n = \eta \restriction_n$. By back-and-forth, we can choose a tree isomorphism $F: (2^{<\omega} \cup X, \subseteq) \cong (2^{<\omega} \cup Y,\subseteq)$ such that $F$ is the identity on $2^n$. This map $F$ induces a tree isomorphism $F^*: S_X^* \to S_Y^*$ defined by $F^*(s^*) = F(s)^*$. On the first move, Player II plays $(\tau^*, 0)$. 
        
                On subsequent moves: 
        
        If Player I plays $((\nu^*, 0), \beta)$ where $\nu \in X$, then Player II plays $(F^*(\nu^*), 0)$.
        
        If Player I plays $((\nu^*, 0), \beta)$ where $\nu \in Y$, then Player II plays $((F^*)^{-1}(\nu^*), 0)$.
        
        If Player I plays $((\nu, k), \beta)$, where $(\nu,k) \in M_X(s)$ for some $s \in S^0_m \cup S^1_m$,  then Player II plays according to $\Gamma_{s, F^*(s)}$, where we take as input all the previous moves that took place in $M_X(s)$ and $M_Y(F^*(s))$. This will be valid, since either $m \leq n$, in which case $\Gamma_{s, F^*(s)}$ actually describes an isomorphism, or else $m > n$, and so the ordinals involved in the relevant previous moves will all be less than $\beta_0 \leq \alpha_n$.
        
        If Player I plays $((\nu, k), \beta)$, where $(\nu,k) \in M_Y(s)$ for some $s \in S^0_n \cup S^1_n$, then Player II plays according to $\Gamma_{(F^*)^{-1}(s), s}$, where we take as input all the previous moves that took place in $M_X((F^*)^{-1}(s))$ and $M_Y(s)$.
        
        \vspace{1 mm}
        Case 2: Suppose Player I plays $(a_0, \beta_0)$ where $a_0 \in M_X(s)$ for some $s \in S^0_N \cup S^1_N$. Choose $n \geq N$ such that $\alpha_n \geq \beta_0$. By back-and-forth, we can choose a tree isomorphism $F: (2^{<\omega} \cup X, \subseteq) \cong (2^{<\omega} \cup Y, \subseteq)$ such that $F$ is the identity on $2^n$. From $F$
        we obtain $F^*: S_X^* \to S_Y^*$ as in Case 1. On the first move, Player II plays according to $\Gamma_{s, s}$, and afterwards plays as in Case 1. 
        
        \vspace{1 mm}
        The remaining cases where Player I starts in $M_Y$ are the same, just interchange the roles of $X$ and $Y$.
		\end{claimproof}

		\medskip
		
		With both claims finished, let $X\subset 2^\omega$ be the set of sequences which are eventually zero, and $Y\subset 2^\omega$ be the set of sequences which are eventually one.  Then $M_X\equiv_\alpha M_Y$ and $M_X\not\iso M_Y$.  This completes the induction and the proof.
	\end{proof}
	
	This gives the first known example of the following behavior:
	
	\begin{cor}
		There is a complete first-order theory for whom isomorphism is neither Borel nor Borel complete.
	\end{cor}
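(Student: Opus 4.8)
The plan is to take $T=\REF(\bin)$ as the witnessing theory and read off both failures directly from the three results just established for it. First one should note that $\REF(\bin)$ is indeed a \emph{complete first-order} theory: its axioms (each $E_n$ an equivalence relation, $E_0$ trivial, $E_{n+1}$ refining $E_n$, and each $E_n$-class split into exactly two $E_{n+1}$-classes) are a schema of ordinary first-order sentences, one batch per $n$, and completeness together with quantifier elimination was recorded above. So the only content is verifying that $\iso$ on $\Mod(\REF(\bin))$ is neither Borel nor Borel complete, and each of these is a one-line citation.

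For the non-Borelness, I would simply invoke Theorem~\ref{REFnotBorelTheorem}, which carries out an induction on a countable ordinal $\alpha$ showing that no $\equiv_\alpha$ suffices to decide isomorphism on $\Mod(\REF(\bin))$ (via the family $M_X$ built from the bubbles $(A_n)^s,(B_n)^t$ and the Ehrenfeucht--Fra\"iss\'e argument of Claim~2 there). By the classical criterion --- Theorem 12.2.4 of \cite{GaoIDST}, also used in that proof --- this is exactly the assertion that $\iso$ fails to be a Borel subset of $\Mod(\REF(\bin))\times\Mod(\REF(\bin))$.

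For the failure of Borel completeness, I would argue via potential cardinality. Theorem~\ref{REFcountingTheorem} gives that $\REF(\bin)$ is short with $\|\REF(\bin)\|=\beth_2$ (using groundedness from Proposition~\ref{gr} and the submodel bound of Lemma~\ref{subm}). On the other hand, Corollary~\ref{countT} gives $\|T_3\|=\beth_3>\beth_2=\|\REF(\bin)\|$, so Theorem~\ref{translate}(2) yields $T_3\not\borelleq\REF(\bin)$. But if $(\Mod(\REF(\bin)),\iso)$ were Borel complete, then by Definition~\ref{BC} \emph{every} $(\Mod(\Psi),\iso)$, in particular $(\Mod(T_3),\iso)$, would be Borel reducible into it --- a contradiction. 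Hence $\REF(\bin)$ is not Borel complete, and it witnesses the corollary.

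There is no real obstacle: this corollary is purely a packaging of Theorems~\ref{REFCodesIso2Theorem}, \ref{REFcountingTheorem}, and \ref{REFnotBorelTheorem}, all of whose proofs contain the substantive work. The only points that merit a second look are that $\REF(\bin)$ genuinely qualifies as a first-order theory (its axiomatization is by first-order sentences, even if infinitely many), and that the implication ``Borel complete $\Rightarrow T_3$ Borel reducible'' is applied in the right direction. (One could equally cite the later theory $\K$ of Section~\ref{OmegaStableSection}, which is also complete, first-order, short with $\|\K\|=\beth_2$, and has non-Borel isomorphism, as a second witness, but $\REF(\bin)$ is the economical choice here.)
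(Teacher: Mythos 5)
Your proposal is correct and matches the paper's intended argument exactly: the witness is $\REF(\bin)$, non-Borelness is Theorem~\ref{REFnotBorelTheorem}, and failure of Borel completeness follows from $\|\REF(\bin)\|=\beth_2<\beth_3=\|T_3\|$ via Theorem~\ref{REFcountingTheorem} and Theorem~\ref{translate}(2). Nothing further is needed.
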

	
	Here, the example is $\REF(\bin)$, the paradigmatic example of a superstable, non-$\omega$-stable theory.  Thus we might informally expect this behavior to be extremely common for such theories.  Since isomorphism is not Borel, we cannot truly consider $\REF(\bin)$ to be especially tame.  However, the theory is relatively simple in the sense that it cannot code much infinitary behavior.  We end with the following class of examples which follow naturally from this one:
	
	\begin{cor}\label{REFCorollary}
		For any $\alpha$ with $2\leq\alpha<\omega_1$, there is a complete first-order theory $S_\alpha$ whose isomorphism relation is not Borel, and where $T_\beta\borelleq S_\alpha$ if and only if $\beta\leq\alpha$.
		
		Each of these theories is grounded, superstable, but not $\omega$-stable.
	\end{cor}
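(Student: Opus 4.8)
The plan is to set $S_\alpha := \REF(\bin)\times T_\alpha$, the two--factor product of $\REF(\bin)$ with the benchmark theory $T_\alpha$ of Definition~\ref{spectrum}, and to read off every clause of the Corollary from results already in hand. Since the index set $\{0,1\}$ is finite, Proposition~\ref{BasicProductProposition}(1) shows $S_\alpha$ is a complete first--order theory in a countable language, and the factor $\kappa$ in Proposition~\ref{BasicProductProposition}(3) equals $1$, so $\|S_\alpha\|=\|\REF(\bin)\|\cdot\|T_\alpha\|$. Proposition~\ref{BasicProductProposition}(2) gives that $S_\alpha$ is grounded, since $\REF(\bin)$ is grounded (cf.\ Proposition~\ref{gr}) and each $T_\alpha$ is grounded (Corollary~\ref{countT}). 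Finally, using $\|\REF(\bin)\|=\beth_2$ from Theorem~\ref{REFcountingTheorem} together with the values of $\|T_\alpha\|$ from Corollary~\ref{countT}, we get $\|S_\alpha\|=\beth_\alpha$ when $\alpha$ is finite (here $\alpha\ge 2$ is used) and $\|S_\alpha\|=\beth_{\alpha+1}$ when $\alpha$ is infinite.

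For the reducibility dichotomy, note first that Corollary~\ref{countT} gives $T_\beta\borelleq T_\alpha$ for every $\beta\le\alpha$, and Proposition~\ref{BasicProductProposition}(5) gives $T_\alpha\borelleq S_\alpha$; by transitivity $T_\beta\borelleq S_\alpha$ whenever $\beta\le\alpha$. Conversely, comparing the cardinals computed above against $\|T_{\alpha+1}\|$ (which is $\beth_{\alpha+1}$ for finite $\alpha$ and $\beth_{\alpha+2}$ for infinite $\alpha$, again by Corollary~\ref{countT}) shows $\|S_\alpha\|<\|T_{\alpha+1}\|$ in every case, so $T_{\alpha+1}\not\borelleq S_\alpha$ by Theorem~\ref{translate}(2); and since $T_{\alpha+1}\borelleq T_\beta$ for all $\beta\ge\alpha+1$, also $T_\beta\not\borelleq S_\alpha$ for all $\beta>\alpha$. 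Hence $T_\beta\borelleq S_\alpha$ if and only if $\beta\le\alpha$.

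For the two remaining points: by Proposition~\ref{BasicProductProposition}(5), $\REF(\bin)\borelleq S_\alpha$ via some Borel reduction $g$, so if $\iso$ were Borel on $\Mod(S_\alpha)$ then its preimage under $g\times g$ would witness that $\iso$ is Borel on $\Mod(\REF(\bin))$, contradicting Theorem~\ref{REFnotBorelTheorem}; thus $\iso$ is not Borel on $\Mod(S_\alpha)$. For the stability classification, a transfinite induction on $\alpha<\omega_1$ shows that each $T_\alpha$ is $\omega$--stable, hence superstable, because $T_0$ is $\omega$--stable, a jump of an $\omega$--stable theory is $\omega$--stable, and a countable product of $\omega$--stable theories is $\omega$--stable (in each case a complete $1$--type over a countable model consists of a choice among countably many ``blocks'' together with a type inside a single block). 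Then $S_\alpha=\REF(\bin)\times T_\alpha$ is superstable by the same block decomposition of types, while it inherits non--$\omega$--stability from its $\REF(\bin)$ factor. The bookkeeping here is routine; the one genuinely non--formal point is this stability classification, and even it reduces to the induction just sketched — the structural reason being that, in contrast to $\REF(\inf)$, no element of $T_\alpha$ (and hence none of $S_\alpha$) lies in an infinite descending chain of equivalence classes, which is exactly what keeps the theory superstable.
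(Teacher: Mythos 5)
Your proof is correct, but it takes a genuinely different route from the paper. The paper constructs the family recursively: it sets $S_2:=\REF(\bin)$, takes $S_{\alpha+1}=J(S_\alpha)$ at successors and $S_\alpha=\prod_{\beta<\alpha}S_\beta$ at limits, and then verifies every clause by transfinite induction using Propositions~\ref{BasicJumpProposition} and \ref{BasicProductProposition}. You instead define $S_\alpha:=\REF(\bin)\times T_\alpha$ directly, which lets you read off completeness, groundedness, and $\|S_\alpha\|=\|T_\alpha\|$ in one application of Proposition~\ref{BasicProductProposition} (your observation that $\beth_2\cdot\|T_\alpha\|=\|T_\alpha\|$ precisely because $\alpha\ge 2$ is the right place to see where the hypothesis enters), and the reducibility dichotomy then follows from Corollary~\ref{countT} and Theorem~\ref{translate}(2) exactly as in the paper. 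The trade-off is in the stability classification: the paper only needs that $J$ and $\prod$ preserve ``superstable, not $\omega$-stable'' starting from $\REF(\bin)$, whereas your construction additionally needs each benchmark $T_\alpha$ to be superstable, which you supply by a separate induction showing the $T_\alpha$ are $\omega$-stable. That induction is true and routine (and arguably worth recording, since the paper nowhere states it), so this is not a gap --- you have simply relocated the inductive work from the $S_\alpha$'s to the $T_\alpha$'s. Your argument that $\iso$ is non-Borel on $\Mod(S_\alpha)$ by pulling back along the reduction $\REF(\bin)\borelleq S_\alpha$ is also the standard fact the paper leaves implicit. In short: both proofs lean on the same two propositions and the same cardinality obstruction; yours packages the construction non-recursively at the cost of an extra (easy) stability induction on the benchmarks.
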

	\begin{proof}
		Take $S_2:=\REF(\bin)$.  We construct $S_{\alpha+1}$ as $J(S_\alpha)$, and for limit $\alpha$, construct $S_\alpha$ as $\prod_\beta S_\beta$ as $\beta$ varies below $\alpha$. That $T_\alpha\borelleq S_\alpha$ follows from induction and Propositions \ref{BasicJumpProposition} and \ref{BasicProductProposition}, part (4).  That $T_{\alpha+1}\not\borelleq S_\alpha$ follows from the fact that $\|S_\alpha\|=\|T_\alpha\|<\|T_{\alpha+1}\|$, which follows from part (3) of those Propositions.
		
		Groundedness follows from part (2), and the place in the stability spectrum is a standard type-counting argument, beginning with the fact that $\REF(\bin)$ has the desired properties.
	\end{proof}
		
	We end this subsection with an open question:
	
	\begin{question}
		Let $\alpha$ be $0$ or $1$.  Is there a first-order theory $S_\alpha$ whose isomorphism relation is not Borel, and where $T_\beta\borelleq S_\alpha$ if and only if $\beta\leq\alpha$?
	\end{question}
	
	Note that the instance of the above question for $\alpha=0$ is precisely Vaught's conjecture for first-order theories. (A theory $T$ has a perfect set of nonisomorphic models if and only if $T_1 \leq_B T$.) For $\alpha =1$, abelian $p$-groups are an infinitary counterexample; we would like a first-order counterexample.

	\subsection{Infinite Branching}
	
	We now turn our attention to $\REF(\inf)$ specifically, and prove the following theorem:
	
	\begin{thm}\label{REIisBCTheorem}
		$\REF(\inf)$ is Borel complete.  Indeed, for each infinite cardinal $\lambda$, $\REF(\inf)$ is $\lambda$-Borel complete.
	\end{thm}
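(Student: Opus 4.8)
Since a sentence is Borel complete precisely when it is $\aleph_0$-Borel complete (Definition~\ref{lambdaBC} and the remarks following it), it suffices to prove the second, stronger statement, so fix an infinite cardinal $\lambda$. By \cite{LaskowskiShelahAleph0Stable}, the class of subtrees of $\lambda^{<\omega}$ (in the natural tree language), taken with $\equiv_{\infty,\omega}$, is $\lambda$-Borel complete; hence by Definition~\ref{lambdaBC} it is enough to build a $\lambda$-Borel map $T\mapsto M_T$ from this class into $\Mod_\lambda(\REF(\inf))$ such that $M_S\equiv_{\infty,\omega}M_T$ if and only if $S\equiv_{\infty,\omega}T$. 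Recall that on structures of a fixed size $\lambda$, $\equiv_{\infty,\omega}$ coincides with back-and-forth equivalence; I would check the two implications by translating winning strategies in the length-$\omega$ Ehrenfeucht-Fra\"isse game between the $M$'s and between the corresponding trees.

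\textbf{The encoding.} Given a nonempty subtree $T\subseteq\lambda^{<\omega}$, I would describe $M_T$ by specifying its tree of $E_n$-classes (which, since every class of a model of $\REF(\inf)$ splits into infinitely many subclasses, is forced to be infinitely branching everywhere) together with the sizes of the $E_\infty$-classes it realizes. Fix three ``gadget'' models $P_1,P_2,P_3$ of $\REF(\inf)$ in which every realized $E_\infty$-class has exactly $i$ elements (for $i=1,2,3$ respectively) and there is one realized branch through each node. Working at a constant (say triple) speed, the classes at the levels $\equiv 0 \pmod 3$ encode the nodes of $T$ level by level; below a $T$-node $\sigma$ one attaches, using the intermediate levels as connecting corridors: a distinguished \emph{marker} class $m_\sigma$ which has one child whose subtree is a copy of $P_1$, one child whose subtree is a copy of $P_2$, and infinitely many children with subtrees copies of $P_3$; for each immediate successor $\sigma\frown\langle\alpha\rangle$ of $\sigma$ in $T$, a descendant class (three levels down) which is again a $T$-node, built recursively from the subtree $T_{\sigma\frown\langle\alpha\rangle}$; and enough further copies of $P_3$ as ``junk'' to soak up the forced branching at every level. (When $\lambda>\aleph_0$ one uses that a model of $\REF(\inf)$ of size $\lambda$ may split a class into as many as $\lambda$ subclasses.) The resulting prescription of the relations $E_n(\alpha,\beta)$ on the universe (identified with $\lambda$ via a fixed bijection) is a Boolean combination of basic relations of $T$, hence $\lambda$-Borel, and uses countability of $\lambda$ nowhere, which gives uniformity in $\lambda$.

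\textbf{Correctness.} The design makes $P_1,P_2,P_3$ pairwise non-$\equiv_{\infty,\omega}$, and, more to the point, arranges that a class of $M_T$ has a child-subtree $\equiv_{\infty,\omega}P_1$ \emph{and} a child-subtree $\equiv_{\infty,\omega}P_2$ exactly when it is one of the markers $m_\sigma$: inside a copy of $P_i$ every child-subtree is again $\equiv_{\infty,\omega}P_i$, a junk class only has $P_3$-type children, and a $T$-node realizes $E_\infty$-classes of all three sizes, so it is not $\equiv_{\infty,\omega}$ to any $P_i$. Thus ``being a $T$-node'' (having a marker child) is an $\equiv_{\infty,\omega}$-invariant, and the $T$-nodes of $M_T$, under the induced tree order, form a tree canonically isomorphic to $T$, with the number of immediate successors of each subtree-type recorded faithfully by the multiplicity function $\mult$ of Proposition~\ref{gr}. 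Consequently, if $S\equiv_{\infty,\omega}T$ then the data $D(M_T)$ in the sense of the proof of Proposition~\ref{gr} is computed from the back-and-forth type of $T$ alone, whence $M_S\equiv_{\infty,\omega}M_T$; conversely a back-and-forth system (or winning strategy) between $M_S$ and $M_T$ preserves ``marker'', ``$T$-node'', and the tree order, so it restricts to one between the recovered copies of $S$ and $T$, giving $S\equiv_{\infty,\omega}T$.

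\textbf{Main obstacle.} The delicate point --- where essentially all the content lies --- is arranging the marker so that ``is (the encoding of) a genuine $T$-node'' is a rigid, $\equiv_{\infty,\omega}$-invariant, purely local feature: one must verify that no infinitary back-and-forth play can slide a marker, a $P_i$-gadget, or a junk region onto anything else, and that the recursion encoding $T_\sigma$ introduces no collapse of trees that were not already $\equiv_{\infty,\omega}$-equivalent. This is exactly why the two-color (rather than one-color) marker is needed, since the one-color property would be inherited by sub-subtrees; carrying it out is a back-and-forth bookkeeping argument in the spirit of the proof of Proposition~\ref{gr}. Once it is in hand, the $\lambda$-Borel measurability of $T\mapsto M_T$ and the uniformity over all infinite $\lambda$ are routine.
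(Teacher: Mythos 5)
Your overall strategy coincides with the paper's: both reduce from subtrees of $\lambda^{<\omega}$ under $\equiv_{\infty,\omega}$, citing the same $\lambda$-Borel completeness result from \cite{LaskowskiShelahAleph0Stable}, so everything comes down to the encoding $T\mapsto M_T$. Your encoding, however, is genuinely different from (and much heavier than) the paper's. You build the tree of $E_n$-classes of $M_T$ recursively out of $T$ itself, using a two-colour marker gadget to make ``is a $T$-node'' an $\equiv_{\infty,\omega}$-invariant feature of a class. The paper instead keeps the tree of classes completely independent of the subtree $S$ being coded --- it is all of $\lambda^{<\omega}$, realized by the eventually-zero branches --- and encodes $S$ purely in the colours: an extra element $(\eta,1)$ is added to $[\eta]_\infty$ exactly when $\eta$ extends some $t^\frown(1)$ with $t\notin S$. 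Recovery is then a one-line definition ($Tr(M_S)=S$), and the forward direction is handled by normalizing to ``reasonable'' subtrees, collapsing $\lambda$, and extending an isomorphism $S\iso T$ to a tree automorphism of all of $\lambda^{<\omega}$, which literally is an $L$-isomorphism $M_S\to M_T$. What the paper's encoding buys is precisely that the step you yourself flag as ``where essentially all the content lies'' disappears; what yours buys is that no reasonableness normalization is needed, since your construction is intrinsic to the abstract tree.

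That flagged step is a genuine gap in the write-up as it stands: you assert, but do not prove, that no back-and-forth play can slide a marker, gadget, or junk region onto anything else, that $D(M_T)$ depends only on the $\equiv_{\infty,\omega}$-class of $T$, and that the induced correspondence on $T$-node classes is a back-and-forth system between $S$ and $T$. I believe the gadget design is sound (markers are exactly the classes with both a $P_1$-child and a $P_2$-child, and this is visible in the data of Proposition~\ref{gr}, since $T$-nodes, corridors, and junk have no $P_1$- or $P_2$-children), but rather than doing the bookkeeping directly you should discharge it the way the paper discharges every claim of this kind: pass to a forcing extension collapsing $\lambda$, where $\equiv_{\infty,\omega}$ becomes $\iso$ for both trees and $L$-structures; check that $T\mapsto M_T$ is absolute and functorial in tree isomorphisms (which requires the corridors and the number of junk copies to be prescribed canonically, not read off from how $T$ sits inside $\lambda^{<\omega}$); and check that any isomorphism $M_S\iso M_T$ carries markers to markers and hence $T$-nodes to $T$-nodes, so that it restricts to an isomorphism $S\iso T$. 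You should also pin down the ``connecting corridors'' precisely enough that a corridor class cannot itself acquire a marker child, and confirm that every class of $M_T$ --- in particular each marker, to which you attach only one $P_1$-child and one $P_2$-child --- still splits into infinitely many children as $\REF(\inf)$ demands (your infinitely many $P_3$-children handle this, but it should be said).
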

	
	
	\begin{proof}
    	Let $\Phi$ be the $L_{\omega_1\omega}$ sentence in the language $\{\leq\}$ describing $\omega$-trees. By Theorem~3.11 of \cite{LaskowskiShelahAleph0Stable}, $\Phi$ is $\lambda$-Borel complete for each $\lambda$, so it is enough to produce a $\lambda$-Borel reduction $f$ from $\mbox{Mod}_\lambda(\Phi)$ to $\mbox{Mod}_\lambda(\REF(\inf))$. 
        
        Call a subtree $S \subset \lambda^{<\omega}$ is {\em reasonable} if for every element $s \in S$, $\{\alpha < \lambda: s^\frown(\alpha) \not \in S\}$ is infinite. We describe an operation $S \mapsto M_S$ sending reasonable subtrees of $\lambda^{<\omega}$ of size $\lambda$, to models of $\REF(\inf)$ of size $\lambda$, such that $S \equiv_{\infty \omega} S'$ iff $M_S \equiv_{\infty \omega} M_{S'}$. It will then be routine to define a $\lambda$-Borel map $f: \mbox{Mod}_\lambda(\Phi) \to \mbox{Mod}_\lambda(\REF(\inf))$, such that given $S' \in \mbox{Mod}_\lambda(\Phi)$ there is some subtree $S \subset \lambda^{<\omega}$ reasonable with $S \cong S'$ and $f(S') \cong M_S$. Then $f$ will be the desired reduction.

 Let $I\subset\lambda^{\omega}$ be the set of all $\omega$-sequences  from $\lambda$ which are eventually zero.  For any set $M$ satisfying
 $$I\times\{0\}\subseteq M\subseteq I\times\{0,1\}$$
if we construe $M$ as an $L=\{E_n:n\in\omega\}$-structure by the rule $E_n((\eta,i),(\nu,j))$ if and only if $\eta\mr{n}=\nu\mr{n}$, then $M$ is a model of
$\REF(\inf)$.
		
		 So, given a reasonable subtree $S\subset \lambda^{<\omega}$ of size $\lambda$, let $M_S$ be the $L$-structure whose universe is
		 $$(I\times\{0\})\cup\{(\eta,1): t\frown(1)\subset\eta \ \hbox{for some $t\in\lambda^{<\omega}\setminus S$}\}$$
		
       
        We check that  the mapping $S \mapsto M_S$ works.
        
        To see this, we describe an inverse operation.  Given any $L$-structure $M$ whose universe satisfies $I\times\{0\}\subseteq M\subseteq I\times\{0,1\}$, let
        $$Tr(M)=\{s\in \lambda^{<\omega}:\forall\alpha<\lambda\exists \eta\in \lambda^\omega\left[s\frown(\alpha)\subset \eta\ \hbox{and} \  (\eta,1)\not\in M\right]\}$$
        
        We first argue that for any subtree $S\subseteq \lambda^{<\omega}$, we have $Tr(M_S)=S$.  Indeed, suppose $s\in S$.  Choose $\alpha\in\lambda$ arbitrarily.  Then
        the element $\eta:=s\frown(\alpha)\frown \overline{0}$ of $I$ witnesses that $s\in Tr(M_S)$.  Conversely, if $s\not\in S$ then as $(\eta,1)\in M_S$
        for every $\eta\supset s\frown(1)$, $s\not\in Tr(M_S)$.  
        
        Thus, in particular, $Tr(M_S)$ is a subtree of $\lambda^{<\omega}$ whenever $S$ is.  
        
         \begin{claim}
			For any subtrees $S,T$ of $\lambda^{<\omega}$, if the $L$-structures $M_S\equiv_{\infty,\omega} M_T$, then $(S,\subseteq)\equiv_{\infty,\omega} (T,\subseteq)$.
		\end{claim}
		\begin{claimproof}
		Assume $M_S\equiv_{\infty,\omega} M_T$.  Pass to a forcing extension $\V[G]$ in which $\lambda^\V$ is countable.   Choose an $L$-isomorphism $f:M_S\rightarrow M_T$.
		This induces a tree isomorphism $f^*:(Tr(M_S),\subseteq)\rightarrow (Tr(M_T),\subseteq)$.  Combined with the computation above, $(S,\subseteq)$ and $(T,\subseteq)$
		are isomorphic in $\V[G]$, so they are back-and-forth equivalent in $\V$.            
		\end{claimproof}

		To complete the proof, suppose two reasonable subtrees satisfy $(S,\subseteq) \equiv_{\infty \omega} (T,\subseteq)$. Pass to a forcing extension wherein $\lambda$ is countable, so that $S \cong T$. Then, since $S$ and $T$ are reasonable, we can choose a  tree automorphism 
		$f: (\lambda^{<\omega},\subseteq) \cong (\lambda^{<\omega},\subseteq)$ that carries $S$ to $T$. Then clearly $f$ induces an $L$-isomorphism from $M_S$ to $M_T$.  This implies that the $L$-structures $M_S$ and $M_T$
		are back-and-forth equivalent in the ground model.

	\end{proof}
	
	The following Corollary follows immediately from Corollary~\ref{BCnotshort}, Proposition~\ref{gr}, and Theorem~\ref{REIisBCTheorem}.
	
	\begin{cor}
		$\REF(\inf)$ is not short. Indeed, $\REF(\inf)$ has class-many $\equiv_{\infty\omega}$-inequivalent models in $\V$.
	\end{cor}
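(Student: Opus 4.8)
The statement splits into two parts, and my plan is to obtain each by assembling the three cited results; there is essentially no new content, so the proposal is an exercise in bookkeeping with one point requiring care. For "$\REF(\inf)$ is not short" I would simply cite Theorem~\ref{REIisBCTheorem}, which gives that $\REF(\inf)$ is Borel complete, together with Corollary~\ref{BCnotshort}, which says a Borel complete sentence is never short; hence $\CSS(\REF(\inf))_\ext$ is a proper class. Proposition~\ref{gr} is not needed for this half.

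For the stronger clause, "class-many $\equiv_{\infty,\omega}$-inequivalent models in $\V$", the plan is to show that $\CSS(\REF(\inf))_\sat$ is itself a proper class; since $I_{\infty,\omega}(\REF(\inf)) = |\CSS(\REF(\inf))_\sat|$ by Fact~\ref{summ} (and Lemma~\ref{subset}), a proper class of pairwise distinct satisfiable canonical Scott sentences is exactly a proper class of pairwise $\equiv_{\infty,\omega}$-inequivalent models of $\REF(\inf)$ in $\V$. By Lemma~\ref{subset} it suffices to prove $\CSS(\REF(\inf))_\ext \subseteq \CSS(\REF(\inf))_\sat$, for then the left-hand class being proper forces the right-hand one to be as well. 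So I would fix $\sigma \in \CSS(\REF(\inf))_\ext$, note that every model of $\REF(\inf)$ is a model of $\REF$ so $\sigma \in \CSS(\REF)_\ext$, and invoke the groundedness of $\REF$ (Proposition~\ref{gr}), $\CSS(\REF)_\ext = \CSS(\REF)_\sat$, to obtain some $N \in \V$ with $N \models \REF$ and $\css(N) = \sigma$.

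The one step that actually needs an argument is the promotion of "$N \models \REF$" to "$N \models \REF(\inf)$". Here I would pass to a forcing extension $\V[G]$ in which $\sigma$ is hereditarily countable, take the countable $M \in \V[G]$ with $M \models \REF(\inf)$ and $\css(M) = \sigma$ guaranteed by $\sigma \in \CSS(\REF(\inf))_\ext$, use the absoluteness of $\css$ (Fact~\ref{summ}(3)) to see that $\css(N) = \sigma$ holds in $\V[G]$ as well, hence $N \equiv_{\infty,\omega} M$ there, and in particular $N \equiv M$ as first-order structures. Since $\REF(\inf)$ is axiomatized by first-order sentences — "$E_n$ is an equivalence relation", "$E_0$ is trivial", "$E_{n+1}$ refines $E_n$", and, for each $k$, "every $E_n$-class meets at least $k$ distinct $E_{n+1}$-classes" — and satisfaction in a set-sized structure is absolute, this yields $N \models \REF(\inf)$ in $\V$, so $\sigma = \css(N) \in \CSS(\REF(\inf))_\sat$, completing the argument. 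I expect no genuine obstacle here, only the need to observe that $\equiv_{\infty,\omega}$-equivalence implies full elementary equivalence and that the infinite-splitting axioms are first order; equivalently, one may package the promotion as the remark that $\sigma$ has a model of $\REF(\inf)$ in a forcing extension, hence $\sigma$ proves $\REF(\inf)$ in the $L_{\infty,\omega}$ proof system, and $N \models \sigma$.
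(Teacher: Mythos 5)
Your proposal is correct and matches the paper's intended argument exactly: the paper offers no written proof beyond the remark that the corollary ``follows immediately'' from Corollary~\ref{BCnotshort}, Theorem~\ref{REIisBCTheorem}, and Proposition~\ref{gr}, and your assembly of those three results is precisely that derivation. The only detail you add --- promoting $N\models\REF$ to $N\models\REF(\inf)$ via $\equiv_{\infty,\omega}$-equivalence with a model of the first-order theory $\REF(\inf)$ and absoluteness of first-order satisfaction --- is a gap the paper leaves implicit, and you close it correctly.
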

	

	\section{$\omega$-Stable Examples}\label{OmegaStableSection}

	Here we discuss two more first-order theories whose isomorphism relations are not Borel, but where one is Borel complete, and the other does not embed $T_3$.  Interestingly, both are extremely similar model-theoretically.  Both are $\omega$-stable with quantifier elimination, and have $\ENINDOP$ and $\eni$-depth 2, which together give a strong structure theorem in terms of stability theory.\footnote{In \cite{LaskowskiShelahAleph0Stable}, an attempt is made to characterize which first-order $\omega$-stable theories are Borel complete, using the dividing lines: ENI-DOP vs ENI-NDOP, and eni-deep vs eni-shallow. In particular, it is shown that any $\omega$-stable theory which either has ENI-DOP or is eni-deep is Borel complete; and if an $\omega$-stable theory has both ENI-NDOP and is eni-shallow, then it has fewer then $\beth_{\omega_1}$-many models up to back-and-forth equivalence.}
    Indeed, as we will show, both have exactly $\beth_2$ models up to $\equiv_{\infty\omega}$, meaning that at a ``macro'' level, they are extremely similar.  Yet at a ``micro'' level, they are quite different.  Because of the similarity of the examples, we are able to highlight exactly why one is relatively simple, while the other is not.
	
	Let us define the theories.  The first, $\K$, is due to Koerwien and constructed in \cite{KoerwienExample}.  The language has unary sorts $U$, $V_i$, and $C_i$, as well as unary functions $S_i$ and $\pi^j_i$ for $i\in \omega$ and $j\leq i+1$.  The axioms are as follows:
	
	\begin{itemize}
		\item The sorts $U$, $V_i$, and $C_i$ are all disjoint.  $U$ and each of the $V_i$ are infinite, but each $C_i$ has size 2.
		\item $\pi^{i+1}_i$ is a function from $V_i$ to $U$; $\pi^j_i$ is a function from $V_i$ to $C_j$ when $j\leq i$.
		\item For each tuple $\o c=(c_0,\ldots,c_i)$ and each $u\in U$, $\pi^{-1}_i(\o c,u)$ is nonempty.  Here $\pi_i$ refers to the product map $\pi^0_i\times\cdots\times \pi^{i+1}_i:V_i\to C_0\times \cdots C_i\times U$. 
		\item $S_i$ is a unary successor function from $V_i$ to itself, and $\pi_i\circ S_i=\pi_i$.
	\end{itemize}
	
	We have a few remarks.  Typically we will drop the subscript on $\pi_i$ and $S_i$ if it is clear from context.  There is a slight ambiguity about the sorts, whether one works in traditional first-order logic (and thus there may be ``unsorted'' elements) or in multisorted logic (where there will not be).  Since the unsorted elements never have any effect other than to complicate notation, we work in multisorted logic.
	
	The properties of $\K$ have been well studied by Koerwien  in \cite{KoerwienExample}; we summarize his findings here:
	
	\begin{thm}
		$\K$ is complete with quantifier elimination.  It is $\omega$-stable,  has $\ENINDOP$, and is $\eni$-shallow of $\eni$-depth 2.
		Furthermore, the isomorphism relation for $\K$ is not Borel.
	\end{thm}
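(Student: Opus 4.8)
The plan is to follow Koerwien's analysis in \cite{KoerwienExample}, so I will only sketch the four assertions and flag where the real work lies. \textbf{Completeness and quantifier elimination.} I would check the standard QE test: every primitive formula $\exists y\,\theta(\bar x,y)$ with $\theta$ quantifier-free is, modulo $\K$, equivalent to a quantifier-free formula. The point is that a finitely generated substructure is determined by finitely many elements of $U$, the (algebraic over $\emptyset$) elements of each $C_i$ that appear, and finitely many $S_i$-chains of $V_i$-elements lying over fixed data $(\bar c,u)\in C_0\times\cdots\times C_i\times U$; amalgamating two such substructures over a common one is just a matter of placing the new chains into distinct fibers, which is always possible because each $U$ and each $V_i$ is infinite (and each fiber of $\pi_i$ carries infinitely many $S_i$-chains). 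Completeness follows since the substructure generated by $\acl(\emptyset)=\bigcup_i C_i$ embeds into every model.

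\textbf{$\omega$-stability.} Over a countable model $M$, a nonalgebraic $1$-type of a $U$-element is unique; a $1$-type of a $C_i$-element is algebraic; and a $1$-type of a $V_i$-element $v$ is determined by $\pi_i(v)\in C_0\times\cdots\times C_i\times U$ (finitely many parameters from $M\cup\acl(\emptyset)$) together with the position of $v$ relative to the $S_i$-chains of that fiber already realized in $M$ — only countably many possibilities. So $|S_1(M)|=\aleph_0$, and a routine induction on arity gives $\omega$-stability. (The theory is not weakly minimal because the $V_i$ sorts have infinite rank coming from the $\mathbb{Z}$-chain structure, but this is irrelevant here.)

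\textbf{$\ENINDOP$ and $\eni$-depth $2$.} This is the delicate part, and it is exactly where I would lean on Koerwien's verification rather than reproduce it. One computes the eventually non-isolated types occurring in a prime-model decomposition: over $\acl(\emptyset)$, the eni-type of a $U$-element $u$ is a depth-$0$ step non-orthogonal only to itself; over $\acl(\emptyset)\cup\{u\}$, the eni-type of an $S_i$-chain inside the fiber over $(\bar c,u)$ is a new eni-type whose further analysis is algebraic; so the decomposition tree has height exactly $2$. $\ENINDOP$ holds because every eni-type over a model lives over a single previous $U$-node — the $V_i$-fibers never force a genuine two-sided amalgamation — so no DOP configuration arises. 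Carrying this out rigorously requires the eni-type apparatus of \cite{LaskowskiShelahAleph0Stable}, and I expect this step to be the main obstacle.

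\textbf{$\iso$ is not Borel.} Here I would argue as in Theorem~\ref{REFnotBorelTheorem}: it suffices to produce, for each $\alpha<\omega_1$, a family of countable models of $\K$ that are pairwise $\equiv_\alpha$-equivalent but pairwise non-isomorphic. The size-$2$ sorts $C_i$ supply binary branching at each level, and the successor functions $S_i$ on the $V_i$-fibers let one stack non-isomorphic-but-$\equiv_{\alpha_n}$ gadgets along an antichain of $2^{<\omega}$, just as the bubbles $(A_n)^s,(B_n)^t$ were stacked in the proof for $\REF(\bin)$; an Ehrenfeucht--Fra\"iss\'e argument then yields $\equiv_\alpha$-equivalence, while the presence or absence of a distinguished family of elements over a dense set of branches separates the models up to isomorphism. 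This shows the Scott ranks of countable models of $\K$ are unbounded below $\omega_1$, whence $\iso$ is not Borel on $\Mod(\K)$ by the classical criterion (\cite{GaoIDST}, Theorem~12.2.4). Of the four parts, the stability-theoretic bookkeeping in the third is by far the hardest; the non-Borelness in the fourth, which is what the rest of this paper needs, can be had by the transparent combinatorial construction just described.
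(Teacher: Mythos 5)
You should first be aware that the paper offers no proof of this theorem at all: it is introduced with ``we summarize his findings here'' and every one of the four assertions, including the non-Borelness of $\iso$, is attributed to Koerwien's paper \cite{KoerwienExample}. So your decision to lean on Koerwien for the $\ENINDOP$/$\eni$-depth computation is not a gap relative to this paper --- it is exactly what the authors do, for all four claims. Your proposal is therefore best judged as a free-standing sketch, and on that score two points need attention. First, your quantifier-elimination step asserts parenthetically that each fiber of $\pi_i$ carries infinitely many $S_i$-chains. That is not an axiom of $\K$ and is false in many models: the paper's own constructions (e.g.\ Proposition~\ref{submodelcount} and Proposition~\ref{relate}) build models in which $\pi_i^{-1}(\bar c,u)$ has $S$-dimension $1$. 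The amalgamation still goes through, but for a different reason --- a primitive existential formula constrains only finitely many iterates $S^k$ of the named elements, so a witness far out on a single $\mathbb{Z}$-chain already works; alternatively, run the back-and-forth between $\omega$-saturated models, where every fiber really does realize infinitely many chains. As written, the justification is wrong even though the conclusion is right.

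Second, the non-Borelness argument is the one place where your sketch substitutes an analogy for an argument. The invariant of a countable model of $\K$ is a countable family of colorings $c_u:2^{<\omega}\to(\omega+1)$ indexed by $U$, considered up to a permutation of $U$ together with a \emph{single simultaneous} shift by an element of $(2^\omega,\oplus)$ acting on all colorings at once (this is the content of Proposition~\ref{relate}). The bubbles $(A_n)^s,(B_n)^t$ of Theorem~\ref{REFnotBorelTheorem} live inside one tree of equivalence classes and are glued along an antichain; there is no immediate analogue of that gluing inside a single model of $\K$, because the tree $2^{<\omega}$ here is $\acl(\emptyset)$ and is the same for every $u\in U$. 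One can still prove unbounded Scott heights --- that is what Koerwien does --- but the construction has to be carried out in the $U$-indexed-colorings picture, and your sketch does not say how. So: parts one and two are essentially fine modulo the fiber-dimension slip, part three is deferred exactly as the paper defers it, and part four names the right strategy (unbounded Scott ranks, then the criterion of \cite{GaoIDST} Theorem~12.2.4) but does not yet contain the construction that makes it work for $\K$ specifically.
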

	
	Our other theory is a tweak of $\K$, so we call it $\TK$.  The language is slightly different; we have unary sorts $U$, $V_i$, and $C_i$ as before, but have unary functions $S_i$, $\pi^0_i$, $\pi^1_i$, and $\tau_{i+1}$ for $i\in\omega$.  The axioms are as follows:
	
	\begin{itemize}
		\item The sorts $U$, $V_i$, and $C_i$ are all disjoint.  $U$ and each of the $V_i$ are infinite, but each $C_i$ has size $2^i$.
		\item $\tau_{i+1}$ is a surjection from $C_{i+1}$ to $C_i$ where, for all $c\in C_i$, $|\tau_{i+1}^{-1}(c)|=2$.
		\item $\pi^{1}_i$ is a function from $V_i$ to $U$; $\pi^0_i$ is a function from $V_i$ to $C_i$.
		\item For each tuple $c\in C_i$ and each $u\in U$, $\pi^{-1}_i(c,u)$ is nonempty.  Here $\pi_i$ refers to the product map $\pi^0_i\times \pi^{1}_i:V_i\to C_i\times U$. 
		\item $S_i$ is a unary successor function from $V_i$ to itself, and $\pi_i\circ S_i=\pi_i$.
	\end{itemize}
	
	The preceding notes also apply to $\K$.  The behavior is extremely similar, and essentially the same proofs of basic properties of $\K$ apply to $\TK$.  We summarize this now:
	
	\begin{thm}
		$\TK$ is complete with quantifier elimination.  
		It is $\omega$-stable, has $\ENINDOP$, and is $\eni$-shallow of $\eni$-depth 2.
	\end{thm}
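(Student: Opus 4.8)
The plan is to adapt Koerwien's treatment of $\K$ in \cite{KoerwienExample} to the language of $\TK$, pointing out only where the two differ. The relevant differences are: each $C_i$ now has size $2^i$ rather than $2$; the maps $\tau_{i+1}\colon C_{i+1}\to C_i$, with all fibres of size $2$, turn $\bigcup_i C_i$ into a fixed copy of the binary tree $2^{<\omega}$ organized level by level; and each $V_i$ now carries a single $C$-coordinate $\pi^0_i\colon V_i\to C_i$ together with $\pi^1_i\colon V_i\to U$, in place of the tuple of coordinates $\pi^0_i\times\cdots\times\pi^{i+1}_i$ in $\K$. None of this changes the qualitative picture: the $C$-sorts remain finite at each level and lie in $\acl(\emptyset)$, while each fibre $\pi_i^{-1}(c,u)$ is a disjoint union of $S_i$-orbits, each a copy of $(\mathbb{Z},S)$.

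For quantifier elimination I would run a back-and-forth between $\aleph_0$-saturated models $M,N\models\TK$: given a partial isomorphism $f$ between finitely generated substructures (closed under $S_i^{\pm1}$, $\pi^0_i$, $\pi^1_i$, $\tau_{i+1}$) and a new element $a$, extend $f$ by $a$ according to the sort of $a$. If $a\in U$, use a fresh element of $U^N$ (or the image of an existing one). If $a\in C_i$, one must also send the branch $\tau(a),\tau^2(a),\dots$ down to $C_0$, but the $C_j$ are finite and the $\tau$-tree is rigidly pinned down by the axioms, so this is immediate. If $a\in V_i$, first match $\pi^0_i(a)\in C_i$ and $\pi^1_i(a)\in U$ by the previous cases, then place $a$ on an $S_i$-orbit over that fibre already met by the domain of $f$, or on a fresh one; the $\aleph_0$-saturation of $N$ supplies the needed orbit, since the axioms make the fibre nonempty and (in a saturated model) split into infinitely many $\mathbb{Z}$-chains. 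In each case the quantifier-free type of $a$ over the substructure determines its complete type, which gives QE. Completeness is then automatic: the substructure generated by $\emptyset$ is $\bigcup_i C_i$ with the $\tau$'s, a single fixed structure present in every model, so any two models share a substructure and QE yields a complete theory.

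For $\omega$-stability, count $1$-types over a countable model $M$: an element lies in $U$ (one new type beyond the realized ones), in some $C_i$ (no new type, as $C_i\subseteq\acl(\emptyset)$ is finite), or in some $V_i$, where its type is determined by the pair $(\pi^0_i(v),\pi^1_i(v))\in C_i\times U$ --- only $\aleph_0$ options --- together with which $S_i$-orbit over that fibre $v$ lies on, one of a countable realized family or else a new orbit. Hence $|S_1(M)|\le\aleph_0$, and the same bookkeeping over an arbitrary countable parameter set gives $\omega$-stability. (This is exactly where $\TK$ parts company with $\REF(\bin)$: there is no infinitary intersection sort analogous to $E_\infty$, hence no family of $2^{\aleph_0}$ limit types.) For $\ENINDOP$ and $\eni$-depth, I would follow Koerwien's analysis: the eni types are accounted for by the generic type of $U$ and the generic types of the $V_i$-fibres, the number of $S_i$-orbits in a fibre being an internal multiplicity that adds no depth, while the $C$-sorts together with the entire $\tau$-tree are algebraic and contribute nothing; this arranges the eni types in a tree of height $2$, so $\TK$ is $\eni$-shallow of $\eni$-depth $2$. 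Finally, if $M$ is prime over $M_1\cup M_2$ with $M_1$ and $M_2$ independent over $M_0$, then $U^M=U^{M_1}\cup U^{M_2}$ (the prime model adds no new non-algebraic elements of $U$) and every $V_i$-orbit has its single $U$-coordinate in $M_1$ or in $M_2$; hence every eni type over $M$ is nonorthogonal to $M_1$ or to $M_2$, which is precisely $\ENINDOP$.

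The main obstacle I anticipate is bookkeeping rather than anything conceptual: making the back-and-forth for QE handle the forced $\tau$-branches in the $C$-sorts and the $S_i$-orbit structure of the fibres at the same time, and re-tracing Koerwien's $\eni$-depth computation with the coordinate datum of a $V_i$-element now reading ``a point of $2^{<\omega}$ at level $i$ together with a point of $U$'' rather than ``a tuple in $C_0\times\cdots\times C_i\times U$''. Since the $\tau$-tree lies entirely inside $\acl(\emptyset)$, this reorganization changes neither which types are eni nor their depths, but it is the place where the verification has to be done with care.
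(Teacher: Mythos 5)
Your proposal is correct and takes essentially the same route as the paper, which gives no proof of this theorem at all: it simply asserts that ``essentially the same proofs of basic properties of $\K$ apply to $\TK$,'' deferring to Koerwien's arguments in \cite{KoerwienExample}. Your write-up is exactly that transfer, carried out with more care about where the two languages differ (the $\tau$-tree on $\bigcup_i C_i$ and the single $C$-coordinate on each $V_i$), and the key observation that these changes live entirely inside $\acl(\emptyset)$ and so affect none of QE, completeness, $\omega$-stability, $\ENINDOP$, or the $\eni$-depth computation is the right one.
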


	We can easily see that both $\K$ and $\TK$ have relatively few models up to back-and-forth equivalence:
	
	\begin{prop}  \label{submodelcount}
		$I_{\infty,\omega}(K)=I_{\infty,\omega}(TK)=\beth_2$.
		
		Indeed, every model $M$ of either theory has a submodel $N$ where $M\equiv_{\infty\omega}N$ and $|N|\leq\beth_1$.
	\end{prop}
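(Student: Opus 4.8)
The plan is to attach to every model of $\K$ (resp.\ $\TK$) a combinatorial invariant, prove it is a complete invariant for $\equiv_{\infty,\omega}$, bound the number of possible invariants by $\beth_2$, and show every invariant is realized by a submodel of size $\le\beth_1$; the submodel clause then yields both the ``indeed'' statement and the upper bound $I_{\infty,\omega}\le\beth_2$. Fix $M\models\K$; the analysis of $\TK$ is identical except that the fixed countable structure $\acl(\emptyset)=\bigcup_i C_i$ carries the binary tree given by the $\tau_i$ rather than being a disjoint union of $2$-element sets. By quantifier elimination, for $u\in U$ and a coordinate tuple $\cbar$ from the \emph{finite} set $\prod_{j\le i}C_j$ (resp.\ from $C_i$), the fiber $\pi_i^{-1}(\cbar,u)$ is merely a set carrying the permutation $S_i$, i.e.\ a disjoint union of finite $S_i$-cycles and $\ZZ$-chains, whose $\equiv_{\infty,\omega}$-type is recorded by the function assigning to each orbit-type (an $n$-cycle, $n\ge 1$, or a $\ZZ$-chain) the number of orbits of that type, truncated at $\aleph_0$. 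Let the \emph{capped type} $t(u)$ be the resulting tuple over all $i$ and $\cbar$; being a function from a countable set into $\aleph_0+1$, there are at most $\beth_1$ possible capped types. Let the \emph{capped type of $M$} record, for each capped type $t$, the number of $u\in U$ with $t(u)=t$, truncated at $\aleph_0$; there are at most $(\aleph_0+1)^{\beth_1}=\beth_2$ of these.

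The main point, and the step I expect to be the real obstacle, is the claim that $M\equiv_{\infty,\omega}M'$ if and only if $M$ and $M'$ have the same capped type. One proves this by exhibiting a back-and-forth system: quantifier elimination reduces the $\equiv_{\infty,\omega}$-type of a tuple to its quantifier-free type together with the capped types of the finitely many fibers and $U$-elements it involves; $\ENINDOP$ makes the distinct fibers over a fixed $u$ independent, so that their invariants may be matched freely; and $\eni$-depth being $2$ is precisely what makes this two-tiered analysis ($U$-elements, then the fibers above them) terminate. This closely follows Koerwien's analysis of $\K$ in \cite{KoerwienExample} and the general $\omega$-stable machinery of \cite{LaskowskiShelahAleph0Stable}. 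Granting it, there are at most $\beth_2$ capped types of models, so $I_{\infty,\omega}(\K),I_{\infty,\omega}(\TK)\le\beth_2$.

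For the submodel clause, given $M$ I would pick $U_0\subseteq U$ meeting each realized capped type in exactly $\min(\aleph_0,m)$ points, where $m$ is its multiplicity; over each $u\in U_0$ and each $\cbar$, replace $\pi_i^{-1}(\cbar,u)$ by the sub-$S_i$-set retaining, for each orbit-type present, all orbits of that type if there are finitely many and countably many otherwise (orbits are countable, so the invariant of each fiber is preserved); keep all of $\bigcup_i C_i$; and let $N$ be the substructure on these elements. Then $N$ is closed under every $\pi^j_i$ and $S_i$, and the axioms of $\K$ (resp.\ $\TK$) survive, so $N$ is a submodel of $M$; $N$ has the same capped type as $M$, hence $N\equiv_{\infty,\omega}M$ by the previous paragraph; and $|N|\le\beth_1\cdot\aleph_0+\aleph_0=\beth_1$. (This re-derives $I_{\infty,\omega}\le\beth_2$ directly, since there are at most $2^{\beth_1}=\beth_2$ structures of size $\le\beth_1$, hence that many canonical Scott sentences.)

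For the matching lower bound I would realize $\beth_1$-many pairwise $\equiv_{\infty,\omega}$-inequivalent capped $u$-types --- for instance, for each nonempty $X\subseteq\omega\setminus\{0\}$ the type $t_X$ whose fiber over one fixed coordinate is $\bigsqcup_{n\in X}(\text{one }n\text{-cycle})$ while all its other fibers are single $\ZZ$-chains --- and then, for each $\m S\subseteq\{t_X:\emptyset\ne X\subseteq\omega\setminus\{0\}\}$, construct a model $M_{\m S}\models\K$ (of size $\le\beth_1$) realizing exactly the $u$-types in $\m S$, each $\aleph_0$ times. For $\m S\ne\m S'$ the models $M_{\m S}$ and $M_{\m S'}$ realize different sets of $u$-types, hence are $\equiv_{\infty,\omega}$-inequivalent, and there are $2^{\beth_1}=\beth_2$ of them; the same construction works verbatim for $\TK$. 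Combining the two bounds gives $I_{\infty,\omega}(\K)=I_{\infty,\omega}(\TK)=\beth_2$.
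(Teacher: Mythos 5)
Your overall architecture is the same as the paper's: cap fiber invariants at $\aleph_0$, cap the multiplicity of each $U$-type at $\aleph_0$, observe that the resulting data determines the $\equiv_{\infty\omega}$-class (which yields both the $\le\beth_1$ submodel by thinning and the upper bound $\le\beth_2$ by counting), and realize $\beth_2$-many distinct sets of $U$-types for the lower bound. However, there is one concrete error and one gap worth flagging.

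The error is your reading of the fiber structure. In both $\K$ and $\TK$, each fiber $\pi_i^{-1}(\cbar,u)$ is a disjoint union of $\ZZ$-chains only; there are no finite $S$-cycles. (If finite cycles were permitted, the sentence $\exists x\, S^3(x)=x$ would separate models and the theories could not be complete with quantifier elimination, as both the paper and Koerwien assert they are.) Consequently the only $\equiv_{\infty\omega}$-invariant of a fiber is its $S$-dimension, capped at $\aleph_0$. Your upper-bound count survives this (the invariant just degenerates to an element of $\omega+1\setminus\{0\}$ per fiber), but your lower-bound construction does not: the types $t_X$ built from one $n$-cycle for each $n\in X$ describe structures that are not models of $\K$ or $\TK$. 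The repair is exactly the paper's device: for $\eta\in 2^\omega$ let $u_\eta$ have every level-$n$ fiber of dimension $\aleph_0$ if $\eta(n)=1$ and dimension $1$ otherwise, and for each infinite $X\subseteq 2^\omega$ realize exactly $\{u_\eta:\eta\in X\}$; this gives $\beth_2$ pairwise inequivalent models.

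The gap is that your ``capped type'' is defined only after fixing a labeling of $\acl(\emptyset)=\bigcup_i C_i$, and the biconditional ``$M\equiv_{\infty\omega}M'$ iff same capped type'' is false as stated: an elementary permutation of $\acl(\emptyset)$ (an element of $(2^\omega,\oplus)$ for $\K$, of $\Aut(2^{<\omega},\le)$ for $\TK$) changes the capped type without changing the model. Only the implication ``same capped type $\Rightarrow$ equivalent'' holds, and fortunately that is the direction needed for the upper bound and the submodel clause. But the converse direction is what you invoke in the lower bound when you claim that realizing different sets of $u$-types forces inequivalence; there you must either quotient by the group action or, more simply, choose $u$-types that are invariant under it (the paper's $u_\eta$ above have fiber dimension depending only on the level $n$, not on the choice of $\cbar$, precisely so that this issue does not arise). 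With these two repairs your argument goes through and coincides with the paper's proof.
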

	\begin{proof}
		Let $T$ be either $\K$ or $\TK$. For the proof of the proposition we can restrict attention to models of $T$ with a fixed algebraic closure of the empty set $\bigcup_i C_i$.  If $T = K$, then let $C$ be all finite sequences $(a_j: j < i)$ with $i > 0$ and with each $a_j \in C_j$; if $T = TK$ then let $C = \bigcup_i C_i$. 
        
        We first show $I_{\infty,\omega}(T)\geq \beth_2$.  For each $\eta\in 2^\omega$, let $u_\eta$ be some element which will eventually be part of $U$ in some model of $T$.  For any $n\in\omega$ and any $c \in C$ where $\pi^{-1}_n(c, u_{\eta})$ is nonempty, we insist the $S_n$-dimension of $\pi^{-1}_n(c, u_{\eta})$ be infinite if $\eta(n)=1$, or 
		equal to one otherwise.  (If $T = K$, then $\pi^{-1}_n(c, u_\eta)$ is nonempty if and only if $\lg(c) = n$; if $T = TK$ then $\pi^{-1}_n(c, u_\eta)$ is nonempty if and only if $c \in C_n$.) For any infinite $X\subseteq 2^\omega$, define $M_X$ to have $U^{M_X}=\{u_\eta:\eta\in X\}$ with the described behavior of the $V_i$ and $S_i$. Evidently if $Y\subseteq 2^\omega$ is infinite and $X\not=Y$, then for any $\eta\in X\setminus Y$, there is no $\nu\in Y$ where $(M_X,u_\eta)\equiv_{\infty\omega}(M_Y,u_\nu)$, and symmetrically.  Thus, $M_X\not\equiv_{\infty\omega}M_Y$.  Since there are $\beth_2$ infinite subsets of $2^\omega$, $I_{\infty\omega}(T)\geq \beth_2$.
		
		That $I_{\infty\omega}(T)\leq \beth_2$ follows immediately from the second claim.  So let $M$ be some model of $T$, of any particular cardinality.  We begin by stripping down the $V_i$.  For every $u\in U$ and $c\in C$, if $\pi^{-1}(c,u)$ is uncountable, drop all but a countable $S$-closed subset of infinite $S$-dimension.  Do this for all pairs $(c,u)$.  The result is $\equiv_{\infty\omega}$-equivalent to the original by an easy argument, and $\pi^{-1}(c,u)$ is now always countable.
		
		Next we need only ensure that $U$ has size at most continuum.  So put an equivalence relation $E$ on $U$, where we say $uEu'$ holds if and only if, for all $c\in C$, the dimensions of $\pi^{-1}(c,u)$ and $\pi^{-1}(c,u')$ are equal.  If any $E$-class is uncountable, drop all but a countably infinite subset; the resulting structure is $\equiv_{\infty\omega}$-equivalent to the original again.  Further, each $E$-class is now countable, and there are only $|C^\omega|=\beth_1$ possible $E$-classes, so the structure now has size at most $\beth_1$.  This completes the proof.
	\end{proof}

%
	
	Any additional complexity of either theory comes from elementary permutations of the algebraic closure of the empty set.   In any model $M$ of either $\K$ or $\TK$, $\acl_M(\emptyset)=\bigcup_{i\in\omega} C_i(M)$.  In models $M$ of $\TK$, the projection functions $\{\tau_i\}$ naturally induce a tree structure, so we think of $\acl_M(\emptyset)$ as being a copy of $(2^{<\omega},\le)$.  In models $M$ of $\K$, as each $C_i(M)$ has exactly two elements, so one can think of $\acl_M(\emptyset)$ as being indexed by $2\times\omega$. Note, however, there is some freedom with all this; for our purposes, $\acl_M(\emptyset)$ could equally well be viewed as any subset of $\acl_{M^{eq}}(\emptyset)$ whose definable closure contains $acl_M(\emptyset)$ (here $M^{eq}$ is the result of eliminating imaginaries from $M$). In the case $M \models K$ it is most convenient to say that $\acl_M(\emptyset)$ is all finite sequences $\<a_j:j<i\>$, where each $a_j\in C_j(M)$.  These finite sequences, when ordered by initial segment, also give a natural correspondence of $\acl_M(\emptyset)$ with the tree $(2^{<\omega},\le)$.  Henceforth, when discussing models $M$ of either $\K$ or $\TK$, we will view $\acl_M(\emptyset)$ as being indexed by the tree $(2^{<\omega},\le)$.
	
	Next, we discuss the group $G$ of elementary permutations of $\acl_M(\emptyset)$ (which only depends on the theory).
    For $\K$, the relevant group is $(2^\omega,\oplus)$, the direct product of $\omega$ copies of the two-element group. Indeed,  in any model of $\K$, any elementary permuation of $\acl_M(\emptyset)$ is determined by the sequence of permutations of $C_i(M)$.
    In $\TK$,  as elementary permutations just have to respect the $\tau_i$ structure,  the relevant group of elementary permutations  is $\Aut(2^{<\omega}, \leq)$. 
    Both of these groups are compact (in fact this is true for all first order theories), but only the group for $\K$ is abelian.  It turns out that being abelian is enough to produce relative simplicity, while being nonabelian leaves enough room  to allow  $\TK$ to be Borel complete.

\bigskip

For the next proposition we need some setup.

Let $X$ be the Polish space of all $f: 2^{<\omega} \to (\omega+1 \backslash \{\emptyset\})$. Let $T$ be either $\K$ or $\TK$, and let $G$ be either $(2^\omega,\oplus)$ or $\Aut(2^{<\omega},\leq)$, respectively. $G$ acts on $2^{<\omega}$ naturally: if $G=(2^\omega,\oplus)$, then $g\cdot \sigma=g\mr{|\sigma|}\oplus \sigma$.  If $G=\Aut(2^{<\omega},\leq)$, then $g\cdot \sigma=g(\sigma)$. From this we get an action of $G$ on $X$: namely for $f \in X, g \in G$, $(g \cdot f)(\sigma)  = f(g^{-1} \cdot \sigma)$. This is a strongly definable, continuous action in the sense of Definition~\ref{sdPolishSpace}. Let $E_G$ be the equivalence relation on $X$ induced by the action, as well as the equivalence relation on $\mathcal{P}_{\aleph_1}(X)$ induced by the diagonal action.

Now $G$ acts diagonally on $X^\omega$ also; this action commutes with the permuation action of $S_\infty$ on $X^\omega$. So $G \times S_\infty$ acts naturally on $X^\omega$; let $E_{G \times S_\infty}$ be the equivalence relation induced by this action.
\begin{prop}  \label{relate}  Let $T$ be either $\K$ or $\TK$. Then:
		
		\begin{itemize}
			\item $(\Mod(T),\iso)\boreleq (X^\omega,E_{G\times S_\infty})$.
			\item $(\Mod(T),\iso) \sim_{\HC}(\m P_{\aleph_1}(X),E_G)$.
		\end{itemize}
		
	\end{prop}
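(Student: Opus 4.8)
The plan is to show that, after fixing a labelling of $\acl_M(\emptyset)$ by $2^{<\omega}$, a model $M\models T$ is coordinatized by the family of $S$-dimensions of the fibres of $V$ above the elements of $U(M)$, and that changing the labelling is the same as acting by $G$. Concretely, for $M\models T$ choose a bijection $\ell_M:\acl_M(\emptyset)\to 2^{<\omega}$ respecting the relevant structure (the tree order for $\TK$, the product indexing for $\K$); such an $\ell_M$ exists because quantifier elimination pins down $\acl_M(\emptyset)$ up to isomorphism. For $u\in U(M)$ let $f^M_{u}\in X$ record, at each $\sigma$, the $S$-dimension (number of $\ZZ$-chains) of the fibre of $V$ above $(\ell_M^{-1}(\sigma),u)$. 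Quantifier elimination gives that two models, with chosen labellings, are isomorphic by an isomorphism fixing the labellings iff the induced multisets $\{\!\{f^M_u:u\in U(M)\}\!\}$ and $\{\!\{f^N_v:v\in U(N)\}\!\}$ coincide; and, by the explicit description of the group of elementary permutations of $\acl(\emptyset)$ recalled just before the Proposition, an isomorphism not fixing the labelling differs from one that does exactly by an element of $G$ acting on these functions. I would isolate this once as a \emph{coordinatization lemma}: $M\cong N$ iff there are $g\in G$ and a bijection $U(M)\to U(N)$ carrying each $f^M_u$ to $g\cdot f^N_{(\cdot)}$.

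For the first bullet I would build a Borel $F:\Mod(T)\to X^\omega$: given $M$ with universe $\omega$, read off $\acl_M(\emptyset)=\bigcup_i C_i(M)$ and choose $\ell_M$ canonically (e.g.\ break the binary branching at each node using the order of $\omega$ — this is Borel, and non-canonicity is absorbed by $G$), enumerate $U(M)$ as $(u_n)_n$ by increasing natural number, and set $F(M)=(f^M_{u_n})_n$. Each coordinate is Borel in $M$ since ``the fibre has exactly $k$ many $\ZZ$-chains'' and ``the fibre has infinitely many $\ZZ$-chains'' are Borel conditions. By the coordinatization lemma, $M\cong N$ iff $F(M)\,E_{G\times S_\infty}\,F(N)$. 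For the reverse reduction, send $(f_n)_n\in X^\omega$ to the model with $U=\omega$ and $\acl(\emptyset)=(2^{<\omega},\le)$ fixed, whose fibre above $(\sigma,n)$ is a disjoint union of $f_n(\sigma)$ many $\ZZ$-chains; this is a Borel map into $\Mod(T)$ and, again by the coordinatization lemma, a reduction. Hence $(\Mod(T),\iso)\boreleq (X^\omega,E_{G\times S_\infty})$.

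For the second bullet I would produce $\HC$-reductions of quotients in both directions, in the sense of Definition~\ref{quotientReductionDef}. For $(\Mod(T),\iso)\hcleq(\m P_{\aleph_1}(X),E_G)$, fix a $G$-equivariant injection $\Theta:X\times((\omega+1)\setminus\{0\})\to X$ with $G$ acting trivially on the second coordinate; such $\Theta$ exists because $G$ fixes the root $\emptyset$ of $2^{<\omega}$, so modifying a function only at $\emptyset$ through a fixed injective pairing $((\omega+1)\setminus\{0\})^2\to(\omega+1)\setminus\{0\}$ works. Send a model $M$ (countable, or countable in a forcing extension) to $A_M=\{\Theta(f^M_u,m_f):u\in U(M)\}$, where $m_f=|\{u\in U(M):f^M_u=f\}|$. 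Since $\Theta$ is injective, $A_M$ is exactly the set of pairs $(f,m_f)$ recoding the multiset $\{\!\{f^M_u\}\!\}$, and since $\Theta$ is $G$-equivariant, $(A_M,A_N)\in E_G$ iff the $G$-orbits of these multisets agree, which by the coordinatization lemma holds iff $M\cong N$; dimensions and multiplicities of countable structures are absolute, so the map is $\HC$-forcing invariant. For $(\m P_{\aleph_1}(X),E_G)\hcleq(\Mod(T),\iso)$, send an infinite $A\in\m P_{\aleph_1}(X)$ to the model $M_A$ with $U(M_A)=A$, $\acl(\emptyset)=(2^{<\omega},\le)$, and the fibre above $(\sigma,u)$ (with $u=f\in A$) a disjoint union of $f(\sigma)$ many $\ZZ$-chains; then $M_A\cong M_B$ iff $g\cdot A=B$ for some $g\in G$, i.e.\ iff $(A,B)\in E_G$. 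Finite and empty $A$ are handled by first replacing $A$ with $A^\sharp:=j(A)\cup\{f^{(n)}_*:n\in\omega\}$, where $j$ is a $G$-equivariant bijection of $X$ onto one of the two $G$-invariant clopen ``halves'' obtained by splitting the value at $\emptyset$, and $\{f^{(n)}_*\}$ is a fixed countably infinite set of pairwise distinct $G$-fixed (e.g.\ constant) functions in the other half; then $A\mapsto A^\sharp$ is $G$-equivariant, injective on $G$-orbits, and has infinite image, so composing with $A^\sharp\mapsto M_{A^\sharp}$ gives a reduction defined on all of $\m P_{\aleph_1}(X)$. Combining the two directions yields $(\Mod(T),\iso)\hceq(\m P_{\aleph_1}(X),E_G)$.

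The main obstacle is the coordinatization lemma and the bookkeeping around it: one must use quantifier elimination to see that $M$ really is determined over $\acl_M(\emptyset)\cup U(M)$ by the fibre dimensions and nothing else, and that the group of elementary permutations of $\acl(\emptyset)$ is exactly the $G$ described before the Proposition, so that the only two sources of non-rigidity — relabelling $\acl(\emptyset)$ and permuting $U(M)$ — are accounted for precisely by $G$ and by $S_\infty$; one then has to carry multiplicities faithfully through $\m P_{\aleph_1}$ via $\Theta$ and patch the finite/empty-set corner via the padding $A\mapsto A^\sharp$. Verifying that the dimension map, the multiplicity map, and $\Theta$ are Borel / strongly definable / forcing-absolute is routine but should be stated.
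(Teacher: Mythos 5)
Your overall architecture matches the paper's: coordinatize a model by the fibre-dimension functions $u\mapsto f^M_u$, observe that relabelling $\acl_M(\emptyset)$ is absorbed by $G$ and permuting $U$ by $S_\infty$, and code multiplicities into a $G$-fixed slot to pass to $\m P_{\aleph_1}(X)$. Your route for the second bullet is genuinely different in its bookkeeping: you go directly between $(\Mod(T),\iso)$ and $(\m P_{\aleph_1}(X),E_G)$, using $\Theta$ at the root to carry multiplicities and a padding map $A\mapsto A^\sharp$ for finite and empty sets, whereas the paper first establishes Borel bireducibility with $(X^\omega,E_{G\times S_\infty})$ (so the $\hcleq$-biembeddability with $\Mod(T)$ comes for free from the first bullet) and then relates $X^\omega$ to $\m P_{\aleph_1}(X)$, attaching the multiplicity at every node via a pairing function and handling finite sets by letting the reduction be the multi-valued relation ``$\o x$ is an infinite-to-one enumeration of $A$.'' Both routes are sound; yours is self-contained, the paper's reuses the first bullet and avoids the padding construction.

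There is, however, one step that fails as written, and only for $T=\K$: every map you build \emph{into} $\Mod(T)$ --- the reverse Borel reduction $(f_n)_n\mapsto M$ in the first bullet and $A\mapsto M_{A}$ (hence $A\mapsto M_{A^\sharp}$) in the second --- realizes $f(\sigma)$ as the $S$-dimension of the fibre over $(\ell^{-1}(\sigma),u)$. But in a model of $\K$ there is no fibre over the root: $\pi_i$ maps $V_i$ to $C_0\times\cdots\times C_i\times U$, so fibres are indexed by tuples of length at least $1$, i.e.\ by $2^{<\omega}\setminus\{\emptyset\}$, while $X$ is by definition the space of functions on all of $2^{<\omega}$. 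The value $f(\emptyset)$ therefore has nowhere to live; since $G$ fixes $\emptyset$ in both of its incarnations, two inputs differing only in their root values are $E$-inequivalent yet are sent to isomorphic models, so these maps are not injective on classes. The paper flags exactly this (``$\pi^{-1}(\emptyset,u)$ is only nonempty for models of $\TK$'') and repairs it with a pairing function, setting the dimension of the fibre over $(\sigma_*,4n)$ to $\langle x_n(\sigma),x_n(\emptyset)\rangle$, thereby folding the root value into every other node. You need the same (or an equivalent) device; note that your $\Theta$ already exploits the dual observation that the root is $G$-fixed and can carry extra data when mapping \emph{into} $X$, so the fix is entirely in the spirit of your construction. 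With that repair, and the deferred routine verifications (the coordinatization lemma via quantifier elimination, and the absoluteness of dimensions and multiplicities), your proof goes through.
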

	\begin{proof}
    	For the various codings below, fix a pairing function $\langle \cdot, \cdot \rangle: (\omega+1 \backslash \emptyset)^2 \to (\omega+1 \backslash \emptyset)$. Note that one difference between $\K$ and $\TK$ that frequently affects the coding is: $\pi^{-1}(\emptyset, u)$ is only nonempty for models of $\TK$.

		To show $(\Mod(T),\iso)\borelleq (X^\omega,E_{G\times S_\infty})$, first let $M\in\Mod(T)$ be arbitrary.  We may choose an indexing of $\acl_M(\emptyset)$ by $2^{<\omega}$ ,  and of $U^M$ by $\omega$, using the original indexing of the universe of $M$ by $\omega$. Then each element $u\in U^M$ induces a function $c_u \in X$, where $c_u(\sigma)$ is the $S$-dimension of $\pi^{-1}(\sigma, u)$. (In the case of $T = \K$, define $c_u(\emptyset) = 1$.)  Then take $M$ to the sequence $(c_{u_n}:n\in\omega)$, where $u_n$ is the $n$-th element of $U$. It is clear that this works.

		To show $(X^\omega,E_{G\times S_\infty})\borelleq (\Mod(T),\iso)$, fix a sequence $\overline{x}=(x_n:n\in\omega)$.  We describe the case for the theory $\K$.  We define $M_{\overline{x}}$ to have $U^{M_{\overline{x}}}=\{4n: n \in \omega\}$, and have $C^{M_{\overline{x}}}_i = \{4i+1, 4i+2\}$.  Then, using the infinitely many remaining elements, we arrange that for each $\sigma \in 2^{<\omega} \backslash \emptyset$ and each $n < \omega$, the $S$-dimension of $\pi^{-1}(\sigma_*, 4n)$ is $\langle x_n(\sigma), x_n(\emptyset) \rangle$, where $\sigma_* = (4i+1+\sigma(i): i < \lg(\sigma))$. The case for $\TK$ is similar.
		
        We have shown that $(\mbox{Mod}(T), \cong) \sim_B (X^\omega, E_{G \times S_\infty})$. It follows that they are $\hcleq$-biembeddable; so to conclude the proof of the proposition it suffices to show that $(X^\omega, E_{G \times S_\infty}) \sim_{\HC} (\mathcal{P}_{\aleph_1}(X), E_G)$.
        
		To show $(X^\omega,E_{G\times S_\infty})\hcleq (\m P_{\aleph_1}(X),E_G)$, we just need to handle multiplicities.  So fix $\overline{x}= (x_n: n \in \omega) \in X^\omega$, and for each $n$, define $m_{\overline{x}}(n)$ to be $|\{m: x_m = x_n\}|$. For each $n\in\omega$ let $y_n \in X$ be defined by: $y_n(\sigma) = \langle x_n(\sigma), m_{\overline{x}}(n) \rangle$. Then $\overline{x} \mapsto \{y_n: n < \omega\}$ works.
		
		We define a reverse embedding $f: (\m P_{\aleph_1}(X),E_G)\hcleq (X^\omega, E_{G \times S_\infty})$ as follows (where recall that we do not require $f$ to be single-valued). Namely, given $A \subset X$ countable and given $\overline{x} \in X^{\omega}$, put $(A, \overline{x}) \in f$ whenever $\overline{x}$ is an infinite-to-one enumeration of $A$. Also put $(\emptyset, \overline{x}) \in f$ for some fixed injective $\overline{x} \in X^\omega$.
	\end{proof}
	

	\subsection{Koerwien's Example}
	
	For this subsection, we want to prove that $\K$ is not Borel complete, and further, to characterize exactly which $T_\alpha$ embed in $\K$. To do this, we show directly that $T_2\borelleq \K$, and then show that $||K||_{\ptl} = \beth_2$.  The former is quite straightforward:
	
	\begin{prop}
		$T_2\borelleq\K$.
	\end{prop}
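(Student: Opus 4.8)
I want to exhibit a Borel reduction $f$ from $T_2$ (equivalently, from $(2^\omega)^\omega/F_2$, the ``countable sets of reals'') into $\Mod(\K)$. Since $\K$ is a first-order theory with quantifier elimination and $\acl_M(\emptyset) = \bigcup_i C_i(M)$ is rigid-ish (each $C_i$ has exactly two elements, and the elementary permutation group is the abelian $(2^\omega, \oplus)$), the natural invariant attached to a model is a $(2^\omega, \oplus)$-orbit of a countable set of functions $2^{<\omega} \to \omega+1\setminus\{0\}$ recording $S$-dimensions. The point of the proposition is that even the much cruder data of a countable set of reals already embeds. So I will build, from a countable set $\{r_n : n \in \omega\} \subseteq 2^\omega$, a model $M$ of $\K$ so that the family of $S$-dimension data recovers exactly $\{r_n\}$ up to the appropriate equivalence.

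\textbf{First} I would describe the coding directly, as in the proof of Proposition~\ref{relate}. Fix the universe indexing: designate $U^M = \{u_n : n \in \omega\}$, each $C_i^M = \{c_i^0, c_i^1\}$ a two-element set, and use the remaining countably many elements of $\omega$ to populate the $V_i$'s. For each $u \in U^M$ and each tuple $\bar c = (c_0, \ldots, c_i) \in C_0^M \times \cdots \times C_i^M$, the fiber $\pi_i^{-1}(\bar c, u) \subseteq V_i$ is nonempty and carries an $S_i$-action; I set its $S_i$-dimension (number of $S_i$-orbits, or $\omega$ if infinite) to be determined by the real $r_n$ attached to $u = u_n$: say, declare the dimension of $\pi_i^{-1}((c_0^{e_0},\ldots,c_i^{e_i}), u_n)$ to be $1$ if $\sum_{j\le i} e_j$ is even together with $r_n(i) = 0$, and $\omega$ otherwise — or more simply, make the dimension depend only on $r_n(i)$ and on one ``distinguished-branch'' bit so that the $(2^\omega,\oplus)$-orbit of the resulting data on $2^{<\omega}$ still remembers the sequence of bits $(r_n(i))_i$. (Exactly how to choose this is a routine bookkeeping matter; I will borrow the $\langle x_n(\sigma), x_n(\emptyset)\rangle$-style pairing trick from the proof of Proposition~\ref{relate} to make the dimensions encode $r_n$ faithfully while still being $\oplus$-invariant-detectable.) Then $M$ is a model of $\K$, and the assignment $\{r_n\} \mapsto M$ can be made into a Borel function $f : (2^\omega)^\omega \to \Mod(\K)$: given a real coding $(r_n : n)\in (2^\omega)^\omega$, one writes down the atomic diagram of $M$ on universe $\omega$ in a Borel way, since each atomic fact about $f(\text{input})$ depends on only finitely much of the input.

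\textbf{Second}, and this is the real content, I would verify that $f$ is a reduction: $\{r_n : n\} = \{s_m : m\}$ as sets if and only if $f(\bar r) \cong f(\bar s)$. The forward direction is easy — equal sets give (after possibly permuting the enumeration, which doesn't change $M$ up to isomorphism since multiplicities are handled by repetition into the always-infinitely-many available $V_i$-elements, or by an explicit multiplicity pairing) isomorphic models. The reverse direction is where care is needed: from an isomorphism $g : f(\bar r) \to f(\bar s)$ I must recover an equality of sets of reals. The key is that $g$ restricts to an elementary permutation of $\acl(\emptyset) = \bigcup_i C_i$, which is an element of the abelian group $(2^\omega, \oplus)$, i.e. a choice of which of the two points of each $C_i$ is ``swapped.'' Then $g$ carries each $u_n \in U^{f(\bar r)}$ to some $u_{\sigma(n)} \in U^{f(\bar s)}$ and must carry $S$-dimension data to $S$-dimension data, twisted by this single global element $h \in (2^\omega,\oplus)$. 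Because $h$ acts on the dimension-function $c_{u_n} : 2^{<\omega} \to \omega+1$ only by the $\oplus$-action on the index $2^{<\omega}$, and because I arranged the coding so that the $\oplus$-orbit of $c_{u_n}$ still determines the real $r_n$ (this is exactly where the ``distinguished bit'' or pairing-function device earns its keep), I conclude $r_n = s_{\sigma(n)}$ for all $n$, hence the two sets of reals coincide.

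\textbf{The main obstacle} I anticipate is purely the encoding bookkeeping in the reverse direction: I must choose the rule ``real $\mapsto$ $S$-dimensions on $C_0 \times \cdots \times C_i \times \{u\}$'' so robustly that it survives an arbitrary elementary permutation of $\acl(\emptyset)$ — that is, so that the $(2^\omega,\oplus)$-orbit of the dimension data is a complete invariant for the real. One clean way: on the fiber over $u_n$, set the dimension over $(c_0^{e_0},\ldots,c_i^{e_i})$ to be $\omega$ when $e_i = 1$ and to be $1 + r_n(i)$ (with $\omega$ for the value $2$, say, or just use $2$ vs. $3$) when $e_i = 0$; then applying $h \in (2^\omega,\oplus)$ permutes the $e_i = 0$ and $e_i = 1$ cases in a way that still lets one read off, for each $i$, the unordered pair of dimensions $\{\omega, 1+r_n(i)\}$ occurring at level $i$, hence $r_n(i)$. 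I would then double-check absoluteness is not an issue (it isn't: this is a genuine Borel map between standard Borel spaces, and the reduction property is a Borel statement), and that the target really lands in $\Mod(\K)$ — i.e. all four axiom schemes hold, in particular every fiber $\pi_i^{-1}(\bar c, u)$ is nonempty, which holds by construction since I always put at least one $S_i$-orbit there.
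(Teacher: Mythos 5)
Your overall strategy is the same as the paper's: send a countable set of reals to a model of $\K$ by writing each real into the $S$-dimensions of the fibers over a $U$-point, and recover the set from the isomorphism type because an isomorphism can only twist $\acl(\emptyset)$ by an element of $(2^\omega,\oplus)$. Two comments. First, your encoding is more complicated than necessary: the paper simply makes the dimension of the fiber over $(\o c, x)$ equal to $x(|\o c|)+2$, i.e.\ depend only on the \emph{length} of $\o c$ and not on which elements of the $C_j$ are chosen, so the dimension data is literally fixed by every elementary permutation of $\acl(\emptyset)$ and no ``unordered pair of dimensions at level $i$'' argument is needed. Your $e_i$-dependent scheme does work, but the invariance argument it forces you to run is pure overhead.

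Second, and this is the one genuine gap: with one $U$-point $u_n$ per index $n$ carrying the profile of $r_n$, the forward direction fails. If $\o r$ and $\o s$ have the same range but some real occurs with different multiplicities in the two enumerations (say once in $\o r$ and infinitely often in $\o s$), then $M_{\o r}$ and $M_{\o s}$ have different numbers of $U$-points with that profile and are not isomorphic, even though $\o r \mathrel{F_2} \o s$. You flag this, but of your two proposed fixes, the ``explicit multiplicity pairing'' borrowed from Proposition~\ref{relate} is exactly backwards --- that device is used there to \emph{remember} multiplicities when passing from sequences to sets, whereas here you must \emph{forget} them --- and ``repetition into the $V_i$-elements'' is aimed at the wrong sort, since the problem lives in $U$, not in the fibers. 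The paper sidesteps the issue by defining $M_X$ directly from the set $X$ (adjoining an auxiliary infinite set $A$ of trivial-profile points to keep $U$ infinite), and only afterwards arguing that $\o x\mapsto$ (a copy of $M_{\{x_n:n\in\omega\}}$ with universe $\omega$) can be realized in a Borel way. Alternatively you could give every real in the range infinitely many $U$-points with its profile, e.g.\ indexing $U$ by $\omega\times\omega$ and letting $u_{n,k}$ carry the profile of $r_n$. Either repair is routine, but as written your forward direction does not go through.
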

	\begin{proof}
		Let $X\subset 2^\omega$ be countable; we describe a model $M_X\models\K$ from which $X$ can be easily recovered.  Let $U$ be the set $A\cup X$, where $A$ is some countable infinite set which is disjoint from $X$.  Let $C_i = \{c^i_0, c^i_1\}$.  For each tuple $(a,\o c)$ with $a\in A$, arrange that $\pi^{-1}(a, \overline{c})$ has $S$-dimension 1.  For each tuple $(x,\o c)$ with $x\in X$, arrange that $\pi^{-1}(x, \overline{c})$ has $S$-dimension $x(|\overline{c}|)+2$. Clearly $M_X \cong M_Y$ iff $X = Y$.
		
		Now it is not hard, given $\overline{x} = (x_n: n \in \omega) \in (2^\omega)^\omega$, to produce in a Borel fashion a model $M_{\overline{x}} \models \K$ with universe $\omega$, such that $M_{\overline{x}} \cong  M_{\{x_n: n \in \omega\}}$. This gives a Borel reduction from $((2^\omega)^\omega, F_2)$ to $(\mbox{Mod}(\K), \cong)$, which suffices (see the discussion preceding Theorem~\ref{REFCodesIso2Theorem}).
	\end{proof}

	Having accomplished this, we can state everything we need about $\K$:
	
	\begin{thm}
		$||K||=\beth_2$.  Therefore, $\K$ is not Borel complete; indeed, there is no Borel embedding of $T_3$ into $\Mod(\K)$.
			\end{thm}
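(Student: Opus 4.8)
The plan is to sandwich $\|\K\|$ between $\beth_2$ and $\beth_2$, and then deduce the non-reducibility of $T_3$ from the potential-cardinality criterion of Theorem~\ref{translate}(2). By the identity $\|\K\| = \|(\Mod(\K),\iso)\| = |\CSS(\K)_\ext|$ recorded after Definition~\ref{shortdef}, it suffices to bound $\|(\Mod(\K),\iso)\|$. For the lower bound, Proposition~\ref{submodelcount} gives $I_{\infty,\omega}(\K) = \beth_2$, and, as noted just before Corollary~\ref{Borelisshort}, $I_{\infty,\omega}(\K) = |\CSS(\K)_\sat| \le \|\K\|$; hence $\|\K\| \ge \beth_2$.

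The substantive half is the upper bound $\|\K\| \le \beth_2$, and here essentially all the work has already been done. By Proposition~\ref{relate}, $(\Mod(\K),\iso) \hceq (\mathcal{P}_{\aleph_1}(X), E_G)$, where $X$ is the strongly definable Polish space of all $f\colon 2^{<\omega}\to(\omega+1\setminus\{\emptyset\})$ and $G = (2^\omega,\oplus)$ acts diagonally via the strongly definable continuous action fixed in the setup preceding that proposition. The crucial feature is that $G$ is both persistently compact \emph{and} abelian. Both quotients are representable --- $(\Mod(\K),\iso)$ by Proposition~\ref{Karp1}, and $(\mathcal{P}_{\aleph_1}(X),E_G)$ by Theorem~\ref{compact}(4) --- so by Lemma~\ref{compo} the reductions witnessing $\hceq$ pass to the representations, and then Proposition~\ref{shorty}(2), applied in both directions, gives $\|(\Mod(\K),\iso)\| = \|(\mathcal{P}_{\aleph_1}(X),E_G)\|$. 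Finally, Theorem~\ref{compact2} applies directly to $X$, $G$, and this action to give $\|(\mathcal{P}_{\aleph_1}(X),E_G)\| \le \beth_2$. Combining, $\|\K\| = \beth_2$.

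For the ``therefore'' clauses: by Corollary~\ref{countT}, $\|T_3\| = \beth_3 > \beth_2 = \|\K\|$, so Theorem~\ref{translate}(2) yields that $(\Mod(T_3),\iso)$ is not Borel reducible to $(\Mod(\K),\iso)$ --- that is, there is no Borel embedding of $T_3$ into $\Mod(\K)$ --- and in particular $\K$ is not Borel complete in the sense of Definition~\ref{BC}. I do not anticipate a genuine obstacle: the hard inputs are Theorem~\ref{compact2} (which itself rests on abelianness via Lemma~\ref{AbelianEqualityLemma} and the type-counting of Proposition~\ref{SmallBranchingSmallFormulaTheorem}) and the two reductions of Proposition~\ref{relate}. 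The only real care needed is the bookkeeping --- confirming that $(2^\omega,\oplus)$ and its action on $X$ literally meet the hypotheses of Theorem~\ref{compact2}, and that the $\HC$-equivalence of Proposition~\ref{relate} transfers the $\beth_2$ bound, which it does precisely because both sides are representable quotients so that Lemma~\ref{compo} and Proposition~\ref{shorty}(2) apply.
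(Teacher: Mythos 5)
Your proof is correct and follows essentially the same route as the paper: the lower bound via Proposition~\ref{submodelcount}, the upper bound via Proposition~\ref{relate} combined with Theorem~\ref{compact2} (using that $(2^\omega,\oplus)$ is compact and abelian), and the non-embeddability of $T_3$ via Theorem~\ref{translate}(2) and Corollary~\ref{countT}. The extra bookkeeping you supply about transferring the bound across the $\hceq$-equivalence via Lemma~\ref{compo} and Proposition~\ref{shorty}(2) is implicit in the paper's one-line appeal to Proposition~\ref{relate}.
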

	\begin{proof}
	        That $||K||\ge\beth_2$ follows immediately from Proposition~\ref{submodelcount}.  
		Since $G=(2^\omega,\oplus)$ is compact and abelian, 
		$||(\P_{\aleph_1}(X),E_G)||\le\beth_2$ by Theorem~\ref{compact2}.
		As $||\K||=||(\P_{\aleph_1}(X),E_G)||$ by Proposition~\ref{relate}, we conclude that $||K||=\beth_2$.  That there is no Borel embedding of $T_3$ into $\Mod(\K)$
		is immediate from Theorem~\ref{translate}(2) and Corollary~\ref{countT}.
	\end{proof}
	
	Once we have one such example, we can apply the usual constructions to get a large class of $\omega$-stable examples:

	\begin{cor}
		For each ordinal $\alpha$ with $2\leq\alpha<\omega_1$, there is an $\omega$-stable theory $S_\alpha$ whose isomorphism relation is not Borel and where $T_\beta\borelleq S_\alpha$ if and only if $\beta\leq\alpha$.
	\end{cor}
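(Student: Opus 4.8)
The plan is to mimic the proof of Corollary~\ref{REFCorollary}, but starting the induction from $\K$ rather than from $\REF(\bin)$. Put $S_2:=\K$; given $S_\alpha$, put $S_{\alpha+1}:=J(S_\alpha)$; and for a countable limit $\alpha$, put $S_\alpha:=\prod_{\beta<\alpha}S_\beta$, working in multisorted first-order logic (as discussed after Proposition~\ref{BasicProductProposition}) so that products of first-order theories remain first-order, and first replacing $\K$ by a bi-interpretable relational theory if one wishes to stay within the relational setting of Definition~\ref{jumpDefinition} (this changes neither the stability class, the isomorphism relation, nor the place in the $\borelleq$-order). By Propositions~\ref{BasicJumpProposition}(1) and \ref{BasicProductProposition}(1) every $S_\alpha$ is then a complete first-order theory.

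The first thing I would verify --- and the step I expect to need the most care, although it is a routine type count --- is that $\omega$-stability is preserved by both constructions, so that every $S_\alpha$ is $\omega$-stable by induction on $\alpha$. For $J(\Phi)$ with $\Phi$ complete, first-order and $\omega$-stable: a model of $J(\Phi)$ is a disjoint, independent union of models of $\Phi$ indexed by the $E$-classes, so given a countable parameter set $A$, a complete type over $A$ is determined by which of the (countably many) classes meeting $A$ the realizing tuple enters, together with a type of $\Phi$ over the countable trace of $A$ on those classes, plus finitely much ``new class'' data amounting to a type of $\Phi$ over $\emptyset$; each of these contributes only countably many possibilities, so $J(\Phi)$ is $\omega$-stable. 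The argument for $\prod_i\Phi_i$ with $I$ countable is identical, with ``sort $U_i$'' in place of ``$E$-class''. (Alternatively, these are the standard disjoint-amalgamation constructions, which do not change the stability class.)

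Next I would pin down the location of $S_\alpha$ among the benchmarks $T_\beta$. By induction $T_\alpha\borelleq S_\alpha$: the base case $T_2\borelleq\K=S_2$ was proved just above; the successor case is Proposition~\ref{BasicJumpProposition}(4) applied to $T_\alpha\borelleq S_\alpha$ (since $T_{\alpha+1}=J(T_\alpha)$); and the limit case is Proposition~\ref{BasicProductProposition}(4). Hence for $\beta\le\alpha$, $T_\beta\borelleq T_\alpha\borelleq S_\alpha$, using $T_\beta\borelleq T_\alpha$ from Corollary~\ref{countT}. For the other direction, an induction using $\|\K\|=\beth_2=\|T_2\|$ (the preceding theorem together with Corollary~\ref{countT}), Proposition~\ref{BasicJumpProposition}(3), and Proposition~\ref{BasicProductProposition}(3) gives $\|S_\alpha\|=\|T_\alpha\|$; in particular each $S_\alpha$ is short. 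So if $\beta>\alpha$, then since $T_{\alpha+1}\borelleq T_\beta$ (Corollary~\ref{countT}) it is enough to show $T_{\alpha+1}\not\borelleq S_\alpha$; but $\|T_{\alpha+1}\|>\|T_\alpha\|=\|S_\alpha\|$ by Corollary~\ref{countT}, so Theorem~\ref{translate}(2) yields $T_{\alpha+1}\not\borelleq S_\alpha$, whence $T_\beta\not\borelleq S_\alpha$.

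Finally, isomorphism on $\Mod(S_\alpha)$ is not Borel: $\K=S_2\borelleq S_\alpha$ by the monotonicity just used, so if $\iso$ were Borel on $\Mod(S_\alpha)$ then, pulling a Borel reduction $f\colon\Mod(\K)\to\Mod(S_\alpha)$ back along $f\times f$, $\iso$ would be Borel on $\Mod(\K)$ --- contradicting Koerwien's result. This finishes the proof; the only genuinely new ingredient beyond Corollary~\ref{REFCorollary} is the preservation of $\omega$-stability under $J$ and $\prod$.
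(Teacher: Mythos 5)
Your proof is correct and is essentially the paper's argument: the paper's proof is just ``let $S_2$ be $\K$ and proceed inductively as in the $\REF(\bin)$ corollary,'' which is exactly what you carry out, with the same appeals to parts (3), (4), (5) of the jump and product propositions, to $\|\K\|=\beth_2$ with Theorem~\ref{translate}(2), and to a standard type count for preservation of $\omega$-stability. The extra details you supply (the relational-language caveat for the jump, the explicit $\omega$-stability count, and pulling non-Borelness back along $\K\borelleq S_\alpha$) are all consistent with what the paper leaves implicit.
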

	\begin{proof}
		Let $S_2$ be $\K$.  Then proceed inductively as in Corollary~\ref{REFCorollary}.
	\end{proof}
	
	There is no such example when $\alpha=0$, since Vaught's Conjecture holds for $\omega$-stable theories. ($T$ has a perfect set of nonisomorphic models iff $T_1 \leq_B T$, and so whenever $T$ is $\omega$-stable, either $T \leq_B T_0$ or $T_1 \leq_B T$.) It is unknown if there is an example when $\alpha=1$.

	\subsection{A New $\omega$-Stable Theory}


	We now consider $\TK$, with the aim of showing it is Borel complete.  Indeed with Proposition~\ref{relate} we have already shown $(\Mod(\TK),\iso)$ is Borel equivalent to $(X^\omega,E_{G\times S_\infty})$, where $X$ is the space of all $c: 2^{<\omega} \to \omega$. (We are replacing $\omega+1 \backslash \emptyset$ with $\omega$, which is harmless.) Recall that $G=\Aut(2^{<\omega},\leq)$ acts on $X$ by permuting the fibers; that is, for any $c:2^{<\omega}\to \omega$, any $g\in G$, and any $\sigma \in 2^{<\omega}$, $(g\cdot c)(\sigma)=c(g^{-1}\cdot \sigma)$.  Then $G$ acts on $X^\omega$ diagonally, while $S_\infty$ acts on $X^\omega$ by permuting the fibers, so these actions commute with one another and induce an action of the product group $G\times S_\infty$.  
    
    Thus, to show $\TK$ is Borel complete, it is enough to show $(X^\omega, E_{G\times S_\infty})$ is Borel complete, which we do directly.  
	
	\begin{thm}  \label{graphs}
		$(\textrm{Graphs},\iso)\borelleq (X^\omega,E_{G\times S_\infty})$.
	\end{thm}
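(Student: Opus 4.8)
The plan is to produce a Borel reduction $H \mapsto c^H = (c^H_v : v\in\omega)\in X^\omega$ from countable graphs $H=(\omega,R)$ into $(X^\omega,E_{G\times S_\infty})$ (recall this suffices, since graphs are Borel complete and, by Proposition~\ref{relate}, $(X^\omega,E_{G\times S_\infty})\boreleq(\Mod(\TK),\iso)$). The coordinates of the sequence will stand for the vertices of $H$, so that the $S_\infty$ factor accounts for relabelings of the vertex set, while the colouring of the binary tree $2^{<\omega}$ carries the adjacency information, with the copy of $G=\Aut(2^{<\omega},\leq)$ acting diagonally on all the trees at once. Concretely, one fixes (Borel in $(H,v)$, possibly after a routine preliminary reduction replacing graphs by a subclass better adapted to tree-coding) a ``tree name'' $\beta^H_v$ — a branch of $2^{<\omega}$, or more robustly a whole coloured sub-configuration — and lets $c^H_v$ be the colouring that flags $\beta^H_v$ in one distinguished colour, flags the names $\{\beta^H_w: wRv\}$ in a second colour, and is $0$ elsewhere, with enough extra colouring (using the $\omega$ available colours) that both the name $\beta^H_v$ and the multiplicities of the neighbour-names are recoverable from the single tree $c^H_v$.

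Granting this, the faithful (``only if'') direction is the easy half. Suppose $c^H\mathrel{E_{G\times S_\infty}}c^{H'}$ via a pair $(g,\sigma)$ with $g\in\Aut(2^{<\omega},\leq)$ and $\sigma\in S_\infty$. Comparing the first distinguished colour across all coordinates yields $g(\beta^H_v)=\beta^{H'}_{\sigma(v)}$ for every $v$; feeding this back into the comparison of the second colour (and using that multiplicities are visible) forces $\sigma\big(\{w:wR^H v\}\big)=\{w':w'R^{H'}\sigma(v)\}$ for every $v$, i.e.\ $\sigma$ is an isomorphism $H\to H'$. Conversely one must check that if $H\cong H'$ then $c^H\mathrel{E_{G\times S_\infty}}c^{H'}$, and also that $H\mapsto c^H$ is Borel; the latter will be evident from the construction.

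The heart of the argument, and the main obstacle, is producing, from a given graph isomorphism $\pi:H\to H'$, a \emph{single} $g\in\Aut(2^{<\omega},\leq)$ with $g(\beta^H_v)=\beta^{H'}_{\pi(v)}$ for all $v$ simultaneously. The $\Aut(2^{<\omega},\leq)$-orbit of a countable configuration of branches is controlled by its meet (ultrametric) structure, so a naming that is fixed independently of $H$ cannot work — no ultrametric on $\omega$ realisable inside $2^\omega$ is invariant under every permutation of $\omega$ — and the names $\beta^H_v$ must instead be chosen so that the configuration $(\beta^H_v)_v$ transforms under relabelings in an $S_\infty$-equivariant way while remaining Borel, isomorphism-invariant, and rich enough to be read back; arranging all of this simultaneously is the technical crux. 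It is precisely here that one exploits that $\Aut(2^{<\omega},\leq)$ is non-abelian: a non-abelian compact group leaves enough room to transport arbitrary configurations along $\pi$, whereas for $\K$ the governing group $(2^\omega,\oplus)$ is abelian and the rigidity of Lemma~\ref{AbelianEqualityLemma} — the very phenomenon forcing $\|\K\|\le\beth_2$ through Theorem~\ref{compact2} — would obstruct any such coding. Once both directions are in place, $H\mapsto c^H$ is the desired Borel reduction, so $(\Mod(\Psi),\iso)\boreleq(X^\omega,E_{G\times S_\infty})\boreleq(\Mod(\TK),\iso)$ for every $\Psi$; that is, $\TK$ is Borel complete.
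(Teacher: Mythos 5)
There is a genuine gap. You correctly isolate the crux --- given a graph isomorphism $\pi:H\to H'$, one must produce a \emph{single} $g\in\Aut(2^{<\omega},\leq)$ transporting the whole configuration of ``names'' along $\pi$ --- and you correctly diagnose why the naive version fails (no $S_\infty$-invariant meet/ultrametric structure on countably many branches of $2^\omega$ exists: among any three branches the two shortest pairwise meets coincide, so a single fixed branch per vertex cannot work). But you then stop at the assertion that non-abelianness ``leaves enough room to transport arbitrary configurations,'' which is not an argument; the construction of the names and the proof that arbitrary permutations of them are realized by tree automorphisms is exactly the content of the theorem, and it is absent from the proposal.

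The paper's resolution is to abandon ``one branch per vertex'' entirely. It fixes, independently of the graph, countably many pairwise disjoint countable \emph{dense} subsets $D_i\subseteq 2^\omega$, one per vertex, and proves as the key claim that for every $\sigma\in S_\infty$ there is $g\in\Aut(2^{<\omega},\leq)$ with $g\cdot D_i=D_{\sigma(i)}$ for all $i$ simultaneously; this is a back-and-forth on finite partial maps $D\to D$ preserving the lengths of pairwise meets, where density is what lets the forth-step always succeed. The edge relation is then encoded not in the coordinates-as-vertices but in a fixed enumeration of all pairs $(\eta,\tau)\in D_i\times D_j$: each coordinate of the $X^\omega$-sequence is a coloring $c^k_{\eta,\tau}$ flagging the two branches with color $k\in\{1,2,3\}$ according to whether $i=j$, $\{i,j\}\in R$, or $\{i,j\}\notin R$, so the $S_\infty$ factor merely reorders the list and the graph is read off from the \emph{set} of colorings. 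Your ``only if'' direction is essentially right in spirit, but it too depends on the unspecified naming scheme, so as written the proposal identifies the obstacle without overcoming it and does not constitute a proof.
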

	\begin{proof}
		To simplify notation, for the whole of this proof we write $E$ in place of   $E_{G\times S_\infty}$.	
		We need some setup first.  Observe that $G$ naturally acts on $2^\omega$, the set of branches of $(2^{<\omega},\le)$, by $g\cdot\sigma=\bigcup_n g\cdot \sigma\mr{n}$; this is a well-defined sequence precisely because $g$ is a tree automorphism.  Let $\{D_i:i\in\omega\}$ be a countable set of dense, disjoint, countable subsets of $2^\omega$, and let $D=\bigcup_i D_i$. We need one claim, where we use the relative complexity of $G$ (it would not go through if we replaced $\TK$ with $\K$):

		\begin{claim}
			For any $\sigma\in S_\infty$, there is a $g\in G$ where for all $i\in\omega$, $g\cdot D_i=D_{\sigma(i)}$ as sets.
		\end{claim}
		\begin{claimproof}
			We construct such a $g$ by a back-and-forth argument.  So let $\m F$ be the set of finite partial functions from $D$ to itself, satisfying all the following:
			
			\begin{itemize}
				\item For each $f\in \m F$ and each $\eta\in \mbox{dom}(f)$, if $\eta\in D_i$, then $f(\eta)\in D_{\sigma(i)}$.
				\item For each $f\in \m F$ and each $\eta,\nu\in \mbox{dom}(f)$,  $\lg(\eta\wedge\nu)=\lg(f(\eta)\wedge f(\nu))$, where 
				$\eta\wedge\nu$ denotes the longest common initial segment of  $\eta$ and $\nu$.
				\item The previous conditions, but with $f^{-1}$ and $\sigma^{-1}$ instead of $f$ and $\sigma$.
			\end{itemize}


			
			Once we establish that  $\m F$ is a back-and-forth system, then $\m F$ defines a $g\in G$ with the desired property.  For choose a bijection $f: D \to D$ such that the finite restrictions of $f$ all lie in $\mathcal{F}$.  If $s\in 2^n$, let $g(s)$ be $f(\eta)\mr{n}$ for any $\eta$ extending $s$; because of the consistency properties of $\m F$, and because $D$ is dense, this is well-defined.  Then clearly $g \in G$ has the desired property with respect to $\sigma$.
			
			So we need only show that $\m F$ is a back-and-forth system. Of course the empty function is in $\m F$. So say $f\in\m F$ and $\eta\in 2^\omega$; we want $f'\supset f$ in $\m F$ with $\eta\in\dom(f')$. The case where $f$ is empty is easy, so suppose $f$ is nonempty. We also assume $\eta\not\in\dom(f)$ already.  Let $n$ be maximal among $\{\lg(\eta\wedge \nu):\nu\in\dom(f)\}$, and let $\nu\in\dom(f)$ be such that $\lg(\eta\wedge\nu)=n$.  We then pick an element $f(\eta)$ of $2^\omega$ which extends $f(\nu)\mr{n}\frown (1-f(\nu)(n))$.  That is, $f(\eta)$ agrees with $f(\nu)$ before stage $n$, but disagrees with it at $n$.  If $\eta\in D_i$, choose this element from $D_{\sigma(i)}$, which is possible by density. This clearly satisfies the desired properties, and the other direction is symmetric, proving the claim.
		\end{claimproof}
		
		\medskip

			Given $\eta,\tau\in 2^\omega$ and $k\in\{1,2,3\}$, let $c^k_{\eta,\tau}:2^{<\omega}\to\omega$ be the coloring which sends $s\in 2^{<\omega}$ to $k$, if $s\subset \eta$ or $s\subset \tau$, or $0$ otherwise. Also, fix a bijection $\rho: \omega\to \bigcup_{i \leq j} D_i \times D_j$. We have now fixed enough notation and can describe our map
			$f:\textit{Graphs}\rightarrow X^\omega$.
		
		Let $R$ be a graph on $\omega$ -- that is, $R$ is a binary relation on $\omega$ which is symmetric and irreflexive.  For each $n\in\omega$, $\rho(n)$ is a pair $(\eta,\tau)\in D_i\times D_j$ for some $i \leq j$.  If $i=j$, define $c_n=c^1_{\eta,\tau}$.  If $i<j$ and $\{i,j\}\in R$, then let $c_n=c^2_{\eta,\tau}$.  Otherwise let $c_n=c^3_{\eta,\tau}$.  Then 
		put $f(R):=(c_n:n\in\omega)$.  $f(R)$ is a visibly element of $X^\omega$, and clearly $f$ is Borel. Note also that $f$ is injective.
		
		Suppose $\sigma:(\omega, R) \cong (\omega, R')$ is a graph isomorphism. We show that $f(R) E f(R')$. By the claim, there is a $g\in G$ where for all $i\in\omega$, $g\cdot D_i=D_{\sigma(i)}$. 
		Let $A$ be the range of $f(R)$ and let $A'$ be the range of $f(R')$. 
		We show that $g \cdot A = A'$ setwise. First suppose $c^1_{\eta,\tau} \in A$. Let $i$ be such that $\eta,\tau\in D_i$, so $g(\eta),g(\tau)\in D_{\sigma(i)}$.  Then $g\cdot c^1_{\eta,\tau}=c^1_{g(\eta),g(\tau)} \in A'$.  Similarly if $c^2_{\eta,\tau}\in A$, there is some $i<j$ where $\eta\in D_i$, $\tau\in D_j$, and $\{i,j\}\in R$.  Since $\sigma:(\omega,R)\to 
		(\omega, R')$ is a graph isomorphism, $\{\sigma(i),\sigma(j)\}\in R'$, so $c^2_{g(\eta),g(\tau)}\in A'$ (this uses $c^2_{g(\eta), g(\tau)} = c^2_{g(\tau), g(\eta)}$). The case $c^3_{\eta,\tau} \in A$ is the same.  Thus $g \cdot A \subset A'$; by a symmetric argument $g \cdot A = A'$. Since $g \cdot f(R)$ and $f(R')$ are both injective and they have the same range, some permutation of $g\cdot f(R)$ is equal to $f(R')$, i.e. $f(R)E f(R')$.
		
		It only remains to show that if $f(R) E f(R')$, then $(\omega,R)\iso (\omega,R')$. 
		So suppose $f(R) E f(R')$. Let $A$ be the range of $f(R)$ and let $A'$ be the range of $f(R')$, and choose $g \in G$ such that $g\cdot A = A'$. Let $i < \omega$; then since for all $\eta, \tau \in D_i$, $c^1_{g(\eta), g(\tau)} \in A'$, we have that $g \cdot D_i = D_{\sigma(i)}$ for some $\sigma(i) < \omega$. I claim that $\sigma: (\omega, R) \cong (\omega, R')$. Indeed, for $i < j$, $(i, j) \in R$ iff there are $\eta \in D_i, \tau \in D_j$ with $c^2_{\eta, \tau} \in A$, which is the case iff there are $\eta \in D_{\sigma(i)}, \tau \in D_{\sigma(j)}$ with $c^2_{\eta, \tau} \in A'$, which is the case iff $(i, j) \in R'$. 
	\end{proof}

	We have now shown:
	
	\begin{thm}
		$\TK$ is Borel complete.
	\end{thm}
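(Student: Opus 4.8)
The plan is to assemble the proof from results already established in the excerpt. By Proposition~\ref{relate}, we have $(\Mod(\TK),\iso)\boreleq (X^\omega,E_{G\times S_\infty})$, where $X$ is the Polish space of colorings $c:2^{<\omega}\to\omega$ and $G=\Aut(2^{<\omega},\le)$. By Theorem~\ref{graphs}, we have $(\textrm{Graphs},\iso)\borelleq (X^\omega,E_{G\times S_\infty})$. Composing these two Borel reductions (Borel reducibility is transitive) yields $(\textrm{Graphs},\iso)\borelleq (\Mod(\TK),\iso)$.

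The remaining step is to invoke the standard fact, recorded in the discussion following Definition~\ref{lambdaBC}, that the class of graphs is Borel complete — indeed that every $(\Mod(\Psi),\iso)$ Borel reduces to $(\textrm{Graphs},\iso)$ via the coding of Theorem~5.5.1 of \cite{HodgesModelTheory}. Chaining once more through transitivity of $\borelleq$, every $(\Mod(\Psi),\iso)$ Borel reduces to $(\Mod(\TK),\iso)$, which is exactly the assertion that $\TK$ is Borel complete in the sense of Definition~\ref{BC}.

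There is essentially no obstacle here: the real work was done in Proposition~\ref{relate} (translating the model theory of $\TK$ into a compact-group-times-$S_\infty$ action on a sequence space) and in Theorem~\ref{graphs} (encoding an arbitrary graph into such a sequence, crucially exploiting that $\Aut(2^{<\omega},\le)$ is rich enough to realize arbitrary permutations of a countable family of dense subsets of $2^\omega$ — the point where $\TK$ diverges from $\K$). The statement $\TK$ is Borel complete is just the formal concatenation of those reductions with the Borel completeness of graphs, so the proof is a one-line citation of Proposition~\ref{relate}, Theorem~\ref{graphs}, and the standard graph-coding result.

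\begin{proof}
By Theorem~\ref{graphs}, $(\textrm{Graphs},\iso)\borelleq (X^\omega,E_{G\times S_\infty})$, and by Proposition~\ref{relate}, $(X^\omega, E_{G\times S_\infty})$ is Borel equivalent to $(\Mod(\TK),\iso)$; in particular $(X^\omega,E_{G\times S_\infty})\borelleq (\Mod(\TK),\iso)$. Since $\borelleq$ is transitive, $(\textrm{Graphs},\iso)\borelleq (\Mod(\TK),\iso)$. As the class of graphs is Borel complete (see the remarks following Definition~\ref{lambdaBC}, using Theorem~5.5.1 of \cite{HodgesModelTheory}), every $(\Mod(\Psi),\iso)$ Borel reduces to $(\textrm{Graphs},\iso)$, hence to $(\Mod(\TK),\iso)$. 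Therefore $\TK$ is Borel complete.
\end{proof}
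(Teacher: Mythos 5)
Your proof is correct and is essentially the paper's own argument: the paper likewise notes (just before Theorem~\ref{graphs}) that by Proposition~\ref{relate} it suffices to show $(X^\omega, E_{G\times S_\infty})$ is Borel complete, and then concludes from Theorem~\ref{graphs} together with the Borel completeness of graphs. The only detail you elide — that Proposition~\ref{relate} is stated for colorings into $\omega+1\setminus\{\emptyset\}$ while Theorem~\ref{graphs} uses $\omega$ — is explicitly dismissed as harmless in the paper, so nothing is missing.
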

    
    \begin{proof}
    By Theorem~\ref{graphs}, together with the  fact that graphs are Borel complete.
    \end{proof}
	
	This resolves a few open questions, raised in \cite{LaskowskiShelahAleph0Stable}:
	
	\begin{cor}
		The $\omega$-stable theory $TK$  is Borel complete, but does not have $\ENIDOP$ and is not $\eni$-deep.  Indeed $TK$ is not $\lambda$-Borel complete for any $\lambda$ with $2^\lambda>\beth_2$, as $|CSS(TK)_\sat|=\beth_2$.
	\end{cor}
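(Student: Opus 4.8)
The plan is to assemble the statement mostly from results already established, with a counting argument at the uncountable level supplying the only real content. That $\TK$ is Borel complete is precisely the preceding Theorem. That $\TK$ does not have $\ENIDOP$ and is not $\eni$-deep is a restatement of the Theorem asserting that $\TK$ has $\ENINDOP$ and is $\eni$-shallow (of $\eni$-depth $2$), since $\ENIDOP$ versus $\ENINDOP$, and $\eni$-deep versus $\eni$-shallow, are complementary dichotomies. Finally, $|\CSS(\TK)_\sat| = I_{\infty,\omega}(\TK) = \beth_2$ by Proposition~\ref{submodelcount} together with the identity $I_{\infty,\omega}(\Phi) = |\CSS(\Phi)_\sat|$ recorded just before Corollary~\ref{Borelisshort}. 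So the only thing left to prove is that $\TK$ fails to be $\lambda$-Borel complete whenever $2^\lambda > \beth_2$.

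Suppose toward a contradiction that $\TK$ is $\lambda$-Borel complete for such a $\lambda$, and let $\Psi$ be the first-order theory of graphs. Then there is a $\lambda$-Borel embedding $f\colon \Mod_\lambda(\Psi) \to \Mod_\lambda(\TK)$. By the definition of a $\lambda$-Borel embedding, $M \equiv_{\infty,\omega} N$ if and only if $f(M) \equiv_{\infty,\omega} f(N)$, so $f$ descends to an \emph{injection} from the set of $\equiv_{\infty,\omega}$-classes of size-$\lambda$ graphs into the set of $\equiv_{\infty,\omega}$-classes of size-$\lambda$ models of $\TK$. The latter set has cardinality at most $I_{\infty,\omega}(\TK) = \beth_2$, since every model of $\TK$ of every cardinality lies in one of only $\beth_2$ back-and-forth classes. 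Thus it suffices to exhibit more than $\beth_2$ pairwise $\equiv_{\infty,\omega}$-inequivalent graphs of cardinality $\lambda$; I will in fact produce $2^\lambda$ of them, which exceeds $\beth_2$ by hypothesis.

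To do this, fix for each ordinal $\alpha < \lambda$ a connected graph $H_\alpha$ of cardinality $|\alpha| + \aleph_0 \le \lambda$ that rigidly codes the well-order $(\alpha,\in)$ via the standard interpretation of linear orders in graphs (which is $L_{\infty,\omega}$-definable and injective on $\equiv_{\infty,\omega}$-classes), and fix a way to mark a component with one extra bit of information, say by adjoining a pendant triangle at a definable vertex. For $S \subseteq \lambda$ let $G_S$ be the disjoint union of the graphs $H_\alpha$ for $\alpha \notin S$ together with the marked copies of $H_\alpha$ for $\alpha \in S$; then $|G_S| = \lambda$. For $\alpha \ne \beta$ the $H_\alpha$- and $H_\beta$-components of $G_S$ are not $\equiv_{\infty,\omega}$-equivalent, because distinct ordinals have distinct canonical Scott sentences (as used in the proof of Corollary~\ref{BCnotshort}), and likewise a marked copy of $H_\alpha$ is not $\equiv_{\infty,\omega}$-equivalent to an unmarked one; so all $\lambda$ components of $G_S$ have pairwise distinct $\equiv_{\infty,\omega}$-types, each occurring with multiplicity exactly one. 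Since back-and-forth equivalence of a disjoint union is determined by which $\equiv_{\infty,\omega}$-types of components occur and with what multiplicity (finite exactly, or just ``infinitely often''), $G_S \equiv_{\infty,\omega} G_{S'}$ forces $G_S$ and $G_{S'}$ to realize the same component types; reading off the markers then gives $S = S'$. This produces the required $2^\lambda$ pairwise $\equiv_{\infty,\omega}$-inequivalent graphs of size $\lambda$ and finishes the argument.

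The one point that needs care — which I regard as the main obstacle — is the final claim about disjoint unions: that $\equiv_{\infty,\omega}$ of $\bigsqcup_i H_i$ is controlled by the multiset of $\equiv_{\infty,\omega}$-types of the $H_i$ modulo the cutoff between finite and infinite multiplicity. This is classical — it is the back-and-forth localization of Scott analysis to connected components — but one should either cite it or supply the short Ehrenfeucht--Fra\"iss\'e argument, being attentive that a component type of multiplicity exactly one is always detected. Everything else is bookkeeping over the earlier sections.
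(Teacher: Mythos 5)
Your proposal is correct and matches the intended argument: the paper states this corollary without proof, as it is meant to follow by combining the preceding theorem (Borel completeness of $\TK$), the earlier theorem that $\TK$ has $\ENINDOP$ and is $\eni$-shallow, Proposition~\ref{submodelcount} together with the identity $I_{\infty,\omega}(\Phi)=|\CSS(\Phi)_\sat|$, and exactly the counting argument you give — a $\lambda$-Borel embedding induces an injection on $\equiv_{\infty,\omega}$-classes, so $2^\lambda$ pairwise inequivalent size-$\lambda$ graphs cannot map into the at most $\beth_2$ classes of models of $\TK$. Your explicit construction of the $2^\lambda$ inequivalent graphs via marked connected components is sound (the disjoint-union fact you flag is the same multiplicity analysis used in Proposition~\ref{BasicJumpProposition}), and is the only substantive detail the paper leaves to the reader.
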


		\bibliography{Citations}
\end{document}